\documentclass[10pt, reqno]{amsart}
\usepackage{amssymb,amsmath,amsthm,color, cite,subcaption}
\theoremstyle{plain}
\newtheorem{theorem}{Theorem}[section]
\newtheorem{conjecture}{Conjecture}[section]

\newtheorem{proposition}[theorem]{Proposition}
\newtheorem{definition}[theorem]{Definition}
\newtheorem{lemma}[theorem]{Lemma}
\newtheorem{corollary}[theorem]{Corollary}

\theoremstyle{remark}
\newtheorem{remark}[theorem]{Remark}
\usepackage{graphicx}

\usepackage
[bookmarks=true,
   bookmarksnumbered=false,
   bookmarkstype=toc]
{hyperref}

\numberwithin{equation}{section}

\newcommand{\C}{\mathbb{C}}
\newcommand{\R}{\mathbb{R}}

\newcommand{\N}{\mathcal{N}}

\newcommand{\F}{\mathcal{F}}
\newcommand{\G}{\mathcal{G}}
\newcommand{\V}{\mathcal{V}}
\renewcommand{\Im}{\operatorname{Im}}
\renewcommand{\Re}{\operatorname{Re}}
\renewcommand{\L}{\mathcal{L}}


\newcommand{\norm}[1]{\|#1\|}
\newcommand{\Jbr}[1]{\left\langle #1 \right\rangle}
\def\d{{\partial}}
\def\({\left[}
\def\){\right]}
\newcommand{\I}{\infty}
\newcommand{\abs}[1]{\left\lvert #1\right\rvert}

\newcommand{\eps}{\varepsilon}

\DeclareMathOperator{\sech}{sech}

\DeclareMathOperator{\arctanh}{arctanh}

\newcommand{\qtq}[1]{\quad\text{#1}\quad}


\renewcommand{\H}{\mathcal{H}}
\newcommand{\rre}{\mathbb{R}}
\newcommand{\pt}{\partial}

\sloppy


\begin{document}

\title[Stability of solitary waves]{Asymptotic stability of solitary waves for the $1d$ NLS with an attractive delta potential}

\author[S. Masaki]{Satoshi Masaki}
\address{Department of Systems Innovation, Graduate school of Engineering Science, Osaka University}
\email{masaki@sigmath.es.osaka-u.ac.jp}

\author[J. Murphy]{Jason Murphy}
\address{Department of Mathematics \& Statistics, Missouri University of Science \& Technology}
\email{jason.murphy@mst.edu}

\author[J. Segata]{Jun-ichi Segata}
\address{Department of Mathematical Sciences, Kyushu University} 
\email{segata@math.kyushu-u.ac.jp}

\begin{abstract}
We consider the one-dimensional nonlinear Schr\"odinger equation with an attractive delta potential and mass-supercritical nonlinearity. This equation admits a one-parameter family of solitary wave solutions in both the focusing and defocusing cases.  We establish asymptotic stability for all solitary waves satisfying a suitable spectral condition, namely, that the linearized operator around the solitary wave has a two-dimensional generalized kernel and no other eigenvalues or resonances. In particular, we extend our previous result \cite{MMS} beyond the regime of small solitary waves and extend the results of \cite{FOO, KO} from orbital to asymptotic stability for a suitable family of solitary waves. 
\end{abstract}

\maketitle

\section{Introduction}
We consider the one-dimensional nonlinear Schr\"odinger equation (NLS) with an attractive delta potential.  This equation has the form
\begin{equation}\label{nls}
\begin{cases}
i\partial_t u = Hu + f(u), \\
u|_{t=0}=u_0.
\end{cases}
\end{equation}

Here $H=-\tfrac12\partial_x^2 + q\delta_0$, where $q<0$ and $\delta_0$ is the Dirac delta distribution (see Section~\ref{S:delta}).  The nonlinearity in \eqref{nls} is assumed to be of the form $f(u)=\sigma |u|^p u$, where $\sigma\in\{\pm 1\}$ and $p>4$ (the mass-supercritical regime).  The choice $\sigma=+1$ yields the defocusing equation, while $\sigma=-1$ gives the focusing equation.  This equation is related to a simple model describing the resonant nonlinear propagation of light through optical wave guides with localized defects \cite{GHW}.

Equation \eqref{nls} admits an explicit one-parameter family of solitary wave solutions of the form $u(t,x)=e^{i\omega t}Q_{\omega}(x)$, where
\begin{equation}\label{nls-Q}
HQ_\omega+\omega Q_\omega + f(Q_\omega) = 0
\end{equation}
(see Section~\ref{S:solitons}).  Here the frequency $\omega$ is restricted to $(0,\tfrac12q^2)$ in the defocusing case and to $(\tfrac12q^2,\infty)$ in the focusing case, with $\tfrac12q^2$ corresponding to the eigenvalue for the underlying linear operator.  The stability properties of these solitary waves have been studied in previous works \cite{FOO, KO, TX, MMS, DP}.  In the mass-supercritical case $p>4$ with defocusing nonlinearity, all solitary waves are orbitally stable.  In the focusing case, there exists $\Omega\in(\tfrac12q^2,\infty)$ such that one has stability for $\omega\in(\tfrac12q^2,\Omega)$ and instability for $\omega\in[\Omega,\infty)$.  The frequency $\Omega$ arises as the unique frequency at which $\tfrac{d}{d\omega}\|Q_{\omega}\|_{L^2}^2=0$.  

In our previous work \cite{MMS}, we showed that one has \emph{asymptotic} stability for all solitary waves with frequency $\omega$ sufficiently close to $\tfrac12q^2$.   In this regime, the solitary waves arise as small multiples of the eigenfunction for the underlying linear equation, and the fact that the solitary waves are small plays a key role in the proof.

In this work, our main goal is to establish asymptotic stability beyond the regime of small solitary waves for the model \eqref{nls}.  We consider initial data of the form
\begin{equation}\label{initial-data}
u_0 = Q_{\omega_0} + \tilde v_0,
\end{equation}
where $Q_{\omega_0}$ is an orbitally stable solitary wave and $\tilde v_0$ is small in $H^1$ and in a weighted $L^2$ space.  Our main result (Theorem~\ref{T} below) is an asymptotic stability result for suitable choices of $\omega_0$.  In particular, we will show that if a certain spectral condition holds at the frequency $\omega_0$ (see Definition~\ref{D:spectral} below), then the solution to \eqref{nls} with initial data $u_0$ as in \eqref{initial-data} scatters to a nearby solitary wave as $t\to\infty$.  Before stating the theorem precisely, let us briefly introduce the general approach and explain the origin of the spectral condition that plays a key role in the argument.

Orbital stability (established in \cite{FOO, KO}) guarantees that for initial data of the form \eqref{initial-data}, the corresponding solution admits a decomposition of the form 
\begin{equation}\label{intro-decomposition1}
u(t) = e^{i\beta(t)}\bigl\{Q_{\omega_0} + \tilde v(t)\bigr\}
\end{equation}
for some phase $\beta(t)$, where $\tilde v(t)$ is small in $H^1$ for all $t\geq 0$.  One might first try to establish asymptotic stability by proving that in fact, $\tilde v(t)$ decays/scatters as $t\to\infty$.  To study the behavior of $\tilde v(t)$, one can first use \eqref{nls} and \eqref{nls-Q} to derive a nonlinear evolution equation for $\tilde v$.  The hope would then be to use this equation together with the smallness assumption on $\tilde v_0$ to run a bootstrap argument that yields decay estimates for $\tilde v$.  Such an argument would require estimates for the linear part of the evolution equation, which involves the matrix operator
\begin{equation}\label{Intro-L}
\L(\omega_0)=\left[\begin{array}{cc} 0 & H+\omega_0 \\ -H-\omega_0 & 0 \end{array}\right]+\sigma Q_{\omega_0}^p\left[\begin{array}{cc} 0 & 1 \\ -(p+1) & 0\end{array}\right]
\end{equation} 
arising from the linearization around the solitary wave. 

At this point, the spectral properties of the operator $\L(\omega_0)$ come into play.  Using \eqref{nls-Q}, we can immediately find two independent vectors in the generalized kernel of $\L(\omega_0)$ (namely, $[0\ Q_{\omega_0}]^t$ and $[\partial_\omega Q_{\omega}|_{\omega=\omega_0}\ 0]^t$), which implies the existence of a two-dimensional subspace on which the linear evolution $e^{t\L(\omega_0)}$ does \emph{not} decay.  Moreover, it is possible for $\L(\omega_0)$ to have additional purely imaginary eigenvalues, even when $Q_{\omega_0}$ is orbitally stable.  Thus we see that to obtain decay estimates, it would at least be necessary to restrict to the continuous spectral subspace of $\L(\omega_0)$. On the other hand, there is no reason \emph{a priori} that the perturbation $\tilde v(t)$ above should belong to this subspace; indeed, the decomposition \eqref{intro-decomposition1} incorporates only a single modulation parameter.  Thus it is not clear whether or not the component $\tilde v(t)$ actually \emph{decays} as $t\to\infty$.

In light of the above observations, we modify the approach as follows.  We firstly restrict attention to frequencies $\omega_0$ so that the dimension of the generalized kernel of $\L(\omega_0)$ is exactly two and such that $\L(\omega_0)$ has no other eigenvalues.  This is part of the spectral condition appearing below. Next, instead of \eqref{intro-decomposition1}, we look for a decomposition of the form
\[
u(t) = e^{i\Phi(t)}\{Q_{\omega(t)}+v(t)\},
\]
where now both the phase $\Phi(t)$ and the frequency $\omega(t)$ may vary in time.  In particular, we will choose these two modulation parameters to guarantee that $v(t)$ belongs to the continuous spectral subspace of $\L(\omega(t))$ for each $t\geq 0$.  This requires that we impose suitable orthogonality conditions (see \eqref{T-orthogonality} below), which we can accomplish by making use of the existing orbital stability results and the implicit function theorem (see Proposition~\ref{P:decomposition}).  We then essentially carry out the strategy outlined above, namely, we use dispersive estimates for the evolution group and a bootstrap argument to derive decay estimates for the perturbation $v(t)$.  We must also derive suitable estimates for the modulation parameters themselves.

Before stating the main result, we must discuss one additional technical issue that is also connected to the spectral condition stated below.  In the bootstrap estimates for both $v$ and the modulation parameters, we must exhibit enough decay to integrate in time.  The lowest order term in the modulation parameters will be quadratic in $v$, while the equation for $v$ will still ultimately contain some linear terms (due to a change of variables that serves to remove the time-dependence from the underlying linearized operator).  This means that we must do better than the typical $|t|^{-\frac12}$ decay rate associated to dispersive estimates for one-dimensional Schr\"odinger-type equations.  By utilizing weighted norms, we can obtain an estimate with an integrable decay rate.  However, this requires that the linear operator $\L$ obeys a further spectral condition, namely, the absence of a resonance at the edge of the continuous spectrum. 

We may now state the main spectral assumption and our main result.  We recall the definition of $\Omega$ given above (see also Section~\ref{S:solitons}).

\begin{definition}[Spectral condition]\label{D:spectral} Let $\omega\in(0,\tfrac12q^2)$ in the defocusing case and $\omega\in(\tfrac12q^2,\Omega)$ in the focusing case. We say that $\omega$ satisfies the spectral condition if $\L(\omega)$ has a generalized kernel of dimension $2$, no other gap or embedded eigenvalues, and no resonances.
\end{definition}

\begin{theorem}[Asymptotic stability and scattering]\label{T} Fix $q<0$, $p>4$, and $1<\alpha<\min\{\frac{p}{4},\frac32\}$.  

Let $\omega_0\in (0,\tfrac12 q^2)$ in the defocusing case and $\omega_0\in(\tfrac12q^2,\Omega)$ in the focusing case.  Suppose that the spectral condition in Definition~\ref{D:spectral} holds in a neighborhood of $\omega_0$.

For $\eps=\eps(q,\omega_0,p)>0$ and $\eta=\eta(\eps)>0$ sufficiently small, we have that if
\[
u_0 = Q_{\omega_0}+\tilde v_0\qtq{with} \|\tilde v_0\|_{H^1} + \|\langle x\rangle^\alpha \tilde v_0\|_{L^2} <\eta,
\]
then the corresponding global-in-time solution $u$ to \eqref{nls} admits a decomposition of the form
\[
u(t) = e^{i\Phi(t)}\{Q_{\omega(t)}+v(t)\}
\]
obeying the following properties:

The phase has the form
\[
\Phi(t)=\theta(t)+\int_0^t \omega(s)\,ds,
\]
and the modulation parameters obey
\[
|\dot\theta(t)|+|\dot\omega(t)| \lesssim \eps^2 \langle t\rangle^{-2\alpha} \qtq{for all}t\geq 0,
\]
where here and below the implicit constants depend on $p$ and $\omega_0$.  In particular, $\omega(t)$ converges to some $\omega_+$ as $t\to\infty$, with $|\omega_0-\omega_+|\lesssim\eps^2$.

The perturbation $v(t)$ obeys the orthogonality conditions
\begin{equation}\label{T-orthogonality}
\Re\langle v(t),Q_{\omega(t)}\rangle = \Re\langle v(t),i\partial_\omega Q_{\omega(t)}\rangle = 0\qtq{for all}t\geq 0,
\end{equation}
along with the estimates
\[
\|v(t)\|_{H^1}\lesssim \eps,\quad \|v(t)\|_{L^r} \lesssim \eps\langle t\rangle^{-[\frac12-\frac1r]},\qtq{and} \|\langle x\rangle^{-\alpha}v(t)\|_{L^2}\lesssim \eps \langle t\rangle^{-\alpha}
\]
for all $t\geq 0$, where $r=r(\alpha,p)$ is a sufficiently large (but finite) exponent.  Moreover, scattering holds in the sense that 
\[
\lim_{t\to\infty} \|v(t) - e^{t\L(\omega_+)}v_+\|_{L^2} = 0
\]
for some $v_+\in L^2$.
\end{theorem}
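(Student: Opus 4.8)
The proof combines a modulated decomposition of the solution, dispersive and local-decay estimates for a frozen linearized operator, and a bootstrap argument; I outline the main stages below.

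\emph{Stage 1: decomposition and equations of motion.} Orbital stability from \cite{FOO,KO} first gives global existence together with an $H^1$-small perturbation for all $t\ge0$. Applying the implicit function theorem (Proposition~\ref{P:decomposition}) then produces, for $\eta$ small, a decomposition $u(t)=e^{i\Phi(t)}\{Q_{\omega(t)}+v(t)\}$ with $\Phi(t)=\theta(t)+\int_0^t\omega(s)\,ds$ and the orthogonality conditions \eqref{T-orthogonality}, valid as long as $\omega(t)$ remains in the given neighborhood of $\omega_0$. Substituting this ansatz into \eqref{nls} and using \eqref{nls-Q}, one finds (after passing to the real/imaginary vector formulation) that $v$ solves an equation of the schematic form $\partial_t v=\L(\omega(t))v+\dot\theta\,\mathcal{R}_1(\omega(t))+\dot\omega\,\mathcal{R}_2(\omega(t))+\mathcal{N}(v)$, where $\mathcal{R}_1,\mathcal{R}_2$ are exponentially localized (built from $Q_{\omega(t)}$ and $\partial_\omega Q_{\omega(t)}$) and $\mathcal{N}(v)=O(Q_{\omega(t)}^{p-1}|v|^2)+\cdots+O(|v|^{p+1})$ collects the terms at least quadratic in $v$. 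Differentiating \eqref{T-orthogonality} in $t$ yields a $2\times2$ linear system for $(\dot\theta,\dot\omega)$ whose matrix is invertible for $\omega$ near $\omega_0$ — this is precisely where $\tfrac{d}{d\omega}\|Q_\omega\|_{L^2}^2\ne0$, i.e. $\omega_0<\Omega$, is used. Solving the system and pairing the localized functions against $v$ shows that the leading part of $(\dot\theta,\dot\omega)$ is quadratic in $v$ measured in a weighted norm, so that $|\dot\theta(t)|+|\dot\omega(t)|\lesssim\|\langle x\rangle^{-\alpha}v(t)\|_{L^2}^2+(\text{higher order})$.

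\emph{Stage 2: freezing the operator and linear estimates.} To run a dispersive argument one must replace the $t$-dependent operator $\L(\omega(t))$ by a time-independent one. Writing $\L(\omega(t))=\L(\omega_0)+\bigl(\L(\omega(t))-\L(\omega_0)\bigr)$ (or, equivalently, performing a gauge change conjugating to $\L(\omega_0)$) and noting that the difference has coefficients that are $O(|\omega(t)-\omega_0|)$ and exponentially localized, the $v$-equation becomes $\partial_t v=\L(\omega_0)v+(\text{small, localized, linear in }v)+(\text{source and quadratic terms})$; here lies the technical point flagged in the introduction, that the freezing procedure leaves behind linear-in-$v$ contributions. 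Under the spectral condition of Definition~\ref{D:spectral}, $\L(\omega_0)$ has a $2$-dimensional generalized kernel, purely continuous spectrum elsewhere, and no threshold resonance; spectral theory for matrix Schr\"odinger operators with a delta potential then yields a bounded continuous-spectral projection $P_c$, the dispersive bound $\|e^{t\L(\omega_0)}P_c\|_{L^1\to L^\infty}\lesssim|t|^{-1/2}$, the associated Strichartz estimates, and — crucially — the weighted local-decay estimate $\|\langle x\rangle^{-\alpha}e^{t\L(\omega_0)}P_c f\|_{L^2}\lesssim\langle t\rangle^{-\alpha}\|\langle x\rangle^{\alpha}f\|_{L^2}$ for $1<\alpha<\tfrac32$, whose proof uses the absence of a resonance at the edge of the spectrum. (The spectral condition holding throughout a neighborhood of $\omega_0$ guarantees these estimates uniformly for the frozen operator at any nearby frequency.) The orthogonality conditions \eqref{T-orthogonality} place $v(t)$ in the range of $P_c$ up to an $O(\eps^2)$ localized error coming from the $\omega(t)$-dependence of the projection, so the estimates above apply to $v$.

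\emph{Stage 3: bootstrap and conclusion.} Choose $r=r(\alpha,p)$ large and set
\[
M(T)=\sup_{0\le t\le T}\eps^{-1}\Bigl(\|v(t)\|_{H^1}+\langle t\rangle^{\frac12-\frac1r}\|v(t)\|_{L^r}+\langle t\rangle^{\alpha}\|\langle x\rangle^{-\alpha}v(t)\|_{L^2}\Bigr).
\]
Assume $M(T)\le2$. Combining the Duhamel formula for $v$ with the linear estimates of Stage 2, the nonlinear bounds for $\mathcal{N}(v)$, and the modulation bound $|\dot\theta|+|\dot\omega|\lesssim\eps^2\langle t\rangle^{-2\alpha}$ (which already follows from $M(T)\le2$), one recovers $M(T)\le1$ provided $\eta$, and hence $\eps$, is small; the constraint $1<\alpha<\min\{\tfrac p4,\tfrac32\}$ is exactly what makes all the resulting time-weights — such as $\langle t\rangle^{-2\alpha}$, $\langle t\rangle^{-p(\frac12-\frac1r)}$, and the convolutions of these against the kernels $|t-s|^{-1/2}$ and $\langle t-s\rangle^{-\alpha}$ — integrable and self-consistent. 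A standard continuity argument then gives $M(\infty)\le1$, and in particular $\omega(t)$ never leaves the neighborhood of $\omega_0$, so the decomposition persists for all time. Integrability of $\dot\omega$ and $\dot\theta$ yields $\omega(t)\to\omega_+$ and $\theta(t)\to\theta_+$ with $|\omega_+-\omega_0|\lesssim\eps^2$. Finally, setting $w(t)=e^{-t\L(\omega_0)}v(t)$ and using the Duhamel representation together with the decay of $v$, of $\mathcal{N}(v)$, and of the modulation parameters, one shows $\{w(t)\}_{t\ge0}$ is Cauchy in $L^2$; calling the limit $v_+$ gives $v(t)=e^{t\L(\omega_0)}v_++o_{L^2}(1)$, and replacing $\L(\omega_0)$ by $\L(\omega_+)$ costs only $O(\eps^2)$ and is absorbed into the error, yielding the stated scattering.

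\emph{Main obstacle.} The crux is closing the bootstrap for $v$. Because freezing the linearized operator unavoidably leaves linear-in-$v$ terms in the equation, the $|t|^{-1/2}$ dispersive rate is not time-integrable, and one is forced to work in weighted spaces and to establish the local-decay estimate of Stage 2, whose validity rests on the no-resonance hypothesis — this is the structural reason that hypothesis appears in Definition~\ref{D:spectral}. Intertwined with this is the need to check that the mass-supercritical nonlinearity $|u|^pu$ is compatible with the scheme: the bound $\alpha<p/4$ ensures the nonlinear terms decay fast enough to be integrated, while $\alpha<3/2$ is the range in which the weighted linear estimate is available.
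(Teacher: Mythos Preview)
Your outline captures the overall architecture, but the freezing step in Stage~2 has a genuine gap. You assert that $\L(\omega(t))-\L(\omega_0)$ has ``exponentially localized'' coefficients; this is false. From \eqref{D:L} the difference contains the constant-matrix term $(\omega(t)-\omega_0)\bigl[\begin{smallmatrix}0&1\\-1&0\end{smallmatrix}\bigr]$, which is neither localized nor time-integrable (indeed $\omega(t)-\omega_0\to\omega_+-\omega_0\neq0$). If instead you absorb this term by the gauge change you allude to, the complex-conjugate part of the linearization picks up the factor $e^{2i[\Phi(t)-\Psi_0(t)]}$ with $\Psi_0(t)=t\omega_0$; since $\Phi(t)-\Psi_0(t)=\theta(t)+\int_0^t(\omega(s)-\omega_0)\,ds$ grows linearly in $t$, the resulting term $[e^{2i(\Phi-\Psi_0)}-1]Q_\omega^p\bar z$ is an $O(1)$ (not $O(\eps)$) localized linear-in-$z$ contribution, and the bootstrap cannot close. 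The paper avoids this by freezing at $\omega(T)$ rather than $\omega_0$ (Proposition~\ref{P:COV}): with $\Psi(t)=\theta(T)+t\omega(T)$ one has $|\Phi(t)-\Psi(t)|\lesssim\eps^2$ \emph{uniformly} on $[0,T]$ (see \eqref{control-phase}), because $\int_0^t|\omega(s)-\omega(T)|\,ds\le\int_0^t\int_s^T|\dot\omega|\lesssim\eps^2$ uses integrability from the \emph{right} endpoint. This is exactly why the paper's frozen frequency moves with $T$. The same issue recurs in your scattering step: replacing $e^{t\L(\omega_0)}$ by $e^{t\L(\omega_+)}$ is not an $O(\eps^2)$ perturbation, since the constant part $(\omega_0-\omega_+)\bigl[\begin{smallmatrix}0&1\\-1&0\end{smallmatrix}\bigr]$ generates an $O(1)$ oscillation over long times; the paper instead reruns the construction with the operator frozen at $\omega_+$ (Proposition~\ref{PT2}).

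Two further omissions would bite when you try to close the estimates. First, the weighted dispersive estimate you quote, with $\|\langle x\rangle^\alpha f\|_{L^2}$ on the right, is not what is proved or used; the paper's estimate (Proposition~\ref{P:weighted-dispersive}) has $\|\langle x\rangle^{\alpha-[\frac12-\frac1r]}f\|_{L^{r',2}}$ on the right, and the Lorentz structure matters in the nonlinear estimates. Second, estimating $\|\langle x\rangle^{\alpha-[\frac12-\frac1r]}|v|^{p+1}\|_{L^{r',2}}$ in Lemma~\ref{L:controlF} requires control of a \emph{positively} weighted norm of $v$, which the paper obtains via the virial bound $\|\langle x\rangle v(t)\|_{L^2}\lesssim\langle t\rangle$ (Lemma~\ref{L:v1}); this ingredient is absent from your sketch and is where the constraint $\alpha<p/4$ is actually used.
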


\begin{remark} The notation $\partial_\omega Q_{\omega(t)}$ appearing in \eqref{T-orthogonality} is short-hand for the expression
\[
\partial_\omega Q_{\omega}\bigr|_{\omega=\omega(t)}.
\] 
The shorter notation appearing in \eqref{T-orthogonality} will be used throughout the paper. 
\end{remark}

Theorem~\ref{T} is conditional on the assumption that $\omega_0$ satisfies the spectral condition in Definition~\ref{D:spectral}.  In this paper, we also investigate the range of frequencies for which the spectral condition holds.  Combining some analytic and numerical results in Section~\ref{S:spectral}, we arrive at the following conjecture.  Recall that $\Omega=\Omega(q,p)$ is the minimal unstable frequency introduced above (see also Section~\ref{S:solitons} below).

\begin{conjecture}\label{conjecture} Fix $q<0$ and $p>4$. 
\begin{itemize} 
\item In the focusing case ($\sigma=-1$), there exists $\omega_1=\omega_1(q,p)\in(\tfrac12q^2,\Omega)$ such that the spectral condition holds for $\omega\in(\tfrac12 q^2,\omega_1)$.
\item In the defocusing case ($\sigma=1$), the spectral condition holds for all $\omega\in(0,\tfrac12q^2)$.
\end{itemize} 
\end{conjecture}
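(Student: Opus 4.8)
The plan is to translate the spectral condition into explicit conditions on the two scalar operators underlying $\L(\omega)$. Writing $L_-:=H+\omega+\sigma Q_\omega^p$ and $L_+:=H+\omega+\sigma(p+1)Q_\omega^p$, one has $\L(\omega)=\left[\begin{array}{cc}0 & L_-\\ -L_+ & 0\end{array}\right]$, and since $Q_\omega$ is given by an explicit formula, the potentials of $L_\pm$ are explicit (a translate of an explicit stationary profile) plus the attractive point interaction $q\delta_0$. Since $Q_\omega>0$ solves $L_-Q_\omega=0$, it is the ground state of $L_-$, so $L_-\geq0$ and $\ker L_-=\operatorname{span}\{Q_\omega\}$; differentiating \eqref{nls-Q} in $\omega$ gives $L_+\partial_\omega Q_\omega=-Q_\omega$, so $[0,Q_\omega]^t$ and $[\partial_\omega Q_\omega,0]^t$ always lie in the generalized kernel of $\L(\omega)$. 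A short Jordan-chain computation shows that, provided $\ker L_+=\{0\}$ and $\tfrac{d}{d\omega}\|Q_\omega\|_{L^2}^2\neq0$ --- the latter holding throughout $(0,\tfrac12q^2)$ and $(\tfrac12q^2,\Omega)$ since $\Omega$ is the unique zero --- the generalized kernel is exactly two-dimensional with a single $2\times2$ Jordan block. So the first step is to prove $\ker L_+=\{0\}$. Here one can exploit that $Q_\omega$ is even: by parity any zero eigenfunction of $L_+$ is even or odd, odd ones are ruled out directly (they would force the decaying half-line solution to vanish at $0$, impossible since $q<0$ makes the matching value $q Q_\omega(0)\neq 0$), and even ones exist only when an explicit function of $\omega$ built from the profile vanishes, so $\ker L_+=\{0\}$ reduces to the non-vanishing of that function on the relevant interval --- favorable for all $\omega\in(0,\tfrac12q^2)$ in the defocusing case but delicate as $\omega\uparrow\Omega$ in the focusing case, which is one source of the restriction $\omega<\omega_1$.

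The next step is to analyze the eigenvalues and resonances of the matrix operator itself. Since the off-diagonal potentials decay exponentially, $\sigma_{\mathrm{ess}}(\L(\omega))=i\bigl((-\infty,-\omega]\cup[\omega,\infty)\bigr)$, with thresholds at $\pm i\omega$, and one must show: (i) $\L(\omega)$ has no eigenvalues in the open gap $i(-\omega,\omega)\setminus\{0\}$ and none off the imaginary axis; (ii) no embedded eigenvalues; (iii) no threshold resonance at $\pm i\omega$. For (i), a gap eigenvalue $i\nu$ with $\nu^2\in(0,\omega^2)$ is equivalent to $L_+L_-b=\nu^2b$ for some $b\in L^2$ --- again an explicit ODE eigenvalue problem, reducible by parity and the matching at $0$ --- while the absence of off-axis (complex-quadruple) eigenvalues follows from the Hamiltonian structure $\L=J\mathcal{H}$, $\mathcal{H}=\operatorname{diag}(L_+,L_-)$, together with $n(L_-)=0$ and $n(L_+)=1$ (the latter from the orbital-stability analysis) and the Grillakis--Shatah--Strauss / Kapitula--Kevrekidis--Sandstede index count. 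For (ii), an Agmon-type argument using the exponential decay of the potential forbids $L^2$ solutions at energies in the continuous spectrum. For (iii), a resonance at $\pm i\omega$ corresponds to a bounded, non-$L^2$ solution of $(\L\mp i\omega)\psi=0$, i.e., to linear dependence of the threshold Jost solutions of the component problems $(H+\sigma cQ_\omega^p)\psi=0$, which amounts to the vanishing of an explicit Wronskian $W(\omega)=W(\omega;q,p)$; one must show $W(\omega)\neq0$.

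I expect the main obstacle --- and the reason the statement is only conjectural --- to be part (i): ruling out purely imaginary \emph{gap} eigenvalues (internal/shape modes) over the whole range. Such modes carry positive Krein signature, so neither orbital stability nor the index count forbids them (the index count excludes real eigenvalues, complex quadruples, and negative-signature imaginary ones, but not internal modes), and one is left to track the spectrum of $\L(\omega)$ directly. The natural device is a continuity argument in $\omega$: the reference regime is $\omega\to\tfrac12q^2$, where $Q_\omega\to0$ and $\L(\omega)$ is a small relatively compact perturbation of $J\operatorname{diag}(H+\omega,H+\omega)$, whose spectrum is explicit, so the spectral condition can be verified perturbatively (essentially the analysis behind \cite{MMS}). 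One then argues, using analyticity of $\omega\mapsto Q_\omega$, that no new eigenvalue enters the gap as $\omega$ moves away from $\tfrac12q^2$: a new gap eigenvalue can be created only by emerging from a threshold $\pm i\omega$ (exactly where $W(\omega)=0$) or by colliding at the origin (which, by the Jordan-chain computation above, happens precisely at $\omega=\Omega$). Thus the problem reduces to showing that $W(\omega)$ and the explicit functions controlling $\ker L_\pm$ and the index count do not vanish throughout the stated intervals --- verifiable analytically in the limiting regimes but, across the full range, apparently requiring either a computer-assisted proof of these non-vanishing statements or direct numerical continuation of the spectrum. The latter is what Section~\ref{S:spectral} carries out: it exhibits no internal modes and no resonance throughout $(0,\tfrac12q^2)$ in the defocusing case and up to some $\omega_1<\Omega$ in the focusing case, with $\omega_1$ the frequency at which an internal mode (or a threshold resonance) first appears; combining those computations with the analytic reductions above yields the conjecture, while a fully rigorous proof would require making the numerics rigorous.
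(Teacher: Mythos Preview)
The statement is a \emph{conjecture}, and the paper explicitly does not prove it: ``In this paper, we do not give a rigorous proof of Conjecture~\ref{conjecture}.'' What the paper offers in Section~\ref{S:spectral} is a mix of analytic facts (the generalized kernel is two-dimensional away from $\omega=\Omega$; eigenvalues of $\L$ lie in $\R\cup i\R$; large embedded eigenvalues are excluded via the asymptotics of $\det D(\xi)$) together with numerical evidence obtained by a shooting method that locates the first threshold resonance $\omega_1$ in the focusing case and finds none in the defocusing case. Your proposal is therefore not to be compared against a proof but against this package of evidence, and on that level it is broadly compatible: you correctly isolate the two-dimensional generalized kernel, you correctly identify purely imaginary gap eigenvalues of positive Krein signature as the obstruction that neither orbital stability nor the index count can exclude, and your continuity-in-$\omega$ picture (eigenvalues enter the gap only through a threshold resonance or through the origin at $\omega=\Omega$) is exactly the mechanism the paper describes in Conjecture~\ref{spectral-conjecture}. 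Your concluding sentence --- that a rigorous proof would require making the numerics rigorous --- matches the paper's own assessment.

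Where your outline differs from the paper is in the machinery invoked. You frame the off-axis and real-eigenvalue exclusion via the Hamiltonian index count (GSS/KKS), whereas the paper proves $\lambda^2\in\R$ directly from $\langle u,L_+u\rangle\langle v,L_-v\rangle=-\lambda^2|\int\bar uv|^2$ and handles the rest via the matrix Wronskian $D(\xi)$ and Proposition~\ref{P:spectral-meets-scattering}. Your route is more structural; the paper's is more hands-on and better suited to the numerical search it actually performs. Two small cautions on your sketch: your argument that odd functions cannot lie in $\ker L_+$ (``the matching value $qQ_\omega(0)\neq0$'') is not right as stated --- an odd function has $u(0)=0$, so the jump condition is automatically satisfied and the delta plays no role; the paper instead cites \cite{FJ,LeCoz} for $\ker L_+=\{0\}$. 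And your claim that an Agmon-type argument disposes of embedded eigenvalues is asserted rather than shown; the paper treats the absence of embedded eigenvalues as part of the spectral assumption and only rules out large ones analytically (Section~\ref{S:embedded}).
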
 
In the focusing case, the frequency $\omega_1$ is identified by the first appearance of a resonance at the edge of the continuous spectrum of $\mathcal{L}(\omega)$.  By using a shooting method in $\omega$, we can numerically estimate the value of $\omega_1$.  This frequency is always smaller than $\Omega$, and hence Theorem~\ref{T} does not treat the entire range of stable solitary waves in the focusing case.  On the other hand, by computing the mass of $Q_{\omega_1}$ (representing the largest solitary wave mass addressed by Theorem~\ref{T} in the focusing case), we can see that our result covers a range of solitary waves beyond the small data regime.  Fixing $q=-1$, we display some computed values of $\omega_1=\omega_1(p)$ and the mass $M$ of $Q_{\omega_1}$ in the following table (see also Figure~\ref{special-omegas1} on page~\pageref{special-omegas1}). 

{\footnotesize
\begin{table}[h]
\begin{tabular}{|c||c|c|c|c|c|c|c|c|c|c|c|c|c|c|c|}
\hline
$p$ & 4.2 & 4.4 & 4.6 & 4.8 & 5 & 5.2 & 5.4 & 5.6 & 5.8 & 6 & 6.2 \\
\hline 
$\omega_1$ & 2.278 & 1.996 & 1.785 & 1.621 & 1.482 & 1.387 & 1.301 & 1.229 & 1.168 & 1.116 & 1.072    \\
\hline 
$M$ & 1.286 & 1.218 & 1.165 & 1.123 & 1.089 & 1.061 & 1.038 & 1.019 & 1.003 & .989 & .976  \\
\hline
\end{tabular}
\end{table}\label{the-table}}

As $p\to\infty$, the minimal unstable frequency $\Omega$ itself converges to $\tfrac12q^2$ (see e.g. \eqref{Omegaeqn} below).  Consequently, the entire stable regime is restricted to the setting of small solitary waves, and hence Theorem~\ref{T} necessarily collapses to the small data regime as $p\to\infty$ in the focusing case.  In the defocusing case, Conjecture~\ref{conjecture} and Theorem~\ref{T} yield asymptotic stability for the entire family of solitary waves. 

In this paper, we do not give a rigorous proof of Conjecture~\ref{conjecture}.  Instead, we focus our attention on the problems of (i) establishing asymptotic stability under the assumed spectral condition and (ii) providing an estimate for the range of $\omega$ for which this condition holds.  We carry out (ii) in Section~\ref{S:spectral}.  In that section, we also state some further conjectures about the global spectral picture for the operators $\L(\omega)$ (see Sections~\ref{S:focusing-spectrum}~and~\ref{S:defocusing-spectrum}).  The rest of the paper is then focused on establishing (i).  A more complete analysis of the spectral properties of the operators $\L(\omega)$, as well the extension of Theorem~\ref{T} to handle different spectral conditions in the focusing case, will be addressed in future works.

Theorem~\ref{T} extends our previous work \cite{MMS} beyond the regime of small solitary waves.  In \cite{MMS} we worked with initial frequencies $\omega_0$ very close to $\tfrac12q^2$, in which case the solitary waves look like small multiples of the eigenfunction of the underlying linear problem. In that work, we were able to take the initial perturbation to be small merely in $H^1$ (i.e. without any weighted assumption), and we proved global space-time bounds and scattering for the corresponding perturbation $v$.  In this small-data regime, we could view the evolution equation for $v$ as a perturbation of the linear Schr\"odinger equation with delta potential.  In particular, we could include terms of the form $\mathcal{O}(vQ_\omega^p)$ together with the nonlinear terms when proving bootstrap estimates for $v$.  Consequently, we could rely purely on Strichartz-type estimates for the underlying linear equation, which ultimately allowed us to close the argument without introducing any weighted spaces.  


In the present setting, the solitary waves $Q_{\omega}$ are no longer assumed to be small, and we must include the $\mathcal{O}(vQ_{\omega}^p)$ terms in the linear part of the equation for $v$.  In particular, a central component of our argument is the proof of dispersive estimates for the matrix operators $e^{t\L(\omega)}$.  To close the boostrap argument for $v$ and the modulation parameters $(\theta,\omega)$, we then rely on the fact that the equations are at least quadratic in $v$.  For the modulation parameters, this is a consequence of the choice of orthogonality conditions (similar to \cite{MMS}).  As for the equation for $v$, one has terms that are quadratic or higher in $v$, as well as some terms involving the derivatives of the modulation parameters.  However, after a change of variables used to remove the time-dependence from the operator in the underlying linear equation, we end up with additional terms that are \emph{linear} in $v$ (see Proposition~\ref{P:COV}).  While these terms come with additional sources of smallness (via the modulation parameters), their time-decay is no better than that of $v$ itself.  As a result, we can close the bootstrap estimate only if $v$ itself obeys some integrable-in-time decay estimate.  In particular, we must prove estimates that exhibit stronger decay than the usual $|t|^{-\frac12}$ decay rate associated to Schr\"odinger-type dispersive estimates in one dimension.  Utilizing such dispersive estimates requires that the initial perturbation belongs to weighted spaces, rather than merely $H^1$.  

We are able to obtain weighted dispersive estimates in the spirit of \cite{KS,  GS2} that exhibit a decay rate of $|t|^{-\frac32}$. This type of dispersive estimate requires the non-resonance condition appearing in Definition~\ref{D:spectral}.  Indeed, such estimates fail for the free Schr\"odinger equation due to the resonance at zero energy.  In practice, we must use suitably interpolated versions of the weighted dispersive estimates in order to treat the entire mass-supercritical regime $p>4$.  Indeed, there is a direct relationship between the rate of decay and the number of weights used.  When estimating a weighted norm of the purely nonlinear term $|v|^p v$, one must balance the decay exhibited by $v$ itself with the growth exhibited by terms like $\|xv\|_{L^2}$.  As a result, the situation becomes increasingly subtle as the value of $p$ decreases. In fact, although we expect that analogues of Theorem~\ref{T} should hold in the mass-critical and mass-subcritical regime, our approach ultimately breaks down precisely at $p=4$ (see e.g. Lemma~\ref{L:controlF}). 


Theorem~\ref{T} also fits in the context of the many recent works on asymptotic stability for solitary wave solutions to dispersive equations, for which there is now a huge literature.  For a sample of related results, we refer the interested reader to \cite{BP, BS, CC1, CC2, CC3, CC4, DP, GS, GNT, KM, KZ, KS, MMS, Miz, SW, SW2, SW3}.  Concerning asymptotic stability for the specific model of the $1d$ NLS with a delta potential, our previous work \cite{MMS} addressed the case of small solitary waves, while Deift--Park \cite{DP} gave a fairly exhaustive treatment of the case of a cubic nonlinearity and even data (see also \cite{DH, GHW, HMZ, HZ, HZ2} for some related results).  In the setting of \cite{DP}, the equation becomes completely integrable and hence is amenable to special techniques such as inverse scattering and nonlinear steepest descent.  Another related work is that of \cite{CM}, which utilized a virial-type argument to prove global space-time estimates in negative exponentially weighted spaces for the perturbation around small solitary waves for \eqref{nls}.  Finally, \cite{MN} showed the failure of (unmodified) scattering to solitary waves for \eqref{nls} with long-range nonlinearities (i.e. $p\leq 2$).

In terms of the techniques employed here, the works that are most closely-related are those of Buslaev--Perelman \cite{BP}, Gang--Sigal \cite{GS}, and Krieger--Schlag \cite{KS}.  The works \cite{BP,GS} concerned asymptotic stability for $1d$ Schr\"odinger equations (with or without potential) with suitable nonlinearities, while \cite{KS} addressed the problem of stable manifolds for the $1d$ NLS with mass-supercritical power-type nonlinearities.  A common theme in all of these works, as well as the present work, is the development of suitable dispersive estimates for the (matrix) operators arising from the linearization of the nonlinear equation around the moving solitary wave, coupled with a bootstrap argument to derive decay estimates for the perturbation and the modulation parameters.  

The rest of this paper is organized as follows:
\begin{itemize}
\item In Section~\ref{S:notation}, we first collect some notation.  We formally introduce the Schr\"odinger operator $H=-\tfrac12\partial_x^2 + q\delta_0$ in Section~\ref{S:delta}, and in Section~\ref{S:solitons} we provide a more detailed introduction to the solitary wave solutions $Q_\omega$.  
\item Section~\ref{S:spectral} contains the spectral analysis for the operator $\L(\omega)$.  In particular, we provide some numerical evidence for the claims made in Conjecture~\ref{conjecture}.  We also give some further conjectures related to the global spectral picture for the operators $\L(\omega)$. 
\item In Section~\ref{S:dispersive}, we prove dispersive estimates for the evolution group $e^{t\L(\omega)}$ under the assumption that $\omega$ obeys the spectral condition Definition~\ref{D:spectral}.  The argument requires the construction of some particular solutions to an associated eigenvalue problem, which we carry out in Appendix~\ref{S:F1F4}.  
\item Finally, in Section~\ref{S:stability}, we prove the main result, Theorem~\ref{T}. 
\end{itemize}

\subsection*{Acknowledgements} S.M. was supported by JSPS KAKENHI Grant Numbers JP17K14219, JP17H02854, JP17H02851, and JP18KK0386. J.M. was supported in part by a Simons Collaboration Grant. J.S was supported by JSPS KAKENHI Grant Numbers JP17H02851, JP19H05597, and JP20H00118.  We are grateful to Rowan Killip and Maciej Zworski for helpful suggestions concerning numerical approaches for the spectral analysis.

\section{Notation and Preliminaries}\label{S:notation}

We write $A\lesssim B$ to denote the inequality $A\leq CB$ for some constant $C>0$.  We denote the dependence of the implicit constants on various parameters via subscripts, e.g. $A\lesssim_p B$ denotes $A\leq CB$ for some $C=C(p)>0$.  We will also make use of the big-oh notation $\mathcal{O}$, again indicating dependence on various parameters via subscripts.  We use the Japanese bracket notation $\langle x\rangle=\sqrt{1+x^2}$. We denote the transpose of a matrix or vector using the notation ${}^t$. 

We utilize the standard Lebesgue and Sobolev spaces.  In addition, we will make use of Lorentz spaces, defined via the quasi-norms
\[
\|f\|_{L^{a,b}} = \bigl\| \lambda\bigl| \{x:|f(x)>\lambda\}\bigr|^{\frac{1}{a}}\bigr\|_{L^b((0,\infty),\frac{d\lambda}{\lambda})}
\]
for $1\leq a<\infty$ and $1\leq b \leq\infty$.  In particular, we have $L^{a,a}=L^a$, while $L^{a,\infty}$ corresponds to weak $L^a$.  In general, one has the embedding $L^{a,b}\hookrightarrow L^{a,b'}$ for $b<b'$.  Lorentz spaces are convenient in our setting essentially due to the fact that pure powers belong to to Lorentz spaces, namely, $|x|^{-a}\in L^{\frac{d}{a},\infty}(\R^d)$ for $0<a\leq d$. 

Many standard inequalities have Lorentz-space analogues (see e.g. \cite{Hunt, ONeil}).  For example, we will make use of H\"older's inequality in the form
\[
\|fg\|_{L^{a,b}}\lesssim \|f\|_{L^{a_1,b_1}}\|g\|_{L^{a_2,b_2}}
\]
for $1\leq a,a_1,a_2<\infty$ and $1\leq q,q_1,q_2\leq\infty$ satisfying $\tfrac{1}{a}=\tfrac{1}{a_1}+\tfrac{1}{a_2}$ and $\tfrac{1}{b}=\tfrac{1}{b_1}+\tfrac{1}{b_2}$. 

We use the following notation for inner product:
\[
\langle f,g\rangle = \Re\int \bar f(x) g(x)\,dx.
\]
We will regularly identify a complex-valued function $f=f_1+if_2$ with the vector $[f_1\ f_2]^t$ taking values in $\R^2$.  Our definition of inner product corresponds to the standard inner product on $\R^2$-valued functions in the sense that
 \[
\bigg\langle \left[\begin{array}{r} f_1 \\ f_2 \end{array}\right],\left[\begin{array}{r} g_1 \\ g_2 \end{array}\right]\bigg\rangle =  \int f_1(x)g_1(x)\,dx + \int f_2(x)g_2(x)\,dx = \langle f,g\rangle.
\]

Finally, we recall the following standard Pauli matrices: 
\begin{equation}\label{pauli}
\sigma_1=\left[\begin{array}{rr} 0 & 1 \\ 1 & 0 \end{array}\right], \quad \sigma_3 = \left[\begin{array}{rr} 1 & 0 \\ 0 & - 1\end{array}\right].
\end{equation}

\subsection{The linear Schr\"odinger equation with a delta potential}\label{S:delta}

The linear Schr\"odinger equation with a delta potential is a classical model in quantum mechanics, which is discussed extensively in \cite{AGHH}.  We consider the case of an attractive potential, that is, we let
\[
H=-\tfrac12\partial_x^2 + q\delta(x) \qtq{with} q<0. 
\]
More precisely, the operator $H$ is defined by $H=-\tfrac12\partial_x^2$ on its domain, given by
\begin{equation}\label{domain}
D(H)=\{u\in H^1(\R)\cap H^2(\R\backslash\{0\}):\partial_x u(0+)-\partial_x u(0-)=2qu(0)\},
\end{equation}
where $0\pm$ denotes limit from the right/left, and $H$ extends to a self-adjoint operator on $L^2$ with purely absolutely continuous spectrum equal to $[0,\infty)$.  We frequently refer to the condition appearing in $D(H)$ as the jump condition at $x=0$.  If $q>0$ (the repulsive case), then $H$ has no eigenvalues, while if $q<0$ (the attractive case, under consideration in this paper) then $H$ has a single negative eigenvalue $-\tfrac12q^2$ with a one-dimensional eigenspace spanned by  $\phi_0(x):=|q|^{\frac12} e^{-|q|\,|x|}$.

\subsection{Solitary wave solutions}\label{S:solitons}  Equation \eqref{nls} admits a family of solitary wave solutions to \eqref{nls} of the form $u(t,x)=e^{i\omega t}Q_\omega(x)$, where $Q_\omega$ solves \eqref{nls-Q}. In the defocusing case (i.e. with $f(u)=+|u|^p u$), the frequency $\omega$ takes values in $(0,\tfrac12q^2)$ and the solitary waves take the explicit form
\[
Q_\omega(x) = \bigl[\tfrac{(p+2)\omega}{2}\bigr]^{\frac{1}{p}}\text{sinh}^{-\frac{2}{p}}\bigl[ p\sqrt{\tfrac{\omega}{2}}|x|+\text{arctanh}(\tfrac{\sqrt{2\omega}}{|q|})\bigr].
\]  
For $\omega=0$, one also obtains the solution
\begin{equation}\label{Q0def}
Q_0(x) = \bigl[\tfrac{(p+2)|q|^2}{(p|q|\,|x|+2)^2}\bigr]^{\frac{1}{p}},
\end{equation}
which belongs to $L^2$ provided $p<4$.  As we are restricting our attention to $p>4$, we will not directly consider this case in the present work. 

In the focusing case (i.e. with $f(u)=-|u|^p u$), the frequency $\omega$ takes values in $(\tfrac12q^2,\infty)$ and the solitary waves take the explicit form
\[
Q_\omega(x) = \bigl[\tfrac{(p+2)\omega}{2}\bigr]^{\frac{1}{p}}\text{cosh}^{-\frac{2}{p}}\bigl[ p\sqrt{\tfrac{\omega}{2}}|x|+\text{arctanh}(\tfrac{|q|}{\sqrt{2\omega}})\bigr].
\]
To derive these explicit formulas, one views \eqref{nls-Q} as a one-dimensional ODE on each side of $x=0$ coupled with the jump condition appearing in \eqref{domain}.  See e.g. \cite{KO, FOO}.  

The stability properties of these solitary waves were studied in \cite{KO, FOO, TX}. In the defocusing case, all solitary waves are orbitally stable \cite{KO}. In the focusing case, the authors of \cite{FOO} utilized the general theory of \cite{Shatah, ShatahStrauss} to prove the existence of $\Omega$ such that $Q_\omega$ is stable for $\omega\in(\tfrac12q^2,\Omega)$ and unstable for $\omega\in(\Omega,\infty)$.  Instability for $\omega=\Omega$ was subsequently established in \cite{TX}.  Here $\Omega$ is
defined as the solution to 
\begin{equation}\label{Omegaeqn}
\tfrac{p-4}{p}\int_{\arctanh(\frac{|q|}{\sqrt{2\omega}})} \sech^{\frac{4}{p}}(x)\,dx = \tfrac{|q|}{\sqrt{2\omega}}\bigl[1-\tfrac{q^2}{2\omega}\bigr]^{\frac{2}{p}-1}
\end{equation}
and represents the unique frequency at which $\tfrac{d}{d\omega}\|Q_\omega\|_{L^2}^2=0$. In fact, we have
\[
\tfrac{d}{d\omega}\tfrac12\|Q_{\omega}\|_{L^2}^2 = \langle Q_{\omega},\partial_\omega Q_{\omega}\rangle >0\qtq{for}\tfrac12q^2 <\omega<\Omega
\]
in the focusing case, while $\langle Q_{\omega},\partial_\omega Q_{\omega}\rangle<0$ for all $\omega\in(0,\tfrac12q^2)$ in the defocusing case.  In particular, this inner product will be nonzero for all $\omega$ under consideration in our main result (Theorem~\ref{T}).

We also note that by using the explicit formulas for $Q_{\omega}$, we have that $\langle\partial_\omega\rangle^2 Q_{\omega}\in H^{1,k}$ for any $k\geq 0$.  Moreover, the bounds are uniform over compact sets of $\omega$. 

\section{Spectral analysis}\label{S:spectral}

In this section we discuss the spectral properties of the family of operators $\L(\omega)$ arising from the linearization of \eqref{nls} around the nonlinear bounds states $Q_\omega$.  We recall
\begin{equation}\label{D:L}
\L=\L(\omega) := \left[\begin{array}{cc} 0 & H+\omega \\ -H-\omega & 0  \end{array}\right]+\sigma Q_\omega^p\left[\begin{array}{cc} 0 & 1 \\-(p+1) & 0 \end{array}\right],
\end{equation}
where $H=-\tfrac12\partial_x^2 + q\delta_0$ with $q<0$ (see Section~\ref{S:delta}) and $Q_\omega$ are the nonlinear bound states (see Section~\ref{S:solitons}).  In particular, we recall that in the defocusing case $\sigma=+1$, we have $\omega\in(0,\tfrac12q^2)$, while in the focusing case $\sigma=-1$ we have $\omega\in(\tfrac12q^2,\infty)$.  We also recall that $\langle Q_\omega,\partial_\omega Q_\omega\rangle=\tfrac12\tfrac{d}{d\omega}\|Q_\omega\|_{L^2}^2$ is non-zero except for the case $\omega=\Omega$ in the focusing case, which marks the transition between stable/unstable solitary waves (see \cite{FOO} and Section~\ref{S:solitons}). 

We view $\L(\omega)$ as acting on $\R^2$-valued functions, where we identify a complex-valued function $f=f_1+if_2$ with the vector $[f_1\ f_2]^t$.  We introduce the notation
\begin{align*}
L_- = H+\omega+\sigma Q_\omega^p, \quad L_+ = H+\omega+\sigma(p+1)Q_\omega^p,
\end{align*}
so that
\[
\L=\L(\omega)=\left[\begin{array}{cc} 0 & L_- \\ -L_+ & 0 \end{array}\right]. 
\]

We will present a blend of analytic results, numerical results, and conjectures related to the spectral properties of $\L(\omega)$.  Our main goals in this section are are (i) to give a qualitative description of the global spectral properties of $\L(\omega)$ and (ii) for fixed values of $q$ and $p$, estimate the range of frequencies $\omega$ for which the spectral condition appearing in Definition~\ref{D:spectral} holds.  By addressing item (ii), we can verify that our main result (Theorem~\ref{T}) extends beyond the regime of \emph{small} solitary waves, which is the primary ambition of this paper.  Indeed, we need only compute the masses corresponding to the range of admissible frequencies.  

The results in this section provide some evidence for Conjecture~\ref{conjecture} stated in the introduction.  In addition, we will state some further conjectures concerning the spectral properties of $\mathcal{L}(\omega)$.  We plan to undertake a more complete analysis in future work. 

We begin by collecting some preliminary results concerning the spectrum of $\L(\omega)$.  From \eqref{D:L} we see that the continuous spectrum $\sigma_c(\mathcal{L}(\omega))$ of $\L(\omega)$ equals $i[\omega,\infty)\cup i[-\omega,-\infty)$ for all $\omega$.

Next, using some known facts about the operators $L_\pm$ (see e.g. \cite{FJ, LeCoz}), we can establish the following results concerning the generalized kernel of $\L$.  In particular, we have a two-dimensional generalized kernel for almost every value of $\omega$.  The only exception is $\omega=\Omega$ in the focusing case, at which point this subspace becomes four-dimensional.  We refer the reader to \cite{HZ2} for a discussion related to taking powers of $\L$.

\begin{lemma}\label{L:genker} Let $\omega\in(0,\tfrac12q^2)$ in the defocusing case, or $\omega\in(\tfrac12q^2,\infty)\backslash\{\Omega\}$ in the focusing case, so that
\begin{equation}\label{IPNZ}
\langle Q_\omega,\partial_\omega Q_\omega\rangle \neq 0.
\end{equation}
Then
\[
\ker \L^k = \text{span}\{\partial_\omega Q_\omega, i Q_\omega\}\qtq{for all}k\geq 2.
\]

In the focusing case, if $\omega=\Omega$, then there exist $h$ and $g$ such that 
\[
\ker\L^k = \text{span}\{\partial_\omega Q_\omega,iQ_\omega,h,ig\}\qtq{for}k\geq 4.
\]

\end{lemma}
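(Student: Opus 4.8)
The plan is to identify the generalized kernel of $\L$ with the generalized kernels of the scalar operators $L_\pm$ by exploiting the block anti-diagonal structure. Writing $v=[v_1\ v_2]^t$, we have $\L v = [L_- v_2\ {-}L_+ v_1]^t$, so $\L^2 v = [-L_- L_+ v_1\ {-}L_+ L_- v_2]^t$. Thus $\ker\L^2$ is governed by $\ker(L_-L_+)$ in the first slot and $\ker(L_+L_-)$ in the second. The first step is to recall the standard spectral facts about $L_\pm$ for the delta-potential NLS (as in \cite{FJ, LeCoz}): differentiating \eqref{nls-Q} in $x$ and in $\omega$ shows $L_+ \partial_\omega Q_\omega = -\sigma\,\partial_\omega(Q_\omega^{p+1})/(p+1)\cdot\ldots$ — more cleanly, $\partial_\omega$ of \eqref{nls-Q} gives $L_+\partial_\omega Q_\omega = -Q_\omega$, while $L_- Q_\omega = 0$. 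One also needs that $L_-\geq 0$ with $\ker L_- = \mathrm{span}\{Q_\omega\}$ (since $Q_\omega>0$ is the ground state of $L_-$), and that $L_+$ has a one-dimensional kernel that is trivial here because $Q_\omega'$ has a jump at $0$ and is not in $D(H)$, so in fact $\ker L_+ = \{0\}$ under the delta potential; the Morse index of $L_+$ is $1$.

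With these facts in hand, the second step is the direct computation of $\ker\L^k$. From $L_-Q_\omega=0$ and $L_+\partial_\omega Q_\omega=-Q_\omega$ we get $\L(iQ_\omega)=\L[0\ Q_\omega]^t=[L_-Q_\omega\ 0]^t=0$ and $\L(\partial_\omega Q_\omega)=\L[\partial_\omega Q_\omega\ 0]^t=[0\ {-}L_+\partial_\omega Q_\omega]^t=[0\ Q_\omega]^t=iQ_\omega$, so $\L^2\partial_\omega Q_\omega=0$; hence $\mathrm{span}\{\partial_\omega Q_\omega, iQ_\omega\}\subset\ker\L^2\subset\ker\L^k$ for all $k\geq2$. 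For the reverse inclusion one argues that $\ker L_+ L_- = \ker L_- = \mathrm{span}\{Q_\omega\}$ (using $L_+$ injective) and that $\ker L_- L_+$ is spanned by $\partial_\omega Q_\omega$: indeed $L_+ w\in\ker L_-=\mathrm{span}\{Q_\omega\}$ means $L_+ w = cQ_\omega = -cL_+\partial_\omega Q_\omega$, so by injectivity of $L_+$, $w=-c\partial_\omega Q_\omega$. This shows $\ker\L^2 = \mathrm{span}\{\partial_\omega Q_\omega, iQ_\omega\}$, and then one checks that passing to higher powers adds nothing, because any $v\in\ker\L^k$ has $\L^{k-2}v\in\ker\L^2$, and then a Fredholm-alternative / solvability argument using the nondegeneracy $\langle Q_\omega,\partial_\omega Q_\omega\rangle\neq0$ (this is exactly where \eqref{IPNZ} enters) prevents any Jordan block of length exceeding $2$: e.g. $\L w = iQ_\omega$ is solvable (take $w=\partial_\omega Q_\omega$) but $\L w' = \partial_\omega Q_\omega$ would require solving $L_+ w_1' = -\partial_\omega Q_\omega$ hmm, rather $L_- w_2' = \partial_\omega Q_\omega$ and $-L_+ w_1' = 0$; the former is solvable iff $\langle Q_\omega,\partial_\omega Q_\omega\rangle$, being the $L^2$-pairing of $\partial_\omega Q_\omega$ with $\ker L_- = \mathrm{span}\{Q_\omega\}$, vanishes — which it does not.

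For the exceptional case $\omega=\Omega$ in the focusing regime, the condition \eqref{IPNZ} fails: $\langle Q_\Omega,\partial_\omega Q_\omega|_{\omega=\Omega}\rangle = 0$. Then the solvability obstruction above disappears, and one can solve $L_- g = \partial_\omega Q_\omega$ for some $g$ (orthogonal-complement solvability in $\mathrm{ran}\,L_- = (\ker L_-)^\perp$), giving $\L(ig)=[L_-g\ 0]^t=\partial_\omega Q_\omega$, hence a Jordan chain $ig\mapsto\partial_\omega Q_\omega\mapsto iQ_\omega\mapsto 0$ of length $3$; one then seeks $h$ with $\L h = ig$, i.e. $L_+ h_1 = -g$, solvable since $\ker L_+=\{0\}$ gives $L_+$ with dense range (and in fact surjective on the relevant space), producing a chain of length $4$. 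This yields $\ker\L^k = \mathrm{span}\{\partial_\omega Q_\omega, iQ_\omega, h, ig\}$ for $k\geq4$, and one checks stabilization at $k=4$ by the same power-reduction argument, now noting $\langle Q_\Omega, g\rangle$-type pairings; the fact that $\tfrac{d}{d\omega}$ has a simple (non-degenerate) zero at $\Omega$ — equivalently $\tfrac{d^2}{d\omega^2}\|Q_\omega\|_{L^2}^2|_{\omega=\Omega}\neq0$ — is what caps the chain length at $4$ rather than letting it grow further. The main obstacle is this last nondegeneracy bookkeeping at $\omega=\Omega$: one must carefully track the iterated solvability conditions to confirm the chain has length exactly $4$ and that there is no additional independent generalized eigenvector, which requires knowing the order of vanishing of the mass functional at $\Omega$; away from $\Omega$ the argument is the routine block-matrix computation sketched above.
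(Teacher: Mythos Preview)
Your approach is essentially the same as the paper's: both exploit the block anti-diagonal structure to reduce to the scalar facts $\ker L_-=\mathrm{span}\{Q_\omega\}$, $\ker L_+=\{0\}$, $L_+\partial_\omega Q_\omega=-Q_\omega$, and the Fredholm obstruction $\partial_\omega Q_\omega\notin(\ker L_-)^\perp$ coming from \eqref{IPNZ}, then compute $\ker\L^2$ directly and handle higher powers by induction/solvability. The paper packages the induction slightly more cleanly (its fact (iv) is exactly your solvability obstruction), while you are actually more explicit than the paper at $\omega=\Omega$: the paper simply defines $g$ via $L_-g=\partial_\omega Q_\omega$ and $h=L_+^{-1}g$ and says ``carry out an induction argument as above,'' whereas you correctly note that capping the Jordan chain at length $4$ requires an extra nondegeneracy input, namely $\tfrac{d^2}{d\omega^2}\|Q_\omega\|_{L^2}^2\big|_{\omega=\Omega}\neq 0$ (equivalently $\langle Q_\Omega,h\rangle\neq 0$).
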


\begin{proof} Throughout the proof, we use $c,c'$ to denote constants that may change from line to line.

We first suppose $\omega\neq\Omega$. We will rely on the following facts about the operators $L_\pm$ (see e.g. \cite{FJ, LeCoz}):
\begin{itemize}
\item[(i)] $L_-f = 0 \implies f =c Q_\omega$.
\item[(ii)] $L_+f = 0 \implies f = 0$.
\item[(iii)] $L_+f = cQ_\omega \implies f=c'\partial_\omega Q_\omega$.
\item[(iv)] $L_-f = c\partial_\omega Q_\omega \implies L_-f = 0 \implies f=c'Q_\omega$.
\end{itemize}
Indeed, (i) is just the fact that the kernel of $L_-$ is one-dimensional and spanned by $Q_\omega$, while (ii) is the fact that $L_+$ has no kernel.  Similarly, (iii) follows from $L_+\partial_\omega Q_\omega = -Q_\omega$ and (ii). Finally, the first implication in (iv) follows from the fact that $\partial_\omega Q_\omega \not\in[\ker(L_-)]^{\perp}$, which follows from \eqref{IPNZ} and (i). 

We proceed by induction.  First, for $k=2$, we write
\begin{equation}\label{L-squared}
\L^2=\left[\begin{array}{cc} - L_-L_+ & 0 \\ 0 & -L_+ L_-\end{array}\right].
\end{equation}
Thus if $\L^2 f = 0$, we have $L_-L_+ f_1 = 0$, yielding $L_+ f_1 = cQ_\omega$ and so $f_1 = c'\partial_\omega Q_\omega$.  Similarly, $L_+L_-f_2 = 0$, yielding $L_- f_2=0$ and hence $f_2=cQ_\omega$. 

Next, we suppose the result holds for some $k$ and suppose that
\[
\L^{k+1} f = \L^k\left[\begin{array}{rr} L_- f_2 \\ -L_+ f_1\end{array}\right] = 0.
\]
Then, by the inductive hypothesis, we must have $L_- f_2 = c\partial_\omega Q_\omega$, yielding $f_2 = c'Q_\omega$.  Similarly, $L_+ f_1 = cQ_\omega$, yielding $f_1 = c'\partial_\omega Q_\omega$.  This completes the proof in the case $\omega\neq\Omega$.

If $\omega=\Omega$, we have $\langle Q_\omega,\partial_\omega Q_\omega\rangle = 0$, and thus we may find $g$ such that $L_-g = \partial_\omega Q_\omega$.  We then define $h=L_+^{-1}g$, so that $L_+h = g$.  To prove the result we can then carry out an induction argument as above.\end{proof}

The eigenvalues of $\L(\omega)$ must be either real or purely imaginary.  Formally, this follows from the fact that $\L^2$ is self-adjoint (see \eqref{L-squared}), so that any eigenvalue $\lambda$ of $\L$ must obey $\lambda^2\in\R$.  To be more precise, we will use an argument from \cite{CGNT}. (See also \cite{BP, KS}.)

\begin{lemma} Eigenvalues of $\L$ must belong to $\R\cup i\R$.  Furthermore, nonzero eigenvalues must appear in $\pm$ or conjugate pairs. 
\end{lemma}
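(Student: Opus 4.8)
The plan is to exploit the structure $\L = \sigma_1 \begin{bmatrix} L_+ & 0 \\ 0 & L_- \end{bmatrix}$ — that is, $\L = \sigma_1 M$ with $M := \mathrm{diag}(L_+, L_-)$ self-adjoint — together with the fact that $\sigma_1$ is self-adjoint and unitary. Suppose $\L \psi = \lambda \psi$ for some nonzero $\psi \in L^2(\R;\C^2)$ (more precisely $\psi$ in the appropriate domain). First I would compute $\lambda \langle M\psi, \psi\rangle = \langle M \L\psi,\psi\rangle = \langle \sigma_1 M \psi, M\psi\rangle$, and observe that $\langle \sigma_1 \xi, \xi\rangle \in \R$ for any $\xi$ (since $\sigma_1$ is self-adjoint) and likewise $\langle M\psi,\psi\rangle \in \R$. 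Hence $\lambda \langle M\psi,\psi\rangle \in \R$. If $\langle M\psi,\psi\rangle \neq 0$ this already forces $\lambda \in \R$. To handle the remaining case I would instead first pass to $\L^2$: from \eqref{L-squared}, $\L^2 = -\,\mathrm{diag}(L_-L_+, L_+L_-)$, and $L_- L_+$ is similar to the self-adjoint operator $L_-^{1/2} L_+ L_-^{1/2}$ on $[\ker L_-]^\perp$ (using that $L_- \geq 0$ with at most a one-dimensional kernel spanned by $Q_\omega$, a known fact about $L_\pm$ cited earlier). Thus $\lambda^2$ is real, which immediately gives $\lambda \in \R \cup i\R$. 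I expect to follow \cite{CGNT} and present the cleaner of these two arguments; the self-adjointness of $\L^2$ route is the most robust and is exactly what the paragraph preceding the lemma signals.

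For the pairing statement, the key is the two symmetries of $\L$. First, $\L$ commutes with complex conjugation in a twisted sense: writing $\bar{\cdot}$ for the conjugation $f \mapsto \bar f$ on complex-valued functions (equivalently $\sigma_3$ acting on the $\R^2$-valued realization, since $f = f_1 + i f_2 \mapsto f_1 - i f_2$), one checks directly from \eqref{D:L} that $\sigma_3 \L \sigma_3 = -\L$ — indeed conjugating the off-diagonal blocks $H+\omega$ and $\sigma Q_\omega^p$ (real operators) by $\sigma_3$ flips their sign. Hence if $\L\psi = \lambda\psi$ then $\L(\sigma_3 \psi) = -\sigma_3 \L \psi = -\lambda(\sigma_3\psi)$, so $-\lambda$ is an eigenvalue whenever $\lambda$ is (with $\sigma_3\psi \neq 0$). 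Second, $\L$ has real coefficients as an operator on $\R^2$-valued functions, so taking the genuine complex conjugate of the eigenvalue equation (now viewing $\L$ as acting on $\C^2$-valued functions to make sense of complex $\lambda$) shows $\bar\lambda$ is an eigenvalue whenever $\lambda$ is. Combining: nonzero eigenvalues come in the quadruple $\{\lambda, -\lambda, \bar\lambda, -\bar\lambda\}$, which for $\lambda \in \R \cup i\R$ collapses to a $\pm$ pair (real case) or a conjugate pair (purely imaginary case), as claimed. I would state precisely which of $\sigma_3\psi$, $\bar\psi$ is used and note they are nonzero since $\sigma_3, \bar{\cdot}$ are injective.

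The main obstacle — really the only subtle point — is the domain/self-adjointness bookkeeping in the delta-potential setting: $L_\pm$ are defined via the jump condition at $x=0$ from \eqref{domain}, $\L$ is not self-adjoint, and one must be careful that the formal manipulations ($\langle M\L\psi,\psi\rangle = \langle\sigma_1 M\psi, M\psi\rangle$, and the similarity transform putting $L_-L_+$ into self-adjoint form) are justified on the correct domains, in particular that $L_-^{1/2}$ preserves the relevant jump condition and that $\psi$ lies in the domain of $\L^2$. Since the $L_\pm$ are standard Schrödinger operators with a point interaction, these are routine but should be acknowledged; I would handle them by working on the orthogonal complement of $\ker L_-$ where $L_-$ is strictly positive and its square root is well-behaved, and by citing the known facts (i)–(ii) about $L_\pm$ already invoked in the proof of Lemma~\ref{L:genker}. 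Everything else is a short computation with Pauli matrices.
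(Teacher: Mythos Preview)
Your factorization $\L = \sigma_1 M$ is incorrect: one has $\L = JM$ with $J = \bigl[\begin{smallmatrix} 0 & 1 \\ -1 & 0\end{smallmatrix}\bigr]$ skew-adjoint, not $\sigma_1$. Consequently $\langle J\xi,\xi\rangle$ is purely imaginary rather than real, and your first computation would only yield $\lambda \in i\R$ when $\langle M\psi,\psi\rangle\neq 0$ --- not the full conclusion. This is a minor slip, since you indicate you would rely on the $\L^2$ route anyway.

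Your $\L^2$ approach via the similarity $L_-L_+ \sim L_-^{1/2}L_+L_-^{1/2}$ does work (once one checks that the eigenvector stays away from $\ker L_-$), but the paper's argument, taken from \cite{CGNT}, is more direct and sidesteps the square-root construction entirely. From $L_+u=-\lambda v$ and $L_-v=\lambda u$ one multiplies the associated quadratic forms to get
\[
\langle u,L_+u\rangle\,\langle v,L_-v\rangle = -\lambda^2\Bigl|\int\bar u\,v\,dx\Bigr|^2.
\]
Both factors on the left are real; after observing that $u,v\neq 0$ and that $v$ cannot be a multiple of $Q_\omega$ (since $L_-v=\lambda u\neq 0$), one has $\langle v,L_-v\rangle>0$, which forces $\int\bar u v\neq 0$ and hence $\lambda^2\in\R$. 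This two-line computation uses exactly the same spectral input ($L_-\geq 0$ with simple kernel) but avoids precisely the domain bookkeeping for $L_-^{1/2}$ that you flagged as the main obstacle.

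Your symmetry argument for the $\pm$/conjugate pairing via $\sigma_3\L\sigma_3=-\L$ and the reality of the coefficients is correct and is what the paper invokes by citing \cite{BP}.
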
 

\begin{proof} Suppose $\L[u\ v]^t = \lambda[u\ v]^t$ for some $\lambda\neq 0$ and nontrivial $[u\ v]^t$.  Then
\[
L_+ u =-\lambda v \qtq{and}L_-v = \lambda u,
\]
which implies
\[
\langle u,L_+ u\rangle = -\lambda\int \bar u v\,dx \qtq{and} \langle v,L_-v\rangle = \lambda\int \bar v u\,dx = \lambda\overline{\int \bar u v\,dx},
\]
so that
\[
\langle u,L_+u\rangle\langle v,L_-v\rangle = -\lambda^2\biggl|\int \bar u v\,dx\biggr|^2. 
\]

Now, if $v=0$, then since $L_+$ has no kernel we also find $u=0$, yielding a contradiction.  Similarly, since $\lambda\neq 0$, if $u=0$ then $v = \lambda^{-1}L_+ u=0$ and we obtain a contradiction. In particular, $L_- v = \lambda u\neq 0$ and hence $v$ is not a multiple of $Q_\omega$.  This implies $\langle v,L_-v\rangle>0$, which (using the identities above and recalling $\lambda\neq 0$) implies $\int \bar u v\,dx \neq 0$.  In particular, we deduce
\[
\lambda^2 = -\frac{\langle u,L_+ u\rangle\langle v,L_-v\rangle}{\bigl|\int \bar u v\,dx\bigr|^2} \in \R,
\]
yielding $\lambda\in\R\cup i \R$.  

The fact that eigenvalues occur in pairs follows from symmetry properties of $\L$ (see e.g. \cite{BP}). \end{proof}

\subsection{The focusing case.}\label{S:focusing-spectrum} In the focusing case, we have $\omega\in (\tfrac12q^2,\infty)$, with orbital stability for $\omega\in (\tfrac12q^2,\Omega)$ and instability for $\omega\in[\Omega,\infty)$.  We can try to piece together the global picture of the spectrum by first considering three regimes, which we can then try to connect in a consistent way.  In particular, we consider: (i) $\omega\to \tfrac12 q^2$, (ii) $\omega$ near $\Omega$, and (iii) $\omega\to\infty$. 
 
(i) First, from the explicit form of the solitary wave solutions (see Section~\ref{S:solitons}), the potentials in $L_\pm$ have the  form
\begin{equation}\label{the-potentials}
 c_\pm\cdot\omega \cosh^{-2}\bigl[p\sqrt{\tfrac{\omega}{2}}|x|+\arctanh(\tfrac{|q|}{\sqrt{2\omega}})\bigr],
\end{equation}
where $c_-=-\tfrac{p+2}{2}$ and $c_+=-\tfrac{(p+1)(p+2)}{2}$. In particular, both potentials tend to zero as $\omega\to \tfrac12 q^2$, and so formally the operator $\L(\omega)$ converges to 
\[
\L(\tfrac12q^2):=\left[\begin{array}{cc} 0 & H+\tfrac12 q^2 \\ -[H+\tfrac 12 q^2] & 0 \end{array}\right].
\]
Now, the operator $H=-\tfrac12\partial_x^2+q\delta_0$ has a single negative eigenvalue $\tfrac12 q^2$ with a one-dimensional eigenspace spanned by $\phi_0(x) = |q|^{\frac12}e^{-|q|\,|x|}$; otherwise, $H$ has continuous spectrum in $[0,\infty)$.  Thus $\L(\tfrac12 q^2)$ has the two-dimensional kernel spanned by $\{\phi_0,i\phi_0\}$, and we can derive that any other eigenvalues must come in purely imaginary pairs.  However, the existence of a purely imaginary eigenvalue would imply the existence of a negative eigenvalue for the corresponding operator $\H(\tfrac12q^2)$ (see \eqref{defH}), which in this case is guaranteed to be nonnegative.  Therefore, we can see that our desired spectral condition holds formally as we send $\omega\to\tfrac12 q^2$, and so we expect the condition to hold at least in a neighborhood of $\tfrac12q^2$. (See also Section~\ref{S:embedded} below.)

(ii)  Analysis of the type carried out in Comech--Pelinovsky \cite[Section~3]{Comech-Pelinovsky}, together with the stability/instability results of \cite{KO, FOO}, can provide a picture of the spectrum in a small neighborhood of $\omega=\Omega$.  In particular, the `extra' two dimensions of the generalized kernel at $\omega=\Omega$ (see Lemma~\ref{L:genker}) will split off into a pair of eigenvalues, which will be real in the unstable case $(\omega>\Omega)$ and purely imaginary in the stable case $(\omega<\Omega)$. 

(iii) To study the regime $\omega\to\infty$, we introduce the scaling
\[
S_\omega f (x) := f(\sqrt{\omega}x).
\]

A direct computation using \eqref{the-potentials} shows that
\begin{align*}
L_\pm(\omega) & = \omega S_\omega[-\tfrac12\partial_x^2 + 1 + c_\pm \cosh^{-2}\bigl(\tfrac{p|x|}{\sqrt{2}}+\arctanh(\tfrac{|q|}{\sqrt{2\omega}})\bigr)+\tfrac{q}{\sqrt{\omega}}\delta_0]S_\omega^{-1} \\ 
& = \omega S_\omega[L_\pm^0 + E_{q,\omega,\pm}]S_\omega^{-1},
\end{align*}
where $L_\pm^0$ denote the operators arising in the linearization around the ground state for the \emph{free} NLS and $E_{q,\omega,\pm}$ will be `perturbative' terms.  More precisely, we recall that
\[
R(x) = \bigl(\tfrac{p+2}{2}\bigr)^{\frac{1}{p}} \cosh^{-\frac{2}{p}}(\tfrac{px}{\sqrt{2}}) 
\]
is the ground state soliton for the free NLS
\[
i\partial_t u + \tfrac12\partial_x^2 u = -|u|^p u,
\]
and that we have the corresponding linearized operators 
\[
L_+^0 = -\tfrac12\partial_x^2 + 1 - (p+1) R^p,\quad L_-^0 = - \tfrac12 \partial_x^2 + 1 - R^p. 
\]
The potentials $E_{q,\omega,\pm}$ introduced above are therefore given explicitly by 
\[
E_{q,\omega,\pm} = c_\pm[\cosh^{-2}\bigl(\tfrac{p|x|}{\sqrt{2}}+\arctanh(\tfrac{|q|}{\sqrt{2\omega}})\bigr) - \cosh^{-2}\bigl(\tfrac{p|x|}{\sqrt{2}})]+\tfrac{q}{\sqrt{\omega}}\delta_0. 
\]
The operator then takes the form
\begin{equation}\label{Lomegatoinfinity}
\L(\omega) = \omega S_\omega\bigl[ \L^0 + \mathcal{E}_{q,\omega}\bigr] S_\omega^{-1},
\end{equation}
where $\L^0$ is built out of $L_\pm^0$ in the usual way and $\mathcal{E}_{q,\omega}$ tends to zero as $\omega\to\infty$.  In particular, we expect that in the limit as $\omega\to\infty$, the spectrum of $\L(\omega)$ will be related to that of $\L^0$.  The spectral properties of this latter operator have been studied in great detail in previous works (see e.g. \cite[Proposition~9.2]{KS} or \cite{CGNT}). In the mass-supercritical setting, the discrete spectrum of $\L^0$ is six-dimensional, with a four-dimensional generalized kernel and a pair of real eigenvalues. 

By connecting the three regimes described above, we can arrive at a conjecture for a qualitative description of the spectrum of $\L(\omega)$ for $\omega\in(\tfrac12q^2,\infty)$.

\begin{conjecture}[Spectrum of $\L(\omega)$ in the focusing, mass-supercritical case]\label{spectral-conjecture}\text{ }

There exists $\omega_1\in(\tfrac12q^2,\Omega)$ such that $\L(\omega_1)$ has a pair of resonances at $\pm i\omega_1$. For $\omega\in(\tfrac12q^2,\omega_1)$ the spectral condition of Definition~\ref{D:spectral} holds.  For $\omega>\omega_1$, there exists a first eigenvalue pair $\pm\lambda_1(\omega)$.  This pair corresponds to the pair of eigenvalues described in regime (ii) above.  In particular, we have 
\[
\pm \lambda_1(\omega) \in \begin{cases} i\R\backslash \sigma_c(\mathcal{L}(\omega))&\qtq{for}\omega_1<\omega<\Omega \\
\{0\} & \qtq{for}\omega=\Omega \\
 \R & \qtq{for}\omega>\Omega.\end{cases}
\]

There exists $\omega_2>\omega_1$ such that $\L(\omega_2)$ has a second pair of resonances at $\pm i\omega_2$.  For $\omega>\omega_2$, there is a second eigenvalue pair $\pm\lambda_2(\omega)$. This pair corresponds to two of the dimensions of the generalized kernel of $\L^0$ in regime (iii) above (see \eqref{Lomegatoinfinity}).  In particular, we have have 
\[
\pm\lambda_2(\omega) \in i\R\backslash\sigma_c(\L(\omega))\qtq{for}\omega>\omega_2,
\]
with $\pm\lambda_2(\omega)\to 0$ as $\omega\to\infty$.

For all $\omega\in(\tfrac12q^2,\infty)$, there are no eigenvalues embedded in the continuous spectrum of $\L(\omega)$. 
\end{conjecture}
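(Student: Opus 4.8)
\emph{Overall strategy.} The plan is to pin down the spectral picture at the two ``ends'' of the interval $(\tfrac12q^2,\infty)$ and near the transition frequency $\Omega$, and then to connect these regimes by an analytic continuation argument in $\omega$, locating the threshold frequencies $\omega_1<\omega_2$ by the appearance of edge resonances. Since the purely imaginary eigenvalues of $\L(\omega)$ are exactly the numbers $\pm i\mu$ with $\mu^2$ an eigenvalue of $L_+L_-$ (equivalently of $L_-L_+$), see \eqref{L-squared}, the problem reduces to tracking the eigenvalues of the scalar self-adjoint operators $L_\pm$ and of $L_+L_-$ in the gap $[0,\omega^2)$, together with the possible embedded eigenvalues of $L_+L_-$ in $[\omega^2,\infty)$. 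Near $\omega=\tfrac12q^2$ the potentials in \eqref{the-potentials} vanish, so $\L(\omega)\to\L(\tfrac12q^2)$ and the spectral condition holds in the limit (as sketched in regime (i)); analytic perturbation theory then propagates it to a right neighborhood of $\tfrac12q^2$. Near $\omega=\infty$, the rescaling \eqref{Lomegatoinfinity} conjugates $\L(\omega)$ to $\omega(\L^0+\mathcal E_{q,\omega})$ with $\mathcal E_{q,\omega}\to0$, so by analytic perturbation theory the discrete spectrum of $\omega^{-1}\L(\omega)$ converges to that of $\L^0$: the four-dimensional generalized kernel of $\L^0$ splits into the (always two-dimensional, by Lemma~\ref{L:genker}) generalized kernel of $\L(\omega)$ plus a small purely imaginary pair $\pm\lambda_2(\omega)$ in the gap, while the real eigenvalue pair of $\L^0$ persists as a real pair $\pm\lambda_1(\omega)$ for $\omega$ large.

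\emph{Tracking the eigenvalue pairs.} Next I would follow the two eigenvalue pairs as $\omega$ decreases. Near $\omega=\Omega$, a modulation/Lyapunov--Schmidt analysis in the spirit of Comech--Pelinovsky \cite{Comech-Pelinovsky} (using that $\tfrac{d}{d\omega}\|Q_\omega\|_{L^2}^2$ changes sign at $\Omega$) shows that the two ``extra'' dimensions of $\ker\L^4$ at $\omega=\Omega$ bifurcate into the pair $\pm\lambda_1(\omega)$, which is real for $\omega>\Omega$ and purely imaginary for $\omega<\Omega$; this is the same pair that is real for $\omega$ large. Analyticity of $\omega\mapsto\L(\omega)$ and of its resolvent off the spectrum then lets one continue $\pm\lambda_1(\omega)$ monotonically down the imaginary axis as $\omega$ decreases below $\Omega$, and the key point is that it cannot disappear except by reaching the edge $\pm i\omega$ of $\sigma_c(\L(\omega))$; the first $\omega$ at which this happens is $\omega_1$. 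At $\omega=\omega_1$ one checks, using the explicit (hypergeometric) Jost solutions of the ODEs associated with $L_\pm$ on each half-line matched through the jump condition at $x=0$, that the limiting solution is bounded but not $L^2$, i.e.\ a genuine edge resonance; for $\omega<\omega_1$ the pair has been absorbed into $\sigma_c$ and the spectral condition of Definition~\ref{D:spectral} holds. An identical argument run downward from $\omega=\infty$ produces the second threshold $\omega_2$ at which $\pm\lambda_2(\omega)$ hits the edge. The actual values of $\omega_1,\omega_2$ (and the verification that no other branch reaches an edge in between) would be obtained by a shooting method in $\omega$ applied to the Jost/Wronskian determinant, as in the numerical part of Section~\ref{S:spectral}.

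\emph{Excluding embedded eigenvalues.} For every $\omega$, an embedded eigenvalue of $\L(\omega)$ in $i(\omega,\infty)$ corresponds to an $L^2$ eigenfunction of $L_+L_-$ with eigenvalue in $(\omega^2,\infty)$. On each half-line the component equations are solvable explicitly in terms of associated Legendre / hypergeometric functions (the $\cosh^{-2}$ potentials being exactly solvable), so the matching through the jump condition reduces the existence of such an eigenfunction to the vanishing of an explicit connection coefficient on the continuous spectrum. One would show this coefficient is nonzero there, either by a Wronskian-monotonicity (oscillation) argument exploiting the sign-definiteness and exponential decay of the potentials in $L_\pm$, or by a limiting-absorption/Mourre-type estimate for the matrix operator $\L(\omega)$ that rules out eigenvalues embedded in $\sigma_c(\L(\omega))$.

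\emph{Main obstacle.} The genuinely hard part is the \emph{global} continuation: between the three controlled regimes there is no a priori monotonicity of the eigenvalue branches, and one must rule out both collisions of purely imaginary eigenvalues producing complex quartets (Hamiltonian--Hopf bifurcations) and the emergence of new eigenvalue pairs out of the edge of $\sigma_c(\L(\omega))$ at frequencies other than $\omega_1,\omega_2$. A fully rigorous proof would therefore require constructing a global Evans (Jost) function --- an analytic function of $(\omega,\lambda)$ whose zero set in the relevant region is exactly the point spectrum of $\L(\omega)$ --- and controlling its zeros across the entire range $\omega\in(\tfrac12q^2,\infty)$, presumably with computer-assisted estimates. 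This is precisely the step replaced here by numerical evidence, which is why the statement is formulated as a conjecture.
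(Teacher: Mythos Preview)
The statement is a \emph{conjecture}, and the paper does not offer a proof. What the paper provides is exactly what you describe in your ``Overall strategy'' and ``Tracking the eigenvalue pairs'' paragraphs: heuristic analysis of the three regimes $\omega\to\tfrac12q^2$, $\omega$ near $\Omega$ (citing Comech--Pelinovsky), and $\omega\to\infty$ (via the rescaling \eqref{Lomegatoinfinity} to $\L^0$), connected by an informal continuity argument, together with numerical evidence obtained by a shooting method in $\omega$ to locate the edge resonances $\omega_1,\omega_2$. Your final paragraph identifies the obstacle precisely as the paper does: the global continuation between the three regimes is not controlled rigorously, and this is why the statement is left as a conjecture.

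Where your outline goes beyond the paper is in proposing concrete analytic tools (explicit hypergeometric/Legendre Jost solutions, Mourre-type estimates, a global Evans function with computer-assisted bounds). The paper does not pursue any of these; for embedded eigenvalues it only rules out large ones via the asymptotics \eqref{determinant-asymptotic} of $\det D(\xi)$ and eigenvalues near $\omega=\tfrac12q^2$ by continuity, and otherwise takes their absence as part of the spectral assumption. So your proposal is not a different route so much as a plausible sketch of how one might eventually upgrade the paper's heuristics to a proof; but since neither you nor the paper actually carries out the global Evans-function analysis or the exclusion of Hamiltonian--Hopf bifurcations, both stop at the same point.
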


The claims made in Conjecture~\ref{spectral-conjecture} are depicted in the following figures, which give qualitative representations of the spectrum of $\L(\omega)$ as $\omega$ varies in $(\tfrac12q^2,\infty)$.  We have presented the scenario in which the first pair of eigenvalues crosses the origin before the emergence of the second eigenvalue pair (which we found to be the case for $p>p_0\approx 4.54$).  

\centerline{
\includegraphics[width=.5\linewidth]{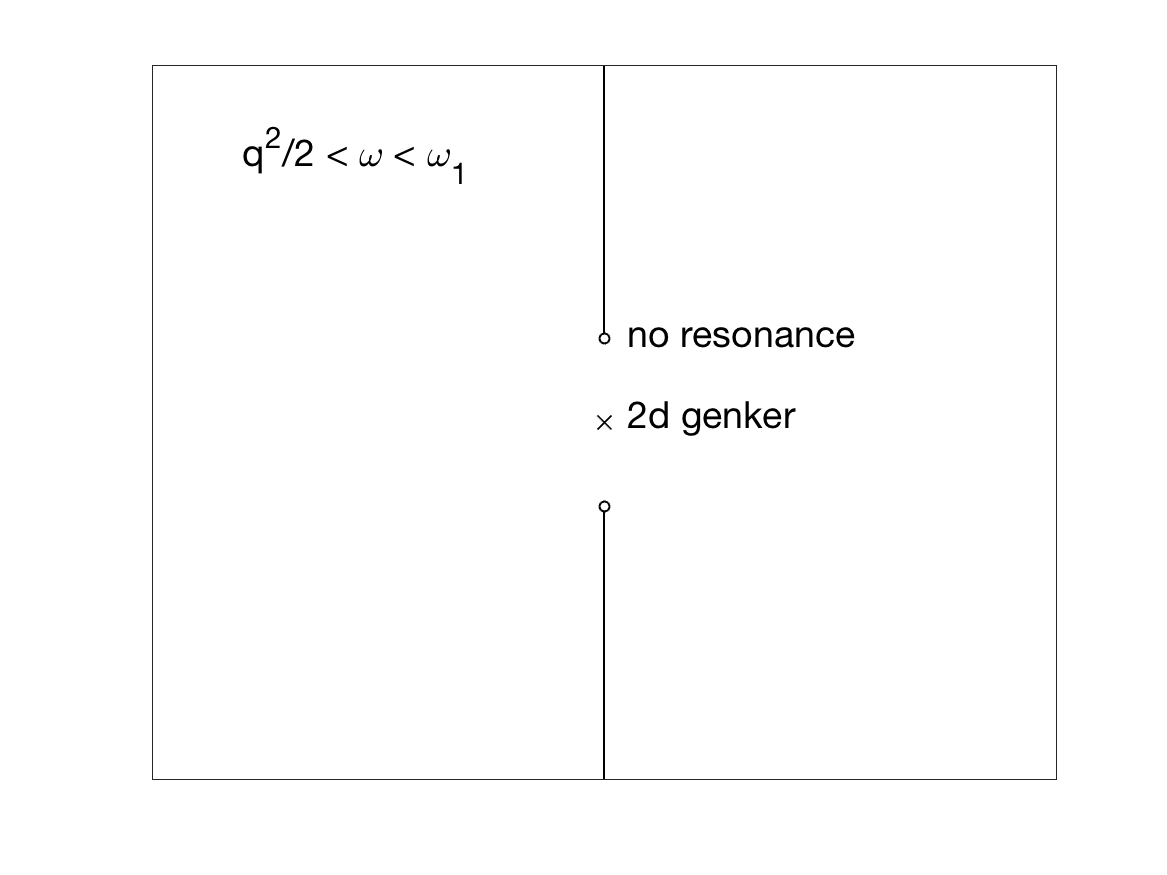}\includegraphics[width=.5\linewidth]{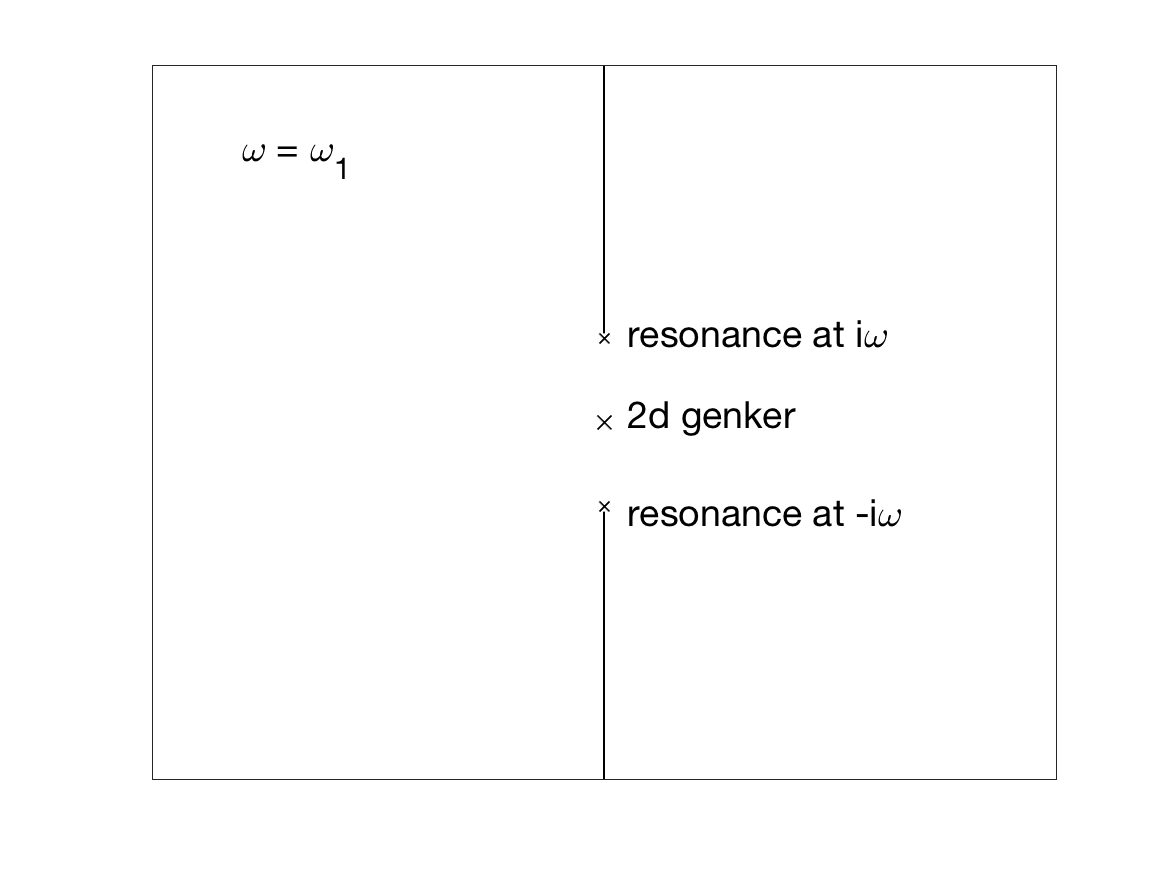}}

\centerline{
\includegraphics[width=.5\linewidth]{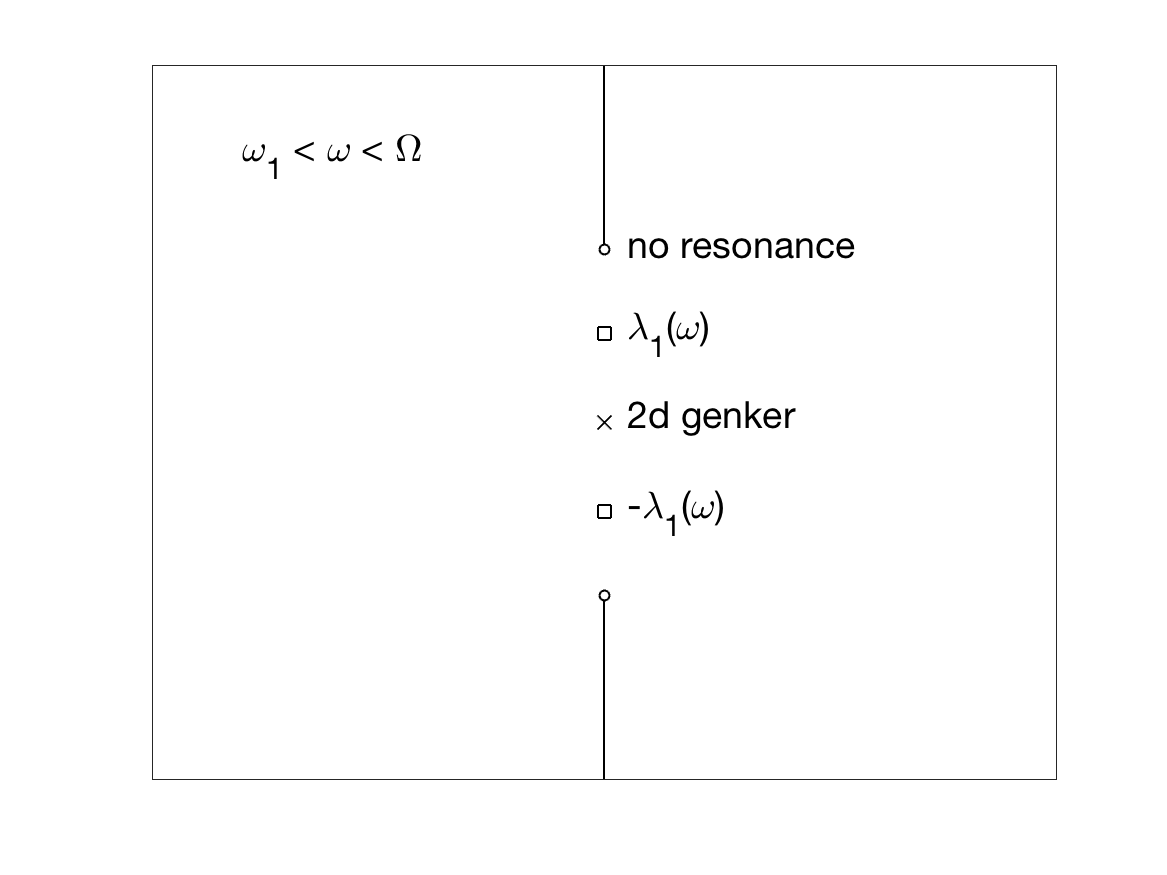}\includegraphics[width=.5\linewidth]{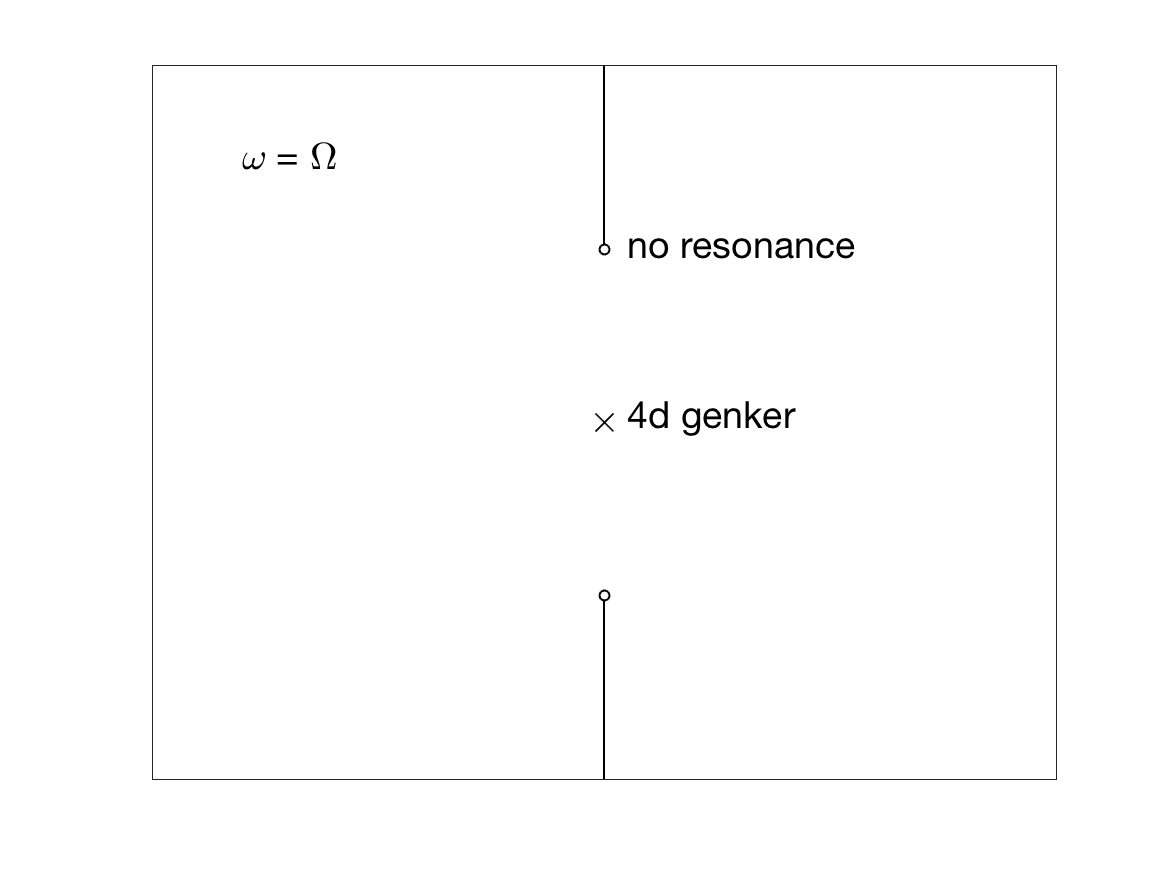}}

\centerline{
\includegraphics[width=.5\linewidth]{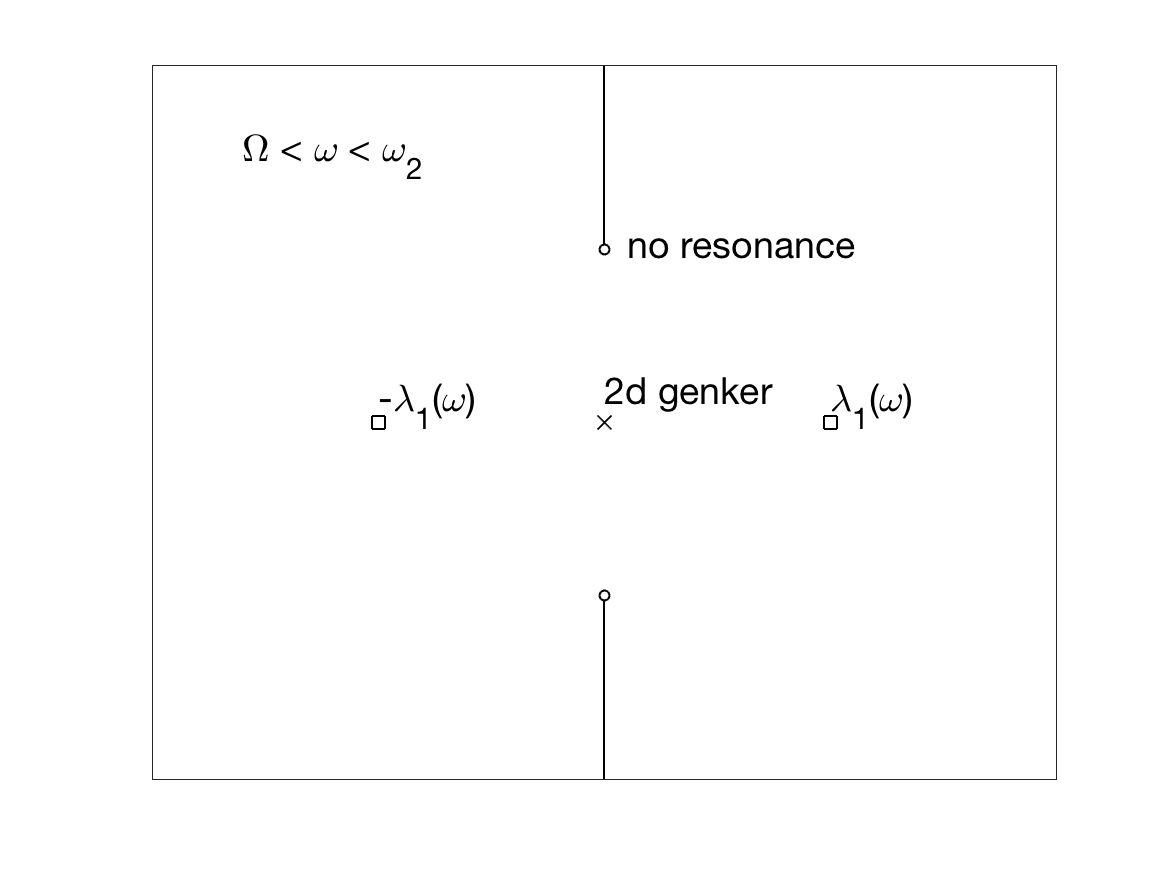}\includegraphics[width=.5\linewidth]{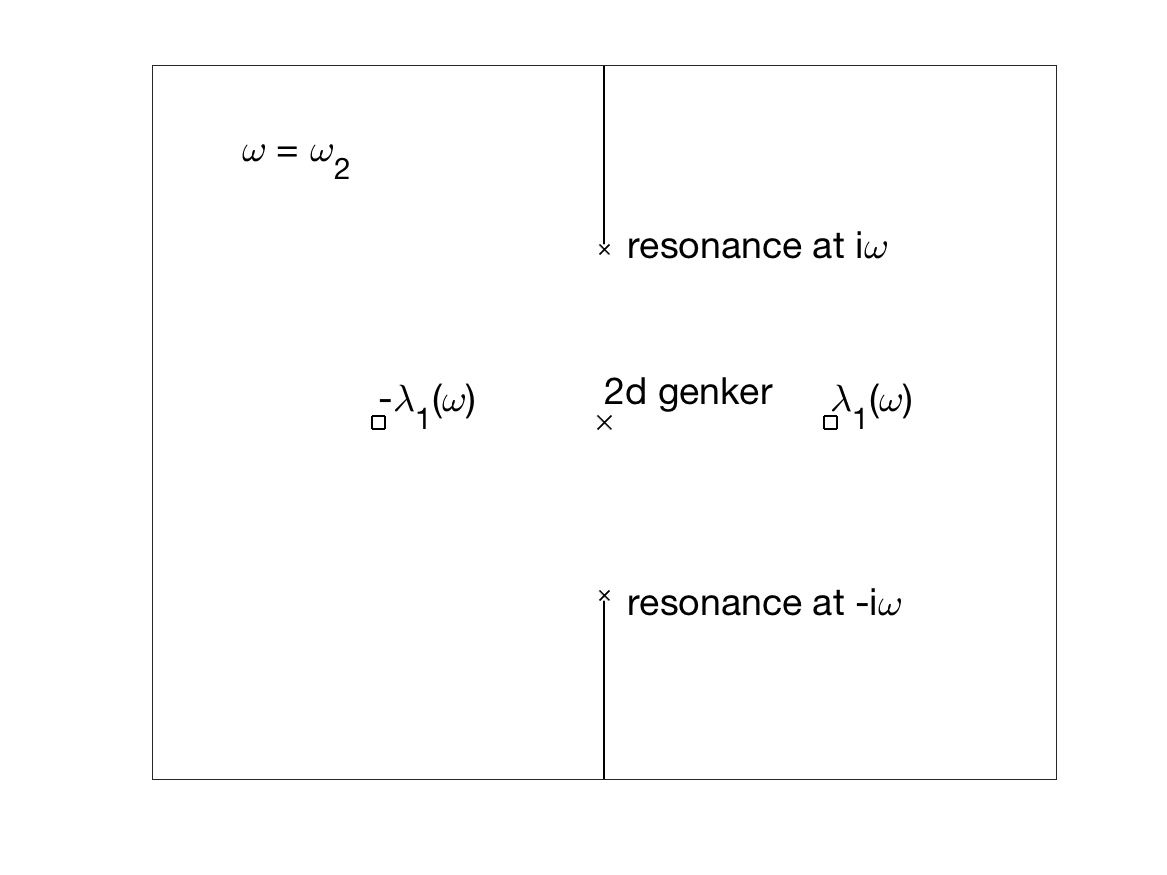}}

\centerline{
\includegraphics[width=.5\linewidth]{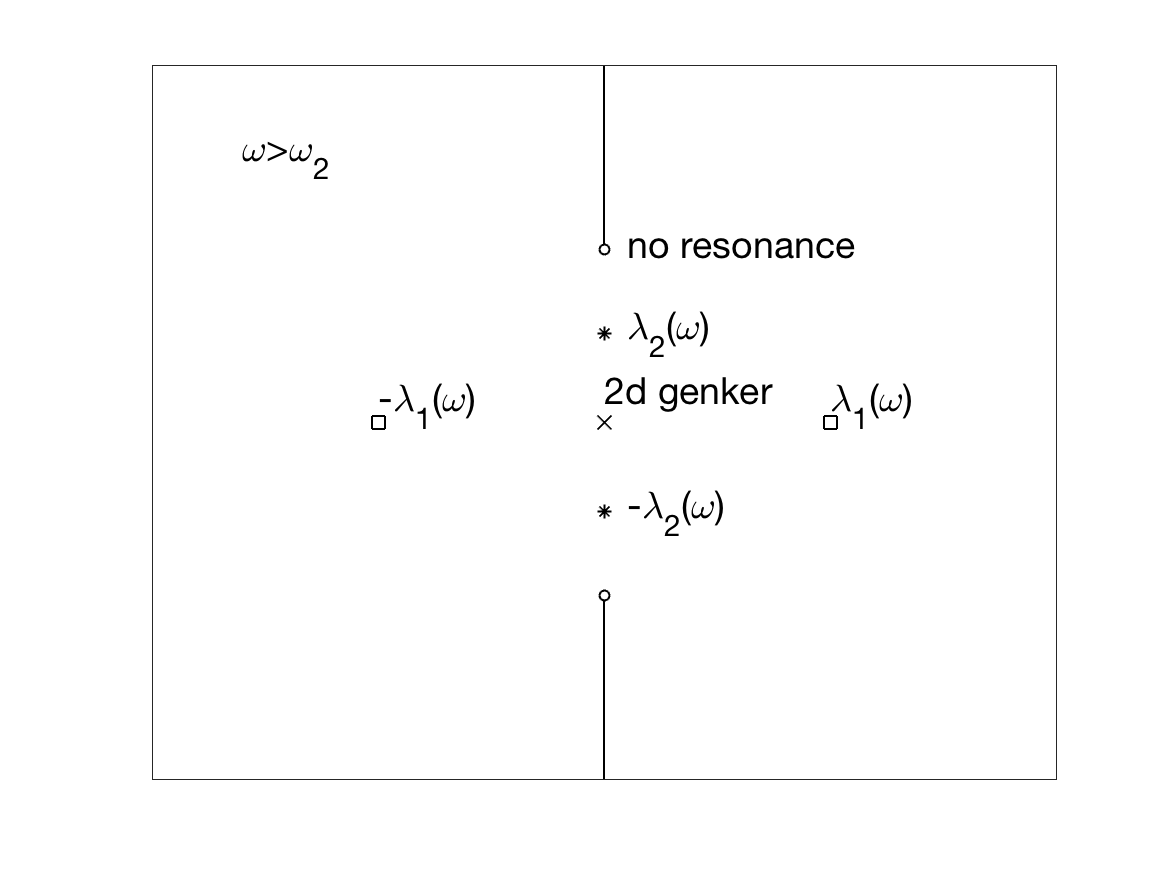}}

\subsubsection{Numerical search for resonances}\label{S:search} To provide evidence for the conjecture above, we undertook a numerical search for resonances of $\L(\omega)$ appearing at $\pm i\omega$.  For all numerical investigations, we fixed the parameter $q=-1$ and focused on finding the resonance at $i\omega$.  

To formulate the problem, we first rewrite the resonance equation
\[
\L(\omega)\left[\begin{array}{r} f \\ g \end{array}\right] = i\omega \left[\begin{array}{r} f \\ g \end{array}\right]
\]  
for complex-valued $f=f_1+if_2$ and $g = g_1 + ig_2$ as a system of four second-order ODEs, namely:
\begin{equation}\label{jODE-system}
\begin{aligned}
& L_-g_1 = -\omega f_2, \quad L_- g_2 = \omega f_1, \\
& L_+f_1 = \omega g_2, \quad L_+ f_2 = -\omega g_1. 
\end{aligned}
\end{equation}
As usual, to deal with the presence of the delta potential we view each equation as an ODE on $\R_-\cup\R_+$ together with the jump condition at $x=0$ (see \eqref{domain} at $x=0$). On each of $\R_\pm$, the operator $H$ acts as $-\tfrac12\partial_x^2$. 

As these equations are linear, it suffices to look for even or odd solutions separately.  In each case, it is enough to solve the ODE on $\R_+$, impose a suitable boundary condition at $x=0$, and extend the solution to $\R$ by taking either the even or odd reflection.  For even solutions, we obtain the boundary condition
\begin{equation}\label{jBC-even}
u'(0) = qu(0), 
\end{equation}
whereas for odd solutions we obtain 
\begin{equation}\label{jBC-odd}
u(0)=0.
\end{equation}

To solve \eqref{jODE-system} numerically, we utilize the {\tt bvp4c} Matlab package, viewing the equations as boundary-value problems on $[0,x_0]$ for sufficiently large $x_0$.  We chose $x_0=50$ and found that increasing $x_0$ further did not change the numerical results.  The ODE system \eqref{jODE-system} is straightforward to study numerically, as the potentials appearing in $L_\pm$ are given explicitly using the formulas for the nonlinear bound states given in Section~\ref{S:solitons}. 

We impose either \eqref{jBC-even} or \eqref{jBC-odd} for each of $f_1$, $f_2$, $g_1$, and $g_2$, depending on whether we seek even or odd solutions.  For the remaining four boundary conditions, one possibility is to impose $u'(x_0)=0$ for each of $f_1$, $f_2$, $g_1$, and $g_2$.  This is consistent with the existence of a resonance, which should correspond to a solution in $(L^\infty \cap \dot H^1)\backslash L^2$.  It would also be consistent with the existence of an eigenvalue. However, this choice of boundary condition turns out to be difficult to work with numerically, as it tends to yield solutions so small that they cannot be meaningfully distinguished from the zero solution.

Thus, for the remaining four boundary conditions we proceed as follows.  Recalling that we are looking for a resonance, we first seek to impose the conditions $f_1\to 1$ and $f_1'\to 0$ as $x\to\infty$. Thus, for the numerical computations we impose 
\[
f_1(x_0)=1\qtq{and} f'(x_0)=0.
\]
Next, by elliptic regularity it is natural to assume that the second derivatives vanish as $x\to\infty$ for solutions in $L^\infty\cap \dot H^1$.  In this case, taking the limit as $x\to\infty$ in \eqref{jODE-system} yields the two final numerical boundary conditions 
\[
f_1(x_0)=g_1(x_0) \qtq{and} f_2(x_0)=-g_2(x_0). 
\] 

We now employ a shooting method with the parameter $\omega$.  That is, we fix a discrete set $D$ and numerically solve the system \eqref{jODE-system} with the boundary conditions above for each $\omega\in D$.  Most solutions have asymptotically linear behavior as $x\to x_0$, corresponding to solutions to \eqref{jODE-system} that grow in magnitude as $x\to\infty$.  On the other hand, a resonance should be asymptotically flat.  To look for possible resonances, we therefore measure the norm of the derivatives $f_1'$, $f_2'$, $g_1'$, and $g_2'$ in a neighborhood of $x_0$.  If these norms become small around some $\omega'\in D$, it suggests the existence of a resonance near $\omega'$.  We may then take a refined set $D$ of frequencies near $\omega'$  and start the search again.  After several iterations, we can identify the resonance.  The idea of the method is illustrated briefly in Figure~\ref{shooting1}.

\begin{figure}[h]
\includegraphics[scale=.15]{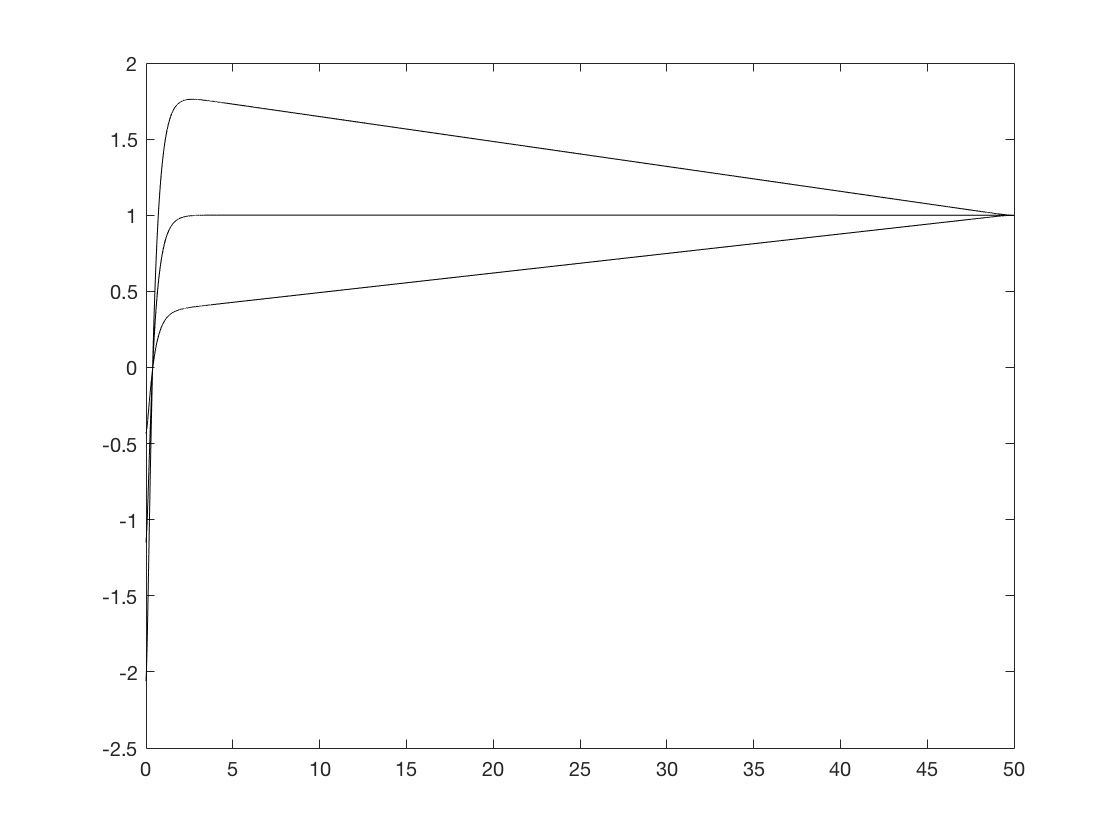}
\includegraphics[scale=.15]{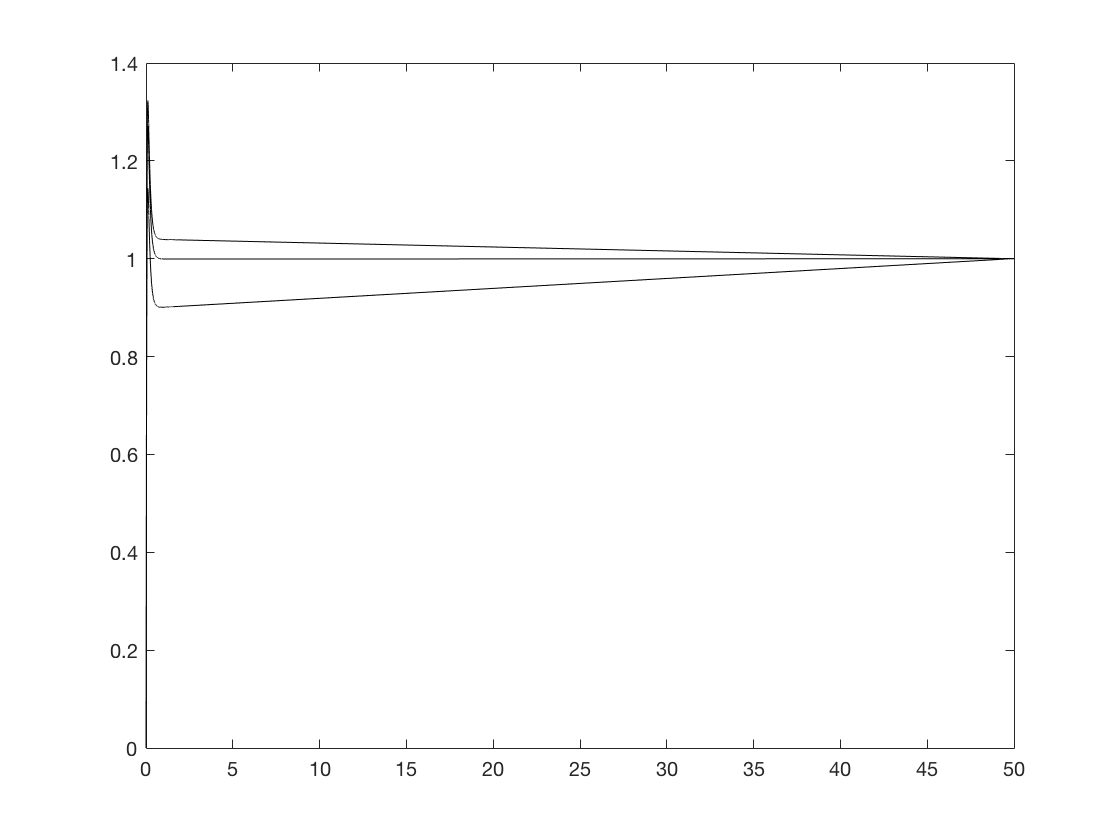}
\caption{\footnotesize (Left) The numerically computed solution $f_1$ in the system \eqref{jODE-system} with $p=5$ and three choices of $\omega$. The asymptotically flat solution corresponds to $\omega_1\approx1.49171$, at which we find an even resonance.  The other two solutions correspond to values of $\omega$ slightly above/below this value. (Right) The analogous situation when searching for an odd resonance, which we find at $\omega_2\approx 19.5722.$}\label{shooting1}
\end{figure} 

For a range of values of $p\geq 4$, we consistently found one even resonance and one odd resonance.  The even resonance always occurred at a value $\omega_1<\Omega$, which is what we expected in light of the discussion above.  As $p$ increases, the odd resonance initially occurs at a value $\omega_2<\Omega$ and later occurs at $\omega_2>\Omega$, with the transition occurring around the value $p_0\approx 4.54$.  In particular, for $p<p_0$ and $\omega_2<\omega<\Omega$, two pairs of imaginary eigenvalues can co-exist, whereas for $p>p_0$ this cannot occur.  The shapes of the resonances appear already in Figure~\ref{shooting1}.  Closer views of these solutions near the origin are displayed in Figure~\ref{show-resonances}.

\begin{figure}[h]
\includegraphics[scale=.15]{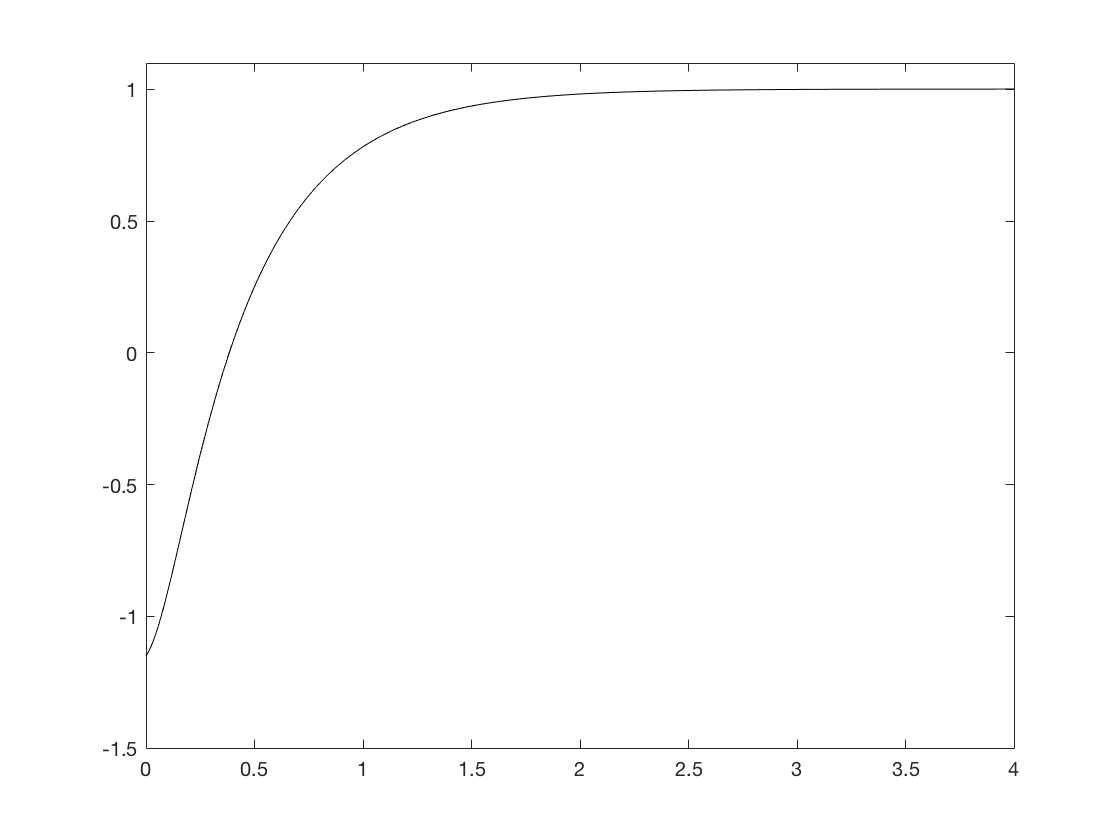}
\includegraphics[scale=.15]{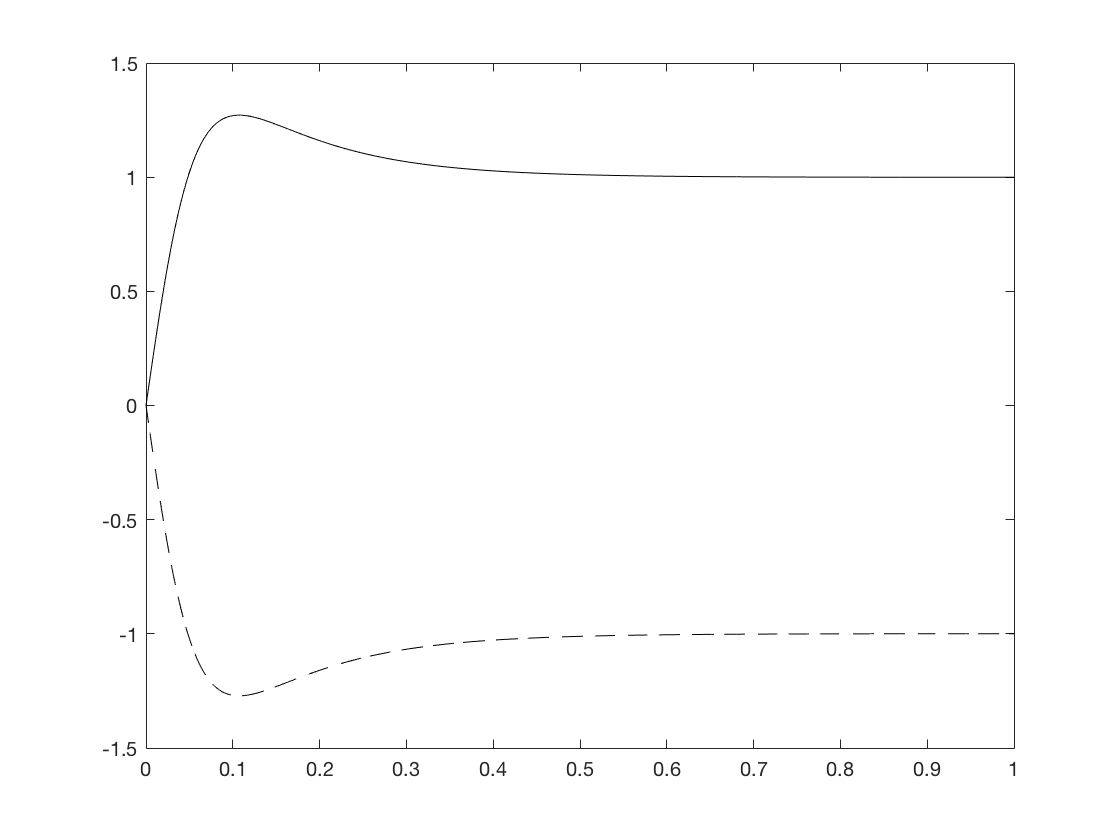}
\caption{\footnotesize (Left) A closer look at the shape of the $f_1$ component of the even resonance near $x=0$. We obtain the full solution by taking an even reflection across $x=0$. (Right) The shape of the $f_1$ and $f_2$ components of the odd resonance for $x>0$.  We obtain the full solution by taking an odd reflection across $x=0$.  These solutions both correspond to the case $p=5$, with the even resonance occurring at $\omega_1\approx 1.49171$ and the odd resonance occurring at $\omega_2 \approx 19.5722$.}\label{show-resonances}
\end{figure}

In Figures~\ref{special-omegas1}~and~\ref{odd-resonances}, we display our main numerical results on the existence of resonances for $\L(\omega)$.  Here $q=-1$ is fixed throughout.  In Figure~\ref{special-omegas1}, we plot the value of the even resonance $\omega_1$ and the threshold frequency $\Omega$ against the power $p$.  We also display the mass of $Q_{\omega_1}$ as a function of $\omega_1$, representing the upper bound on solitary wave mass for which our main result (Theorem~\ref{T}) applies in the focusing case.  In particular, this demonstrates that we can treat solitary waves that are $\mathcal{O}(1)$ in $L^2$, and hence our result does indeed extend beyond the small solitary wave regime.  We list some  computed values of $\omega_1$ and $M(Q_{\omega_1})$ in the table on page~\pageref{the-table}.  As $p\to\infty$, one finds that $\Omega$ (and hence $\omega_1$) converges to $\tfrac12q^2$, so that one has stability of any type only for small solitary waves.  On the other hand, as $p$ decreases to $4$ we obtain $\Omega\to\infty$, corresponding to the fact that all solitary waves are stable when $p=4$, while we are still able to find an even resonance appearing at $\omega_1\approx 2.6648$.

\begin{figure}[h]
\includegraphics[scale=.15]{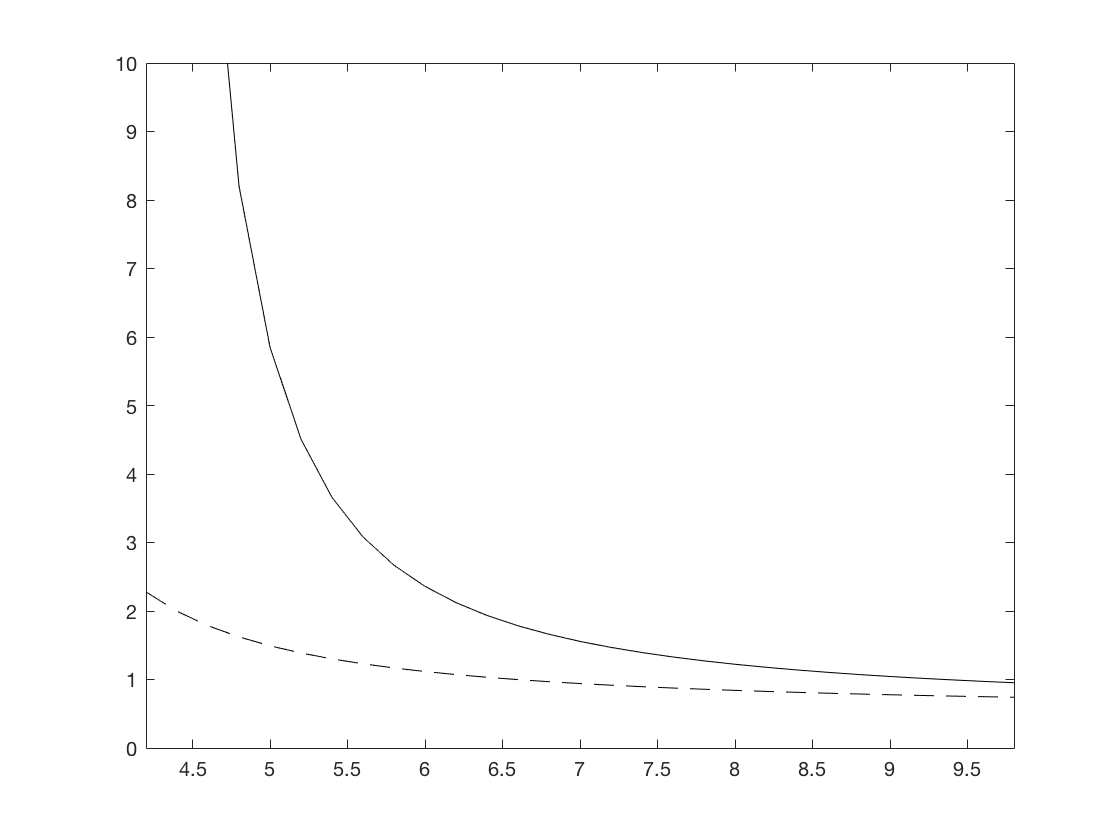}
\includegraphics[scale=.15]{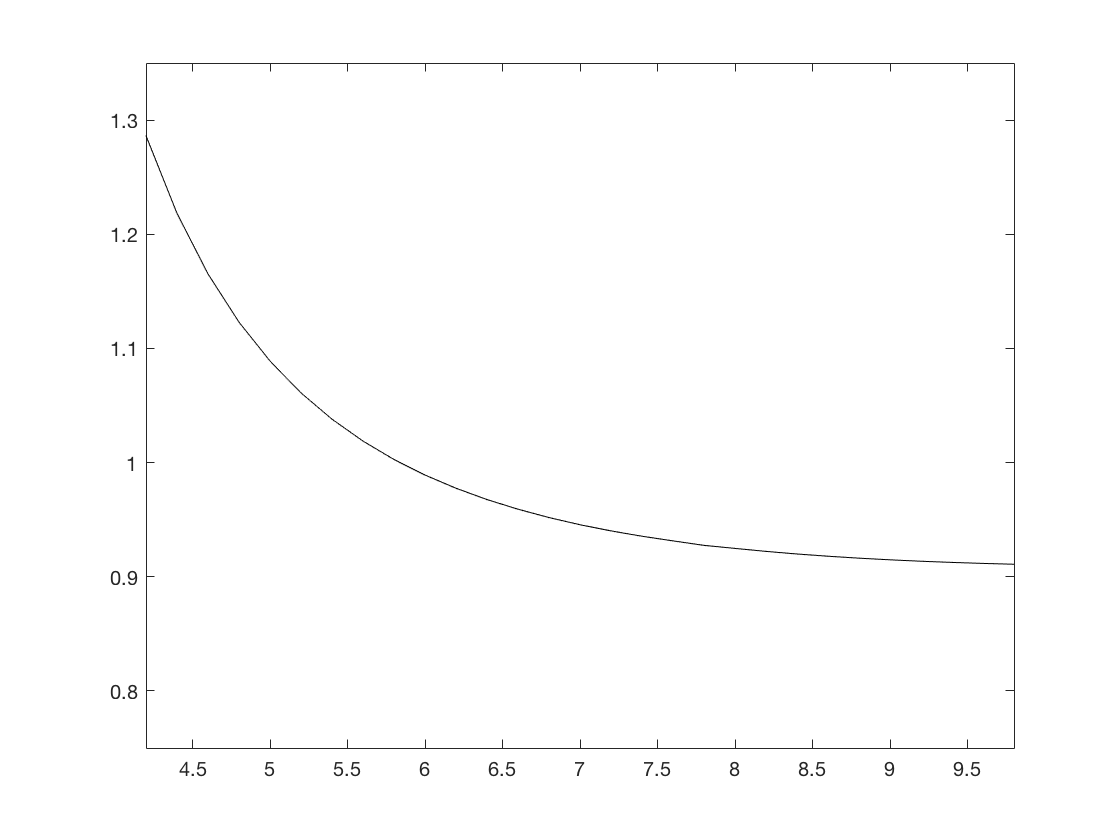}
\caption{\footnotesize (Left) A plot the curves $p\mapsto\Omega$ (solid line) and $p\mapsto \omega_1$ (dashed line).  (Right) A plot of the curve $p\mapsto M(Q_{\omega_1})$, which represents the upper bound on the mass of solitary waves addressed by Theorem~\ref{T} in the focusing case.}\label{special-omegas1}
\end{figure}

In Figure~\ref{odd-resonances}, we also plot  the odd resonance $\omega_2$, which appears to satisfy $\omega_2\to\infty$ as $p\to\infty$.  As these resonances always appear after the even resonance, they are not directly relevant for the analysis of the present paper.  

\begin{figure}[h]
\includegraphics[scale=.15]{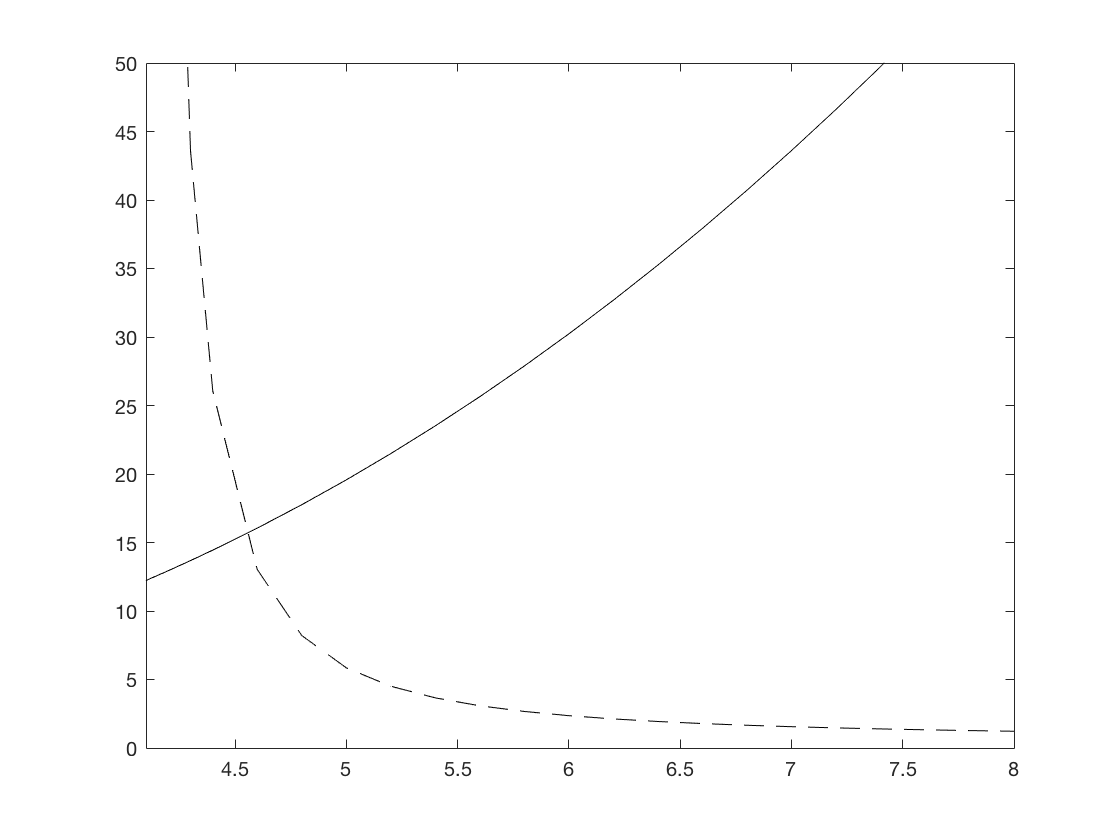}
\caption{\footnotesize A plot of $p\mapsto \Omega$ (dashed line) and $p\mapsto\omega_2$ (solid line).  The curves cross around $p\approx 4.54$.}\label{odd-resonances}
\end{figure}
 
\subsubsection{Embedded eigenvalues.}\label{S:embedded} In Conjecture~\ref{spectral-conjecture}, we assert that there are no eigenvalues embedded in the continuous spectrum of $\L(\omega)$.  This is a generic assumption in stability problems that has been established rigorously in some related settings (see e.g. \cite{BP, Perelman, KS, HZ2}); see also \cite{MS} for a numerical approach.  Some of the theory that will be developed in the next section already rules out the possibility of embedded eigenvalues of sufficiently large magnitude (in both the focusing and defocusing cases).  In that section, the analysis is done in terms of the operator
\begin{equation}\label{defH}
\H  =\left[\begin{array}{cc} H+\omega & 0 \\ 0 & -H-\omega\end{array}\right] + \sigma Q_\omega^p \left[\begin{array}{cc}\tfrac{p+2}{2} & \tfrac{p}{2} \\ -\tfrac{p}{2} & -\tfrac{p+2}{2}\end{array}\right],
\end{equation}
which is connected to $\mathcal{L}$ via 
\begin{equation*}
-iU^{\ast}{{\mathcal L}}U=\H,\quad U=\frac{1}{\sqrt{2}}
\left[\begin{array}{rr}1 & 1 \\ i & -i\end{array}\right].
\end{equation*}
Ruling out imaginary eigenvalues of $\L$ is then equivalent to ruling out real eigenvalues of $\H$.  In Proposition~\ref{P:spectral-meets-scattering}, we define a matrix-valued function $D(\xi)$ (given as the matrix Wronskian of two particular solutions to the generalized eigenvalue problem for $\H$) and show that $\omega+\tfrac12\xi^2$ is an eigenvalue of $\H$ if and only if $\det D(\xi)=0$.  On the other hand, in Lemma~\ref{L:AB} (see \eqref{determinant-asymptotic}) we derive that 
\[
\det D(\xi) = -4i\xi\sqrt{\xi^2+4\omega} + \mathcal{O}(1) \qtq{as}|\xi|\to\infty. 
\]
This precludes the existence of large eigenvalues for $H$, and so rules out the possibility of large imaginary eigenvalues for $\L$.

We can consider the limiting case $\omega\to\tfrac12q^2$.  In this case, one can compute explicitly to derive
\[
\det D(\xi) \to -4[i\xi-q][\sqrt{\xi^2+4\omega}+q],
\]
which is always nonzero.  Using continuity with respect to $\omega$, this implies the absence of embedded eigenvalues at least for $\omega$ in a neighborhood of $\tfrac12q^2$.  In the present paper, we take the absence of any embedded eigenvalues for $\L(\omega)$ as an assumption (see the spectral condition in Definition~\ref{D:spectral}).  We will address this problem in more detail in future work.

\subsection{The defocusing case.}\label{S:defocusing-spectrum} In the defocusing case, orbital stability holds for all $\omega\in(0,\tfrac12q^2)$.  Our conjecture for the defocusing case is that the spectral condition in Definition~\ref{D:spectral} holds for all $\omega\in(0,\tfrac12q^2)$, so that Theorem~\ref{T} applies to the entire family of solitary wave solutions.  In what follows, we give some evidence for this conjecture.  A more rigorous analysis will be the topic of future work. 

The limit $\omega\to\tfrac12 q^2$ is essentially the same as in the focusing case, as the solitary wave solutions converge to zero.  Thus we once again expect the spectral condition to hold at least in a neighborhood of $\tfrac12q^2$.  As in the focusing case, we carried out a numerical search for resonances at the edge of the continuous spectrum signaling the emergence of imaginary eigenvalues.  In contrast to the focusing case, we found none.  In addition, orbital stability suggests that real eigenvalues should not occur, as these would correspond to exponentially growing modes for the underlying linearized equation. This leads us to the assertion that for all $\omega\in(0,\tfrac12q^2)$, the discrete spectrum of $\L(\omega)$ consists only of the $2d$ generalized kernel. Similar to the focusing case, we also assert that embedded eigenvalues do not occur, with the special cases of large magnitude eigenvalues already precluded as in Section~\ref{S:embedded}.  

Proceeding analogously to the focusing case, we may also attempt to connect the regime $\omega\to\tfrac12q^2$ to the other limiting regime $\omega\to 0$.  In this latter regime, we obtain the operator
\begin{equation}\label{mathcalL0}
\L(0) := \left[\begin{array}{cc} 0 & H \\ -H & 0 \end{array}\right] + V(x)\left[\begin{array}{cc} 0 & 1 \\ -(p+1) & 0 \end{array}\right],
\end{equation}
where
\[
V(x) = \frac{(p+2)|q|^2}{(p|q||x|+2)^2}
\]
(see \eqref{Q0def}).  If we additionally send $q\to 0$ in \eqref{mathcalL0}, then we have $V\to 0$ and $H\to -\tfrac12\partial_x^2$, and the discrete spectrum consists only of a resonance of multiplicity 2 at the origin (corresponding to the constant functions $[1\ 0]^t$ and $[0\ 1]^t$).  For $q<0$, we retain this resonance at zero, which corresponds to the limit of the $2d$ generalized generalized kernel of $\mathcal{L}(\omega)$.  We obtain a resonance (rather than an eigenvalue) due to the fact that in the mass-supercritical regime, the limits of $Q_\omega$ and $\partial_\omega Q_\omega$ as $\omega\to 0$ fail to belong to $L^2$ (see \eqref{Q0def}).  In light of the above, we expect that there is no additional discrete spectrum for the limiting operator $\L(0)$.  Our assertion that the discrete spectrum of $\L(\omega)$ is given only by its $2d$ generalized kernel for all $\omega\in(0,\tfrac12q^2)$ then connects these two limiting regimes in a consistent way.

\subsection{Projection formula.} To conclude this section, we establish a formula for the projection away from the continuous spectrum for $\L(\omega)$, under the assumption that $\omega$ obeys the spectral condition appearing in Definition~\ref{D:spectral}.  The argument is the same as in \cite[Proposition~C.2]{GS}.  

\begin{proposition}\label{P:Projection} Assume that $\omega$ is such that \eqref{IPNZ} holds and $\L(\omega)$ has no resonances and no eigenvalues other than zero.  Let $P_c$ denote the projection onto the continuous spectrum of $\L(\omega)$. Then
\[
(I-P_c) f = \frac{\Re \langle f,Q_\omega\rangle}{\langle Q_\omega,\partial_\omega Q_\omega\rangle} \partial_\omega Q_\omega + \frac{\Re \langle f,i\partial_\omega Q_\omega\rangle}{\langle Q_\omega,\partial_\omega Q_\omega\rangle} iQ_\omega. 
\]
\end{proposition}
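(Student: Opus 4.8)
The plan is to recognize $I-P_c$ as the Riesz spectral projection of $\L=\L(\omega)$ at the eigenvalue $0$, and then to evaluate it explicitly using the generalized kernels of $\L$ and of its adjoint $\L^*$. Under the standing hypotheses (that \eqref{IPNZ} holds and that $\L$ has no resonances and no eigenvalues other than $0$), the discrete spectrum of $\L$ consists of the single point $0$, which is isolated from the continuous spectrum $\sigma_c(\L)=i[\omega,\infty)\cup i(-\infty,-\omega]$ because $\omega>0$. Hence $Q_0:=I-P_c=\tfrac1{2\pi i}\oint_{|\zeta|=\rho}(\zeta-\L)^{-1}\,d\zeta$ is a bounded projection commuting with $\L$, and by Lemma~\ref{L:genker} the point $0$ is ``semisimple of order two'' in the sense that $\ker\L^k=\ker\L^2=\operatorname{span}\{\partial_\omega Q_\omega,\,iQ_\omega\}$ for all $k\ge2$; consequently $\operatorname{Range}(Q_0)=\ker\L^2=\operatorname{span}\{\partial_\omega Q_\omega,\,iQ_\omega\}$. (Here one uses $\L(iQ_\omega)=0$ and $\L(\partial_\omega Q_\omega)=iQ_\omega$, which follow from \eqref{nls-Q} and its $\omega$-derivative, i.e. from $L_-Q_\omega=0$ and $L_+\partial_\omega Q_\omega=-Q_\omega$.)

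The next step is to compute $\L^*$. Writing $\L=\bigl[\begin{smallmatrix}0 & L_-\\ -L_+ & 0\end{smallmatrix}\bigr]$ with $L_\pm$ self-adjoint, one has $\L^*=\bigl[\begin{smallmatrix}0 & -L_+\\ L_- & 0\end{smallmatrix}\bigr]=\sigma_1\L\sigma_1$ with $\sigma_1$ the Pauli matrix from \eqref{pauli}; since $\sigma_1$ is an involution, $\ker(\L^*)^k=\sigma_1\ker\L^k$. Applying $\sigma_1$ to the basis above, and keeping track of the identification of a complex function $f_1+if_2$ with $[f_1\ f_2]^t$, gives $\sigma_1(\partial_\omega Q_\omega)=i\partial_\omega Q_\omega$ and $\sigma_1(iQ_\omega)=Q_\omega$, hence $\ker(\L^*)^2=\operatorname{span}\{Q_\omega,\,i\partial_\omega Q_\omega\}$. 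Since $Q_0^*$ is the Riesz projection of $\L^*$ at $0$, we have $\overline{\operatorname{Range}(Q_0^*)}=\ker(\L^*)^2$, so $\ker Q_0=\operatorname{Range}(P_c)=\bigl(\operatorname{span}\{Q_\omega,\,i\partial_\omega Q_\omega\}\bigr)^{\perp}$. Combining this with the description of $\operatorname{Range}(Q_0)$, we conclude that $(I-P_c)f=Q_0f$ is characterized as the unique element of $\operatorname{span}\{\partial_\omega Q_\omega,\,iQ_\omega\}$ for which $f-Q_0f\perp Q_\omega$ and $f-Q_0f\perp i\partial_\omega Q_\omega$.

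It then remains to solve this (decoupled) $2\times2$ system. Writing $Q_0f=a\,\partial_\omega Q_\omega+b\,iQ_\omega$ with $a,b\in\R$, the two orthogonality conditions separate thanks to the elementary identities $\langle iQ_\omega,Q_\omega\rangle=\langle\partial_\omega Q_\omega,i\partial_\omega Q_\omega\rangle=0$ and $\langle\partial_\omega Q_\omega,Q_\omega\rangle=\langle iQ_\omega,i\partial_\omega Q_\omega\rangle=\langle Q_\omega,\partial_\omega Q_\omega\rangle$, the last quantity being nonzero by \eqref{IPNZ}; this yields $a=\Re\langle f,Q_\omega\rangle/\langle Q_\omega,\partial_\omega Q_\omega\rangle$ and $b=\Re\langle f,i\partial_\omega Q_\omega\rangle/\langle Q_\omega,\partial_\omega Q_\omega\rangle$, which is the claimed identity. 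I expect the only real care to be in the two bookkeeping points flagged above — the $\R^2$-valued versus complex-valued identification when applying $\sigma_1$, and the fact that the real part built into $\langle\cdot,\cdot\rangle$ kills the relevant cross terms — together with the (standard) appeal to the spectral hypotheses of Definition~\ref{D:spectral} needed to know that $I-P_c$ is genuinely the Riesz projection at $0$ with two-dimensional range. Alternatively, one can bypass the Riesz-projection language: \emph{define} $Pf$ to be the right-hand side of the asserted identity, verify directly that $P^2=P$, that $\operatorname{Range}(P)=\ker\L^2$, and that $\L P=P\L$ (using $\L^*Q_\omega=0$ and $\L^*(i\partial_\omega Q_\omega)=Q_\omega$, which follow from $\L^*=\sigma_1\L\sigma_1$ and \eqref{nls-Q}), and then note that a bounded projection commuting with $\L$ whose range is the entire generalized kernel of $\L$ at $0$ must, under the spectral condition, coincide with $I-P_c$.
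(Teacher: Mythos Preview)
Your proof is correct and follows essentially the same approach as the paper's: identify the range of $I-P_c$ as the generalized kernel of $\L$ (via Lemma~\ref{L:genker}), identify the kernel of $I-P_c$ via the generalized kernel of the adjoint/transpose, and solve the resulting $2\times2$ system for the coefficients. The paper's version is terser---it simply asserts that $P_c^t$ projects onto the continuous spectrum of $\L^t$ and hence $(I-P_c^t)Q_\omega=Q_\omega$, $(I-P_c^t)i\partial_\omega Q_\omega=i\partial_\omega Q_\omega$---whereas you make the mechanism explicit by computing $\L^*=\sigma_1\L\sigma_1$ and reading off $\ker(\L^*)^2=\sigma_1\ker\L^2$; but this is the same content.
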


\begin{proof} Using Lemma~\ref{L:genker}, we find that the range of $I-P_c$ equals the span of $\{\partial_\omega Q_\omega,iQ_\omega\}$. Thus we have
\[
(I-P_c)f = a_1 \partial_\omega Q_\omega + ia_2Q_\omega
\]
for some $a_1,a_2\in\R$.   In particular,
\begin{align*}
\Re \langle (I-P_c)f, Q_\omega\rangle & = a_1\langle Q_\omega,\partial_\omega Q_\omega\rangle, \\
\Re\langle (I-P_c)f,i\partial_\omega Q_\omega\rangle & = a_2 \langle Q_\omega,\partial_\omega Q_\omega\rangle. 
\end{align*}
As the transpose $P_c^t$ is the projection onto the continuous spectrum of $\L^t$, we derive that $(I-P_c^t)Q_\omega=Q_\omega$ and $(I-P_c^t)i\partial_\omega Q_\omega = i\partial_\omega Q_\omega$.  Solving for $a_1$ and $a_2$ now yields the result. 
\end{proof}

\section{Dispersive estimates}\label{S:dispersive}

In this section we derive dispersive estimates 
for the evolution group $e^{t\mathcal{L}}$, where $\mathcal{L}=\mathcal{L}(\omega)$ is as in \eqref{D:L}.  We first observe that 
\begin{equation*}
-iU^{\ast}{{\mathcal L}}U=\H,
\end{equation*}
where 
\[
U=\frac{1}{\sqrt{2}}
\left[\begin{array}{rr}1 & 1 \\ i & -i\end{array}\right]
\]
and
\begin{equation}\label{defH}
\H = \H_0+\V:= \left[\begin{array}{cc} H+\omega & 0 \\ 0 & -H-\omega\end{array}\right] + \sigma Q_\omega^p \left[\begin{array}{cc}\tfrac{p+2}{2} & \tfrac{p}{2} \\ -\tfrac{p}{2} & -\tfrac{p+2}{2}\end{array}\right]. 
\end{equation}
In particular, it is equivalent to prove dispersive estimates for the unitary group $e^{it\H}$, as we have 
$e^{t{{\mathcal L}}}=Ue^{it\H}U^{\ast}$.

\begin{proposition}[Dispersive Estimates]\label{P:D} Suppose $\langle Q_{\omega},\partial_\omega Q_{\omega}\rangle\neq 0$ and the spectral condition in Definition~\ref{D:spectral} holds. Then, writing $P_{c}$ for the projection to the absolutely continuous spectrum of $\H$, we have 
\begin{align}
\|e^{it\H}P_c \varphi\|_{L^\infty}&\lesssim |t|^{-\frac12}\|\varphi\|_{L^1}, \label{dispersive1} \\
\|\langle x\rangle^{-1}e^{it\H}P_c\varphi\|_{L^\infty}&\lesssim |t|^{-\frac32}\|\langle x\rangle \varphi\|_{L^1}. \label{dispersive2}
\end{align}
In addition, we have the $L^2$ bound
\begin{equation}\label{dispersive3}
\|e^{it\H}P_c\varphi\|_{L^2} \lesssim \|\varphi\|_{L^2}.
\end{equation}
\end{proposition}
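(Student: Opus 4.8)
\textbf{Proof strategy for Proposition~\ref{P:D}.}

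The plan is to follow the now-standard route of reducing matters to a spectral representation of the evolution via the distorted Fourier transform associated to $\H$, and then extracting decay from oscillatory-integral estimates. I would begin by exploiting the unitary equivalence $e^{t\L}=Ue^{it\H}U^{\ast}$ so that it suffices to work with the self-adjoint-after-conjugation operator $\H=\H_0+\V$, whose essential spectrum is $(-\infty,-\omega]\cup[\omega,\infty)$ with a spectral gap containing only the eigenvalue $0$ (by the spectral condition and Lemma~\ref{L:genker}). Restricting to the absolutely continuous subspace via $P_c$, I would write $e^{it\H}P_c$ through the Stone formula, i.e. as an integral against the boundary values of the resolvent $R(\lambda\pm i0)=(\H-\lambda)^{-1}$ over $\lambda$ ranging in the two half-lines of the continuous spectrum. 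Parametrizing each half-line by $\lambda=\pm(\omega+\tfrac12\xi^2)$, this turns into an integral over $\xi\in\R$ of an oscillatory factor $e^{\mp it(\omega+\frac12\xi^2)}$ against the spectral density, which is built from the Jost-type solutions to the generalized eigenvalue problem for $\H$ (these are precisely the particular solutions constructed in Appendix~\ref{S:F1F4}, with the Wronskian matrix $D(\xi)$ of Proposition~\ref{P:spectral-meets-scattering} controlling invertibility).

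The $L^2$ bound \eqref{dispersive3} is the easy part: it is simply the statement that $e^{it\H}$ is bounded on $L^2$ uniformly in $t$, which follows because $\H$ is self-adjoint with respect to an equivalent inner product (equivalently, $e^{it\H}=U^\ast e^{t\L}U$ and $e^{t\L}$ preserves the relevant quadratic form on the continuous subspace); $P_c$ is a bounded projection, so \eqref{dispersive3} is immediate. For \eqref{dispersive1}, I would represent the kernel of $e^{it\H}P_c$ as $\int_\R e^{-\frac{i}{2}t\xi^2} m(x,y,\xi)\,d\xi$ (after pulling out $e^{\mp it\omega}$), where $m$ is constructed from products of Jost solutions divided by $\det D(\xi)$. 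The non-resonance hypothesis guarantees that $\det D(\xi)$ does not vanish at $\xi=0$, and Lemma~\ref{L:AB} gives $\det D(\xi)=-4i\xi\sqrt{\xi^2+4\omega}+\mathcal O(1)$ as $|\xi|\to\infty$, together with its non-vanishing on all of $\R$ (no embedded eigenvalues); this makes $1/\det D(\xi)$ a bounded, smooth, symbol-type multiplier. One then splits into a low-frequency piece (where one uses the $\mathcal O(1)$ behavior of $\det D$ together with smoothness of the Jost solutions in $\xi$ near $0$) and a high-frequency piece (where one uses the large-$\xi$ asymptotics, matching the free evolution up to acceptable errors), and in each regime applies the standard stationary-phase / van der Corput bound $|\int e^{-\frac{i}{2}t\xi^2}a(\xi)\,d\xi|\lesssim |t|^{-1/2}(\|a\|_{L^\infty}+\|a'\|_{L^1})$ to produce the $|t|^{-1/2}$ factor with an $L^1\to L^\infty$ mapping.

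For the weighted estimate \eqref{dispersive2}, the improvement from $|t|^{-1/2}$ to $|t|^{-3/2}$ comes from integrating by parts twice in $\xi$ in the oscillatory integral: each integration by parts produces a factor $\sim(t\xi)^{-1}$, and the would-be singularities at $\xi=0$ are absorbed by the weights $\langle x\rangle^{-1}$ on the output and $\langle x\rangle$ on the input (the Jost solutions and their $\xi$-derivatives grow at most linearly in $x$, and the division by $\det D(\xi)\sim \xi$ at high frequency is compensated by the weight). Concretely, one writes $\xi e^{-\frac{i}{2}t\xi^2}=\frac{i}{t}\partial_\xi e^{-\frac{i}{2}t\xi^2}$ and moves the derivatives onto the amplitude; the crucial point is that $\partial_\xi^j$ of the kernel amplitude, for $j=0,1,2$, is integrable in $\xi$ after pairing with the weights, uniformly in $x,y$. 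This is exactly the mechanism in \cite{KS, GS2}, and the non-resonance condition is what makes $\det D(0)\neq 0$, which is indispensable: for the free Schr\"odinger equation $\det D$ vanishes at $\xi=0$ and the $|t|^{-3/2}$ rate genuinely fails. I expect the main obstacle to be the bookkeeping near $\xi=0$ — establishing the requisite smoothness and $x$-growth bounds on the Jost solutions $F_1,\dots,F_4$ and on $D(\xi)^{-1}$ uniformly down to the edge of the spectrum, and verifying that two $\xi$-derivatives can be taken without creating non-integrable singularities once the weights are in place — which is why the detailed construction in Appendix~\ref{S:F1F4} and the asymptotics of Lemma~\ref{L:AB} are doing the heavy lifting.
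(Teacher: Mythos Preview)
Your overall strategy matches the paper's: Stone-formula spectral representation, Jost-type solutions from Appendix~\ref{S:F1F4}, a low/high energy split, and crucial use of the non-resonance condition $\det D(0)\neq 0$. Two points in your execution deviate from what actually works, and one structural choice differs from the paper.

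First, your justification of \eqref{dispersive3} via self-adjointness is not valid: $\H$ is not self-adjoint in $L^2$ (the potential matrix $\V$ is not Hermitian), and while $\sigma_3\H$ is symmetric, the form $\langle\sigma_3\cdot,\cdot\rangle$ is indefinite, so there is no equivalent inner product making $e^{it\H}$ unitary. The paper instead deduces \eqref{dispersive3} from the representation formula (Proposition~\ref{P:Rep}) together with the asymptotics of Lemma~\ref{L:FG-behave}, which show that the distorted Fourier transform is $L^2$-bounded.

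Second, your mechanism for \eqref{dispersive2} --- ``integrating by parts twice, each producing a factor $\sim(t\xi)^{-1}$'' --- does not work as written: two literal integrations by parts would produce $t^{-2}$ times an amplitude with a $\xi^{-1}$ singularity at the threshold that the weights cannot absorb, even with non-resonance. The correct route (in the paper and in \cite{KS,GS2}) is a \emph{single} integration by parts via $e^{\frac{i}{2}t\xi^2}=(it\xi)^{-1}\partial_\xi e^{\frac{i}{2}t\xi^2}$, which gains exactly one power of $t^{-1}$ (the $\xi^{-1}$ cancels against the Jacobian $\xi$ from $d\lambda=\xi\,d\xi$), followed by the free-Schr\"odinger $|t|^{-1/2}$ dispersive bound on the resulting oscillatory integral. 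The weights $\langle x\rangle^{\pm 1}$ enter because the $\xi$-derivative landing on the Jost solutions generates linear growth in $x,y$.

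Finally, for the high-energy regime you gesture at ``large-$\xi$ asymptotics matching the free evolution,'' but the paper does not use the Jost representation there at all; it switches to the Born series $(\H-\lambda)^{-1}=\sum_n(-1)^n(\H_0-\lambda)^{-1}[\V(\H_0-\lambda)^{-1}]^n$, with the free resolvent kernel known explicitly (including the delta-potential reflection coefficient), and sums the series by taking the low/high threshold at $C\|\V\|_{L^1}$.
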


\begin{remark} Throughout this section the parameter $\omega$ is fixed and will often be suppressed in formulas.  In Section~\ref{S:stability}, however, we will need to apply the dispersive estimates for varying choices of $\omega$.  The proof below will demonstrate that the implicit constants appearing in \eqref{dispersive1} and \eqref{dispersive2} depend certain norms of the potential $\mathcal{V}$, and in particular they depend continuously on the parameter $\omega$.\end{remark}

To prove Proposition~\ref{P:D}, we will adapt the arguments of Goldberg--Schlag \cite{GS2} and Krieger--Schlag \cite[Sections~7--8]{KS}. The starting point for the argument is the following spectral resolution of the propagator (see \cite{GS,KS}): 
\begin{eqnarray}
\lefteqn{2\pi i\langle e^{it\H}{ P_c}\varphi,\psi\rangle} \label{rep-form}\\
& =&\biggl[\int_{-\infty}^{-\omega}+\int_{\omega}^\infty\biggr]e^{it\lambda}\big\langle \big\{(\H-\lambda-i0)^{-1}-(\H-\lambda+i0)^{-1}\big\}\varphi,\psi\big\rangle\,d\lambda 
\nonumber\\
& =& e^{it\omega}\int_0^\infty e^{\frac{it\xi^2}{2}}\xi\big\langle\big\{(\H-\tfrac12\xi^2-\omega+i0)^{-1}-(\H-\tfrac12\xi^2-\omega-i0)^{-1}\big\}\varphi,\psi\big\rangle\,d\xi 
\nonumber\\
\label{resolvent1}\\
& &+ e^{-it\omega}\int_0^\infty e^{-\frac{it\xi^2}{2}}\xi\big\langle\big\{(\H+\tfrac12\xi^2+\omega+i0)^{-1}-(\H+\tfrac12\xi^2+\omega-i0)^{-1}\big\}\varphi,\psi\big\rangle\,d\xi,
\nonumber\\
\label{resolvent2}
\end{eqnarray}
where we have changed variables via $\lambda=\tfrac12\xi^2+\omega$ on $(\omega,\infty)$ and $\lambda=-\tfrac12\xi^2-\omega$ on $(-\infty,-\omega$). To estimate these integrals, we split into low-energy and high-energy regions, separated by a threshold energy $\lambda=C\|\mathcal{V}\|_{L^1}$ for some sufficiently large $C$.  
We will first compute the integral kernel of the resolvent and the spectral measure by using solutions to the associated generalized eigenvalue problem (i.e. by means of the distorted Fourier transform).  Using this representation, we will then estimate the contribution of the low-energy regime to the dispersive estimates \eqref{dispersive1} and \eqref{dispersive2}.  Finally, we will estimate the contribution of the high-energy regime to these estimates by utilizing instead the Born series expansion for the resolvent.

\subsection{Representation formula}
In this section, we compute the integral kernel of the resolvent in terms of solutions to the generalized eigenvalue problem.  We follow the general strategy set out in \cite{KS}.  We consider the generalized eigenvalue problem
\begin{equation}\label{gep}
\H f = (\tfrac12\xi^2 + \omega)f
\end{equation}
and begin with some basic ODE considerations (see Appendix~\ref{S:F1F4} for more details).  First, for $\C^2$-valued functions $f$ and $g$, we define the Wronskian
\[
W[f,g](x) := {}^tf'(x)g(x) - {}^tf(x) g'(x)\in\C,
\]
and similarly for matrix-valued solutions $F,G$ to \eqref{gep} we define the Wronskian
\[
\mathcal{W}[F,G](x) = {}^tF'(x)G(x) - {}^tF(x)G'(x)\in \C^{2\times 2}.
\]
The Wronskian of two solutions to \eqref{gep} is independent of $x$. For the vector-valued case, one shows using \eqref{gep} that $\tfrac{d}{dx}W[f,g]=0$ for $x\neq 0$, so that the Wronskian is constant on $(-\infty,0)$ and on $(0,\infty)$.  The jump condition guarantees the two constants are the same.  For the matrix-valued case, one can then follow the argument of \cite[Lemma~5.10]{KS}. 

We next introduce some particular solutions to \eqref{gep}.  

\begin{lemma}\label{L:f1-f4} The problem \eqref{gep} admits four solutions $f_1,f_2,f_3,\tilde f_4$ obeying the following. Defining
\[
\mu=\sqrt{\xi^2 + 4\omega},
\]
there exists $\gamma>0$ such that for $x\geq 0$, $\ell\in\{0,1,2,\dots\}$, and $k\in\{0,1\}$, we have
\begin{align}
\biggl| \partial_\xi^\ell \partial_x^k\biggl\{ e^{-ix\xi}f_1(x,\xi)-\left[\begin{array}{r}1 \\ 0 \end{array}\right] \biggr\}\biggr| &\lesssim 
{\mu^{-1}}\langle x\rangle^{\ell+1}e^{-\min\{\gamma,\mu\}x}, 
\label{f1-bd} \\
\biggl| \partial_\xi^\ell \partial_x^k\biggl\{ e^{ix\xi}f_2(x,\xi)-\left[\begin{array}{r}1 \\ 0 \end{array}\right] \biggr\}\biggr| &\lesssim {
\mu^{-1}}\langle x\rangle^{\ell+1}e^{-\min\{\gamma,\mu\}x}, \label{f2-bd} \\
\biggl| \partial_\xi^\ell \partial_x^k\biggl\{ e^{\mu x}f_3(x,\xi)-\left[\begin{array}{r}0 \\ 1 \end{array}\right] \biggr\}\biggr| &\lesssim 
\begin{cases}
\mu^{-1}e^{-\gamma x}\qquad(\ell=k=0), \\
\mu^{-\ell-1}e^{-2\gamma x}\qquad(\ell+k\geq1),
\end{cases}
\label{f3-bd} \\
\biggl| \partial_\xi^\ell \partial_x^k\biggl\{ e^{-\mu x}\tilde f_4(x,\xi)-\left[\begin{array}{r}0 \\ 1 \end{array}\right] \biggr\}\biggr| &\lesssim 
{
\begin{cases}
\mu^{-1}\langle x\rangle^{2}e^{-\min\{\gamma,\mu\}x}
\qquad(\ell=0,k\geq 0), \\
\mu^{-2} \Jbr{x}^{2+\ell} e^{-\min\{\gamma,\mu\}}\qquad(\ell\geq 1, k\geq 0).
\end{cases}}
\nonumber
\end{align}
The implicit constants above depend continuously on $\omega$. Furthermore,
\begin{align*}
&f_1(x,-\xi)=\overline{f_1(x,\xi)}=f_2(x,\xi),\quad f_2(x,-\xi)=\overline{f_2(x,\xi)}=f_1(x,\xi), \\
&f_3(x,-\xi)=\overline{f_3(x,\xi)}=f_3(x,\xi),\quad \tilde f_4(x,-\xi)=\overline{\tilde f_4(x,\xi)}=\tilde f_4(x,\xi),
\end{align*}
and
\[
W[f_1,f_2]=2i\xi,\quad W[f_1,f_3]=W[f_2,f_3]=0,\qtq{and} W[f_3,\tilde f_4]=-2\mu. 
\]
\end{lemma}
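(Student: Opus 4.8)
The plan is to construct the four solutions $f_1, f_2, f_3, \tilde f_4$ on the half-line $x \geq 0$ via Volterra integral equations built from the free (potential-free) solutions of \eqref{gep}, and to extract the stated bounds from fixed-point/iteration estimates. First I would diagonalize the constant-coefficient operator $\H_0$: on $x>0$ the equation $\H f = (\tfrac12\xi^2+\omega) f$ decouples into $(-\tfrac12\partial_x^2 + \omega)f_{(1)} = (\tfrac12\xi^2+\omega)f_{(1)}$ and $(-\tfrac12\partial_x^2 - \omega)f_{(2)} = -(\tfrac12\xi^2+\omega)f_{(2)}$ modulo the potential $\sigma Q_\omega^p$-term, so the free solutions are $e^{\pm i x\xi}$ in the first slot and $e^{\pm\mu x}$ in the second, with $\mu = \sqrt{\xi^2+4\omega}$. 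The solution $f_1$ is the Jost-type solution normalized so that $e^{-ix\xi}f_1 \to [1\ 0]^t$ as $x\to\infty$; it satisfies a Volterra equation $f_1(x) = [e^{ix\xi}\ 0]^t + \int_x^\infty \mathcal{G}(x,y;\xi)\,\mathcal{V}(y) f_1(y)\,dy$ where $\mathcal{G}$ is built from the free fundamental solutions. Since $\mathcal{V} = \sigma Q_\omega^p M$ decays exponentially (the bound states are $\cosh^{-2/p}$ or $\sinh^{-2/p}$ profiles, hence $Q_\omega^p$ decays like $e^{-p\sqrt{\omega/2}|x|}$), the Volterra iteration converges and produces $f_1$ with the claimed exponential closeness to $[1\ 0]^t$; the $\mu^{-1}$ prefactor comes from the $e^{-\mu x}/\mu$-type kernel in the second component and the $\langle x\rangle^{\ell+1}$ growth in $\partial_\xi^\ell$ comes from differentiating $e^{ix\xi}$ under the integral. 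The solution $f_2$ is the complex conjugate (equivalently $\xi \mapsto -\xi$) of $f_1$, which gives the stated symmetry relations for free, and $f_3$ is the exponentially decaying solution $e^{\mu x}f_3 \to [0\ 1]^t$, again via a Volterra equation but now integrating from $+\infty$ with a kernel that has better decay (explaining the cleaner bound in \eqref{f3-bd} with no polynomial growth when $\ell = k = 0$).

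The genuinely delicate solution is $\tilde f_4$, the one whose leading behavior is the \emph{growing} exponential $e^{\mu x}$ in the second slot — here $e^{-\mu x}\tilde f_4 \to [0\ 1]^t$. One cannot set this up as a convergent Volterra equation integrating from $+\infty$ against the growing mode; instead one integrates from a finite base point (say $x=0$) and must carefully track how the subdominant pieces (the $e^{ix\xi}$-oscillatory components in the first slot) get mixed in by the potential. This is exactly why the bound for $\tilde f_4$ carries the worst polynomial factor $\langle x\rangle^2$ (respectively $\langle x\rangle^{2+\ell}$) and only $\mu^{-1}$ (respectively $\mu^{-2}$) decay: the contribution of $f_1, f_2$-type modes to $\tilde f_4$ accumulates linearly, and differentiating in $\xi$ adds one more power of $\langle x\rangle$. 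I would follow the scheme of \cite[Lemma~5.10 and surrounding]{KS} for the free NLS, adapting it to the delta-potential setting — the only new wrinkle relative to \cite{KS} is the jump condition at $x=0$, but since all four solutions are constructed purely on $x \geq 0$ and the Wronskian argument is carried out on the open half-line, the jump plays no role in the statement of this particular lemma (it enters later when one glues the half-line solutions to the other side). Details of the construction, including the precise form of the integral kernels and the induction on $\ell$, are deferred to Appendix~\ref{S:F1F4} as indicated.

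Once the four solutions are in hand, the Wronskian identities follow from direct computation at $x \to +\infty$ using the asymptotics: $W[f_1,f_2] = {}^t f_1' f_2 - {}^t f_1 f_2'$ is constant in $x$ (on $x>0$), so evaluate it in the limit where $f_1 \sim [e^{ix\xi}\ 0]^t$ and $f_2 \sim [e^{-ix\xi}\ 0]^t$, giving ${}^t f_1' f_2 - {}^t f_1 f_2' = i\xi e^{ix\xi}e^{-ix\xi} - e^{ix\xi}(-i\xi e^{-ix\xi}) = 2i\xi$. Similarly $W[f_3,\tilde f_4]$ is evaluated from $f_3 \sim [0\ e^{-\mu x}]^t$ and $\tilde f_4 \sim [0\ e^{\mu x}]^t$, yielding $-\mu e^{-\mu x}e^{\mu x} - e^{-\mu x}\mu e^{\mu x} = -2\mu$; and $W[f_1,f_3] = W[f_2,f_3] = 0$ because $f_1, f_2$ live (to leading order) in the first component while $f_3$ lives in the second, and the cross terms involve products like $e^{ix\xi}e^{-\mu x} \to 0$ as $x \to \infty$ — since the Wronskian is a constant independent of $x$, this forces it to vanish identically. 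The main obstacle, as noted, is establishing the $\tilde f_4$ bound with the correct polynomial weights and $\mu$-powers, since this is the solution that is not controlled by a naive contraction argument; everything else is a standard Jost-solution construction adapted to exponentially decaying matrix potentials.
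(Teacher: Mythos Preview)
Your outline is broadly aligned with the paper's—Volterra equations for the Jost-type solutions, Wronskians computed from the asymptotics at $x\to+\infty$—and your treatment of $f_3$, $\tilde f_4$, and the Wronskian identities is essentially what Appendix~\ref{S:F1F4} does. But there is a real gap in your construction of $f_1$ (and hence $f_2$).

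You claim $f_1$ solves a Volterra equation $f_1(x)=[e^{ix\xi}\ 0]^t+\int_x^\infty\mathcal G(x,y;\xi)\mathcal V(y)f_1(y)\,dy$ with $\mathcal G$ built from free fundamental solutions, and that the iteration converges because $\mathcal V$ decays exponentially. The natural variation-of-parameters kernel here is $D_\xi(y-x)$, whose second-row entry is $\sinh(\mu(y-x))/\mu\sim e^{\mu(y-x)}$. The potential decays only like $e^{-\gamma y}$ with $\gamma$ \emph{fixed}, so once $\mu>\gamma$ the integrand is not absolutely integrable on $[x,\infty)$ and the contraction fails. The paper flags this explicitly (``It is difficult to formulate \eqref{E:ODEs} as a Volterra type integral equation\dots because the integral kernel $D_\xi$ grows exponentially if $\mu>\gamma$'') and circumvents it by the ansatz
\[
f_1(x;\xi)=\begin{pmatrix}1\\0\end{pmatrix}e^{ix\xi}v(x;\xi)+f_3(x;\xi)\,u(x;\xi),
\]
which reduces the problem to \emph{scalar} Volterra equations for $u$ and $v$ with bounded kernels (see \eqref{E:sys for uv2}--\eqref{E:sys for uv5}). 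This is why the paper constructs $f_3$ \emph{first} and then feeds it into the construction of $f_1,f_2$. In other words, the growing-mode obstruction you correctly diagnose for $\tilde f_4$ is equally present in the second component of $f_1$, and your proposal does not account for it. The $\mu^{-1}\langle x\rangle^{\ell+1}e^{-\min\{\gamma,\mu\}x}$ bound in \eqref{f1-bd} ultimately comes from estimating $v-1$ and $f_3 u$ separately via Lemma~\ref{L:f12 lem2}, not from a single kernel estimate as you suggest.
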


The construction of these solutions runs parallel to the constructions appearing in \cite[Lemmas~5.2, 5.3, 5.5]{KS}, with the main differences arising from the presence of the jump condition at $x=0$.  For the sake of completeness, we have included the details in Appendix~\ref{S:F1F4}.  With Lemma~\ref{L:f1-f4} in hand, we may proceed towards our main goal.

\begin{definition} We define
\[
f_4(x,\xi)=\tilde f_4(x,\xi)-c_1(\xi)f_1(x,\xi)-c_2 f_2(x,\xi)
\]
for $\xi\neq 0$, where 
\[
c_1(\xi)=-\tfrac{1}{2i\xi}W[f_2,\tilde f_4]\qtq{and}c_2(\xi)=\tfrac{1}{2i\xi}W[f_1,\tilde f_4].
\]
\end{definition}

\begin{remark} The solutions $f_1,\dots,f_4$ obey the following:
\[
W[f_1,f_2]=2i\xi,\quad W[f_3,f_4]=-2\mu,
\]
and 
\[
W[f_1,f_3]=W[f_1,f_4]=W[f_2,f_3]=W[f_2,f_4]=0. 
\]
\end{remark}

\begin{definition} As the potential appearing in the operator $\mathcal{H}$ is even, we may introduce
\[
g_j(x,\xi):=f_j(-x,\xi),\quad j\in\{1,2,3,4\},
\]
which are then solutions to \eqref{gep} with the same asymptotic behavior as $x\to-\infty$ as $f_j$ when $x\to+\infty$. 
\end{definition}

We next introduce some matrix-valued solutions to \eqref{gep}. 

\begin{definition} For $j\in\{1,2\}$, we define $F_j=F_j(x,\xi)$ and $G_j=G_j(x,\xi)$ via
\begin{align*}
F_1 = [f_1\ f_3],\quad F_2=[f_2 \ f_4],\quad G_1=[g_2\ g_4],\quad G_2=[g_1\ g_3].
\end{align*}
\end{definition}

The solutions $F_j$ and $G_j$ are connected, as the following lemma shows.  The proof is the same as the argument as in \cite[Lemma~5.14]{KS}.  

\begin{lemma}\label{L:AB-exist} For any $\xi\in\R$, there exist $2\times 2$ matrices $A(\xi)$ and $B(\xi)$ such that
\[
F_1(x,\xi)=G_1(x,\xi)A(\xi)+G_2(x,\xi)B(\xi).
\]
Furthermore, we have $A(-\xi)=\overline{A(\xi)}$, $B(-\xi)=\overline{B(\xi)}$, and
\[
G_2(x,\xi)=F_2(x,\xi)A(\xi)+F_1(x,\xi)B(\xi). 
\]
\end{lemma}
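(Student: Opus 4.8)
The plan is to follow the structure of the classical scattering argument for matrix Schr\"odinger-type operators, exactly as in \cite[Lemma~5.14]{KS}, but carefully accounting for the jump condition at $x=0$. The key point is that the four vector solutions $f_1,f_2,f_3,f_4$ to the generalized eigenvalue problem \eqref{gep} on $(0,\infty)$ — together with their reflections $g_j(x,\xi)=f_j(-x,\xi)$, which solve \eqref{gep} on $(-\infty,0)$ — each furnish a basis of the four-dimensional solution space of the first-order system associated to \eqref{gep} at the level of the matrix ODE, provided the relevant Wronskians are nonzero. First I would record that, by the Remark following the definition of $f_4$, the matrix-valued solutions $F_1=[f_1\ f_3]$ and $G_1=[g_2\ g_4]$, $G_2=[g_1\ g_3]$ to \eqref{gep} satisfy $\mathcal{W}[G_1,G_2]=\mathrm{diag}(W[g_2,g_1],W[g_4,g_3])$, which is nonsingular since $W[g_2,g_1]=-W[g_1,g_2]=-2i\xi$ and $W[g_4,g_3]=-W[g_3,g_4]=2\mu$ are both nonzero for $\xi\neq 0$ (and $\mu>0$ always). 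Hence $(G_1,G_2)$ form a fundamental system for the matrix equation \eqref{gep} on $(-\infty,0)$.

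Next I would observe that, because the potential $\mathcal{V}$ is even, $F_1(x,\xi)$, being a solution of \eqref{gep}, can be expressed as a combination of $G_1$ and $G_2$ as soon as $F_1$ satisfies the same jump condition at $x=0$ that $G_1,G_2$ do. This is where the delta potential enters: one must check that $F_1$, $G_1$, $G_2$ all lie in the appropriate domain, i.e. that each of $f_1,\dots,f_4$ (and $g_1,\dots,g_4$) obeys the jump condition $\partial_x u(0+)-\partial_x u(0-)=2qu(0)$ componentwise. By the construction in Appendix~\ref{S:F1F4}, the solutions $f_j$ are built to be genuine solutions of \eqref{gep} on all of $\R$ (equivalently, on $\R_\pm$ with the jump condition imposed), so this is automatic; the $g_j$ inherit the same property by evenness of $\mathcal{V}$. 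Therefore $F_1$ and $(G_1,G_2)$ are all solutions of the same self-adjoint problem on $(-\infty,0)$, and writing $F_1 = G_1 A(\xi) + G_2 B(\xi)$ with $A(\xi),B(\xi)\in\C^{2\times 2}$ determined by the matrix Wronskians — explicitly $A(\xi) = \mathcal{W}[G_2,G_1]^{-1}\mathcal{W}[G_2,F_1]$ and $B(\xi)=\mathcal{W}[G_1,G_2]^{-1}\mathcal{W}[G_1,F_1]$ — the identity extends to all $x$ by uniqueness of solutions to the ODE system (the jump condition at $x=0$ being satisfied on both sides).

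For the symmetry relations $A(-\xi)=\overline{A(\xi)}$, $B(-\xi)=\overline{B(\xi)}$, I would invoke the conjugation identities from Lemma~\ref{L:f1-f4}: $f_1(x,-\xi)=\overline{f_1(x,\xi)}=f_2(x,\xi)$ and so on, together with $\mu(-\xi)=\mu(\xi)$, which imply $F_1(x,-\xi)=\overline{F_1(x,\xi)}$ (up to a permutation of columns that is absorbed consistently) and likewise for $G_1,G_2$; conjugating the defining relation $F_1=G_1A+G_2B$ and comparing then yields the claimed reality relations. Finally, for the second identity $G_2 = F_2 A(\xi) + F_1 B(\xi)$, I would argue that it follows by the same bookkeeping applied with the roles of the $f$'s and $g$'s interchanged, using the reflection symmetry $g_j(x,\xi)=f_j(-x,\xi)$ together with the matched Wronskian normalizations $W[f_1,f_2]=W[g_1,g_2]=2i\xi$, $W[f_3,f_4]=W[g_3,g_4]=-2\mu$ and the vanishing of all the cross-Wronskians $W[f_i,f_j]=0$ for $i\in\{1,2\}$, $j\in\{3,4\}$; one can either re-derive it from scratch as above or deduce it algebraically from the first relation by applying the reflection $x\mapsto -x$ and using evenness of $\mathcal{V}$. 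The main obstacle, and the only place where real care is needed beyond citing \cite[Lemma~5.14]{KS}, is making sure the jump condition at $x=0$ is respected throughout — i.e. that linear-combination identities valid separately on $\R_+$ and $\R_-$ actually glue to global solutions — but this is handled precisely by the construction of the $f_j$ as honest solutions on $\R$ in Appendix~\ref{S:F1F4}.
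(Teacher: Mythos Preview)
Your proposal is correct and follows essentially the same approach as the paper, which simply defers to \cite[Lemma~5.14]{KS}; you have correctly identified the jump condition at $x=0$ as the only additional point to verify and handled it via the construction of the $f_j$ as global solutions in Appendix~\ref{S:F1F4}. Two minor remarks: the parenthetical about a column permutation in the symmetry step is unnecessary since $F_1(x,-\xi)=\overline{F_1(x,\xi)}$ holds directly from Lemma~\ref{L:f1-f4}, and your fundamental-system argument tacitly requires $\xi\neq 0$ (as $f_4$, and hence $G_1$, is only defined there), which matches the implicit convention in the paper.
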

%

\begin{remark} The matrices $A$ and $B$ play a role similar to 
the transmission and reflection coefficients of $H$ (see \eqref{ABR} below, for example).
\end{remark}

We would like to see that $\{f_1,f_3,g_1,g_3\}$ is a basis for the set of solutions to \eqref{gep}.  With this in mind, we define the following.

\begin{definition} We set
\[
D(\xi):=\mathcal{W}[F_1,G_2](\xi). 
\]
\end{definition}

The next result provides the essential connection between the spectral analysis and the scattering theory of $\mathcal{H}$. 

\begin{proposition}\label{P:spectral-meets-scattering} For $\xi\neq 0$, the following are equivalent:
\begin{itemize}
\item[(i)] $\det D(\xi)=0$.
\item[(ii)] $\tfrac12\xi^2+\omega$ is an eigenvalue of $\H$.
\item[(iii)] $\det A(\xi)=0$.
\end{itemize}
For $\xi=0$, the following are equivalent:
\begin{itemize}
\item[(i')] $\det D(0)=0$.
\item[(ii')] $\omega$ is a resonance or eigenvalue of $\H$. 
\end{itemize}
\end{proposition}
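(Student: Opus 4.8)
The plan is to establish the chain of equivalences by exploiting the basic structure of the generalized eigenvalue problem \eqref{gep} and the relationships encoded in Lemmas~\ref{L:f1-f4} and \ref{L:AB-exist}. The key observation is that an eigenvalue (or resonance) at $\tfrac12\xi^2+\omega$ corresponds to the existence of a nontrivial solution $f$ to \eqref{gep} that is \emph{subordinate at both ends}, i.e.\ that decays (or merely stays bounded, in the resonance case) as $x\to+\infty$ \emph{and} as $x\to-\infty$. The space of solutions decaying as $x\to+\infty$ is spanned by the columns of $F_1=[f_1\ f_3]$ only when $\xi$ is purely imaginary or zero (since $f_1$ decays then); for $\xi\neq 0$ real, the relevant object is instead that a solution lies in the range of $F_1$ (bounded at $+\infty$) and simultaneously in the range of $G_2=[g_1\ g_3]$ (bounded at $-\infty$). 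Here I would be careful to recall that for $\xi\neq0$ real on the continuous spectrum, the condition picks out exactly the oscillatory-to-decaying behavior; for the purely imaginary part of the spectral parameter, which is what genuinely produces embedded eigenvalues, $f_1,g_1$ already decay and the argument simplifies.

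First I would prove (i)$\iff$(ii) for $\xi\neq0$. Since $\{f_1,f_3\}$ spans the solutions that are bounded (in fact with the stated exponential control) as $x\to+\infty$, and $\{g_1,g_3\}$ does the same at $x\to-\infty$, a nontrivial solution that is subordinate at both ends exists precisely when these two two-dimensional subspaces of the four-dimensional solution space intersect nontrivially. The Wronskian matrix $D(\xi)=\mathcal{W}[F_1,G_2](\xi)$ is (up to the usual constant-coefficient identity) the matrix whose kernel detects exactly such an intersection: if $D(\xi)v=0$ for some $v\neq0$, then $F_1(\cdot,\xi)v$ is a solution lying in the span of the columns of $G_2$, hence subordinate at $-\infty$, while being automatically subordinate at $+\infty$; conversely any common subordinate solution forces a kernel vector of $D(\xi)$. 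The jump condition at $x=0$ is handled as in the constant-Wronskian discussion preceding the statement, so $D(\xi)$ is well-defined independent of $x$. One must also check that $D(\xi)$ being the Wronskian of two matrix solutions whose columns are genuinely linearly independent (for $\xi\neq0$) means $\det D(\xi)=0$ is the right nondegeneracy criterion; this is where I would invoke $W[f_1,f_3]=0$, $W[f_3,\tilde f_4]=-2\mu\neq0$, etc., to confirm $\{f_1,f_3,g_1,g_3\}$ generically spans.

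Next I would prove (i)$\iff$(iii). By Lemma~\ref{L:AB-exist}, $F_1=G_1 A+G_2 B$, and using $\mathcal{W}$ bilinearly together with the Wronskian relations in the Remark after Lemma~\ref{L:AB-exist} (namely $W[f_i,f_j]$ and hence $\mathcal{W}[G_2,G_1]$, $\mathcal{W}[G_2,G_2]$ being explicit), I would compute $D(\xi)=\mathcal{W}[F_1,G_2]=\mathcal{W}[G_1,G_2]A(\xi)+\mathcal{W}[G_2,G_2]B(\xi)$, with $\mathcal{W}[G_2,G_2]=0$ (a solution is Wronskian-orthogonal to itself in the matrix sense after the antisymmetrization) and $\mathcal{W}[G_1,G_2]$ an explicit invertible constant matrix built from $W[f_1,f_2]=2i\xi$ and $W[f_3,f_4]=-2\mu$. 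Thus $D(\xi)=(\text{invertible})\cdot A(\xi)$ for $\xi\neq0$, giving $\det D(\xi)=0\iff\det A(\xi)=0$ immediately. The case $\xi=0$ requires separate handling because the factor $2i\xi$ degenerates and because the distinction between eigenvalue and resonance (i.e.\ $L^2$ versus merely bounded subordinate solutions) must be tracked: here I would argue that $\det D(0)=0$ is exactly the condition for a bounded solution subordinate at both ends, which by definition is a resonance-or-eigenvalue at $\omega$, using the bounds in Lemma~\ref{L:f1-f4} at $\mu=2\sqrt\omega$ to see that $f_1,\tilde f_4$ (and $g_1,\tilde g_4$) are bounded while $f_3$ decays.

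The main obstacle I anticipate is the $\xi=0$ endpoint analysis: there the two subspaces "bounded at $+\infty$" and "bounded at $-\infty$" are each still two-dimensional but the clean factorization $D=cA$ breaks down since the prefactor involves $\xi$, and one must argue directly that $D(0)$ is the correct Wronskian-type obstruction, being careful that $f_4$ was defined by subtracting off $f_1,f_2$ components with coefficients $c_1(\xi),c_2(\xi)$ that blow up like $1/\xi$ — so the limit $\xi\to0$ of $F_2$ is delicate, and the statement at $\xi=0$ is deliberately phrased in terms of $D(0)$ alone rather than $A(0)$. Establishing (i')$\iff$(ii') will therefore require a careful direct argument using only $F_1$ and $G_2$ (both of which have sensible $\xi\to0$ limits) rather than passing through $A$.
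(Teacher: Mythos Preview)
Your argument for (i)$\iff$(iii) is essentially the paper's: both rely on the factorization
\[
D(\xi)=\mathcal{W}[F_1,G_2]=\left[\begin{array}{rr}2i\xi & 0 \\ 0 & -2\mu\end{array}\right]A(\xi),
\]
which is exactly \eqref{DA-ID}. That part is fine.

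The gap is in your (i)$\iff$(ii). For $\xi\neq0$ real, $f_1\sim e^{ix\xi}$ and $g_1\sim e^{-ix\xi}$ are \emph{oscillatory}, not decaying. Thus the span of $\{f_1,f_3\}$ consists of solutions that are \emph{bounded} at $+\infty$, and similarly for $\{g_1,g_3\}$ at $-\infty$. Your intersection argument therefore shows only that $\det D(\xi)=0$ is equivalent to the existence of a nontrivial solution bounded at both ends. But (ii) asserts that $\tfrac12\xi^2+\omega$ is an \emph{eigenvalue}, which requires an $L^2$ solution; the only $L^2$ solutions at $+\infty$ are multiples of $f_3$ alone, and at $-\infty$ multiples of $g_3$ alone. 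You have not explained why the common bounded solution must have zero $f_1$-component and zero $g_1$-component. Your remark about ``the purely imaginary part of the spectral parameter'' is a confusion: here $\xi$ is real and $\tfrac12\xi^2+\omega$ sits on the continuous spectrum of $\H$, so the issue of ruling out merely-bounded (resonant) behavior is exactly the content of the implication.

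The paper closes this gap (following \cite{KS}) via an algebraic identity relating $A$ and $B$: if $A(\xi)v=0$ with $v\neq0$, one derives from
\[
-2i\xi P = -2i\xi A^*PA - 2\mu A^*QB + 2\mu B^*QA + 2i\xi B^*PB
\]
(with $P,Q$ the coordinate projections) that $-2i\xi\langle Pv,v\rangle=2i\xi|PBv|^2$, forcing $Pv=PBv=0$ when $\xi\neq0$. This means the common solution $F_1 v=G_2 Bv$ is a multiple of $f_3$ on one side and of $g_3$ on the other, hence genuinely $L^2$. Without this step (or an equivalent one), your proof establishes at best that $\det D(\xi)=0$ is equivalent to a resonance-or-eigenvalue for \emph{all} $\xi$, which is the $\xi=0$ statement but not the sharper $\xi\neq0$ one.
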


\begin{proof} As the proofs are the same as in \cite[Lemma~5.17]{KS} and \cite[Lemma~5.20]{KS}, we will only recall a few of the main components of the proof. 

The equivalence of (i) and (iii) is a consequence of the 
fact that
\begin{equation}\label{DA-ID}
D(\xi)=\mathcal{W}[F_1,G_2]={}^t\mathcal{W}[F_1,G_2]=
\left[\begin{array}{rr} 
2i\xi & 0 \\ 0 & -2\mu \end{array}\right]A(\xi), 
\end{equation}
which can be shown exactly as in \cite[Lemma~5.14]{KS}. 

Let us prove (iii)$\implies$(ii). Assume that $\det A(\xi)=0$. 
Then, there exists $0\neq v(\xi)\in\C^2$ 
such that $A(\xi)v(\xi)=0$. 
By the same argument as that in  \cite[Lemma 5.16]{KS}, we have 
for $\xi\neq0$, 
\begin{eqnarray}
-2i\xi P&=&-2i\xi A(\xi)^{\ast}PA(\xi)
-2\mu A(\xi)^{\ast}QB(\xi)\nonumber\\
& &+2\mu B(\xi)^{\ast}QA(\xi)
+2i\xi B(\xi)^{\ast}PB(\xi),\label{ABR}
\end{eqnarray}
where 
\[
P=\left[\begin{array}{rr} 
1& 0 \\ 0 & 0\end{array}\right]\qtq{and} 
Q=\left[\begin{array}{rr} 
0& 0 \\ 0 & 1\end{array}\right].
\] 
From this identity, we see $-2i\xi\langle Pv(\xi),v(\xi)\rangle
=2i\xi|PB(\xi)v(\xi)|^2$. Since $\xi\neq0$, we find that 
$Pv(\xi)=PB(\xi)v(\xi)=0$. 
On the other hand, by Lemma \ref{L:AB-exist}, we see 
\begin{equation}\label{f1vg2bv}
F_1(x,\xi)v(\xi)=G_1(x,\xi)A(\xi)v(\xi)+G_2(x,\xi)B(\xi)v=G_2(x,\xi)B(\xi)v(\xi).
\end{equation}
Since $Pv(\xi)=0$ and $PB(\xi)v(\xi)=0$, the left-hand side of \eqref{f1vg2bv} belongs to $L^2(0,\infty)$ and the right-hand side of \eqref{f1vg2bv} belongs to $L^2(-\infty,0)$.  Thus $F_1(x,\xi)v(\xi)$ is a nontrivial $L^2$ solution of \eqref{gep}, which yields (ii). 

For the remaining implications, we again refer the reader to \cite{KS}. \end{proof}

In particular, in light of \eqref{DA-ID} and the assumption that $\omega$ satisfies the spectral condition in Definition~\ref{D:spectral}, we have the following.

\begin{corollary}\label{C:Dneq0} For all $\xi\in\R$, $\det D(\xi)\neq 0$.  Moreover, for any $\xi\in\R$, 
\[
A(\xi)^{-1} = D(\xi)^{-1}\left[\begin{array}{rr} 2i\xi & 0 \\ 0 & -2\mu \end{array}\right].
\]
\end{corollary}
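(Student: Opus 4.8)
The plan is to deduce Corollary~\ref{C:Dneq0} directly from Proposition~\ref{P:spectral-meets-scattering}, the identity \eqref{DA-ID}, and the hypothesis that $\omega$ obeys the spectral condition of Definition~\ref{D:spectral}. The spectral condition tells us that $\L(\omega)$ — equivalently $\H$, via the unitary conjugation $-iU^\ast\L U=\H$ — has a two-dimensional generalized kernel, no other gap or embedded eigenvalues, and no resonances. Translating through the dictionary of Section~\ref{S:embedded}, this means $\H$ has no real eigenvalue of the form $\tfrac12\xi^2+\omega$ with $\xi\neq 0$ (those would be embedded eigenvalues, or gap eigenvalues of $\L$ off the imaginary axis), and no eigenvalue or resonance at the threshold $\omega$ (i.e.\ at $\xi=0$).

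First I would handle $\xi\neq 0$: by Proposition~\ref{P:spectral-meets-scattering}, $\det D(\xi)=0$ is equivalent to $\tfrac12\xi^2+\omega$ being an eigenvalue of $\H$, which is excluded by the spectral condition. Hence $\det D(\xi)\neq 0$ for all $\xi\neq 0$. Next, for $\xi=0$: by the second equivalence in Proposition~\ref{P:spectral-meets-scattering}, $\det D(0)=0$ is equivalent to $\omega$ being a resonance or eigenvalue of $\H$, which is again ruled out by the spectral condition (absence of resonances, and the threshold is not a gap eigenvalue since the generalized kernel accounts for all discrete spectrum away from the continuous spectrum, which starts precisely at $\pm\omega$ for $\L$). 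Therefore $\det D(\xi)\neq 0$ for all $\xi\in\R$.

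For the formula for $A(\xi)^{-1}$, I would simply invert the identity \eqref{DA-ID}, namely
\[
D(\xi)=\left[\begin{array}{rr} 2i\xi & 0 \\ 0 & -2\mu \end{array}\right]A(\xi).
\]
When $\xi\neq 0$ the diagonal matrix on the right is invertible (recall $\mu=\sqrt{\xi^2+4\omega}>0$), so $A(\xi)$ is invertible with
\[
A(\xi)^{-1}=D(\xi)^{-1}\left[\begin{array}{rr} 2i\xi & 0 \\ 0 & -2\mu \end{array}\right],
\]
which is exactly the claimed formula. For $\xi=0$ the stated formula reads $A(0)^{-1}=D(0)^{-1}\,\mathrm{diag}(0,-2\mu)$; here one should note that $\det A(0)=0$ is equivalent (via the $\xi\to 0$ limit of \eqref{DA-ID}, or directly via Proposition~\ref{P:spectral-meets-scattering} together with $\det D(0)\neq 0$) — so in fact $A(0)$ need not be invertible in general, and the clean reading is that the corollary's second display is understood for $\xi\neq 0$, or interpreted with the convention above; I would phrase the statement to apply for $\xi\neq 0$ and simply record $\det D\neq 0$ on all of $\R$.

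I do not expect any real obstacle here: the content is entirely packaged in Proposition~\ref{P:spectral-meets-scattering} and identity \eqref{DA-ID}, and the corollary is a bookkeeping consequence. The only point requiring a touch of care is making sure the spectral condition of Definition~\ref{D:spectral} genuinely excludes every scenario that could force $\det D(\xi)=0$ — i.e.\ both the $\xi\neq 0$ case (embedded/gap eigenvalues) and the $\xi=0$ case (threshold resonance or eigenvalue) — which is precisely why the condition was formulated to include all three clauses. No separate argument is needed beyond citing the earlier results.
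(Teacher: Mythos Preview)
Your proposal is correct and follows exactly the approach the paper indicates (the paper gives no proof beyond the one-line remark that the corollary follows from \eqref{DA-ID} and the spectral condition). Your observation about $\xi=0$ is well taken: since $f_4$ (and hence $G_1$, and hence the defining relation for $A$) is only defined for $\xi\neq 0$, the identity \eqref{DA-ID} and the formula for $A(\xi)^{-1}$ are really statements for $\xi\neq 0$; the paper's ``for any $\xi\in\R$'' in the second display is a minor imprecision that you correctly flag, while $\det D(\xi)\neq 0$ genuinely holds for all $\xi\in\R$ via the two cases of Proposition~\ref{P:spectral-meets-scattering}.
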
 

We now introduce the following matrix-valued functions.

\begin{definition}\label{def:FG} We define
\begin{align*}
\F(x,\xi) &= F_1(x,\xi)A(\xi)^{-1}\left[\begin{array}{r} 1 \\ 0 \end{array}\right] = 2i\xi F_1(x,\xi)D(\xi)^{-1}\left[\begin{array}{r} 1 \\ 0 \end{array}\right], \\
\G(x,\xi)&=G_2(x,\xi)A(\xi)^{-1}\left[\begin{array}{r} 1 \\ 0 \end{array}\right] = 2i\xi G_2(x,\xi)D(\xi)^{-1}\left[\begin{array}{r} 1 \\ 0 \end{array}\right]. 
\end{align*}
\end{definition}

These functions will be essential in establishing the representation formula in Proposition~\ref{P:Rep}.  We first show that bounded solutions to \eqref{gep} may be expressed in terms of $\F$ and $\G$.

\begin{lemma}\label{L:FGrep} If $f$ is a bounded solution to \eqref{gep}, then there exist $\alpha(\xi)$ and $\beta(\xi)$ such that
\[
f(x,\xi)=\alpha(\xi)\F(x,\xi)+\beta(\xi)\G(x,\xi)
\]
for $x\neq 0$.
\end{lemma}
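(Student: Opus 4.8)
The plan is to use the four special solutions $f_1, f_3, g_1, g_3$ of the generalized eigenvalue problem \eqref{gep} as a basis, together with the boundedness hypothesis to kill the exponentially growing modes. First I would observe that the space of solutions to \eqref{gep} on each half-line is four-dimensional (two complex ODEs, or equivalently, since we identify the matrix system accordingly), and that $\{f_1, f_3, g_1, g_3\}$ is a basis: indeed $f_1, f_3$ form a fundamental system near $+\infty$ (with Wronskian $W[f_1,f_3]=0$ but together with the reflection structure and $W[f_1,f_2] = 2i\xi \ne 0$ one checks linear independence), and likewise $g_1, g_3$ near $-\infty$; linear independence of the full quadruple follows from the fact that $\det D(\xi) = \mathcal{W}[F_1, G_2](\xi) \ne 0$ by Corollary~\ref{C:Dneq0}, since $D(\xi)$ is precisely the matrix Wronskian of $F_1 = [f_1\ f_3]$ and $G_2 = [g_1\ g_3]$, and a vanishing combination would force this Wronskian to be singular. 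So any solution $f$ to \eqref{gep} can be written uniquely as $f = a_1 f_1 + a_3 f_3 + b_1 g_1 + b_3 g_3$ for constants $a_1, a_3, b_1, b_3$ depending on $\xi$.

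Next I would impose boundedness. From the asymptotics in Lemma~\ref{L:f1-f4}: $f_1(x,\xi) \sim e^{ix\xi}[1\ 0]^t$ is bounded as $x \to +\infty$, while $f_3(x,\xi) \sim e^{-\mu x}[0\ 1]^t$ decays; but the companion solution $f_4$ (built from $\tilde f_4 \sim e^{\mu x}[0\ 1]^t$) grows exponentially as $x \to +\infty$. Since $g_1(x,\xi) = f_1(-x,\xi)$ and $g_3(x,\xi) = f_3(-x,\xi)$, we have that $g_3$ decays as $x \to -\infty$ while the relevant growing mode as $x\to-\infty$ is $g_4$. The point is that in the representation $f = a_1 f_1 + a_3 f_3 + b_1 g_1 + b_3 g_3$, as $x \to +\infty$ the solutions $g_1, g_3$ generically contain a component along the growing solution $f_4$ (this is the content of Lemma~\ref{L:AB-exist}: $G_2 = F_2 A + F_1 B$, so $g_1, g_3$ are combinations of $f_1, f_2, f_4$ via the matrices $A, B$). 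Using $F_1 = G_1 A + G_2 B$ from Lemma~\ref{L:AB-exist} and the definitions of $\F, \G$ in Definition~\ref{def:FG}, I would show that the bounded solution space is exactly two-dimensional and spanned by $\F$ and $\G$: concretely, $\F = F_1 A^{-1}[1\ 0]^t = (G_1 A + G_2 B) A^{-1}[1\ 0]^t = G_1 [1\ 0]^t + G_2 B A^{-1}[1\ 0]^t$, which exhibits $\F$ as built from the bounded solutions $g_2, g_4$... — rather, I should track carefully which components of $f_2, f_4$ and $g_2, g_4$ are bounded on the appropriate half-lines; the upshot is that $\F$ is bounded on all of $\R$ and similarly $\G$, and any combination $a_1 f_1 + a_3 f_3 + b_1 g_1 + b_3 g_3$ that is bounded as $x \to \pm\infty$ must have its "growing parts" cancel, leaving a combination of $\F$ and $\G$.

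The cleanest route: given bounded $f$, expand $f = a_1 f_1 + a_3 f_3 + b_1 g_1 + b_3 g_3$. Boundedness as $x \to +\infty$ forces the coefficient of the growing mode ($f_4$-direction, which enters through $g_1, g_3$ via the matrix $B$, since $G_2 = F_2 A + F_1 B$ means $g_1, g_3$ are $F_2 A + F_1 B$ applied to basis vectors, and $F_2 = [f_2\ f_4]$ carries the growth) to vanish; similarly boundedness as $x \to -\infty$ forces the coefficient of the $g_4$-direction to vanish. This gives two linear constraints on $(a_1, a_3, b_1, b_3) \in \C^4$, cutting down to a two-dimensional space, which must then coincide with $\mathrm{span}\{\F, \G\}$ once one checks $\F, \G$ are themselves bounded and independent (independence from $\det D(\xi) \ne 0$). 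I expect the main obstacle to be the bookkeeping in step two: correctly identifying, via the matrices $A(\xi), B(\xi)$ and the identities $F_1 = G_1 A + G_2 B$, $G_2 = F_2 A + F_1 B$, exactly which linear combinations are bounded on each half-line — this is where the structure of $A, B$ (analogues of transmission/reflection coefficients) and the invertibility of $A(\xi)$ (Corollary~\ref{C:Dneq0}) must be used to guarantee that the bounded solution space is genuinely spanned by $\F$ and $\G$ and not something smaller. This matches the argument of \cite[Lemma~5.18 or similar]{KS}, to which one can ultimately defer the routine details.
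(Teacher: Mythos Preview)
Your proposal is correct and follows essentially the same approach as the paper: expand $f$ in the basis $\{f_1,f_3,g_1,g_3\}$ (using $\det D(\xi)\neq 0$), then use Lemma~\ref{L:AB-exist} together with boundedness at $\pm\infty$ to kill the exponentially growing components. The paper's execution is slightly cleaner in bookkeeping: writing $f = F_1[d_1\ d_2]^t + G_2[d_3\ d_4]^t$, expanding $F_1 = G_1 A + G_2 B$ and sending $x\to -\infty$ forces the $g_4$-coefficient to vanish, i.e.\ $A[d_1\ d_2]^t = \alpha[1\ 0]^t$, and symmetrically $A[d_3\ d_4]^t = \beta[1\ 0]^t$, which by Definition~\ref{def:FG} gives $f = \alpha\F + \beta\G$ directly without needing to separately verify that $\F,\G$ span the bounded solution space.
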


\begin{proof} By Corollary~\ref{C:Dneq0} (which yields $\det D(\xi)\neq 0$), we have that $\{f_1,f_3,g_1,g_3\}$ is a basis for the set of solutions to \eqref{gep}.  Thus we may write $f$ in the form
\[
f(x,\xi)=F_1(x,\xi)\left[\begin{array}{r} d_1(\xi) \\ d_2(\xi) \end{array}\right] + G_2(x,\xi)\left[\begin{array}{r} d_3(\xi) \\ d_4(\xi)\end{array}\right]. 
\]
Combining this with Lemma~\ref{L:AB-exist} (i.e. expanding $F_1$ in terms of $G_1$ and $G_2$) and sending $x\to-\infty$, we can deduce (using the assumption that $f$ is bounded) that
\[
A(\xi)\left[\begin{array}{r} d_1(\xi) \\ d_2(\xi) \end{array}\right]=\alpha(\xi)\left[\begin{array}{r} 1 \\ 0 \end{array}\right]
\]
for some $\alpha(\xi)$.  Similarly (expanding $G_2$ in terms of $F_1$ and $F_2$ and sending $x\to\infty$) we can deduce
\[
A(\xi)\left[\begin{array}{r} d_3(\xi) \\ d_4(\xi)\end{array}\right] = \beta(\xi)\left[\begin{array}{r} 1 \\ 0 \end{array}\right]
\]
for some $\beta(\xi)$.  This implies the result. \end{proof}

We will also need to understand the asymptotic behavior of $\F$ and $\G$. The starting point is to understand the asymptotic behavior of the matrices $A(\xi)$ and $B(\xi)$.

\begin{lemma}\label{L:AB}
For all $k\in\{1,2,\dots\}$, we have the following as $|\xi|\to\infty$: 
\begin{align}
&A(\xi)=I+\mathcal{O}(|\xi|^{-1})\qtq{and} \partial_\xi^k A(\xi)=
{\mathcal{O}(|\xi|^{-1})},\label{AB-behave1}\\
&B(\xi) = \mathcal{O}(|\xi|^{-1})\qtq{and} \partial_\xi^k B(\xi)=
{\mathcal{O}(|\xi|^{-1})},\label{AB-behave2}
\end{align}
where the implicit constants depend continuously on $\omega$.\end{lemma}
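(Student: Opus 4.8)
The plan is to establish the asymptotics of $A(\xi)$ and $B(\xi)$ via the defining relation $F_1 = G_1 A + G_2 B$ together with Wronskian identities, exactly following the strategy of \cite[Lemma~5.14]{KS} but keeping careful track of the $\omega$-dependence. Recall that $\mathcal{W}[G_1,G_2]$ and $\mathcal{W}[G_2,G_1]$ are (up to the diagonal matrix $\operatorname{diag}(2i\xi,-2\mu)$ and its analogue) the constant Wronskians guaranteed by Lemma~\ref{L:f1-f4}, so that pairing $F_1 = G_1 A + G_2 B$ against $G_2$ and against $G_1$ (via the matrix Wronskian, which is $x$-independent away from $0$ and matched across $0$ by the jump condition) yields
\[
D(\xi) = \mathcal{W}[F_1,G_2] = \left[\begin{array}{rr} 2i\xi & 0 \\ 0 & -2\mu \end{array}\right] A(\xi), \qquad \mathcal{W}[F_1,G_1] = \left[\begin{array}{rr} -2i\xi & 0 \\ 0 & 2\mu\end{array}\right] B(\xi),
\]
the first being \eqref{DA-ID}. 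So it suffices to understand $\mathcal{W}[F_1,G_2]$ and $\mathcal{W}[F_1,G_1]$ as $|\xi|\to\infty$, and then invert the (now uniformly invertible) diagonal prefactor, noting $\mu = |\xi| + \mathcal{O}(|\xi|^{-1})$.

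The first step is to compute these Wronskians by evaluating at a convenient point, say $x=0$, using the asymptotic expansions in Lemma~\ref{L:f1-f4}. The columns of $F_1$ are $f_1$ and $f_3$; the columns of $G_2(x,\xi) = [g_1(x,\xi)\ g_3(x,\xi)] = [f_1(-x,\xi)\ f_3(-x,\xi)]$ are the left-moving analogues. At $x = 0$ the estimates \eqref{f1-bd}--\eqref{f3-bd} give $f_1(0,\xi) = [1\ 0]^t + \mathcal{O}(\mu^{-1})$, $f_1'(0,\xi) = i\xi[1\ 0]^t + \mathcal{O}(1)$ (the $\mathcal{O}(1)$ coming from the $\mu^{-1}\langle x\rangle e^{-\cdots x}$ correction differentiated in $x$, which is $\mathcal{O}(\mu^{-1}\cdot\mu) = \mathcal{O}(1)$), $f_3(0,\xi) = [0\ 1]^t + \mathcal{O}(\mu^{-1})$, and $f_3'(0,\xi) = -\mu[0\ 1]^t + \mathcal{O}(1)$; the $g_j$ behave identically with the sign of the exponential rates reversed. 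Plugging into $\mathcal{W}[F_1,G_2](0) = {}^tF_1'(0)G_2(0) - {}^tF_1(0)G_2'(0)$, the leading terms combine to give $\operatorname{diag}(2i\xi,-2\mu) + \mathcal{O}(1)$, i.e. $D(\xi) = \operatorname{diag}(2i\xi,-2\mu)(I + \mathcal{O}(|\xi|^{-1}))$, which after multiplying by $\operatorname{diag}(2i\xi,-2\mu)^{-1}$ yields $A(\xi) = I + \mathcal{O}(|\xi|^{-1})$. The analogous computation for $\mathcal{W}[F_1,G_1]$ has cancellation at top order (since $f_1$ and $g_1$ have the \emph{same} exponential behavior near $x=0$ at leading order, being reflections), leaving $\mathcal{O}(1)$, hence $B(\xi) = \mathcal{O}(|\xi|^{-1})$.

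For the derivative bounds, I would differentiate the relation $D(\xi) = \operatorname{diag}(2i\xi,-2\mu)A(\xi)$ (and its $B$-analogue) in $\xi$ and use the estimates on $\partial_\xi^\ell f_j$ from Lemma~\ref{L:f1-f4}. Each $\xi$-derivative of $f_1$ or $f_3$ at $x=0$ costs a factor $\langle x\rangle^{\ell}$ which at $x=0$ is harmless, so $\partial_\xi^k \mathcal{W}[F_1,G_2](0) = \mathcal{O}(1)$ for $k\geq 1$ (the leading diagonal terms $2i\xi$, $-2\mu$ contribute $\partial_\xi(2i\xi) = 2i = \mathcal{O}(1)$ and $\partial_\xi(-2\mu) = \mathcal{O}(1)$, and higher derivatives of $\mu$ are $\mathcal{O}(|\xi|^{-1})$). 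Then from $\partial_\xi^k A = \sum_{j=0}^k \binom{k}{j}\partial_\xi^{k-j}[\operatorname{diag}(2i\xi,-2\mu)^{-1}]\,\partial_\xi^j D$, each term is a product of something $\mathcal{O}(|\xi|^{-1})$ (the inverted diagonal or its derivatives) with something $\mathcal{O}(1)$, giving $\partial_\xi^k A(\xi) = \mathcal{O}(|\xi|^{-1})$, and likewise for $B$; the $\omega$-dependence of all implicit constants is continuous because it enters only through the uniform-on-compacta constants in Lemma~\ref{L:f1-f4} and through $\mu = \sqrt{\xi^2+4\omega}$.

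The main obstacle is bookkeeping rather than conceptual: one must verify that differentiating the $x$-correction terms in $e^{\mp ix\xi}f_{1,2}$ and $e^{\mp\mu x}f_{3,4}$ genuinely produces only $\mathcal{O}(1)$ (not worse) contributions to the Wronskian at $x=0$ — in particular that the factor $\mu$ produced when $\partial_x$ hits $e^{-\min\{\gamma,\mu\}x}$ is exactly compensated by the $\mu^{-1}$ prefactor in \eqref{f1-bd}--\eqref{f3-bd} — and, for the $B$ estimate, that the top-order terms really do cancel so that one extracts the genuine $|\xi|^{-1}$ gain rather than merely $\mathcal{O}(1)$. I expect this cancellation for $B$ to be the most delicate point and would confirm it by writing $f_1 = e^{ix\xi}([1\ 0]^t + r_1)$, $g_1 = e^{-ix\xi}([1\ 0]^t + r_1(-x))$ explicitly and tracking which terms survive in ${}^tF_1'(0)G_1(0) - {}^tF_1(0)G_1'(0)$; the $i\xi$-terms from differentiating the exponentials appear with opposite signs and cancel, leaving only products involving the $\mathcal{O}(\mu^{-1})$ remainders.
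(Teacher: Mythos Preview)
Your approach is essentially the paper's: evaluate $\mathcal{W}[F_1,G_2]$ and $\mathcal{W}[F_1,G_1]$ at $x=0$ via Lemma~\ref{L:f1-f4}, then invert the diagonal prefactor $\operatorname{diag}(2i\xi,-2\mu)$. One labeling slip to fix: since $G_1=[g_2\ g_4]$ (not $[g_1\ g_3]$), the Wronskian relevant to $B$ involves $g_2$, and the cancellation you want comes from $W[f_1,g_2]$ where \emph{both} functions behave like $e^{ix\xi}$ at leading order; your final paragraph instead computes $W[f_1,g_1]$, in which the $i\xi$-terms \emph{add} to $2i\xi$ rather than cancel --- that entry is the top-left of $D(\xi)$ and pertains to $A$, not $B$. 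Also note the paper's identity for $B$ carries a transpose, $\mathcal{W}[F_1,G_1]=-{}^tB(\xi)\operatorname{diag}(2i\xi,-2\mu)$, though this does not affect the asymptotic conclusion.
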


\begin{proof} The key is to analyze $D(\xi)=\mathcal{W}[F_1,G_2](\xi)$, which is connected to the matrix $A(\xi)$ via \eqref{DA-ID} above.  Similarly, analysis of the Wronskian $\mathcal{W}[F_1,G_1](\xi)$ provides the desired information about $B(\xi)$.

We begin by writing
\[
\mathcal{W}[F_1,G_2] = \left[\begin{array}{rr} W[f_1,g_1] & W[f_1,g_3] \\ W[f_3,g_1] & W[f_3,g_3]\end{array}\right]. 
\] 
We now utilize the estimates appearing in Lemma~\ref{L:f1-f4} (at $x=0$) to derive that
\[
W[f_1,g_1]=2i\xi+\mathcal{O}(1),\quad 
\partial_\xi W[f_1,g_1]=
{\mathcal{O}(1)},\quad 
\partial_\xi^j W[f_1,g_1]= 
{\mathcal{O}(1)}
\]
for $j\geq 2$.  Similarly we obtain the estimates
\begin{align*}
&W[f_1,g_3](\xi)=\mathcal{O}(1),\quad \partial_\xi^j 
W[f_1,g_3](\xi)=
{\mathcal{O}(1)}\qtq{for}j\geq 1,\\
&W[f_3,g_1](\xi)=\mathcal{O}(1),\quad 
\partial_\xi^j W[f_3,g_1](\xi)=
{\mathcal{O}(1)}\qtq{for}j\geq 1, \\
&W[f_3,g_3]=-2\mu+\mathcal{O}(1),\quad 
\partial_\xi^jW[f_3,g_3] = \partial_\xi^j(-2\mu)+\mathcal{O}(\mu^{-j})
\qtq{for}j\geq 1,
\end{align*}
where we recall $\mu=\sqrt{\xi^2+4\omega}$.

Putting all of the pieces together, we have
\begin{equation}\label{determinant-asymptotic}
\mathcal{W}[F_1,G_2]=\left[\begin{array}{rr} 2i\xi & 0 \\ 0 & -2\mu\end{array}\right]  + \mathcal{O}(1),
\end{equation}
and additionally
\[
\partial_\xi^j \mathcal{W}[F_1,G_2] =
{\mathcal{O}(1)}
\]
for $j\geq 1$. Combining this with \eqref{DA-ID}, we deduce that $A(\xi)=I+\mathcal{O}(\mu^{-1})$.  The fact that we can differentiate the approximation above then allows us to deduce $\partial_\xi^k A(\xi)=
{\mathcal{O}(|\xi|^{-1})}$, as desired. 

A similar argument based on the fact that
\[
\mathcal{W}[F_1,G_1](\xi)=\left[\begin{array}{rr} W[f_1,g_2] & W[f_1,g_4] \\ W[f_3,g_2]&W[f_3,g_4]\end{array}\right]=-{}^t B(\xi)\left[\begin{array}{rr} 2i\xi & 0 \\ 0 & -2\mu\end{array}\right]
\]
yields to the desired estimates for $B$.  For the second equality above, we refer the reader to \cite[Lemma~5.14]{KS}. \end{proof}

We turn to the behavior of $\F$ and $\G$. We first introduce two scalar functions $\tilde T(\xi)$ and $\tilde R(\xi)$ (the transmission and reflection coefficients).

\begin{definition} We define $\tilde T(\xi)$ and $\tilde R(\xi)$ by imposing
\begin{equation}\label{Def:TRtilde}
\begin{aligned}
2i\xi\left[\begin{array}{rr} 1 & 0 \\ 0 & 0 \end{array}\right]D(\xi)^{-1}\left[\begin{array}{r} 1 \\ 0 \end{array}\right]& =:\left[\begin{array}{c} \tilde T(\xi) \\ 0\end{array}\right], \\
 2i\xi\left[\begin{array}{rr} 1 & 0 \\ 0 & 0 \end{array}\right]B(\xi)D(\xi)^{-1}\left[\begin{array}{r} 1 \\ 0 \end{array}\right] & =: \left[\begin{array}{c} \tilde R(\xi) \\ 0 \end{array}\right]. 
\end{aligned}
\end{equation}
\end{definition}

\begin{remark}[Properties of $\tilde T$ and $\tilde R$]\label{R:TR}  Using \eqref{DA-ID} and Lemma~\ref{L:AB}, we may deduce that $\tilde T$ and $\tilde R$, along with their derivatives, are bounded functions.\end{remark}

\begin{lemma}\label{L:FG-behave} For $\ell\in\{0,1,2,\dots\}$, we have
\begin{align*}
\partial_\xi^\ell \biggl(\F(x,\xi) -\tilde T(\xi)e^{ix\xi}\left[\begin{array}{r} 1 \\ 0 \end{array}\right]\biggr)&\lesssim\mu^{-1} x^{\ell+1}e^{-\min\{\gamma,\mu\}x})\qtq{as}x\to\infty,\\
\partial_\xi^\ell \biggl(\F(x,\xi)-\biggl(e^{ix\xi}+\tilde R(\xi)e^{-ix\xi}\bigg)\left[\begin{array}{r} 1 \\ 0 \end{array}\right]\biggr)&\lesssim \mu^{-1}|x|^{\ell+1}e^{\min\{\gamma,\mu\}x}\qtq{as}x\to-\infty, \\
\partial_\xi^\ell \biggl(\G(x,\xi)-\tilde T(\xi)e^{-ix\xi}\left[\begin{array}{r} 1 \\ 0 \end{array}\right]\biggr) &\lesssim\mu^{-1}|x|^{\ell+1}e^{\min\{\gamma,\mu\}x}\qtq{as}x\to-\infty,\\
\partial_\xi^\ell \biggl(\G(x,\xi)-\biggl(e^{-ix\xi}+\tilde R(\xi)e^{ix\xi}\biggr)\left[\begin{array}{r} 1 \\ 0 \end{array}\right]\biggr)&\lesssim\mu^{-1}x^{\ell+1}e^{-\min\{\gamma,\mu\}x}\qtq{as}x\to\infty,
\end{align*}
with implicit constants depending continuously on $\omega$.
\end{lemma}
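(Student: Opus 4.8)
\textbf{Proof proposal for Lemma~\ref{L:FG-behave}.}

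The plan is to expand the definitions of $\F$ and $\G$ from Definition~\ref{def:FG} and track the asymptotics term by term, using the estimates on $f_1,f_2,f_3,f_4$ from Lemma~\ref{L:f1-f4} together with the asymptotics of $A(\xi)$, $B(\xi)$ from Lemma~\ref{L:AB}. Recall $\F(x,\xi)=F_1(x,\xi)A(\xi)^{-1}[1\ 0]^t=[f_1\ f_3]A(\xi)^{-1}[1\ 0]^t$. If we write $A(\xi)^{-1}[1\ 0]^t = [a_{11}^*\ a_{21}^*]^t$ for the first column of $A(\xi)^{-1}$, then $\F = a_{11}^* f_1 + a_{21}^* f_3$. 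For $x\to\infty$, the bound \eqref{f3-bd} shows $f_3$ is exponentially small (like $e^{-\mu x}$), so that term contributes an error of the stated type, while \eqref{f1-bd} gives $f_1(x,\xi)=e^{ix\xi}([1\ 0]^t + \mathcal{O}(\mu^{-1}\langle x\rangle e^{-\min\{\gamma,\mu\}x}))$. Thus $\F(x,\xi) \approx a_{11}^*(\xi) e^{ix\xi}[1\ 0]^t$ as $x\to\infty$; comparing with \eqref{Def:TRtilde}, the scalar $a_{11}^*(\xi)$ is precisely $\tilde T(\xi)$ (since $A^{-1}=D^{-1}\cdot\mathrm{diag}(2i\xi,-2\mu)$ by Corollary~\ref{C:Dneq0}, and $\tilde T$ was defined via $2i\xi[\cdot]D^{-1}[1\ 0]^t$, so the top-left entry of $\mathrm{diag}(2i\xi,-2\mu)D^{-1}[1\ 0]^t$... wait, one must be careful about the order: $A^{-1}=D^{-1}\mathrm{diag}(2i\xi,-2\mu)$, so $A^{-1}[1\ 0]^t = D^{-1}[2i\xi\ 0]^t = 2i\xi D^{-1}[1\ 0]^t$, and its top entry is exactly $\tilde T(\xi)$ by the first line of \eqref{Def:TRtilde}). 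This gives the first estimate. The $\xi$-derivatives are handled by Leibniz, using that $\partial_\xi^k$ of $e^{-ix\xi}f_1$ obeys \eqref{f1-bd} (picking up the $\langle x\rangle^{\ell+1}$ factor from combining the $x^k$ from differentiating $e^{ix\xi}$ with the $\langle x\rangle^{\ell+1-k}$ from the lemma), and that $\tilde T$ and its derivatives are bounded (Remark~\ref{R:TR}), and that $\partial_\xi^k(a_{21}^*f_3)$ remains exponentially small in $x$.

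For the behavior as $x\to-\infty$, I would use Lemma~\ref{L:AB-exist} to rewrite $F_1 = G_1 A + G_2 B$, so $\F = (G_1 A + G_2 B)A^{-1}[1\ 0]^t = G_1[1\ 0]^t + G_2 B A^{-1}[1\ 0]^t = [g_2\ g_4][1\ 0]^t + [g_1\ g_3](BA^{-1}[1\ 0]^t) = g_2 + (\text{first entry of }BA^{-1}[1\ 0]^t)\,g_1 + (\text{second entry})\,g_3$. Since $g_j(x,\xi)=f_j(-x,\xi)$, as $x\to-\infty$ we have $-x\to\infty$, so \eqref{f2-bd} gives $g_2(x,\xi)=e^{-i(-x)\xi}([1\ 0]^t+\cdots)$... let me recompute: $f_2(y,\xi)=e^{-iy\xi}([1\ 0]^t+\cdots)$ so $g_2(x,\xi)=f_2(-x,\xi)=e^{ix\xi}([1\ 0]^t+\cdots)$, with error controlled by $\mu^{-1}\langle x\rangle^{\ell+1}e^{-\min\{\gamma,\mu\}(-x)}=\mu^{-1}|x|^{\ell+1}e^{\min\{\gamma,\mu\}x}$ for $x<0$, which matches. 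Similarly $g_1(x,\xi)=f_1(-x,\xi)=e^{-ix\xi}([1\ 0]^t+\cdots)$ and $g_3(x,\xi)=f_3(-x,\xi)=e^{\mu x}([0\ 1]^t+\cdots)$ is exponentially small as $x\to-\infty$. Comparing the coefficient of $e^{-ix\xi}[1\ 0]^t$, namely the first entry of $BA^{-1}[1\ 0]^t = B D^{-1}[2i\xi\ 0]^t = 2i\xi B D^{-1}[1\ 0]^t$, with the second line of \eqref{Def:TRtilde}, we see this coefficient equals $\tilde R(\xi)$. This yields the second estimate. The estimates for $\G$ follow by the identical procedure using the second relation in Lemma~\ref{L:AB-exist}, $G_2 = F_2 A + F_1 B$, which gives $\G = G_2 A^{-1}[1\ 0]^t = F_2[1\ 0]^t + F_1 B A^{-1}[1\ 0]^t = f_2 + (\text{coeffs})f_1 + (\text{coeffs})f_3$, and one reads off the $x\to\pm\infty$ asymptotics from \eqref{f1-bd}, \eqref{f2-bd}, \eqref{f3-bd} exactly as before (with the roles of $\pm\infty$ swapped relative to the $\F$ computation, since now it is $F_1, F_2$ rather than $G_1, G_2$ that appear).

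The main obstacle, such as it is, is bookkeeping rather than conceptual: one must carefully match the matrix algebra defining $\tilde T$ and $\tilde R$ in \eqref{Def:TRtilde} against the entries of $BA^{-1}[1\ 0]^t$ and $A^{-1}[1\ 0]^t$, keeping track of the fact that $A^{-1}=D^{-1}\,\mathrm{diag}(2i\xi,-2\mu)$ from Corollary~\ref{C:Dneq0}, and one must verify that the off-diagonal coefficients (the ones multiplying $f_3$ or $g_3$, which are the exponentially-decaying solutions) really do stay bounded with bounded $\xi$-derivatives so that their contribution is absorbed into the exponentially small error terms. This last point uses Lemma~\ref{L:AB} (boundedness of $A^{-1}$, i.e. of $D^{-1}\mathrm{diag}(2i\xi,-2\mu)$, away from $\xi=0$ and its behavior as $|\xi|\to\infty$, together with $\det D(\xi)\neq0$ everywhere from Corollary~\ref{C:Dneq0}) and the boundedness of $\tilde T,\tilde R$ and derivatives from Remark~\ref{R:TR}. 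A minor subtlety is the behavior near $\xi=0$: since $A(\xi)^{-1}$ involves $D(\xi)^{-1}\mathrm{diag}(2i\xi,\cdot)$, the factor $2i\xi$ keeps things bounded there, and the claimed estimates only involve $\mu=\sqrt{\xi^2+4\omega}\geq 2\sqrt\omega>0$, so no singularity in $\mu^{-1}$ arises; this should be remarked but requires no extra work. As all the constituent lemmas already record that their implicit constants depend continuously on $\omega$, the same holds here.
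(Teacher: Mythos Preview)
Your proposal is correct and follows essentially the same approach as the paper's proof: expand $\F$ (and $\G$) in terms of the columns $f_1,f_3$ (respectively $g_1,g_3$) via Definition~\ref{def:FG}, insert the asymptotics from Lemma~\ref{L:f1-f4}, identify the leading coefficients with $\tilde T$ and $\tilde R$ via \eqref{Def:TRtilde} and Corollary~\ref{C:Dneq0}, and control the remainders using Lemma~\ref{L:AB} and Remark~\ref{R:TR}. The paper only writes out the case $\F$ for $x\to\infty$ and declares the remaining cases similar, whereas you also sketch the $x\to-\infty$ reduction via $F_1=G_1A+G_2B$ from Lemma~\ref{L:AB-exist}; this is exactly the intended ``similar argument.''
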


\begin{proof} We focus on describing $\F$ in the region $x\geq 0$.  Similar arguments treat the remaining cases.

By definition, we may write
\begin{align*}
\F & = 2i\xi[f_1\ 0]D(\xi)^{-1}\left[\begin{array}{r} 1 \\ 0 \end{array}\right] + 2i\xi[0\ f_3]D(\xi)^{-1}\left[\begin{array}{r} 1 \\ 0 \end{array}\right]. 
\end{align*}
We now insert the approximations to $f_1$ and $f_3$ from Lemma~\ref{L:f1-f4}.  The contribution of $f_1$ is then
\begin{equation}\label{FFF1}
\tilde T(\xi)e^{ix\xi}\left[\begin{array}{r} 1 \\ 0 \end{array}\right] + 2i\xi e^{ix\xi}\left[ J_1 \begin{array}{r} 0 \\ 0 \end{array}\right]D(\xi)^{-1}\left[\begin{array}{r} 1 \\ 0 \end{array}\right],
\end{equation}
where 
\[
J_1(x,\xi)=e^{-ix\xi}f_1(x,\xi)-\left[\begin{array}{r} 1 \\ 0 \end{array}\right].
\]
Similarly, the contribution of $f_3$ is 
\begin{equation}\label{FFF2}
2i\xi e^{-\mu x}\left[\begin{array}{rr} 0 & 0 \\ 0 & 1 \end{array}\right]D(\xi)^{-1}\left[\begin{array}{r} 1 \\ 0 \end{array}\right] + 2i\xi e^{-\mu x}\left[\begin{array}{r} 0 \\ 0 \end{array} J_3\right] D(\xi)^{-1} \left[\begin{array}{r} 1 \\ 0 \end{array}\right],
\end{equation}
where 
\[
J_3(x,\xi)=e^{\mu x}f_3(x,\xi)-\left[\begin{array}{r} 0 \\ 1 \end{array}\right].
\]  

Our task is to estimate the second term in \eqref{FFF1} and both terms in \eqref{FFF2}.  The key ingredients are the estimates in Lemma~\ref{L:f1-f4}, as well as Lemma~\ref{L:AB} (where we recall the connection between $A(\xi)$ and $D(\xi)$ given by Corollary~\ref{C:Dneq0}).  As the estimates are all similar, let us only present just the estimate of the second term in \eqref{FFF1}. 

Given $\ell\geq 0$, we estimate 
\begin{align*}
\sum_{i+j+k=\ell}&  \bigl| \partial_\xi^i e^{ix\xi}\bigr|\,\biggl| \left[\partial_\xi^j J_1 \begin{array}{r} 0 \\ 0 \end{array}\right]\biggr| \,\biggl| \partial_\xi^k[2i\xi D(\xi)^{-1}]\left[\begin{array}{r} 1 \\ 0 \end{array}\right]\biggr| \\
& {
\lesssim \sum_{i+j+k=\ell} |x|^i \mu^{-1}
\langle x\rangle^{j+1}e^{-\min\{\gamma,\mu\}x}}
\lesssim \mu^{-1}\langle x\rangle^{\ell+1} e^{-\min\{\gamma,\mu\}x},
\end{align*}
as desired. \end{proof}

With the particular solutions constructed above, we can construct the integral kernel of the resolvent:
\begin{lemma}\label{L:ResKer} For $\xi\neq 0$, 
\begin{align*}
[\H-\tfrac12\xi^2-\omega-i0]^{-1}(x,y) & = \begin{cases} -2F_1(x,\xi)D^{-1}(\xi){}^tG_2(y,\xi)\sigma_3 & x\geq y, \\ -2G_2(x,\xi)D^{-1}(\xi){}^tF_1(y,\xi)\sigma_3 & x\leq y, \end{cases}\\
[\H-\tfrac12\xi^2-\omega+i0]^{-1}(x,y) & = \begin{cases} -2F_1(x,-\xi)D^{-1}(-\xi){}^tG_2(y,-\xi)\sigma_3 & x\geq y, \\ -2G_2(x,-\xi)D^{-1}(-\xi){}^t F_1(y,-\xi)\sigma_3 & x \leq y.\end{cases}
\end{align*}
\end{lemma}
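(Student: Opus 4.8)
The plan is to construct the resolvent kernel directly as the unique kernel that (i) solves the inhomogeneous ODE $[\H-\tfrac12\xi^2-\omega]u=\delta_y$ in the distributional sense away from the delta potential, (ii) satisfies the jump condition from $D(H)$ at both $x=0$ and $x=y$, and (iii) decays at $\pm\infty$ so as to define a bounded operator; the boundary values $\mp i0$ single out which limiting behaviour to impose. This is exactly the classical variation-of-parameters construction for a second-order system, adapted to the matrix setting as in \cite[Lemma~5.18]{KS}, with the extra bookkeeping coming from the jump at the origin already being encoded in the solutions $f_j$ and $g_j$ of Lemma~\ref{L:f1-f4}.

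Concretely, I would proceed as follows. First, recall from Lemma~\ref{L:FGrep} (and Corollary~\ref{C:Dneq0}) that $\{f_1,f_3,g_1,g_3\}$ is a basis for the solution space of \eqref{gep}, with $F_1=[f_1\ f_3]$ collecting the solutions that decay (or remain bounded in the resonant/continuous-spectrum sense selected by $-i0$) as $x\to+\infty$, and $G_2=[g_1\ g_3]$ the analogous package as $x\to-\infty$. For $x$ and $y$ on the same side of the origin, the ansatz $u(x)=F_1(x,\xi)M_1{}^tG_2(y,\xi)$ for $x>y$ and $u(x)=G_2(x,\xi)M_2{}^tF_1(y,\xi)$ for $x<y$ automatically solves the homogeneous equation for $x\neq y$ and has the correct decay at $\pm\infty$. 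Continuity at $x=y$ forces the two matrices to agree, and the jump in $\pt_x u$ at $x=y$ dictated by the ODE $\H u=(\tfrac12\xi^2+\omega)u$ (with the $\tfrac12\pt_x^2$ normalization producing a factor of $2$ and the $\sigma_3$ coming from the block structure of $\H$, cf. \eqref{defH}) pins the common matrix down: solving the resulting linear algebra using the matrix Wronskian $\mathcal{W}[F_1,G_2]=D(\xi)$, which is invertible by Corollary~\ref{C:Dneq0}, yields the prefactor $-2F_1 D^{-1}{}^tG_2\sigma_3$. The same computation with $F_1(\cdot,\xi)$ replaced by $F_1(\cdot,-\xi)$, etc., handles the $+i0$ resolvent, using the conjugation symmetries $f_j(x,-\xi)=\overline{f_j(x,\xi)}$ from Lemma~\ref{L:f1-f4}; the $-i0$ versus $+i0$ prescription corresponds precisely to choosing the solution behaving like $e^{ix\xi}$ versus $e^{-ix\xi}$ at infinity, which is the content of passing $\xi\mapsto-\xi$.

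The point requiring genuine care — and what I expect to be the main obstacle — is verifying that the jump condition at $x=0$ is respected by the constructed kernel, and simultaneously that the constructed $u(\cdot)=[\H-\tfrac12\xi^2-\omega\mp i0]^{-1}\varphi$ genuinely lies in the operator domain (i.e. in $H^1\cap H^2(\R\setminus\{0\})$ with the correct derivative jump at $0$) rather than merely solving the ODE piecewise. Here one uses that each of $f_1,f_3,g_1,g_3$ individually satisfies the jump condition at the origin (built into their construction in Appendix~\ref{S:F1F4}), so any $x$-dependent linear combination of them with $\xi$-dependent (but $x$-independent) coefficients does as well; since for fixed $y>0$ the kernel restricted to $x<0$ is a combination of $g_1(x,\xi),g_3(x,\xi)$ with coefficients depending only on $y,\xi$, and for $x$ slightly positive it is a combination of $f_1,f_3$, the only place a spurious jump could enter is the matching at $x=y$, which is separate from $x=0$; when $y<0$ the roles are mirrored. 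One then checks that the $L^2$ mapping properties follow from the asymptotics in Lemma~\ref{L:FG-behave} (equivalently Lemma~\ref{L:f1-f4}), so that the kernel indeed represents the resolvent on the continuous spectrum; uniqueness of the resolvent then forces the identity. The remaining verification — that the formula is self-adjoint-consistent, i.e. symmetric under $x\leftrightarrow y$ together with transposition and the $\sigma_3$ conjugation — is a short computation using $\mathcal{W}[F_1,G_2]={}^t\mathcal{W}[F_1,G_2]$ from \eqref{DA-ID} and will be recorded as in \cite[Lemma~5.18]{KS}.
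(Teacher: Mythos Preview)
Your proposal is correct and follows essentially the same route as the paper: construct the Green's function from the decaying pair $F_1,G_2$ and fix the constant matrix via the Wronskian $D(\xi)$, whose invertibility is supplied by Corollary~\ref{C:Dneq0}. The only cosmetic difference is that the paper phrases this as classical variation of parameters---writing $h=F_1[d_1\ d_2]^t+G_2[d_3\ d_4]^t$, inverting the resulting $4\times4$ system for $(d_1',\dots,d_4')$ (citing \cite[(5.22)]{KS}), and integrating---whereas you match the ansatz directly at $x=y$; your extra remarks about the jump condition at $x=0$ are correct and left implicit in the paper, since $F_1$ and $G_2$ are built from the $f_j,g_j$ of Lemma~\ref{L:f1-f4}, which already satisfy it.
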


\begin{proof} It suffices to prove the first identity.  We introduce $h=[\H-\tfrac12\xi^2-\omega-i0]^{-1}\varphi$, i.e. the solution to $[\H-\tfrac12\xi^2-\omega-i0]h=\varphi$.  By variation of parameters, we may write
\[
h(x,\xi)=F_1(x,\xi)\left[\begin{array}{r} d_1(x,\xi) \\ d_2(x,\xi)\end{array}\right] + G_2(x,\xi)\left[\begin{array}{r} d_3(x,\xi) \\ d_4(x,\xi)\end{array}\right],
\]
where (denoting $\tfrac{d}{dx}$ by $'$)
\[
F_1(x,\xi)\left[\begin{array}{r} d_1'(x,\xi) \\ d_2'(x,\xi)\end{array}\right] + G_2(x,\xi)\left[\begin{array}{r} d_3'(x,\xi) \\ d_4'(x,\xi)\end{array}\right] = \left[\begin{array}{r} 0 \\ 0 \end{array}\right]
\]
and
\[
\left[\begin{array}{rr} F_1(x,\xi) & G_2(x,\xi) \\ F_1'(x,\xi) & G_2'(x,\xi)\end{array}\right]\left[\begin{array}{r} d_1'(x,\xi) \\ d_2'(x,\xi)\\ \hline  d_3'(x,\xi) \\ d_4'(x,\xi)\end{array}\right] = \left[\begin{array}{c} 0 \\ 0 \\ \hline -2\varphi_1 \\ 2\varphi_2\end{array}\right]=\left[\begin{array}{c} 0 \\ 0 \\ \hline -2\sigma_3\varphi \end{array}\right]
\]
As $\det D(\xi)\neq 0$, we can invert the matrix appearing on the left (cf. \cite[(5.22)]{KS}), which leads to
\begin{align*}
\left[\begin{array}{r} d_1'(x,\xi) \\ d_2'(x,\xi)\\ \hline  d_3'(x,\xi) \\ d_4'(x,\xi)\end{array}\right] & = \left[\begin{array}{rr} -{}^t D^{-1}(\xi)G_2'(x,\xi) & {}^t D^{-1}(\xi){}^t G_2(x,\xi) \\ {}^t D^{-1}(\xi) {}^t F_1'(x,\xi) & -{}^t D^{-1}(\xi){}^t F_1(x,\xi) \end{array}\right]\left[\begin{array}{c} 0 \\ 0 \\ \hline -2\sigma_3\varphi \end{array}\right] \\
& = \left[\begin{array}{r} -2{}^t D^{-1}(\xi) {}^tG_2(x,\xi)\sigma_3 \varphi \\ 2{}^tD^{-1}(\xi){}^t F_1(x,\xi)\sigma_3 \varphi\end{array}\right]. 
\end{align*}
In particular, integrating in $x$ yields
\begin{align*}
\left[\begin{array}{r} d_1(x,\xi)\\d_2(x,\xi)\end{array}\right]&=-2{}^tD^{-1}(\xi)\int_{-\infty}^x {}^tG_2(y,\xi)\sigma_3\varphi(y)\,dy, \\
\left[\begin{array}{r} d_3(x,\xi)\\d_4(x,\xi)\end{array}\right]&=-2{}^tD^{-1}(\xi)\int_{x}^\infty {}^tF_1(y,\xi)\sigma_3\varphi(y)\,dy.
\end{align*} 
This implies the desired identity. \end{proof}

With the integral kernel for the resolvent in place, we can now compute the spectral measure.

\begin{definition} We define $E=E(x,\xi)$ by
\[
E = [\F\ \G],
\]
where $\F$ and $\G$ are as in Definition~\ref{def:FG}.
\end{definition}

\begin{lemma}\label{L:SM} For $\xi\in\R$,
\begin{equation}\label{SM1}
[\H-\tfrac12\xi^2-\omega-i0]^{-1}(x,y)-[\H-\tfrac12\xi^2-\omega+i0]^{-1}(x,y)=-\tfrac{1}{i\xi}E(x,\xi)E(y,\xi)^*\sigma_3,
\end{equation}
where $\sigma_3$ is as in \eqref{pauli}. 
\end{lemma}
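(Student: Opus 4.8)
The plan is to carry out the computation exactly as in \cite{KS} (see also \cite{GS2}); once the particular solutions and their Wronskians have been set up as above, the proof is a (somewhat lengthy) exercise in linear algebra built on the explicit resolvent kernel from Lemma~\ref{L:ResKer}.

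First I would reduce to the region $x\geq y$. Since the potential in $\H$ is real and $\sigma_3\H\sigma_3=\H^{*}$, one has $[(\H-z)^{-1}]^{*}=\sigma_3(\H-\bar z)^{-1}\sigma_3$; hence the difference $\Delta(x,y)$ of the two boundary-value resolvent kernels appearing in \eqref{SM1} obeys the kernel symmetry $\Delta(x,y)=-\sigma_3\,\Delta(y,x)^{*}\,\sigma_3$. A direct check (using $\sigma_3^{2}=I$ and $\overline{1/i\xi}=-1/i\xi$ for real $\xi\neq0$) shows that the proposed right-hand side $-\tfrac{1}{i\xi}E(x,\xi)E(y,\xi)^{*}\sigma_3$ satisfies exactly the same symmetry, so it suffices to verify \eqref{SM1} for $x\geq y$.

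For $x\geq y$, I would insert the formulas from Lemma~\ref{L:ResKer}, obtaining
\[
\Delta(x,y)=-2\bigl[F_1(x,\xi)D(\xi)^{-1}\,{}^{t}G_2(y,\xi)-F_1(x,-\xi)D(-\xi)^{-1}\,{}^{t}G_2(y,-\xi)\bigr]\sigma_3.
\]
The conjugation symmetries of Lemma~\ref{L:f1-f4} give $F_1(x,-\xi)=\overline{F_1(x,\xi)}$ and $G_2(x,-\xi)=\overline{G_2(x,\xi)}$, and — since the coefficients of \eqref{gep} are real — the Wronskian characterization $D(\xi)=\mathcal{W}[F_1,G_2](\xi)$ yields $D(-\xi)=\overline{D(\xi)}$ (consistent with \eqref{DA-ID}). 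Thus the bracket equals $M-\overline{M}$ with $M:=F_1(x,\xi)D(\xi)^{-1}\,{}^{t}G_2(y,\xi)$, and the task is to identify $-2(M-\overline M)$ with $-\tfrac{1}{i\xi}E(x,\xi)E(y,\xi)^{*}$.

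The core of the argument is then the evaluation of $M-\overline M$. Because $\det D(\xi)\neq0$ (Corollary~\ref{C:Dneq0}), the columns of $F_1(\cdot,\xi)$ and $G_2(\cdot,\xi)$ form a basis for the solution space of \eqref{gep}, so I would expand the ``reflected'' matrix solutions $F_1(x,-\xi)=\overline{F_1(x,\xi)}$ and $G_2(y,-\xi)=\overline{G_2(y,\xi)}$ against this basis; the connection coefficients are precisely the matrices $A(\xi)$ and $B(\xi)$ of Lemma~\ref{L:AB-exist} (equivalently, they can be read off from the Jost asymptotics in Lemmas~\ref{L:f1-f4} and \ref{L:FG-behave}). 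Substituting and simplifying with the Wronskian relations $W[f_1,f_2]=2i\xi$, $W[f_3,f_4]=-2\mu$, $W[f_i,f_j]=0$ otherwise, together with \eqref{DA-ID} and the ``unitarity''-type identity \eqref{ABR} relating $A$ and $B$, collapses the expression to a rank-two form; recognizing it via $\F=F_1A(\xi)^{-1}[1\ 0]^{t}$, $\G=G_2A(\xi)^{-1}[1\ 0]^{t}$ (Definition~\ref{def:FG}) and $E=[\F\ \G]$, and using $\F(y,\xi)^{*}={}^{t}\overline{\F(y,\xi)}$ (and likewise for $\G$), produces the claimed identity. The main obstacle is purely this bookkeeping: tracking the $2\times2$ matrix products, the placement of the $\sigma_3$'s, and the transposes and conjugations so that \eqref{ABR} and the Wronskian relations can be invoked cleanly. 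No ideas beyond \cite{KS} are needed; the one structural difference from \cite{KS} — that the $f_j$ and the constancy of the Wronskians must be handled across the jump at $x=0$ — has already been arranged in Lemma~\ref{L:f1-f4} and the preceding ODE discussion.
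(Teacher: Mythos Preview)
Your approach is viable in principle but differs from the paper's, which follows \cite[Lemma~6.7]{KS} via a cleaner structural argument rather than a direct algebraic expansion. The paper sets $K(x,y,\xi)$ equal to the left-hand side of \eqref{SM1} and observes that for fixed $y$ it is a \emph{bounded} solution of \eqref{gep} in $x$; by Lemma~\ref{L:FGrep} this forces $K(x,y,\xi)=E(x,\xi)M(y,\xi)$ for some $M$. The same adjoint symmetry you invoke, $\sigma_3 K(x,y,\xi)^{*}\sigma_3=-K(y,x,\xi)$, then shows $K=-E(x,\xi)\,C(\xi)\,E(y,\xi)^{*}\sigma_3$ for a matrix $C(\xi)$ independent of $x,y$ and satisfying $C^{*}=-C$ (hence of the form $\bigl[\begin{smallmatrix} i\alpha & \beta\\ -\bar\beta & i\eps\end{smallmatrix}\bigr]$). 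Finally, $C$ is pinned down by comparing the asymptotics of the two expressions for $K$ as $x\to\infty$, $y\to-\infty$, using Lemma~\ref{L:ResKer} on one side and Lemma~\ref{L:FG-behave} on the other; this forces $\beta=0$ and $\alpha=\eps=-\tfrac{1}{\xi}$.

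Your route --- writing $\Delta=-2(M-\overline{M})\sigma_3$ and then expanding $\overline{F_1}$, $\overline{G_2}$ against the basis --- can be pushed through, but the claim that ``the connection coefficients are precisely the matrices $A(\xi)$ and $B(\xi)$'' is not quite right: Lemma~\ref{L:AB-exist} gives $F_1=G_1A+G_2B$ and $G_2=F_2A+F_1B$, whereas $\overline{F_1}=[f_2\ f_3]$ and $\overline{G_2}=[g_2\ g_3]$ differ from $F_2=[f_2\ f_4]$ and $G_1=[g_2\ g_4]$ in their second columns, so additional manipulation is needed before \eqref{ABR} can be invoked. The paper's argument sidesteps this bookkeeping entirely: boundedness plus symmetry reduces the problem to determining four scalars, which the asymptotic comparison handles in a few lines.
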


\begin{proof} We proceed as in \cite[Lemma~6.7]{KS}. We set
\[
K(x,y,\xi)=\text{LHS}\eqref{SM1}.
\]
Now, for fixed $y$, $K(x,y,\xi)$ is a bounded solution to \eqref{gep}, so that by Lemma~\ref{L:FGrep} we may write
\[
K(x,y,\xi)=E(x,\xi)M(y,\xi)\qtq{for some }M\in\C^{2\times 2}. 
\]
Using Lemma~\ref{L:ResKer}, we also see that
\[
\sigma_3K(x,y,\xi)^*\sigma_3 = -K(y,x,\xi),
\]
which (after some rearranging) implies
\[
C(\xi):=E(y,\xi)^{-1}\sigma_3 M(y,\xi)^*=-M(x,\xi)\sigma_3[E(x,\xi)^*]^{-1}. 
\]
Observing that $C(\xi)^*=-C(\xi)^{-1}$, we see that $C(\xi)$ takes the form
\[
C(\xi)=\left[\begin{array}{rr} i\alpha & \beta \\ -\bar\beta & i\eps\end{array}\right]
\]
for some $\alpha,\eps\in\R$ and $\beta\in\C$. We therefore obtain
\begin{equation}\label{KECE}
K(x,y,\xi)=-E(x,\xi)C(\xi)E(y,\xi)^*\sigma_3. 
\end{equation}

To complete the proof we must show $C(\xi)=-\tfrac{1}{i\xi}I$, i.e. $\alpha=\eps=-\tfrac{1}{\xi}$ and $\beta=0$.  To this end, we will use two different expressions for $K(x,y,\xi)$ and send $x\to\infty$ and $y\to-\infty$.  First, using Lemma~\ref{L:ResKer} and \eqref{Def:TRtilde} in the original definition of $K$, we deduce that 
\begin{align*}
K(x,y,\xi) & = \left[\tfrac{1}{i\xi}\tilde T(\xi)e^{i(x-y)\xi}+\tfrac{1}{i\xi}\tilde T(-\xi)e^{-i(x-y)\xi}\right]\left[\begin{array}{rr} 1  & 0 \\ 0 & 0 \end{array}\right] +o(1)
\end{align*}
as $x\to\infty$ and $y\to-\infty$.  On the other hand, using \eqref{KECE} and the asymptotic behavior given in Lemma~\ref{L:FG-behave}, we deduce
\begin{align*}
K(x,y,\xi) & = -\biggl[\left\{\beta |\tilde T(\xi)|^2-\bar\beta |\tilde R(\xi)|^2+i\alpha \tilde T(\xi)\tilde R(-\xi)+i\eps \tilde T(-\xi)\tilde T(\xi)\right\} e^{i(x+y)\xi} \\
& \quad\quad -\bar\beta e^{-i(x+y)\xi} + \left\{i\alpha \tilde T(\xi)-\bar\beta\tilde R(\xi)\right\}e^{i(x-y)\xi} \\
& \quad\quad +\left\{i\eps\tilde T(-\xi)-\bar\beta \tilde R(-\xi)\right\} e^{-i(x-y)\xi} \biggr]\left[\begin{array}{rr} 1 & 0 \\ 0 & 0 \end{array}\right] + o(1)
\end{align*}
as $x\to\infty$ and $y\to-\infty$.  It follows that $\beta=0$, while $\alpha=\eps=-\tfrac{1}{\xi}$, as desired.  
\end{proof}

Finally, we can obtain the following representation formula.

\begin{definition} We define $\F_\pm(x,\xi)$ and $\G_\pm(x,\xi)$ via
\begin{align*}
\F_+=\F,&\quad F_-=\sigma_1\F, \\
\G_+=\G,&\quad \G_-=\sigma_1\G,
\end{align*}
where $\sigma_1$ is as in \eqref{pauli}, and we set
\[
e_\pm(x,\xi) = \begin{cases} \F_{\pm}(x,\xi) & \xi\geq 0, \\ \G_{\pm}(x,-\xi) & \xi\leq 0. \end{cases}
\]
\end{definition}

\begin{proposition}[Representation formula]\label{P:Rep} With $e_\pm=e_\pm(x,\xi)$ as above, we have 
\begin{equation}\label{E:rep-form}
\begin{aligned}
\langle e^{it\H}P_c \varphi,\psi\rangle & =\tfrac{1}{2\pi}e^{it\omega}\int e^{\frac{it\xi^2}{2}}\langle \varphi,\sigma_3 e_+(\cdot,\xi)\rangle\overline{\langle \psi,e_+(\cdot,\xi)\rangle}\,d\xi \\
& \quad + \tfrac{1}{2\pi} e^{-it\omega}\int e^{-\frac{it\xi^2}{2}}\langle \varphi,\sigma_3 e_-(\cdot,\xi)\rangle\overline{\langle \psi,e_-(\cdot,\xi)\rangle}\,d\xi.
\end{aligned}
\end{equation}
\end{proposition}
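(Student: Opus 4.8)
The plan is to substitute the kernel identity of Lemma~\ref{L:SM} into the spectral resolution \eqref{rep-form}--\eqref{resolvent2}, following the scheme used in the proof of that lemma (cf.\ \cite[Section~6]{KS}). For the positive-energy integral \eqref{resolvent1}, Lemma~\ref{L:SM} says that the difference of resolvents appearing there is the integral kernel $\tfrac1{i\xi}E(x,\xi)E(y,\xi)^*\sigma_3$, so the weight $\xi$ in the spectral measure cancels the $\tfrac1{i\xi}$, leaving $\tfrac1i E(x,\xi)E(y,\xi)^*\sigma_3$. Recalling $E=[\F\ \G]$ from Definition~\ref{def:FG}, one has the rank-one decomposition $E(x,\xi)E(y,\xi)^*=\F(x,\xi)\F(y,\xi)^*+\G(x,\xi)\G(y,\xi)^*$, and pairing this kernel against $\varphi$ in the $y$-variable and $\psi$ in the $x$-variable, using that $\sigma_3$ is real symmetric, turns the contribution of \eqref{resolvent1} into
\[
\tfrac1{2\pi}e^{it\omega}\int_0^\infty e^{\frac{it\xi^2}{2}}\Bigl[\langle\varphi,\sigma_3\F(\cdot,\xi)\rangle\,\overline{\langle\psi,\F(\cdot,\xi)\rangle}+\langle\varphi,\sigma_3\G(\cdot,\xi)\rangle\,\overline{\langle\psi,\G(\cdot,\xi)\rangle}\Bigr]\,d\xi,
\]
where the constant $\tfrac1{2\pi}$ is produced by the $2\pi i$ in \eqref{rep-form} and the factors of $i$ in Lemma~\ref{L:SM}.

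To reach the first term of \eqref{E:rep-form} one folds the half-line $(0,\infty)$ onto $\R$: in the $\G$-term substitute $\xi\mapsto-\xi$; by the very definition of $e_+$ (together with the conjugation relations of Lemma~\ref{L:f1-f4} underlying Lemmas~\ref{L:ResKer} and~\ref{L:SM}) this term becomes $\langle\varphi,\sigma_3 e_+(\cdot,\xi)\rangle\overline{\langle\psi,e_+(\cdot,\xi)\rangle}$ integrated over $(-\infty,0)$, while the $\F$-term is the same integrand over $(0,\infty)$, and since $e^{it\xi^2/2}$ is even the two combine into an integral over $\R$. For the negative-energy integral \eqref{resolvent2} I would exploit the symmetry $\sigma_1\H\sigma_1=-\H$ (immediate from the explicit form of $\H$), which yields $(\H+\tfrac12\xi^2+\omega\pm i0)^{-1}=-\sigma_1(\H-\tfrac12\xi^2-\omega\mp i0)^{-1}\sigma_1$. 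Combining this with Lemma~\ref{L:SM} and the relations $\sigma_3\sigma_1=-\sigma_1\sigma_3$ and $(\sigma_1 E)(\sigma_1 E)^*=\sigma_1(EE^*)\sigma_1$, the computation reduces to the one above with $E$ replaced by $\sigma_1E$, that is, with $\F,\G$ replaced by $\F_-=\sigma_1\F$ and $\G_-=\sigma_1\G$; the same folding then produces the second term of \eqref{E:rep-form}, carrying $e_-$ and the factor $e^{-it\omega}e^{-it\xi^2/2}$. Finally, the zero eigenvalue of $\H$ lies in the gap $(-\omega,\omega)$ and, under the spectral condition (so that $\det D\neq0$ on $\R$ and $\H$ has no embedded eigenvalues or resonances), the absolutely continuous part of the spectral measure is accounted for entirely by \eqref{rep-form}, so no additional point contributions arise.

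The main obstacle is not the substitution itself but the bookkeeping of the symmetries: one must carefully track the conjugation relations $f_j(x,-\xi)=\overline{f_j(x,\xi)}$, $A(-\xi)=\overline{A(\xi)}$, $B(-\xi)=\overline{B(\xi)}$, together with the signs produced by the Pauli matrices, in order to land on exactly the stated integrand --- in particular the precise placement of the complex conjugate on $\langle\psi,e_\pm(\cdot,\xi)\rangle$ and the correct overall constant $\tfrac1{2\pi}$. One must also justify interchanging the $\xi$-integration with the spatial integrations and the (only conditional) convergence of the resulting oscillatory integrals; as usual this is handled by first restricting $\varphi$ and $\psi$ to a dense class and interpreting \eqref{rep-form} via limiting absorption, exactly as in \cite[Section~6]{KS}.
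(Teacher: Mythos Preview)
Your proposal is correct and follows essentially the same route as the paper's proof: substitute Lemma~\ref{L:SM} into the spectral resolution \eqref{resolvent1}, use the $\sigma_1$-conjugation identity $\sigma_1\H\sigma_1=-\H$ to deduce the analogous kernel formula \eqref{with-plusses} for the negative-energy integral \eqref{resolvent2}, and then fold the half-line integrals onto $\R$ via the definition of $e_\pm$. The only presentational difference is that the paper packages the folding step by observing directly that $E(x,\xi)=[e_+(x,\xi)\ e_+(x,-\xi)]$ for $\xi>0$ (and similarly $\sigma_1 E$ in terms of $e_-$), whereas you first expand $EE^*=\F\F^*+\G\G^*$ and then substitute $\xi\mapsto-\xi$ in the $\G$-term; these are of course equivalent.
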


\begin{proof}[Proof of Proposition~\ref{P:Rep}] Using Lemma~\ref{L:SM}, we write
\begin{align*}
[\H-&\tfrac12\xi^2-\omega-i0]^{-1}(x,y)-[\H-\tfrac12\xi^2-\omega+i0]^{-1}(x,y) \\
& = -\tfrac{1}{i\xi}[e_+(x,\xi)\ e_+(x,-\xi)][e_+(y,\xi)\ e_+(y,-\xi)]^*\sigma_3.
\end{align*}
We also claim that
\begin{equation}\label{with-plusses}
\begin{aligned}
&[\H+\tfrac12\xi^2+\omega-i0]^{-1}(x,y)-[\H+\tfrac12\xi^2+\omega+i0]^{-1}(x,y) \\
&\quad = -\tfrac{1}{i\xi}[e_-(x,\xi)\ e_-(x,-\xi)][e_-(y,\xi)\ e_-(y,-\xi)]^*\sigma_3.
\end{aligned}
\end{equation}
Inserting these relations into the expression above and simplifying then leads to the desired formula \eqref{E:rep-form}.  To derive \eqref{with-plusses}, we again use Lemma~\ref{L:SM}, along with the identity
\[
\H+\tfrac12\xi^2 + \omega = -\sigma_1[H-\tfrac12\xi^2 - \omega]\sigma_1.
\]
Inserting these identities for the integral kernel into \eqref{resolvent1} and \eqref{resolvent2} now leads to the representation formula. \end{proof}

\begin{remark} The $L^2\to L^2$ estimate \eqref{dispersive3} now follows from Proposition~\ref{P:Rep} and Lemma~\ref{L:FG-behave}. In the next sections, we will prove \eqref{dispersive1} and \eqref{dispersive2} by considering separately the low-energy and high-energy contributions.
\end{remark}

\subsection{Estimates for low energies}

In this section, we estimate the contribution of the low energy regime to the dispersive estimates \eqref{dispersive1} and \eqref{dispersive2}.   To implement the cutoff to low and high frequencies, we let $\chi$ be a smooth, even bump function such that $\chi(\xi)=1$ for $|\xi|\leq\lambda$, and  $\chi(\xi)=0$ for $|\xi|\ge2\lambda$.  We will choose $\lambda=C\|\mathcal{V}\|_{L^1}$ for some sufficiently large $C$. 

Recalling the representation formula \eqref{rep-form} and Lemma~\ref{L:ResKer}, we have the following: Let $P_c^+$ and $P_c^-$ be projections to positive part and negative part of the absolutely continuous 
spectrum of ${{\mathcal H}}$. Then 
\begin{eqnarray}
\lefteqn{2\pi i\langle e^{it{{\mathcal H}}}\chi({{\mathcal H}}-\omega)
P_c^+\varphi,\psi\rangle}
\nonumber\\
&=&
e^{it\omega}\int_{0}^{\infty}
e^{\frac{i}{2}t\xi^{2}}\xi\chi(\xi^2)
\langle\{({{\mathcal H}}-\tfrac{\xi^{2}}{2}-\omega-i0)^{-1}
-({{\mathcal H}}-\tfrac{\xi^{2}}{2}-\omega+i0)^{-1}\}\varphi,\psi\rangle 
d\xi\nonumber\\
&=&
-2e^{it\omega}\int\!\int_{x\geq y}
\int_{0}^{\infty}
e^{\frac{i}{2}t\xi^{2}}\xi\chi(\xi^2)
F_1(x,\xi)D^{-1}(\xi){}^tG_2(y,\xi)\sigma_3\varphi(y)\overline{\psi}(x)
\,d\xi\, dx\,dy\nonumber\\
& &
+2e^{it\omega}\int\!\int_{x\leq y}
\int_{0}^{\infty}
e^{\frac{i}{2}t\xi^{2}}\xi\chi(\xi^2)
G_2(x,\xi)D^{-1}(\xi){}^tF_1(y,\xi)\sigma_3\varphi(y)\overline{\psi}(x)
\,d\xi\,dx\,dy,\nonumber\\
\label{low}
\end{eqnarray}
with a similar formula for $e^{it{{\mathcal H}}}\chi({{\mathcal H}}-\omega)P_c^-$. 

Because the functions $F_j$, $G_j$, and $D^{-1}$ have the same decay/regularity properties as the analogous functions appearing in \cite{KS}, we can follow much of the presentation in \cite{KS}.  Accordingly, we shall keep our presentation rather brief. 

It is sufficient to consider only first term on the right-hand side of \eqref{low}.  We split the region of the integral into the three subregions: $x\geq y\geq 0$, $x\geq 0\geq y$ and $0\geq x\geq y$.  By symmetry, it suffices to consider the first two regions.

For the case $x\geq 0\geq y$, we split the $\xi$ integral as follows: 
\begin{eqnarray}
\lefteqn{\int_{0}^{\infty}
e^{\frac{i}{2}t\xi^{2}}\xi\chi(\xi^2)
F_1(x,\xi)D^{-1}(\xi){}^tG_2(y,\xi)\sigma_3d\xi}\nonumber\\
&=&
\int_{0}^{\infty}
e^{\frac{i}{2}t\xi^{2}}\xi\chi(\xi^2)
\left[f_1(x,\xi)  \begin{array}{cc}0\\0\end{array}\right]
D^{-1}(\xi)
\left[\begin{array}{cc}{}^tf_1(-y,\xi)  \\0\ \ \ 0\end{array}\right]
\sigma_3d\xi\nonumber\\
& &
+\int_{0}^{\infty}
e^{\frac{i}{2}t\xi^{2}}\xi\chi(\xi^2)
\left[f_1(x,\xi)  \begin{array}{cc}0\\0\end{array}\right]
D^{-1}(\xi)
\left[\begin{array}{cc}0\ \ \ 0  \\ {}^tf_3(-y,\xi)\end{array}\right]
\sigma_3d\xi\nonumber\\
& &+\int_{0}^{\infty}
e^{\frac{i}{2}t\xi^{2}}\xi\chi(\xi^2)
\left[ \begin{array}{cc}0\\0\end{array}f_3(x,\xi)\right]
D^{-1}(\xi)
\left[\begin{array}{cc}{}^tf_1(-y,\xi)  \\0\ \ \ 0\end{array}\right]
\sigma_3d\xi\nonumber\\
& &+\int_{0}^{\infty}
e^{\frac{i}{2}t\xi^{2}}\xi\chi(\xi^2)
\left[\begin{array}{cc}0\\0\end{array}f_3(x,\xi)\right]
D^{-1}(\xi)
\left[\begin{array}{cc}0\ \ \ 0  \\ {}^tf_3(-y,\xi)\end{array}\right]
\sigma_3d\xi\nonumber\\
&=:&I_1(x,y)+I_2(x,y)+I_3(x,y)+I_4(x,y).
\nonumber
\end{eqnarray}
Let us show how to estimate $I_1$; the remaining terms are similar.  Writing $\tilde{\chi}$ for a cut-off function such that $\tilde{\chi}\chi=\chi$, we have
\begin{eqnarray*}
\lefteqn{|I_1(x,y)|}\\
&\lesssim&t^{-\frac12}
\int_{-\infty}^{\infty}
|[\xi\chi(\xi^2)D^{-1}(\xi)]\check{{}\ }\ast
[\tilde{\chi}(\xi^2)e^{-ix\xi}f_1(x,\xi)]\check{{}\ }
\ast[\tilde{\chi}(\xi^2)e^{iy\xi}f_1(-y,\xi)]\check{{}\ }
(\eta)|d\eta\\
&\lesssim&t^{-\frac12}
\|[\xi\chi(\xi^2)D^{-1}(\xi)]\check{{}\ }\|_{L^1}
\|[\tilde{\chi}(\xi^2)e^{-ix\xi}f_1(x,\xi)]\check{{}\ }\|_{L^1}
\|[\tilde{\chi}(\xi^2)e^{iy\xi}f_1(-y,\xi)]\check{{}\ }\|_{L^1}.
\end{eqnarray*}

Using Lemmas~\ref{L:f1-f4}~and~\ref{L:AB}, we deduce that
\begin{eqnarray*}
\|[\xi\chi(\xi^2)D^{-1}(\xi)]\check{{}\ }\|_{L^1}\lesssim 1
\end{eqnarray*}
and
\begin{eqnarray*}
\qquad\sup_{x\geq 0}\|[\tilde{\chi}(\xi^2)e^{-ix\xi}f_1(x,\xi)]\check{{}\ }\|_{L^1}
\lesssim
\sup_{x\geq 0}\|(1-\pt_{\xi}^2)
\{\tilde{\chi}(\xi^2)e^{-ix\xi}f_1(x,\xi)\}\|_{L^2}
\lesssim 1.
\end{eqnarray*}
Similarly, 
\begin{eqnarray*}
\sup_{y\le0}\|[\tilde{\chi}(\xi^2)e^{iy\xi}f_1(-y,\xi)]\check{{}\ }\|_{L^1}\lesssim 1.
\end{eqnarray*}
Hence we have 
\begin{eqnarray*}
\sup_{x\geq 0, y\le0}|I_1(x,y)|\lesssim t^{-\frac12}.
\end{eqnarray*}

For the case $0\geq x\geq y$, using Lemma~\ref{L:AB-exist}, we find
\begin{eqnarray*}
\lefteqn{\int_{0}^{\infty}
e^{\frac{i}{2}t\xi^{2}}\xi\chi(\xi^2)
F_1(x,\xi)D^{-1}(\xi){}^tG_2(y,\xi)\sigma_3d\xi}\nonumber\\
&=&\int_{0}^{\infty}
e^{\frac{i}{2}t\xi^{2}}\xi\chi(\xi^2)
G_1(x,\xi)A(\xi)D^{-1}(\xi){}^tG_2(y,\xi)\sigma_3d\xi\\
& &+\int_{0}^{\infty}
e^{\frac{i}{2}t\xi^{2}}\xi\chi(\xi^2)
G_2(x,\xi)B(\xi)D^{-1}(\xi){}^tG_2(y,\xi)\sigma_3d\xi\\
&=:&J(x,y)+K(x,y).
\end{eqnarray*}
Again, we split $J$ into the following four pieces.
\begin{eqnarray}
J(x,y)&=&
\int_{0}^{\infty}
e^{\frac{i}{2}t\xi^{2}}\xi\chi(\xi^2)
\left[f_2(-x,\xi)  \begin{array}{cc}0\\0\end{array}\right]
A(\xi)D^{-1}(\xi)
\left[\begin{array}{cc}f_1(-y,\xi)  \\0\ \ \ 0\end{array}\right]
\sigma_3d\xi\nonumber\\
& &
+\int_{0}^{\infty}
e^{\frac{i}{2}t\xi^{2}}\xi\chi(\xi^2)
\left[f_2(-x,\xi)  \begin{array}{cc}0\\0\end{array}\right]
A(\xi)D^{-1}(\xi)
\left[\begin{array}{cc}0\ \ \ 0  \\ f_3(-y,\xi)\end{array}\right]
\sigma_3d\xi\nonumber\\
& &+\int_{0}^{\infty}
e^{\frac{i}{2}t\xi^{2}}\xi\chi(\xi^2)
\left[ \begin{array}{cc}0\\0\end{array}f_4(-x,\xi)\right]
A(\xi)D^{-1}(\xi)
\left[\begin{array}{cc}{}f_1(-y,\xi)  \\0\ \ \ 0\end{array}\right]
\sigma_3d\xi\nonumber\\
& &+\int_{0}^{\infty}
e^{\frac{i}{2}t\xi^{2}}\xi\chi(\xi^2)
\left[\begin{array}{cc}0\\0\end{array}f_4(-x,\xi)\right]
A(\xi)D^{-1}(\xi)
\left[\begin{array}{cc}0\ \ \ 0  \\ f_3(-y,\xi)\end{array}\right]
\sigma_3d\xi\nonumber\\
&=:&J_{1}(x,y)+J_{2}(x,y)+J_{3}(x,y)+J_{4}(x,y).
\nonumber
\end{eqnarray}
Using Corollary~\ref{C:Dneq0}, we may deduce that $J_{2}\equiv J_{3}\equiv 0$.   Arguing as in the case $x\geq 0\geq y$, we can then obtain
\begin{eqnarray*}
\sup_{0\geq x\geq y}|J(x,y)|\lesssim
\sup_{0\geq x\geq y}(|J_{1}(x,y)|+|J_{4}(x,y)|)\lesssim t^{-\frac12},
\end{eqnarray*}
with a similar estimate for $K$. 

Collecting the above estimates, we conclude 
\begin{eqnarray}
\left|2\pi i\langle e^{it{{\mathcal H}}}\chi({{\mathcal H}}-\omega)
P_c\varphi,\psi\rangle\right|&\lesssim&
t^{-\frac12}\|\varphi\|_{L^1}\|\psi\|_{L^1},\label{LE}
\end{eqnarray}
with implicit constant depending on $\mathcal{V}$.  This completes the estimation of the contribution of the low energies to the unweighted dispersive estimate \eqref{dispersive1}. 

For the low-energy contribution to the weighted estimate \eqref{dispersive2}, we will once again proceed essentially as in \cite{KS} (see \cite[Proposition~8.1]{KS}) due to the fact that the distorted plane waves $e_{\pm}$ 
constructed above share the same decay/regularity properties as those appearing in \cite{KS}.

By Proposition \ref{P:Rep}, we have
\begin{eqnarray*}
\lefteqn{2\pi i
\langle e^{it{{\mathcal H}}}\chi({{\mathcal H}}-\omega)
P_{\text{c}}^+\varphi,\psi\rangle}\\
&=&
-4ie^{it\omega}
\int_0^{\infty}
e^{\frac{i}{2}t\xi^2}\xi^2\chi(\xi)
\langle f,\sigma_3F_1(\cdot,\xi)D(\xi)^{-1}e\rangle 
\\
& &\qquad\qquad\qquad\qquad\qquad\qquad\times
\langle g,\sigma_3F_1(\cdot,\xi)D(\xi)^{-1}e\rangle  d\xi\\
& &
-4ie^{it\omega}
\int^0_{-\infty}
e^{\frac{i}{2}t\xi^2}\xi^2\chi(\xi)
\langle f,\sigma_3G_2(\cdot,-\xi)D(-\xi)^{-1}e\rangle  
\\
& &\qquad\qquad\qquad\qquad\qquad\qquad
\times\langle g,\sigma_3G_2(\cdot,-\xi)D(\xi)^{-1}e\rangle  d\xi.
\end{eqnarray*}
Integrating by parts via the identity
\begin{eqnarray*}
e^{\frac{i}{2}t\xi^2}=\frac{\pt_{\xi}(e^{\frac{i}{2}t\xi^2})}{it\xi},
\end{eqnarray*}
we have
\begin{eqnarray*}
\lefteqn{2\pi i
\langle e^{it{{\mathcal H}}}(1-\chi({{\mathcal H}}-\omega))
P_{\text{c}}^+\varphi,\psi\rangle}\\
&=&
\frac{4}{t}e^{it\omega}
\int_0^{\infty}
e^{\frac{i}{2}t\xi^2}\pt_{\xi}\{\xi\chi(\xi)
\langle f,\sigma_3F_1(\cdot,\xi)D(\xi)^{-1}e\rangle 
\\
& &\qquad\qquad\qquad\qquad\qquad\qquad
\times\langle g,\sigma_3F_1(\cdot,\xi)D(\xi)^{-1}e\rangle  \}d\xi\\
& &
+\frac{4}{t}e^{it\omega}
\int^0_{-\infty}
e^{\frac{i}{2}t\xi^2}\pt_{\xi}\{\xi\chi(\xi)
\langle f,\sigma_3G_2(\cdot,-\xi)D(-\xi)^{-1}e\rangle 
\\
& &\qquad\qquad\qquad\qquad\qquad\qquad 
\times\langle g,\sigma_3G_2(\cdot,-\xi)D(-\xi)^{-1}e\rangle \} d\xi\\
&=:&L_1+L_2.
\end{eqnarray*}

We focus our attention on $L_1$, as $L_2$ can be treated in a similar way.  We split $L_1$ into the following two pieces: 
\begin{eqnarray*}
L_1
&=&\frac{4}{t}e^{it\omega}
\int_0^{\infty}
e^{\frac{i}{2}t\xi^2}\pt_{\xi}(\xi\chi(\xi))
\langle f,\sigma_3F_1(\cdot,\xi)D(\xi)^{-1}e\rangle 
\\
& &\qquad\qquad\qquad\qquad\qquad\qquad\times
\langle g,\sigma_3F_1(\cdot,\xi)D(\xi)^{-1}e\rangle d\xi\\
& &+\frac{4}{t}e^{it\omega}
\int_0^{\infty}
e^{\frac{i}{2}t\xi^2}\xi\chi(\xi)
\pt_{\xi}\{\langle f,\sigma_3F_1(\cdot,\xi)D(\xi)^{-1}e\rangle 
\\
& &\qquad\qquad\qquad\qquad\qquad\qquad\times
\langle g,\sigma_3F_1(\cdot,\xi)D(\xi)^{-1}e\rangle 
\}d\xi\\
&=:&L_{1,1}+L_{1,2}.
\end{eqnarray*}

Let us now focus now on the term $L_{1,1}$; we refer the reader to \cite[Proposition~8.1]{KS} for details concerning the remaining terms.  We let $\omega(\xi)=\pt_{\xi}(\xi\chi(\xi))$ and 
let $\tilde{\omega}(\xi)$ be an another smooth 
cut-off function satisfying $\omega=
\omega\tilde{\omega}$. 
By the $L^1-L^\infty$ dispersive estimate for the free Schr\"odinger group, we see
\begin{eqnarray*}
|L_{1,1}|
&\lesssim&t^{-\frac32}
\|\tilde{{{\mathcal F}}}_{\xi\mapsto u}^{-1}
[\omega(\xi)\langle f,\sigma_3F_1(\cdot,\xi)D(\xi)^{-1}e\rangle ]\|_{L_u^1}\\
& &\qquad\qquad\times
\|\tilde{{{\mathcal F}}}_{\xi\mapsto u}^{-1}
[\tilde{\omega}(\xi)\langle g,\sigma_3F_1(\cdot,\xi)D(\xi)^{-1}e\rangle ]\|_{L_u^1},
\end{eqnarray*}
where $\tilde{{{\mathcal F}}}$ is the Fourier 
cosine transform. If we can prove
\begin{eqnarray}
\|\tilde{{{\mathcal F}}}_{\xi\mapsto u}^{-1}
[\omega(\xi)F_1(\cdot,\xi)D(\xi)^{-1}e]\|_{L_u^1}
\lesssim\langle x\rangle,\label{u22}
\end{eqnarray}
then we can obtain 
\begin{eqnarray*}
\|\tilde{{{\mathcal F}}}_{\xi\mapsto u}^{-1}
[\omega(\xi)\langle f,\sigma_3F_1(\cdot,\xi)D(\xi)^{-1}e\rangle ]\|_{L_u^1}
\lesssim\|\langle x\rangle f\|_{L_x^1}.
\end{eqnarray*}
This bound, along with with a similar estimate for 
\[
\tilde{{{\mathcal F}}}_{\xi\mapsto u}^{-1}
[\tilde{\omega}(\xi)\langle g,\sigma_3F_1(\cdot,\xi)D(\xi)^{-1}e\rangle ],
\]
will yield the desired decay estimate \eqref{L2}. 

We turn to \eqref{u22}.  For the case $x\ge0$, an integration by 
parts implies 
\begin{eqnarray*}
\lefteqn{\tilde{{{\mathcal F}}}_{\xi\mapsto u}^{-1}
[\omega(\xi)F_1(\cdot,\xi)D(\xi)^{-1}e](u)}\\
&=&
\sum_{\pm}
\biggl\{
-\frac{1}{2i(x\pm u)}\omega(0)f_1(x,0)D(0)^{-1}_{11}\\
& &-\frac{1}{2i(x\pm u)}\int_0^{\infty}e^{i(x\pm u)\xi}
\pt_{\xi}\left\{\omega(\xi)e^{-ix\xi}f_1(x,\xi)D(\xi)^{-1}_{11}\right\}d\xi\biggl\}\\
& &-\frac1u
\int_0^{\infty}\sin(\xi u)
\pt_{\xi}\left\{\omega(\xi)f_3(x,\xi)D(\xi)^{-1}_{21}\right\}d\xi,
\end{eqnarray*}
where $D(\xi)^{-1}_{ij}$ is the $(i,j)$ component of $D(\xi)^{-1}$. Integrating by parts in the above identity and using Lemma \ref{L:f1-f4}, we have
\begin{eqnarray*}
\left|\tilde{{{\mathcal F}}}_{\xi\mapsto u}^{-1}
[\omega(\xi)F_1(\cdot,\xi)D(\xi)^{-1}e](u)\right|
\lesssim\frac{|x|}{|x^2-u^2|}+\frac{1}{(u+x)^2}+\frac{1}{(u-x)^2}+\frac{1}{u^2}.
\end{eqnarray*}
Combining this with the estimate
\begin{eqnarray*}
\left|\tilde{{{\mathcal F}}}_{\xi\mapsto u}^{-1}
[\omega(\xi)F_1(\cdot,\xi)D(\xi)^{-1}e](u)\right|
\lesssim1,
\end{eqnarray*}
we obtain \eqref{u22}. 

For the case $x\le0$ the key is to rewrite $F_1(x,\xi)D^{-1}(\xi)e$ 
in terms of $f_1(-x,\xi)$ and $f_3(-x,\xi)$: 
\begin{eqnarray}
F_1(x,\xi)D^{-1}(\xi)e
&=&
\frac{f_1(-x,-\xi)-f_1(-x,\xi)}{2i\xi}
+\frac{\tilde{R}(\xi)+1}{2i\xi}f_1(-x,\xi)\nonumber\\
& &+f_3(-x,\xi)(B(\xi)D^{-1}(\xi))_{21}.\label{kk}
\end{eqnarray}
Now observe that the second term in the right hand side of \eqref{kk} is smooth for any $\xi\in\rre$, due to the fact that $\tilde{R}(0)=-1$ (see Lemma~\ref{L:FG-behave}). Substituting \eqref{kk} into $\tilde{{{\mathcal F}}}_{\xi\mapsto u}^{-1} [\omega(\xi)F_1(\cdot,\xi)D(\xi)^{-1}e](u)$ and applying an  integration by parts and Lemma \ref{L:f1-f4} to the resulting expression, we obtain \eqref{u22}.  We again refer the reader to \cite[Proof of Proposition 8.1]{KS} for more complete details. 

Collecting the above estimates, we arrive at the estimate
\begin{eqnarray}
\langle e^{it{{\mathcal H}}}(1-\chi({{\mathcal H}}-\omega))
P_{c}\varphi,\psi\rangle
\lesssim t^{-\frac32}\|\langle x\rangle f\|_{L^1}\|\langle x\rangle g\|_{L^1},
\label{L2}
\end{eqnarray}
with some implicit constant depending on $\mathcal{V}$, which completes the estimation of the contribution of the low energies  to the weighted dispersive estimate \eqref{dispersive2}. 

\subsection{Estimates for high energies}

In this section, we obtain estimates for the high-energy contributions by using the Born series expansion 
\begin{eqnarray*}
\lefteqn{({{\mathcal H}}-\tfrac{\xi^{2}}{2}-\omega\mp i0)^{-1}}\\
&=&\sum_{n=0}^{\infty}(-1)^n
({{\mathcal H}}_0-\tfrac{\xi^{2}}{2}-\omega\mp i0)^{-1}
\{{{\mathcal V}}({{\mathcal H}}_0-\tfrac{\xi^{2}}{2}-\omega\mp i0)^{-1}\}^n.
\end{eqnarray*}
In particular, we write 
\begin{eqnarray}
\lefteqn{2\pi i\langle e^{it{{\mathcal H}}}(1-\chi({{\mathcal H}}-\omega))
P_c\varphi,\psi\rangle}
\nonumber\\
&=&
e^{it\omega}\sum_{n=0}^{\infty}(-1)^n\int_{0}^{\infty}
e^{\frac{i}{2}t\xi^{2}}\xi[1-\chi(\xi^2)]\nonumber\\
& &\qquad\times\left[
\langle\{
({{\mathcal H}}_0-\tfrac{\xi^{2}}{2}-\omega- i0)^{-1}
\{{{\mathcal V}}({{\mathcal H}}_0-\tfrac{\xi^{2}}{2}-\omega- i0)^{-1}\}^n
\}\varphi,\psi\rangle\right.\nonumber\\
& &\qquad\quad-\left.
\langle\{
({{\mathcal H}}_0-\tfrac{\xi^{2}}{2}-\omega+ i0)^{-1}
\{{{\mathcal V}}({{\mathcal H}}_0-\tfrac{\xi^{2}}{2}-\omega+ i0)^{-1}\}^n
\}\varphi,\psi\rangle\right] d\xi\nonumber\\
&=:&e^{it\omega}\sum_{n=0}^{\infty}(-1)^nI_n.\label{Sum}
\end{eqnarray}

To obtain decay estimates for $I_n$, we use that 
the integral kernel of 
$({{\mathcal H}}_{0}-\frac{\xi^{2}}{2}-\omega\mp i0)^{-1}$ is given by 
\begin{eqnarray*}
\left[
\begin{array}{cc} 
(H-\frac{\xi^{2}}{2}\mp i0)^{-1}(x,y) & 0 \\
0 & -(H+\frac{\mu^{2}}{2}\pm i0)^{-1}(x,y)
\end{array}
\right],
\end{eqnarray*}
where $\mu=\sqrt{\xi^2+4\omega}$ and 
\begin{eqnarray*}
(H-\tfrac{\xi^{2}}{2}\mp i0)^{-1}(x,y)
&=&\mp\tfrac{i}{\xi}
\left\{e^{\mp i\xi|x-y|}
+R(\mp\xi)e^{\mp i\xi(|x|+|y|)}\right\}\\
&=:&K^1_{\mp}(x,y,\xi),\\
(H+\tfrac{\mu^{2}}{2}\pm i0)^{-1}(x,y)
&=&-\tfrac{1}{\mu}
\left\{e^{-\mu|x-y|}
+R(i\mu)
e^{-\mu(|x|+|y|)}\right\}\\
&=:&K^2(x,y,\xi).
\end{eqnarray*}
Here $R(\xi)=\tfrac{q}{i\xi-q}$ is the reflection coefficient associated to the operator $H=-\tfrac12\partial_x^2+q\delta_0$. 

We first evaluate $I_0$. A direct computation shows  
\begin{eqnarray*}
I_0&=&\int_{0}^{\infty}
e^{\frac{i}{2}t\xi^{2}}\xi[1-\chi(\xi^2)]\\
& &\quad\times\left[
\int_{-\infty}^{+\infty}\left\{\int_{-\infty}^{+\infty}
\left[K^1_-(x,y,\xi)-K^1_+(x,y,\xi)\right]\varphi_1(y)dy\right\}\overline{\psi_1(x)}dx
\right]d\xi\\
&=&\int_{-\infty}^{+\infty}\int_{-\infty}^{+\infty}
\left\{\int_{0}^{\infty}
e^{\frac{i}{2}t\xi^{2}}\xi[1-\chi(\xi^2)]
\left[K^1_-(x,y,\xi)-K^1_+(x,y,\xi)\right]d\xi\right\}\\
& &\qquad\qquad\qquad\qquad\qquad\qquad\qquad
\qquad\qquad\qquad
\times\overline{\psi_1(x)}\varphi_1(y)\,dx\,dy,
\end{eqnarray*}
where $\varphi=\left[\begin{array}{cc}\varphi_1\\
\bar\varphi_1\end{array}\right]$ and $\psi=\left[\begin{array}{cc}\psi_1\\
\bar\psi_1\end{array}\right]$. Hence if we can show that
\begin{eqnarray}
\sup_{x,y\in\rre}\left|\int_{0}^{\infty}e^{\frac{i}{2}t\xi^{2}}\xi[1-\chi(\xi^2)]
\left[K^1_-(x,y,\xi)-K^1_+(x,y,\xi)\right]d\xi\right|\lesssim t^{-\frac12},\label{a1}
\end{eqnarray}
then we will have the desired estimate 
\begin{eqnarray}
|I_0|\lesssim t^{-\frac12}\|\varphi\|_{L^1}\|\psi\|_{L^1}. \label{I0} 
\end{eqnarray}
To obtain \eqref{a1}, it suffices to show 
\begin{eqnarray}
\sup_{a\in\rre}\left|\int_{0}^{\infty}e^{ia\xi+\frac{i}{2}t\xi^{2}}[1-\chi(\xi^2)]g(\xi)
d\xi\right|\lesssim t^{-\frac12},\label{a2}
\end{eqnarray}
where $g=1, R$ or $\bar{R}$. Using the identity 
\begin{equation}\label{IBPID}
e^{ia\xi+\frac{i}{2}t\xi^{2}}
=\frac{\pt_{\xi}\left\{(\xi+\frac{a}{t})e^{ia\xi+\frac{i}{2}t\xi^{2}}\right\}
}{1+it(\xi+\frac{a}{t})^2}
\end{equation}
and integrating by parts, we obtain
\begin{eqnarray*}
\lefteqn{\int_{0}^{\infty}e^{ia\xi+\frac{i}{2}t\xi^{2}}[1-\chi(\xi^2)]g(\xi)
d\xi}\\
&=&
-\int_{0}^{\infty}(\xi+\frac{a}{t})e^{ia\xi+\frac{i}{2}t\xi^{2}}
\pt_{\xi}\left\{\frac{[1-\chi(\xi^2)]g(\xi)}{1+it(\xi+\frac{a}{t})^2}\right\}
d\xi\\
&=&
-\int_{0}^{\infty}(\xi+\frac{a}{t})e^{ia\xi+\frac{i}{2}t\xi^{2}}
\frac{\pt_{\xi}\left\{[1-\chi(\xi^2)]g(\xi)\right\}}{1+it(\xi+\frac{a}{t})^2}d\xi\\
& &
+2it\int_{0}^{\infty}(\xi+\frac{a}{t})^2e^{ia\xi+\frac{i}{2}t\xi^{2}}
\frac{[1-\chi(\xi^2)]g(\xi)}{\left\{1+it(\xi+\frac{a}{t})^2\right\}^2}
d\xi.
\end{eqnarray*}
Thus we can estimate
\begin{eqnarray*}
\lefteqn{
\left|\int_{0}^{\infty}e^{ia\xi+\frac{i}{2}t\xi^{2}}[1-\chi(\xi^2)]g(\xi)
d\xi\right|}\\
&\lesssim&
t^{-\frac12}\int_{0}^{\infty}\left|\pt_{\xi}\left\{[1-\chi(\xi^2)]g(\xi)\right\}\right|d\xi
+\int_{0}^{\infty}\frac{d\xi}{1+t(\xi+\frac{a}{t})^2}\\
&\lesssim&t^{-\frac12},
\end{eqnarray*}
with implicit constants independent of $a$.  This yields \eqref{a2}, as desired.

We turn to the estimation of $I_n$ for $n\geq1$. For this, we rely on the following identity:
\begin{eqnarray*}
\lefteqn{({{\mathcal H}}_q-\tfrac{\xi^{2}}{2}-\omega\mp i0)^{-1}
\{{{\mathcal V}}({{\mathcal H}}_q-\tfrac{\xi^{2}}{2}-\omega\mp i0)^{-1}\}^n
\}\varphi(x)}\\
&=&
\int_{\rre^{n+1}}
{{\mathcal K}}_{\mp}^n(x,x_1,\cdots, x_{n+1})\varphi(x_{n+1})
dx_1\cdots dx_{n+1},
\end{eqnarray*}
where 
\begin{eqnarray*}
\lefteqn{{{\mathcal K}}_{\mp}^n(x,x_1,\cdots, x_{n+1},\xi)}\\
&=&
\left[
\begin{array}{cc}
K_{\mp}^1(x,x_1,\xi) & 0 \\
0 & K^2(x,x_1,\xi)
\end{array}
\right]\\
& &\times\prod_{j=1}^{n}
\left[
\begin{array}{cc}
V_1(x_j) & V_2(x_j) \\
-V_2(x_j) & -V_1(x_j)
\end{array}
\right]
\left[
\begin{array}{cc}
K_{\mp}^1(x_{j},x_{j+1},\xi) & 0 \\
0 & K^2(x_{j},x_{j+1},\xi)
\end{array}
\right],
\end{eqnarray*}
where $K^1_{\pm}$ and $K^2$ are as above and we have set $V_1(x)=\sigma(\frac{p}{2}+1)Q_\omega^{p}(x)$ and 
$V_2(x)=\sigma\frac{p}{2}Q_\omega^{p}(x)$. 

It follows that
\begin{align*}
|I_n| & \lesssim \sum_{\pm} \biggl| \int_0^\infty\int_{\R^{n+2}} e^{\frac{i}{2}t\xi^2}\xi[1-\chi(\xi^2)] \\
&\quad \quad \times K_{\mp}^n(x,x_1,\dots,x_{n+1},\xi)\varphi(x_{n+1})\bar\psi(x)\,dx\,dx_1\cdots dx_{n+1} \,d\xi\biggr|. 
\end{align*}
We now observe that each component of the matrices 
${{\mathcal K}}_{\mp}^n(x,x_1,\cdots, x_{n+1})$ 
is sum of $2^n$ terms of the form
\begin{eqnarray*}
C(p)\prod_{j=1}^nV_1(x_j)\xi^{-n-1}
g(\xi)e^{ih(x,x_1,\cdots,x_{n+1})\xi}e^{-k(x,x_1,\cdots,x_{n+1})\mu},
\end{eqnarray*}
where $g$ is some power of $R$ or $\bar{R}$, and $h$ and  $k$ are real-valued functions with $k(x,x_1,\cdots,x_{n+1})\geq 0$.  Thus the right-hand side of the above inequality  
is bounded by sum of $2^n$ terms of the form
\begin{eqnarray*}
\int_{\rre^{n+2}}
|\psi(x)|\prod_{j=1}^nV_1(x_j)|\varphi(x_{n+1})|\,\bigl|
I(x,x_1,\cdots,x_{n+1},\xi)\bigr|dx\,dx_1\cdots dx_{n+1},
\end{eqnarray*}
where
\begin{eqnarray*}
\lefteqn{I(x,x_1,\cdots,x_{n+1},\xi)}\\
&=&
\int_{0}^{\infty}e^{\frac{i}{2}t\xi^{2}}\xi^{-n}[1-\chi(\xi^2)]
g(\xi)e^{ih(x,x_1,\cdots,x_{n+1})\xi}e^{-k(x,x_1,\cdots,x_{n+1})\mu}d\xi.
\end{eqnarray*}
Thus, if we can prove that
\begin{eqnarray}
\sup_{a\in\rre,b\geq 0}
\left|\int_{0}^{\infty}e^{\frac{i}{2}t\xi^{2}}e^{ia\xi}e^{-b\mu}\xi^{-n}[1-\chi(\xi^2)]
g(\xi)d\xi\right|\leq C\lambda^{-n+1}t^{-\frac12},
\label{q1}
\end{eqnarray}
where the constant $C$ is independent of $n$ and $g$, 
then we can obtain the estimate 
\begin{eqnarray}
|I_n|\lesssim t^{-\frac12}2^n \lambda^{-n+1}\|{{\mathcal V}}\|_{L^1}^n
\|\varphi\|_{L^1}\|\psi\|_{L^1},\label{In}
\end{eqnarray}
with implicit constant independent of $n$. 

We turn to \eqref{q1}. Using \eqref{IBPID} once again and integrating by parts, we obtain
\begin{eqnarray*}
\lefteqn{\int_{0}^{\infty}e^{ia\xi+\frac{i}{2}t\xi^{2}}
e^{-b\mu}\xi^{-n}[1-\chi(\xi^2)]
g(\xi)
d\xi}\\
&=&
-\int_{0}^{\infty}(\xi+\frac{a}{t})e^{ia\xi+\frac{i}{2}t\xi^{2}}
\pt_{\xi}\left\{\frac{e^{-b\mu}\xi^{-n}[1-\chi(\xi^2)]g(\xi)}{
1+it(\xi+\frac{a}{t})^2}\right\}d\xi\\
&=&
-\int_{0}^{\infty}(\xi+\frac{a}{t})e^{ia\xi+\frac{i}{2}t\xi^{2}}
\frac{\pt_{\xi}\left\{e^{-b\mu}\xi^{-n}[1-\chi(\xi^2)]
g(\xi)\right\}}{1+it(\xi+\frac{a}{t})^2}d\xi\\
& &
+2it\int_{0}^{\infty}(\xi+\frac{a}{t})^2e^{ia\xi+\frac{i}{2}t\xi^{2}}
\frac{e^{-b\mu}\xi^{-n}[1-\chi(\xi^2)]g(\xi)}{\left\{
1+it(\xi+\frac{a}{t})^2\right\}^2}d\xi.
\end{eqnarray*}
Thus we firstly obtain 
\begin{eqnarray*}
\lefteqn{
\left|\int_{0}^{\infty}e^{ia\xi+\frac{i}{2}t\xi^{2}}
e^{-b\mu}\xi^{-n}[1-\chi(\xi^2)]g(\xi)
d\xi\right|}\\
&\lesssim&
t^{-\frac12}\int_{0}^{\infty}\left|\pt_{\xi}
\left\{e^{-b\mu}\xi^{-n}[1-\chi(\xi^2)]g(\xi)\right\}\right|d\xi
+\lambda^{-n}\int_{0}^{\infty}\frac{d\xi}{1+t(\xi+\frac{a}{t})^2}.
\end{eqnarray*}

To continue, we estimate as follows:
\begin{eqnarray*}
\lefteqn{\int_{0}^{\infty}\left|\pt_{\xi}
\left\{e^{-b\mu}\xi^{-n}[1-\chi(\xi^2)]g(\xi)\right\}\right|d\xi}\\
&\leq&
b\int_{0}^{\infty}e^{-b\mu}\xi^{-n}[1-\chi(\xi^2)]|g(\xi)|d\xi
+
\int_{0}^{\infty}e^{-b\mu}\left|\pt_{\xi}
\left\{\xi^{-n}[1-\chi(\xi^2)]g(\xi)\right\}\right|d\xi\\
&\leq&\lambda^{-n}b\int_{0}^{\infty}e^{-b\mu}d\xi
+\int_{0}^{\infty}\left|\pt_{\xi}
\left\{\xi^{-n}[1-\chi(\xi^2)]g(\xi)\right\}\right|d\xi\\
&\lesssim&\lambda^{-n+1},
\end{eqnarray*}
where we note that the implicit constants are independent of $b$.  In particular, continuing from above we obtain \eqref{q1}. 

Substituting \eqref{I0} and \eqref{In} into \eqref{Sum}, we see that with  $\lambda=C\|{{\mathcal V}}\|_{L^1}$, we obtain 
\begin{eqnarray}
\lefteqn{\left|2\pi i\langle e^{it{{\mathcal H}}}(1-\chi({{\mathcal H}}-\omega))
P_c\varphi,\psi\rangle\right|}
\nonumber\\
&\lesssim&
t^{-\frac12}\|\varphi\|_{L^1}\|\psi\|_{L^1}
\sum_{n=0}^{\infty}2^n \lambda^{-n+1}\|{{\mathcal V}}\|_{L^1}^n
\nonumber\\
&\lesssim&
t^{-\frac12}\|\varphi\|_{L^1}\|\psi\|_{L^1},\label{HE}
\end{eqnarray}
yielding the desired unweighted estimate \eqref{dispersive1} in the high-energy regime.  

For the high-energy contribution to the weighted estimate \eqref{dispersive2}, we begin by performing an integration by parts in \eqref{Sum}, obtaining
\begin{eqnarray}
\lefteqn{2\pi i \langle e^{it{{\mathcal H}}}(1-\chi({{\mathcal H}}-\omega))
P_c\varphi,\psi\rangle}
\nonumber\\
&=&
t^{-1}e^{it\omega}\sum_{n=0}^{\infty}(-1)^n\int_{0}^{\infty}
e^{\frac{i}{2}t\xi^{2}}\pt_{\xi}\biggl[\{[1-\chi(\xi^2)]\nonumber\\
& &\qquad\times\left[
\langle\{
({{\mathcal H}}_q-\tfrac{\xi^{2}}{2}-\omega- i0)^{-1}
\{{{\mathcal V}}({{\mathcal H}}_q-\tfrac{\xi^{2}}{2}-\omega- i0)^{-1}\}^n
\}\varphi,\psi\rangle\right.\nonumber\\
& &\qquad\quad-\left.
\langle\{
({{\mathcal H}}_q-\tfrac{\xi^{2}}{2}-\omega+ i0)^{-1}
\{{{\mathcal V}}({{\mathcal H}}_q-\tfrac{\xi^{2}}{2}-\omega+ i0)^{-1}\}^n
\}\varphi,\psi\rangle\right] \biggl]d\xi.\nonumber
\end{eqnarray}
One may now proceed essentially as we did for the unweighted estimate, leading to the estimate 
\begin{eqnarray}
\langle e^{it{{\mathcal H}}}(1-\chi({{\mathcal H}}-\omega))
P_c\varphi,\psi\rangle
\lesssim t^{-\frac32}\|\langle x\rangle \varphi\|_{L^1}\|\langle x\rangle \psi\|_{L^1}.
\label{H2}
\end{eqnarray}
In particular, the additional decay is already present in the formula above, while the weights appear due to the fact that the additional derivative hits the resolvent, leading to terms like $|x_j-x_{j-1}|$ (cf. the definition of $K_{\pm}^1$ and $K^2$ above).  We then apply the simple bound $|x_j-x_{j-1}|\leq |x_j|+|x_{j-1}|$ and use either $V_1$, $\varphi$, or $\psi$ to absorb the weight. 

This completes the estimation of the high-energy regime, and hence completes the proof of Proposition~\ref{P:D}. 


\subsection{Interpolation estimates}

In Section~\ref{S:stability}, we will rely on the dispersive estimates appearing in Proposition~\ref{P:D} in an essential way.  However, we will not apply these estimates directly, but rather in an interpolated form.  In this section, we collect the estimates in the form that we will need them.

First, interpolating \eqref{dispersive1} with \eqref{dispersive3} leads to the estimate
\begin{equation}\label{dispersive4}
\|e^{-it\H}P_c\varphi\|_{L^{r,2}} \lesssim |t|^{-\frac12(1-\frac{2}{r})}\|\varphi\|_{L^{{r',2}}},\quad 2\leq r<\infty, 
\end{equation} 
where $L^{a,b}$ denotes the Lorentz space (see Section~\ref{S:notation}).

We may also interpolate \eqref{dispersive2} with \eqref{dispersive3}, leading to 
\begin{equation}\label{dispersive5}
\|\langle x\rangle^{-(1-\frac{2}{r})}e^{-it\H}P_c\varphi\|_{L^{r,2}} \lesssim |t|^{-\frac32(1-\frac{2}{r})}\|\langle x\rangle^{1-\frac2r}\varphi\|_{L^{r',2}},\quad 2\leq r<\infty. 
\end{equation}
In particular, interpolating between these last two estimates yields
\begin{equation}\label{dispersive6}
\| \langle x\rangle^{-\theta(1-\frac2r)}e^{-it\H}P_c\varphi\|_{L^{r,2}}\lesssim |t|^{-(\frac12+\theta)(1-\frac2r)}\|\langle x\rangle^{\theta(1-\frac2r)}\varphi\|_{L^{r',2}}
\end{equation}
for any $2\leq r<\infty$ and $\theta\in[0,1]$.  Using this estimate together with H\"older's inequality, we can establish the following weighted $L^2$ estimate:
\begin{proposition}\label{P:weighted-dispersive} For any $2\leq r<\infty$ and $\alpha\in[\tfrac12-\tfrac1r,\tfrac32-\tfrac3r]$,  
\begin{equation}\label{dispersive7}
\|\langle x\rangle^{-\alpha}e^{-it\H}P_c \varphi\|_{L^2} \lesssim |t|^{-\alpha}\|\langle x\rangle^{\alpha-[\frac12-\frac1r]}\varphi\|_{L^{r',2}}. 
\end{equation}
\end{proposition}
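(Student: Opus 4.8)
The plan is to derive \eqref{dispersive7} from the already-established interpolated estimate \eqref{dispersive6} by inserting a power of $\langle x\rangle$ and applying H\"older's inequality in Lorentz spaces, with the interpolation parameter $\theta$ chosen so that both the exponent of $\langle x\rangle$ hitting the data and the power of $|t|$ come out exactly as claimed. First I would dispose of the endpoint $r=2$: there the hypothesis forces $\alpha=0$ and \eqref{dispersive7} reduces to the $L^2$ bound \eqref{dispersive3}. So assume $2<r<\infty$ and abbreviate $s:=\tfrac12-\tfrac1r\in(0,\tfrac12)$, so that $1-\tfrac2r=2s$ and the admissible interval for $\alpha$ is $[s,3s]$. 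Given such $\alpha$, I would set $\theta:=\tfrac{\alpha}{2s}-\tfrac12$. A direct check shows $\theta\in[0,1]$ if and only if $s\le\alpha\le 3s$, i.e.\ exactly the hypothesis on $\alpha$; moreover $\theta(1-\tfrac2r)=2s\theta=\alpha-s$, while $(\tfrac12+\theta)(1-\tfrac2r)=2s(\tfrac12+\theta)=\alpha$.

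Next I would split
\[
\langle x\rangle^{-\alpha}\,e^{-it\H}P_c\varphi=\langle x\rangle^{-s}\cdot\bigl(\langle x\rangle^{-\theta(1-\frac2r)}\,e^{-it\H}P_c\varphi\bigr)
\]
and estimate the product with H\"older's inequality in Lorentz spaces, using exponents $(1/s,\infty)$ for the first factor and $(r,2)$ for the second; these combine to $(2,2)=L^2$ precisely because $s+\tfrac1r=\tfrac12$. Here one uses that $\langle x\rangle^{-s}\in L^{1/s,\infty}(\R)$, which holds since $|x|^{-s}\in L^{1/s,\infty}(\R)$ for $0<s<1$ and $\langle x\rangle^{-s}$ is bounded. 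This reduces matters to bounding $\|\langle x\rangle^{-\theta(1-\frac2r)}e^{-it\H}P_c\varphi\|_{L^{r,2}}$, which is exactly what \eqref{dispersive6} delivers: with our $\theta$ it gives the bound $|t|^{-(\frac12+\theta)(1-\frac2r)}\|\langle x\rangle^{\theta(1-\frac2r)}\varphi\|_{L^{r',2}}=|t|^{-\alpha}\|\langle x\rangle^{\alpha-s}\varphi\|_{L^{r',2}}$, which is precisely the right-hand side of \eqref{dispersive7} since $s=\tfrac12-\tfrac1r$.

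There is no serious obstacle here; the argument is pure bookkeeping built on \eqref{dispersive6}. The only points requiring care are (i) that the value of $\theta$ dictated by matching the data weight $\langle x\rangle^{\alpha-s}$ simultaneously produces the decay rate $|t|^{-\alpha}$, (ii) that this $\theta$ lies in $[0,1]$ exactly on the stated range of $\alpha$ — which is the reason that range appears in the statement — and (iii) that the leftover factor $\langle x\rangle^{-s}$ lands in the borderline Lorentz space $L^{1/s,\infty}$, so that the H\"older product genuinely lands in $L^2$ rather than in a weak-type space. None of these is difficult, so I expect the proof to be short.
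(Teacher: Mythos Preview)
Your proposal is correct and is essentially identical to the paper's proof: the paper also applies H\"older in Lorentz spaces with the factor $\langle x\rangle^{-(\frac12-\frac1r)}\in L^{\frac{2r}{r-2},\infty}$ (your $L^{1/s,\infty}$) and then invokes \eqref{dispersive6} with the parameter $\beta$ chosen so that $\beta(1-\tfrac2r)=\alpha-(\tfrac12-\tfrac1r)$, which is exactly your $\theta$. The paper does not separately dispatch the case $r=2$, but otherwise the arguments match line for line.
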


\begin{proof} Using H\"older's inequality (for Lorentz spaces) and \eqref{dispersive6}, we estimate
\begin{align*}
\|\langle x\rangle^{-\alpha}e^{-it\H}P_c \varphi\|_{L^2} & \lesssim \| \langle x\rangle^{-[\frac12-\frac1r]}\|_{L^{\frac{2r}{r-2},\infty}}\|\langle x\rangle^{-\alpha+[\frac12-\frac1r]}e^{-it\H}P_c\varphi\|_{L^{r,2}} \\
& \lesssim |t|^{-(1-\frac2r)(\frac12+\beta)}\|\langle x\rangle^{\alpha-[\frac12-\frac1r]}\varphi\|_{L^{r',2}}, 
\end{align*}
where $\beta\in[0,1]$ is determined by imposing
\[
\beta(1-\tfrac2r)=\alpha-[\tfrac12-\tfrac1r]. 
\]
In particular, the requirement that $\beta$ belong to $[0,1]$ requires $\alpha\in[\tfrac12-\tfrac1r,\tfrac32-\tfrac3r]$.  Noting that
\[
(1-\tfrac{2}{r})(\tfrac12+\beta)=\alpha
\]
then completes the proof. 
\end{proof}


\section{Proof of asymptotic stability}\label{S:stability}

In this section, we prove the main result (Theorem~\ref{T}). We fix $q<0$ and fix $\omega_0$ such that $Q_{\omega_0}$ is orbitally stable and $\omega_0$ obeys the spectral condition appearing in Definition~\ref{D:spectral}.  Let us also fix a weight $1<\alpha<\min\{\frac{p}{4},\tfrac32\}$, which will appear in the bootstrap norm below (see \eqref{D:X}).  In \eqref{orthogonality} and in the arguments below, we use notation such as $\partial_\omega Q_{\omega(t)}$ or $\partial_\omega Q_\omega$ as short-hand for the expression $\partial_\omega Q_{\omega}\bigr|_{\omega=\omega(t)}$.  

By the orbital stability results of \cite{FOO,KO}, we know that any solution that starts close to $Q_{\omega_0}$ in $H^1$ will be global and remain close to the orbit of $Q_{\omega_0}$ in $H^1$ for all time.  We begin by upgrading the orbital stability to a decomposition that incorporates the orthogonality conditions needed to establish decay and scattering for the perturbative part of the solution.  

\begin{proposition}[Decomposition]\label{P:decomposition} There exists $\eps=\eps(\omega_0)$ sufficiently small and $\eta=\eta(\eps)$ sufficiently small that if
\[
u_0=Q_{\omega_0}+\tilde v_0\qtq{with} \|\tilde v_0\|_{H^1}<\eta
\]
and $u(t)$ is the corresponding solution to \eqref{nls}, then there exists a decomposition
\begin{equation}\label{u=Q+v}
u(t) = e^{i\Phi(t)}\bigl\{Q_{\omega(t)} + v(t)\bigr\}\qtq{for all}t\geq 0
\end{equation}
obeying the following properties:
\begin{itemize}
\item The phase has the following form: 
\begin{equation}\label{D:Phi}
\Phi(t):=\theta(t)+\int_0^t \omega(s)\,ds,
\end{equation}
and $\omega(t)$ obeys 
\begin{equation}\label{control-omegat}
\sup_{t\in[0,\infty)}|\omega(t)-\omega_0|<\eps.
\end{equation}
\item The following orthogonality conditions hold for $v(t)$: 
\begin{equation}\label{orthogonality}
\Re\bigl\langle v(t),Q_{\omega(t)}\bigr\rangle = \Re\bigl\langle v(t),i\partial_\omega Q_{\omega(t)}\bigr\rangle = 0.
\end{equation}
\item The following estimates hold for $v(t)$:
\begin{align}
\|\langle x\rangle^\alpha v(0)\|_{L^2} & \leq \|\langle x\rangle^\alpha\tilde v_0\|_{L^2} + \mathcal{O}_{\omega_0}(\eps), \label{control-v0} \\
\sup_{t\in[0,\infty)}\|v(t)\|_{H^1} & \lesssim_{\omega_0} \eps. \label{control-v-H1}
\end{align} 
\item The following evolution equation holds for $v(t)$:
\begin{equation}\label{nls-v}
i\partial_t v -\dot\theta v = (H+\omega)v + e^{-i\Phi}f(u)-f(Q_\omega) + \dot\theta Q_\omega - i\dot\omega \partial_\omega Q_\omega. 
\end{equation}
\end{itemize}
\end{proposition}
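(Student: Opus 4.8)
The plan is to combine the orbital stability of \cite{FOO,KO} with a standard modulation argument via the implicit function theorem, and then to derive \eqref{nls-v} by a direct computation. Write $\Gamma:=\{e^{i\gamma}Q_{\omega_0}:\gamma\in\R\}$ for the group orbit of $Q_{\omega_0}$. By \cite{FOO,KO} there is a function $\delta(\eta)$ with $\delta(\eta)\to0$ as $\eta\to0$ such that if $\|\tilde v_0\|_{H^1}<\eta$ then the solution $u$ to \eqref{nls} is global and $\sup_{t\geq0}\dist_{H^1}(u(t),\Gamma)\leq\delta(\eta)$. One first fixes $\eps=\eps(\omega_0)$ small enough that the interval $(\omega_0-\eps,\omega_0+\eps)$ lies in the orbital-stability range of frequencies and the modulation neighborhood below is valid, and then takes $\eta=\eta(\eps)$ small enough that $\delta(\eta)$ is as small as required in the estimates.

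For the modulation step, consider the map $\Psi(\gamma,\omega;w):=\bigl(\Re\langle e^{-i\gamma}w-Q_\omega,Q_\omega\rangle,\ \Re\langle e^{-i\gamma}w-Q_\omega,i\partial_\omega Q_\omega\rangle\bigr)$, which vanishes at $(\gamma,\omega;w)=(0,\omega_0;Q_{\omega_0})$. A short computation, using that $Q_\omega$ is real and the inner product convention of Section~\ref{S:notation}, shows that the Jacobian of $\Psi$ in $(\gamma,\omega)$ at this point is the anti-diagonal $2\times2$ matrix with both off-diagonal entries equal to $-\langle Q_{\omega_0},\partial_\omega Q_{\omega_0}\rangle$, which is invertible by \eqref{IPNZ}. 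The implicit function theorem then yields an $H^1$-neighborhood of $Q_{\omega_0}$ and $C^1$ maps $w\mapsto(\gamma(w),\omega(w))$ solving $\Psi(\gamma(w),\omega(w);w)=0$, with $(\gamma(Q_{\omega_0}),\omega(Q_{\omega_0}))=(0,\omega_0)$ and $|\gamma(w)|+|\omega(w)-\omega_0|\lesssim\|w-Q_{\omega_0}\|_{H^1}$. Using the $S^1$-invariance of $\Psi$ (applying the above to $e^{-i\gamma_0}w$) and uniqueness in the implicit function theorem, these maps extend consistently to a tubular neighborhood $\mathcal{N}_\delta=\{w:\dist_{H^1}(w,\Gamma)<\delta\}$. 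Since the pairings involve $w$ only tested against the rapidly decaying functions $Q_\omega$ and $\partial_\omega Q_\omega$, the maps $\gamma,\omega$ are moreover $C^1$ on a neighborhood in $H^{-1}$; and $v(w):=e^{-i\gamma(w)}w-Q_{\omega(w)}$ is Lipschitz on $\mathcal{N}_\delta$ and vanishes on $\Gamma$, hence $\|v(w)\|_{H^1}\lesssim\dist_{H^1}(w,\Gamma)$.

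With $\eta$ small, $u(t)\in\mathcal{N}_\delta$ for all $t\geq0$, so one sets $\Phi(t):=\gamma(u(t))$, $\omega(t):=\omega(u(t))$, $v(t):=e^{-i\Phi(t)}u(t)-Q_{\omega(t)}$, and $\theta(t):=\Phi(t)-\int_0^t\omega(s)\,ds$. This gives the decomposition \eqref{u=Q+v} and the form \eqref{D:Phi}, and \eqref{orthogonality} holds by construction. Since $|\omega(t)-\omega_0|+\|v(t)\|_{H^1}\lesssim\dist_{H^1}(u(t),\Gamma)\leq\delta(\eta)$, choosing $\eta$ small enough gives \eqref{control-omegat} and \eqref{control-v-H1}. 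For \eqref{control-v0}, at $t=0$ write $v(0)=e^{-i\Phi(0)}\tilde v_0+\bigl(e^{-i\Phi(0)}Q_{\omega_0}-Q_{\omega(0)}\bigr)$: the first term has the same $\langle x\rangle^\alpha L^2$ norm as $\tilde v_0$ (the estimate being vacuous when $\tilde v_0\notin\langle x\rangle^{-\alpha}L^2$), while the second is $\mathcal{O}_{\omega_0}(|\Phi(0)|+|\omega(0)-\omega_0|)=\mathcal{O}_{\omega_0}(\eps)$ in $\langle x\rangle^\alpha L^2$, using that $Q_\omega$ and $\partial_\omega Q_\omega$ are bounded in weighted Sobolev spaces uniformly for $\omega$ near $\omega_0$ (see the remark at the end of Section~\ref{S:solitons}).

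Finally, \eqref{nls-v} is obtained by differentiating $u=e^{i\Phi}(Q_\omega+v)$ in $t$, inserting $i\partial_t u=Hu+f(u)$, using the gauge invariance $f(e^{i\Phi}z)=e^{i\Phi}f(z)$, dividing by $e^{i\Phi}$, substituting $\dot\Phi=\dot\theta+\omega$ along with \eqref{nls-Q} in the form $HQ_\omega+\omega Q_\omega=-f(Q_\omega)$, and rearranging, noting that $f(Q_\omega+v)=e^{-i\Phi}f(u)$. This manipulation is justified in $C([0,\infty);H^{-1})$ since $u\in C([0,\infty);H^1)\cap C^1([0,\infty);H^{-1})$ and $\gamma,\omega$ are $C^1$ on a neighborhood in $H^{-1}$, so that $\Phi,\omega$, and hence $v$, are $C^1$ in $t$. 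The whole argument is essentially standard modulation theory; the points demanding the most care are obtaining uniform Lipschitz control over the full tubular neighborhood via the $S^1$-symmetry, and the time-regularity of the modulation parameters needed to legitimize the derivation of \eqref{nls-v}.
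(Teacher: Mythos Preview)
Your proof is correct and follows essentially the same approach as the paper: orbital stability plus the implicit function theorem applied to the map $\Psi$ (the paper's $F$), with the same Jacobian computation, followed by the same triangle-inequality arguments for \eqref{control-v0} and \eqref{control-v-H1} and a direct derivation of \eqref{nls-v}. The only cosmetic difference is that the paper applies the implicit function theorem along the curve of zeros $(e^{i\beta(t)}Q_{\omega_0},\beta(t),\omega_0)$ with $t$-independent bounds, whereas you apply it once at $(0,\omega_0;Q_{\omega_0})$ and extend over the tubular neighborhood by $S^1$-equivariance; your added remark about $C^1$-regularity in $H^{-1}$ to justify the time-differentiation is a detail the paper leaves implicit.
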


\begin{proof}  We let $\eps>0$ be a small parameter to be determined below, and take $\nu=\nu(\eps)>0$ and $\delta=\delta(\nu)>0$ to be determined below as well.  By orbital stability of $Q_{\omega_0}$, we may find $\eta=\eta(\delta)>0$ and $\beta(t)\in\R$ (with $\beta(0)=0$) such that
\begin{equation}\label{orbital-stability}
\|\tilde v_0\|_{H^1}<\eta \implies \sup_{t\geq 0} \|u(t)-e^{i\beta(t)}Q_{\omega_0}\|_{H^1}<\delta. 
\end{equation}

We will construct the modulation parameters by the implicit function theorem.  We begin by defining the function $F:L^2\times\R^2\to\R^2$ by 
\begin{align*}
F(h,\theta,\omega) = \left[\begin{array}{cc} F_1(h,\theta,\omega) \\ F_2(h,\theta,\omega)\end{array}\right] & = \left[\begin{array}{ll} \Re\langle e^{-i\theta}h-Q_\omega,Q_\omega\rangle \\ \Re\langle e^{-i\theta}h-Q_\omega,i\partial_\omega Q_\omega\rangle\end{array}\right] \\
& = \left[\begin{array}{ll} \Re\langle e^{-i\theta}h,Q_\omega\rangle - \|Q_\omega\|_{L^2}^2 \\ \Re\langle e^{-i\theta}h,i\partial_\omega Q_\omega\rangle\end{array}\right]. 
\end{align*}
By construction, we have
\[
F(e^{i\beta(t)}Q_{\omega_0},\beta(t),\omega_0) \equiv 0. 
\]
Next, by direct computation, 
\[
\left[\begin{array}{cc} \partial_\theta F_1 & \partial_\omega F_1 \\ \partial_\theta F_2 & \partial_\omega F_2 \end{array}\right] = \left[\begin{array}{ll} \Re\langle -ie^{-i\theta}h,Q_\omega\rangle &\Re\langle e^{-i\theta}h,\partial_\omega Q_\omega\rangle - 2\langle Q_\omega,\partial_\omega Q_\omega\rangle \\ \Re\langle -ie^{-i\theta}h,i\partial_\omega Q_\omega\rangle & \Re\langle e^{-i\theta}h,i\partial_\omega^2 Q_\omega\rangle \end{array}\right]. 
\]
In particular, evaluating at $(e^{i\beta(t)}Q_{\omega_0},\beta(t),\omega_0)$ yields the matrix 
\[
J = \left[\begin{array}{cc} 0 & -\langle Q_{\omega_0},\partial_\omega Q_{\omega_0}\rangle\\ -\langle Q_{\omega_0},\partial_\omega Q_{\omega_0}\rangle & 0 \end{array}\right],
\]
which is boundedly invertible (with norm depending on $\omega_0$).  Note that the matrix $J$ and the bounds on the derivatives of $F$ above are \emph{independent of $t$}.  This will allow us to apply the implicit function theorem to the entire curve of zeros $\{(e^{i\beta(t)}Q_{\omega_0},\beta(t),\omega_0):t\geq 0\}$, yielding neighborhoods around $e^{i\beta(t)}Q_{\omega}$ and $(\beta(t),\omega_0)$ of fixed size (that is, with size independent of $t$). In particular, by the implicit function theorem, if we choose $\nu=\nu(\omega_0)$ and $\delta=\delta(\nu)$ sufficiently small, we may find for each $t\geq 0$ a map
\[
T_t:B_\delta(e^{i\beta(t)}Q_{\omega})\subset L^2\to B_{\nu}((\beta(t),\omega_0)) \subset \R^2
\]
such that
\[
F(h,T_t(h))=0 \qtq{for all}h\in B_\delta(e^{i\beta(t)}Q_{\omega_0}). 
\]
As \eqref{orbital-stability} implies $u(t)\in B_\delta(e^{i\beta(t)}Q_{\omega_0})$ for all $t\geq 0$, we may choose
\[
(\Phi(t),\omega(t)):=T_t(u(t))
\]
and set
\[
v(t):=e^{-i\Phi(t)}u(t)-Q_{\omega(t)},\quad \theta(t):=\Phi(t)-\int_0^t\omega(s)\,ds,
\] 
to arrive at the desired decomposition with \eqref{D:Phi} and \eqref{orthogonality}.  We also have \eqref{control-omegat} provided we take $\nu\leq \eps$. 

We next establish \eqref{control-v0}. We first write
\[
Q_{\omega_0}+\tilde v_0 = u_0 = e^{i\theta(0)}\{Q_{\omega(0)}+v(0)\}, 
\]
so that
\[
\|\langle x\rangle^\alpha v(0)\|_{L^2} \leq \|\langle x\rangle^\alpha \tilde v_0\|_{L^2} + \|\langle x\rangle^\alpha[Q_{\omega_0}-e^{i\theta(0)}Q_{\omega(0)}]\|_{L^2}. 
\]
Now we estimate
\begin{align*}
\|& \langle x\rangle^\alpha[Q_{\omega_0}-e^{i\theta(0)}Q_{\omega(0)}]\|_{L^2} \\
& \leq |\theta(0)|\cdot\|\langle x\rangle^\alpha Q_{\omega_0}\|_{L^2} + \|\langle x\rangle^\alpha[Q_{\omega_0}-Q_{\omega(0)}]\|_{L^2}.
\end{align*}
By construction, we have $|\theta(0)|<\nu\leq \eps$ (cf. $\beta(0)=0$).  Similarly, as $|\omega(0)-\omega_0|<\nu$, we may choose $\nu=\nu(\eps,\omega_0)$ possibly even smaller to guarantee that the second term above is bounded by $\eps$.  Continuing from above, we derive \eqref{control-v0}.

The proof of \eqref{control-v-H1} follows in a similar fashion.  We first write
\[
e^{i\Phi(t)}\{Q_{\omega(t)}+v(t)\} = e^{i\beta(t)}Q_{\omega_0}+ [u(t) - e^{i\beta(t)}Q_{\omega_0}],
\]
so that by \eqref{orbital-stability} we have
\[
\|v(t)\|_{H^1} \leq \|e^{i\Phi(t)}Q_{\omega(t)} - e^{i\beta(t)}Q_{\omega_0}\|_{H^1}+\delta. 
\]
We now observe that
\[
\|e^{i\Phi(t)}Q_{\omega(t)} - e^{i\beta(t)}Q_{\omega_0}\|_{H^1} \leq |\Phi(t)-\beta(t)|\cdot\|Q_{\omega_0}\|_{H^1} + \|Q_{\omega(t)}-Q_{\omega_0}\|_{H^1}. 
\]
Once again we have $|\Phi(t)-\beta(t)|< \nu\leq\eps$, and as $|\omega(t)-\omega_0|<\nu$, we may choose $\nu=\nu(\eps,\omega_0)$ possibly even smaller we may guarantee that the second term above is bounded by $\eps$. This proves \eqref{control-v-H1}.

Finally, \eqref{nls-v} follows from \eqref{nls} and \eqref{nls-Q}. \end{proof}


We will show that with the decomposition \eqref{u=Q+v}, the perturbation $v(t)$ in fact converges to zero in suitable norms, and the modulation parameter $\omega(t)$ converges as $t\to\infty$.

By differentiating the orthogonality relations \eqref{orthogonality}, we first obtain an ODE system describing the evolution of the modulation parameters $(\omega,\theta)$.  The choice of orthogonality conditions guarantees that the nonlinearity in the equation below contains terms that are quadratic and higher in $v$. 

\begin{proposition}\label{P:ODE} The modulation parameters $(\omega,\theta)$ obey the ODE system
\begin{equation}\label{ODE}
A\left[\begin{array}{c} \dot\omega \\ \dot\theta\end{array}\right] = \left[\begin{array}{c} \Im\langle \N,Q_\omega\rangle \\ -\Re\langle \N,\partial_\omega Q_\omega\rangle\end{array}\right],
\end{equation}
where $A=A(t)$ is defined by 
\begin{equation}\label{D:A}
A:=\left[\begin{array}{ll} \langle Q_\omega,\partial_\omega Q\rangle-\Re\langle v,\partial_\omega Q\rangle   & -\Im\langle v,Q_\omega\rangle \\ -\Im\langle v,\partial_\omega^2 Q_\omega\rangle & \langle Q_\omega,\partial_\omega Q_\omega\rangle+ \Re\langle v,\partial_\omega Q_\omega\rangle  \end{array}\right]
\end{equation}
and
\begin{equation}\label{D:N}
\N := e^{-i\Phi} f(u)-f(Q_\omega)-\sigma\tfrac{p+2}{2}Q_\omega^p v - \sigma\tfrac{p}{2}Q_\omega^p \bar v.
\end{equation}
\end{proposition}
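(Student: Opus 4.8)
The plan is to differentiate the two orthogonality conditions in \eqref{orthogonality} with respect to $t$ and substitute the evolution equation \eqref{nls-v} for $v$. First, recall that $v(t)$ satisfies
\[
i\partial_t v = \dot\theta v + (H+\omega)v + \N + \dot\theta Q_\omega - i\dot\omega\partial_\omega Q_\omega + \sigma\tfrac{p+2}{2}Q_\omega^p v + \sigma\tfrac{p}{2}Q_\omega^p\bar v,
\]
obtained by rewriting \eqref{nls-v} using the definition \eqref{D:N} of $\N$, i.e. by moving the two linear-in-$v$ terms $\sigma\tfrac{p+2}{2}Q_\omega^p v + \sigma\tfrac{p}{2}Q_\omega^p\bar v$ out of $e^{-i\Phi}f(u)-f(Q_\omega)$. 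The point of this splitting is that the operator $(H+\omega) \cdot + \sigma\tfrac{p+2}{2}Q_\omega^p\cdot + \sigma\tfrac{p}{2}Q_\omega^p\overline{\cdot}\,$ acting on $v$ is, after identifying $v=v_1+iv_2$ with $[v_1\ v_2]^t$, precisely (up to the factor of $i$ and a sign convention) the matrix operator $\L(\omega)$ from \eqref{D:L}; equivalently, $L_\pm$ act on the real and imaginary parts. The key structural identities we will use are $L_-Q_\omega=0$ and $L_+\partial_\omega Q_\omega = -Q_\omega$ (from \eqref{nls-Q}), which encode that $iQ_\omega$ and $\partial_\omega Q_\omega$ span the generalized kernel of $\L(\omega)$.

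Next I would carry out the differentiation. Writing $\Re\langle v,Q_\omega\rangle=0$ and differentiating in $t$, we get
\[
\Re\langle \partial_t v, Q_\omega\rangle + \dot\omega\,\Re\langle v,\partial_\omega Q_\omega\rangle = 0,
\]
and similarly differentiating $\Re\langle v,i\partial_\omega Q_\omega\rangle = 0$ gives
\[
\Re\langle \partial_t v, i\partial_\omega Q_\omega\rangle + \dot\omega\,\Re\langle v, i\partial_\omega^2 Q_\omega\rangle = 0.
\]
Now substitute $\partial_t v = -i[\dot\theta v + (H+\omega)v + \sigma\tfrac{p+2}{2}Q_\omega^p v + \sigma\tfrac{p}{2}Q_\omega^p\bar v + \N + \dot\theta Q_\omega - i\dot\omega\partial_\omega Q_\omega]$. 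The terms linear in $v$ (those involving $(H+\omega)v$ and the $Q_\omega^p$ terms) are handled by moving the self-adjoint operators $L_\pm$ onto the test functions $Q_\omega$ and $\partial_\omega Q_\omega$; using $L_-Q_\omega=0$ and $L_+\partial_\omega Q_\omega=-Q_\omega$ together with the already-imposed orthogonality $\Re\langle v,Q_\omega\rangle=\Re\langle v,i\partial_\omega Q_\omega\rangle=0$, all of these contributions vanish. (This is the familiar computation showing that the choice of orthogonality conditions is adapted to the generalized kernel.) The terms $-i\dot\theta Q_\omega$ and $-\dot\omega\partial_\omega Q_\omega$ produce the main linear-in-$(\dot\omega,\dot\theta)$ contributions: $-i\dot\theta Q_\omega$ pairs with $Q_\omega$ to give $\dot\theta\cdot\Re\langle -iQ_\omega,Q_\omega\rangle=0$ but pairs with $i\partial_\omega Q_\omega$ to give $-\dot\theta\,\Re\langle iQ_\omega,i\partial_\omega Q_\omega\rangle = -\dot\theta\langle Q_\omega,\partial_\omega Q_\omega\rangle$; likewise $-\dot\omega\partial_\omega Q_\omega$ contributes $-\dot\omega\langle Q_\omega,\partial_\omega Q_\omega\rangle$ to the first equation. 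Collecting the remaining $v$-dependent coefficients of $\dot\omega$ and $\dot\theta$ (the terms $-i\dot\theta v$ and the $\dot\omega\,\Re\langle v,\cdot\rangle$ boundary terms from the product rule) produces exactly the perturbation entries of the matrix $A$ in \eqref{D:A}, while the inhomogeneity is $\Re\langle -i\N, Q_\omega\rangle = \Im\langle\N,Q_\omega\rangle$ and $\Re\langle -i\N, i\partial_\omega Q_\omega\rangle = -\Re\langle\N,\partial_\omega Q_\omega\rangle$, matching the right-hand side of \eqref{ODE}.

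The main obstacle — really the only point requiring care rather than bookkeeping — is verifying that the linear-in-$v$ terms coming from $(H+\omega)v$ and the two $Q_\omega^p$ multiplication terms genuinely cancel. This requires correctly translating the complex-valued equation into the $L_\pm$ formalism (keeping track of which of $v_1,v_2$ each operator acts on, and of the factor $i$), then integrating by parts to move $L_\pm$ onto $Q_\omega$ or $\partial_\omega Q_\omega$ — which is legitimate since $Q_\omega,\partial_\omega Q_\omega\in D(H)$ and the delta jump condition is respected — and finally invoking $L_-Q_\omega=0$, $L_+\partial_\omega Q_\omega=-Q_\omega$ along with the standing orthogonality relations. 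Everything else is the product rule and rearrangement. I would also note in passing that the matrix $A(t)$ is invertible for $\eps$ small, since $A(t) = \langle Q_\omega,\partial_\omega Q_\omega\rangle\,\mathrm{diag}(1,1) + O(\|v\|_{L^2})$ and $\langle Q_\omega,\partial_\omega Q_\omega\rangle\neq 0$ by \eqref{control-omegat} and the sign properties recorded in Section~\ref{S:solitons}; this is what makes \eqref{ODE} an honest ODE system for $(\dot\omega,\dot\theta)$, though strictly speaking that observation belongs to the subsequent analysis rather than to the statement being proved here.
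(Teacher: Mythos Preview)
Your proposal is correct and follows essentially the same route as the paper: both differentiate the orthogonality conditions \eqref{orthogonality}, substitute the evolution equation \eqref{nls-v} for $\partial_t v$, and use $L_-Q_\omega=0$ and $L_+\partial_\omega Q_\omega=-Q_\omega$ (equivalently, self-adjointness of $H+\omega$ together with \eqref{nls-Q}) to eliminate the linear-in-$v$ contributions before collecting the remaining terms into the matrix form \eqref{ODE}. The only cosmetic difference is ordering: the paper pairs \eqref{nls-v} directly with $Q_\omega$ and $i\partial_\omega Q_\omega$ and identifies $\N$ at the end, whereas you first isolate $\N$ and then verify the cancellation via the $L_\pm$ formalism --- but this is the same computation.
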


\begin{proof} We start with \eqref{nls-v}, multiply by $Q_\omega$, integrate, and take the imaginary part.  Using self-adjointness of $H+\omega$, this yields
\begin{align*}
\Re&\langle \dot v,Q_\omega\rangle \\
& = \dot\theta\Im\langle v,Q_\omega\rangle -\dot\omega\langle Q_\omega,\partial_\omega Q_\omega\rangle +\Im \langle e^{-i\Phi} f(u)-f(Q_\omega),Q_\omega\rangle-\Im\langle v,f(Q_\omega)\rangle.
\end{align*}
On the other hand, differentiating the first equation in \eqref{orthogonality} implies
\[
\Re\bigl[\langle \dot v,Q_\omega\rangle + \dot\omega\langle v,\partial_\omega Q_\omega \rangle\bigr] =0,
\]
so that continuing from above yields
\begin{equation}\label{mod-eqn1}
\begin{aligned}
\dot\omega[\langle Q_\omega,\partial_\omega & Q_\omega\rangle-\Re\langle v,\partial_\omega  Q_\omega\rangle] -\dot\theta\Im\langle v,Q_\omega\rangle \\
&= \Im\langle e^{-i\Phi} f(u)-f(Q_\omega),Q_\omega\rangle-\Im\langle v,f(Q_\omega)\rangle  = \Im\langle \N,Q_\omega\rangle,
\end{aligned}
\end{equation}
with $\N$ defined as in \eqref{D:N}. 

Similarly, we start with \eqref{nls-v}, multiply by $i\partial_\omega Q$, integrate, and take the imaginary part.  This yields 
\begin{align*}
-\Im\langle \dot v,\partial_\omega Q_\omega\rangle & = \dot\theta\Re\langle v,\partial_\omega Q_\omega\rangle + \dot\theta\langle Q_\omega,\partial_\omega Q_\omega\rangle \\
&\quad + \Re\langle e^{-i\Phi} f(u)-f(Q_\omega),\partial_\omega Q_\omega\rangle + \Re\langle v,(H+\omega)\partial_\omega Q_\omega\rangle. 
\end{align*}
Recalling \eqref{nls-Q}, we have
\[
(H+\omega)\partial_\omega Q_\omega = - Q_\omega - \sigma(p+1)Q_\omega^p \partial_\omega Q_\omega,
\]
so that using \eqref{orthogonality}, we arrive at
\[
\Re\langle v,(H+\omega)\partial_\omega Q_\omega\rangle =  - \sigma(p+1)\Re\langle v,Q_\omega^p\partial_\omega Q_\omega\rangle.
\]
On the other hand, differentiating the second equation in \eqref{orthogonality} yields 
\[
\Im\langle \dot v,\partial_\omega Q_\omega\rangle + \dot \omega\Im\langle v,\partial_\omega^2 Q_\omega\rangle = 0. 
\]
Thus we arrive at
\begin{equation}\label{mod-eqn2}
\begin{aligned}
\dot\omega\Im\langle v,\partial_\omega^2 & Q_\omega\rangle - \dot\theta[\Re\langle v,\partial_\omega Q_\omega\rangle + \langle Q_\omega,\partial_\omega Q_\omega\rangle] \\
& = \Re\langle e^{-i\Phi} f(u)-f(Q_\omega),\partial_\omega Q_\omega\rangle -\sigma(p+1)\Re\langle v,Q_\omega^p\partial_\omega Q_\omega\rangle \\
& = \Re\langle \N,\partial_\omega Q_\omega\rangle.
\end{aligned}
\end{equation}

Collecting \eqref{mod-eqn1} and \eqref{mod-eqn2} yields \eqref{ODE}. \end{proof}


We turn to the problem of proving global-in-time estimates for $v(t)$ by a bootstrap argument.  We will proceed by proving estimates on intervals of the form $[0,T]$.  It is difficult to analyze $v(t)$ directly due to the fact that the linear operator in \eqref{nls-v} is time-dependent.  Instead, given $T>0$, we will introduce a new variable that will solve an equation with a time-independent linear part.

\begin{proposition}\label{P:COV} Given $T>0$, we set
\[
\Psi(t) = \theta(T) + t\omega(T)
\]
and define $z(t)$ via
\[
e^{i\Phi(t)}v(t) = e^{i\Psi(t)}z(t). 
\]
Then $z(t)$ obeys the evolution equation
\begin{equation}\label{nls-z}
i\partial_t z = [H+\omega(T)]z + \sigma\tfrac{p+2}{2}Q_{\omega(T)}^p z + \sigma\tfrac{p}{2}Q_{\omega(T)}^p \bar z + \F, 
\end{equation}
where
\begin{equation}\label{D:F}
\begin{aligned}
\mathcal{F} & = e^{i\Phi-i\Psi}\bigl\{\N+ \dot\theta Q_\omega - i\dot\omega\partial_\omega Q_\omega\bigr\} \\
& \quad + \sigma\tfrac{p+2}{2}[Q_\omega^p-Q_{\omega(T)}^p]z \\
& \quad + \sigma\tfrac{p}{2}[Q_{\omega}^p - Q_{\omega(T)}^p]\bar z + \sigma\tfrac{p}{2}[e^{2i\Phi-2i\Psi}-1]Q_{\omega}^p \bar z.
\end{aligned}
\end{equation}
\end{proposition}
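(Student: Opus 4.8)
The statement is an identity, so the plan is simply to carry out the change of variables carefully and track the phase factors. First I would rewrite the right-hand side of \eqref{nls-v} using the definition \eqref{D:N} of $\N$ to replace $e^{-i\Phi}f(u)-f(Q_\omega)$ by $\N+\sigma\tfrac{p+2}{2}Q_\omega^p v+\sigma\tfrac{p}{2}Q_\omega^p\bar v$. This displays in \eqref{nls-v} the (still time-dependent) linear operator $H+\omega(t)+\sigma\tfrac{p+2}{2}Q_{\omega(t)}^p(\cdot)+\sigma\tfrac{p}{2}Q_{\omega(t)}^p\overline{(\cdot)}$ acting on $v$, plus the inhomogeneous terms $\N+\dot\theta Q_\omega-i\dot\omega\partial_\omega Q_\omega$.

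Next I would substitute $v=e^{i(\Psi-\Phi)}z$ (hence $\bar v = e^{i(\Phi-\Psi)}\bar z$) and differentiate using the product rule. The two facts needed here are $\dot\Phi=\dot\theta+\omega$, which is immediate from \eqref{D:Phi}, and $\dot\Psi=\omega(T)$, which is immediate from the definition of $\Psi$. A short computation then yields
\[
i\partial_t v-\dot\theta v = e^{i(\Psi-\Phi)}\bigl[i\partial_t z+(\omega-\omega(T))z\bigr].
\]
Multiplying the whole (rewritten) equation \eqref{nls-v} by $e^{i(\Phi-\Psi)}$ and invoking the crucial cancellation $(H+\omega)z-(\omega-\omega(T))z=(H+\omega(T))z$ — which is exactly what removes the time-dependence from the linear part — produces
\[
i\partial_t z = [H+\omega(T)]z+\sigma\tfrac{p+2}{2}Q_\omega^p z+\sigma\tfrac{p}{2}e^{2i(\Phi-\Psi)}Q_\omega^p\bar z+e^{i(\Phi-\Psi)}\bigl[\N+\dot\theta Q_\omega-i\dot\omega\partial_\omega Q_\omega\bigr].
\]
Note that the extra factor $e^{2i(\Phi-\Psi)}$ in front of $Q_\omega^p\bar z$ appears precisely because $\bar v$ carries the conjugate phase.

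Finally I would reorganize the $Q$-dependent linear terms so their coefficients become $Q_{\omega(T)}^p$ rather than $Q_{\omega(t)}^p$, writing $Q_\omega^p z = Q_{\omega(T)}^p z+[Q_\omega^p-Q_{\omega(T)}^p]z$ and $e^{2i(\Phi-\Psi)}Q_\omega^p\bar z = Q_{\omega(T)}^p\bar z+[Q_\omega^p-Q_{\omega(T)}^p]\bar z+[e^{2i(\Phi-\Psi)}-1]Q_\omega^p\bar z$, and collecting the leftover terms into $\mathcal{F}$. This gives exactly \eqref{nls-z} with $\mathcal{F}$ as in \eqref{D:F}. There is no genuine obstacle in this proposition; it is a routine computation, and the only points demanding care are the bookkeeping of the phase factors (in particular the opposite phase carried by $\bar v$, which is the source of the $[e^{2i\Phi-2i\Psi}-1]Q_\omega^p\bar z$ term in \eqref{D:F}) and the identification of the time-independent operator via the cancellation above.
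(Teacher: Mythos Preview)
Your proposal is correct and follows essentially the same approach as the paper: both use $\dot\Phi=\dot\theta+\omega$ and $\dot\Psi=\omega(T)$ to pass from \eqref{nls-v} to an equation for $z$ with linear part $H+\omega(T)$, and then split $Q_\omega^p$ versus $Q_{\omega(T)}^p$ (together with the conjugate-phase factor on $\bar v$) to arrive at \eqref{nls-z}--\eqref{D:F}. The only cosmetic difference is that you introduce $\N$ before changing variables whereas the paper changes variables first and then isolates the linear-in-$v$ terms; the computations are otherwise identical.
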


\begin{proof} To derive \eqref{nls-z}, we first use \eqref{nls-v} along with
\[
\dot\Phi(t) = \dot\theta(t)+\omega(t)\qtq{and} \dot\Psi(t)=\omega(T)
\]
to derive
\[
i\partial_t z = [H+\omega(T)]z + e^{i\Phi-i\Psi}\bigl\{ e^{-i\Phi}f(u)-f(Q)+\dot\theta Q_{\omega} - i\dot\omega \partial_\omega Q_\omega\bigr\}. 
\]
After isolating the parts of $e^{-i\Phi}f(u)-f(Q)$ that are linear in $v$ and writing 
\begin{align*}
e^{i\Phi-i\Psi}&\{\sigma\tfrac{p+2}{2}Q_\omega^p v + \sigma \tfrac{p}{2}Q_\omega^p \bar v\} \\
& = \sigma\tfrac{p+2}{2}Q_{\omega(T)}^p z + \sigma\tfrac{p+2}{2}[Q_\omega^p-Q_{\omega(T)}^p]z \\
& \quad + \sigma\tfrac{p}{2}Q_{\omega(T)}^p \bar z + \sigma\tfrac{p}{2}[Q_{\omega}^p - Q_{\omega(T)}^p]\bar z + \sigma\tfrac{p}{2}[e^{2i\Phi-2i\Psi}-1]Q_{\omega}^p \bar z,
\end{align*}
we arrive at \eqref{nls-z}. \end{proof}


The introduction of $z(t)$ solves one problem (that is, the issue of the time-dependent linear operator) but leads to a new one.  In particular, $z$ does not obey the orthogonality relation needed to apply the dispersive estimates adapted to the linear operator appearing in \eqref{nls-z}.  To remedy this, we introduce one more variable. For this, we recall Proposition~\ref{P:Projection} and introduce the notation $P_c(T)$ for the projection onto the continuous spectrum of $\L(\omega(T))$. 

\begin{proposition} For each $t$, we have a spectral decomposition
\begin{equation}\label{z=abeta}
z(t)=ia(t)Q_{\omega(T)}+b(t)\partial_\omega Q_{\omega(T)}+ \eta(t),
\end{equation}
so that $\eta(t)=P_c(T)z(t)$. The coefficients $(a,b)$ obey the linear system
\begin{equation}\label{ab} 
B\left[\begin{array}{c} a \\ b \end{array}\right] = -\left[\begin{array}{c} \Re\langle e^{i[\Psi-\Phi]}\eta,Q_{\omega(t)} \\ \Re\langle e^{i[\Psi-\Phi]}\eta,i\partial_\omega Q_{\omega(t)}\rangle \end{array}\right],
\end{equation}
where 
\begin{equation}\label{D:B}
B=\left[\begin{array}{ll} -\sin(\Psi-\Phi) \langle Q_{\omega(T)},Q_{\omega(t)}\rangle &  \cos(\Psi-\Phi)\langle \partial_\omega Q_{\omega(T)},Q_{\omega(t)}\rangle \\ \cos(\Psi-\Phi) \langle Q_{\omega(T)},\partial_\omega Q_{\omega(t)}\rangle & \sin(\Psi-\Phi)\langle \partial_\omega Q_{\omega(T)},\partial_\omega Q_{\omega(t)}\rangle \end{array}\right].
\end{equation}
The function $\eta$ (identified with the vector $[\Re\eta\ \Im\eta]^t$) obeys the integral equation
\begin{equation}\label{nls-eta}
\eta(t) = e^{t\L(\omega(T))}P_c(T)z(0) + P_c(T)\int_0^t e^{(t-s)\L(\omega(T))}\left[\begin{array}{r} \Im \F \\ -\Re\F\end{array}\right]\,ds. 
\end{equation} 
\end{proposition}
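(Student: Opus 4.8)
The plan is to derive the three assertions in order, each from results already in hand. For the spectral decomposition \eqref{z=abeta}: by \eqref{control-omegat} with $\eps$ small, $\omega(T)$ lies in the neighborhood of $\omega_0$ on which the spectral condition of Definition~\ref{D:spectral} holds, so Lemma~\ref{L:genker} gives $\ker\L(\omega(T))^k=\mathrm{span}\{\partial_\omega Q_{\omega(T)},iQ_{\omega(T)}\}$ for $k\geq 2$, with no other eigenvalues or resonances. Hence $I-P_c(T)$ is the projection onto this two-dimensional subspace, and Proposition~\ref{P:Projection} gives it explicitly:
\[
(I-P_c(T))z(t)=\frac{\Re\langle z(t),Q_{\omega(T)}\rangle}{\langle Q_{\omega(T)},\partial_\omega Q_{\omega(T)}\rangle}\,\partial_\omega Q_{\omega(T)}+\frac{\Re\langle z(t),i\partial_\omega Q_{\omega(T)}\rangle}{\langle Q_{\omega(T)},\partial_\omega Q_{\omega(T)}\rangle}\,iQ_{\omega(T)}.
\]
Reading off the coefficients defines the functions $b(t)$ and $a(t)$ (real-valued, since $z$ is $\R^2$-valued and $I-P_c(T)$ preserves real functions), and setting $\eta(t):=z(t)-ia(t)Q_{\omega(T)}-b(t)\partial_\omega Q_{\omega(T)}=P_c(T)z(t)$ yields \eqref{z=abeta}.

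For the linear system \eqref{ab}--\eqref{D:B}: recall from Proposition~\ref{P:COV} that $v(t)=e^{i[\Psi(t)-\Phi(t)]}z(t)$. I would substitute this, together with \eqref{z=abeta}, into the two orthogonality conditions \eqref{orthogonality}. Expanding $e^{i[\Psi-\Phi]}=\cos(\Psi-\Phi)+i\sin(\Psi-\Phi)$ and using that $Q_{\omega(T)},\partial_\omega Q_{\omega(T)},Q_{\omega(t)},\partial_\omega Q_{\omega(t)}$ are real-valued, each of the inner products $\Re\langle e^{i[\Psi-\Phi]}(iaQ_{\omega(T)}+b\partial_\omega Q_{\omega(T)}),Q_{\omega(t)}\rangle$ and $\Re\langle e^{i[\Psi-\Phi]}(iaQ_{\omega(T)}+b\partial_\omega Q_{\omega(T)}),i\partial_\omega Q_{\omega(t)}\rangle$ collapses to the corresponding row of $B[a\ b]^{t}$: the factors of $i$ select precisely the $\sin$ and $\cos$ entries appearing in \eqref{D:B}, while the $\eta$-terms produce the right-hand side of \eqref{ab}. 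This step is pure bookkeeping of signs and of the factors of $i$; it is the fiddliest but conceptually most trivial part.

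For the integral equation \eqref{nls-eta}: I would first recast the scalar equation \eqref{nls-z} in $\R^2$-vector form. Splitting \eqref{nls-z} into real and imaginary parts, the linear terms $[H+\omega(T)]z+\sigma\tfrac{p+2}{2}Q_{\omega(T)}^p z+\sigma\tfrac{p}{2}Q_{\omega(T)}^p\bar z$ contribute $L_+\Re z$ to the real part and $L_-\Im z$ to the imaginary part (with $L_-=H+\omega(T)+\sigma Q_{\omega(T)}^p$ and $L_+=H+\omega(T)+\sigma(p+1)Q_{\omega(T)}^p$), so that $[\Re z\ \Im z]^{t}$ solves
\[
\partial_t\left[\begin{array}{c}\Re z\\ \Im z\end{array}\right]=\L(\omega(T))\left[\begin{array}{c}\Re z\\ \Im z\end{array}\right]+\left[\begin{array}{c}\Im\F\\ -\Re\F\end{array}\right].
\]
Duhamel's formula then gives $z(t)=e^{t\L(\omega(T))}z(0)+\int_0^t e^{(t-s)\L(\omega(T))}[\Im\F\ \ {-}\Re\F]^{t}\,ds$, and applying the bounded projection $P_c(T)$ --- which commutes with $e^{t\L(\omega(T))}$ since it is a spectral projection, and fixes $P_c(T)z(0)$ --- produces \eqref{nls-eta}. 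The only step requiring genuine care is this real/imaginary-part computation, confirming that the linear part of \eqref{nls-z} reassembles exactly into $\L(\omega(T))$ and the forcing into $[\Im\F\ \ {-}\Re\F]^{t}$; there is no real analytic obstacle here, as all the dispersive content is deferred to Proposition~\ref{P:D} and is not needed for this proposition.
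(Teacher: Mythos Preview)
Your proposal is correct and follows essentially the same approach as the paper, which simply says the system \eqref{ab}--\eqref{D:B} ``follows from direct computation using \eqref{orthogonality}'' and derives \eqref{nls-eta} by recasting \eqref{nls-z} as the $\R^2$-system $\partial_t[z_1\ z_2]^t=\L(\omega(T))[z_1\ z_2]^t+[\Im\F\ {-}\Re\F]^t$ and then applying $P_c(T)$. Your write-up is more explicit (invoking Proposition~\ref{P:Projection} for the decomposition and spelling out the $\sin/\cos$ bookkeeping), but there is no substantive difference in method.
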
  

\begin{proof} The system \eqref{ab} with \eqref{D:B} follows from direct computation using \eqref{orthogonality}.  

As for \eqref{nls-eta}, we first re-cast \eqref{nls-z} as the following system for $(z_1,z_2)$: 
\begin{equation}\label{zsyst}
\partial_t\left[\begin{array}{r} z_1 \\ z_2 \end{array}\right] = \L(\omega(T))\left[\begin{array}{r} z_1 \\ z_2 \end{array}\right] + \left[\begin{array}{rr} \Im\F \\ -\Re \F\end{array}\right],
\end{equation}
where $\L$ is as in \eqref{D:L}. We now apply $P_c(T)$ and recall that $P_c(T)iQ_{\omega(T)}=P_c(T)\partial_\omega Q_{\omega(T)}=0$, yielding an evolution equation for $\eta$.  Recasting this equation in integral form yields \eqref{nls-eta}.  \end{proof}


We now turn to the bootstrap argument.   We fix $T>0$ and first introduce the quantities that we will need to control.  We recall that we have chosen a weight $\alpha$ obeying
\[
1<\alpha<\min\{\tfrac{p}{4},\tfrac32\},
\]
which requires $p>4$. We then fix a large but finite exponent $r=r(p,\alpha)$, which will be determined more precisely below (see Lemma~\ref{L:controlF} and its proof).  We define

\begin{align}
\|v\|_{X([0,T])} & = \sup_{t\in[0,T]} \bigl\{|t|^{\frac12-\frac1r}\|v(t)\|_{L_x^{r,2}} + |t|^{-\alpha}\|\langle x\rangle^{-\alpha}v(t)\|_{L_x^2}\bigr\}, \label{D:X} \\
\|(\omega,\theta)\|_{Y([0,T])} & = \sup_{t\in[0,T]} \bigl\{ \langle t\rangle^{2\alpha}\bigl[ |\dot\omega(t)|+|\dot\theta(t)|\bigr]\bigr\}, \label{D:Y} \\
\|Q_\omega\|_{Z([0,T])} & =\sup_{t\in[0,T]} \bigl\{ \|\langle x\rangle^{20} Q_\omega\|_{H^1}+\|\langle x\rangle^{20} \partial_\omega Q_\omega\|_{L^2} + \|\langle x\rangle^{20}\partial_\omega^2 Q_\omega\|_{L^2}\bigr\}, \label{D:Z}
\end{align}
where $L^{r,2}$ denotes the Lorentz space. The weight $\langle x\rangle^{20}$ does not have any special significance; any `sufficiently large' weight would do.

We first collect some bounds that depend only on $\omega_0$. First, recall that $\langle Q_{\omega_0},\partial_\omega Q_{\omega_0}\rangle\neq 0$ (see e.g. Section~\ref{S:solitons}).  Thus, choosing $\eps=\eps(\omega_0)$ possibly even smaller in Proposition~\ref{P:decomposition}, we can guarantee (via \eqref{control-omegat}) that 
\begin{equation}
\inf_{t\in[0,\infty]}|\langle Q_\omega,\partial_\omega Q_{\omega}|\rangle  \gtrsim_{\omega_0} 1. \label{bs0}
\end{equation}
Next, noting that
 \[
\|\langle x\rangle^{20}Q_{\omega_0}\|_{H^1} + \|\langle x\rangle^{20}\partial_\omega Q_{\omega_0}\|_{L^2} + \|\langle x\rangle^{20} \partial_\omega^2 Q_{\omega_0}\|_{L^2} \lesssim_{\omega_0} 1
\]
we can choose $\eps=\eps(\omega_0)$ possibly even smaller and  guarantee via \eqref{control-omegat} that
\begin{equation}\label{bs3}
\|Q_\omega\|_{Z([0,\infty])} \lesssim_{\omega_0}1
\end{equation}
(see Section~\ref{S:solitons}). 

We suppose that the following estimates hold on $[0,T]$:
\begin{align}
\|v\|_{X([0,T])} &\leq 2C_1\eps, \label{bs1} \\
\|(\omega,\theta)\|_{Y([0,T])}&\leq 2C_2\eps^2, \label{bs2}
\end{align}
where $\eps>0$ is as in Proposition~\ref{P:decomposition} and the constants $C_1,C_2$ will be determined below; in particular, we will need to choose $C_1$ depending on $\omega_0$ and $C_2$ depending $C_1$. We will also need to take $\eps$ possibly even smaller depending on the constants $\{C_j\}_{j=1}^2$ and the power $p$.  The parameters we choose will also depend on $q$ (the coupling constant for the delta potential); however, as $q$ is fixed throughout the paper, we will not track this dependence throughout the argument. 

Henceforth we assume that the estimates \eqref{bs1} and \eqref{bs2} hold.  We will show we may in fact improve the estimates \eqref{bs1} and \eqref{bs2} on $[0,T]$, removing the factor $2$ in each estimate.  Using this and the smallness assumption at time $t=0$, a standard continuity argument will imply that in fact, \eqref{bs1} and \eqref{bs2} hold on the entire interval $[0,\infty)$. 

We first record a weighted estimate for $v$, which follows from a straightforward virial estimate at the level of the original solution $u$. Because the estimate involves the weight $|x|^2$, the delta potential ultimately has no real effect (e.g. when integrating by parts).  We will proceed with the formal computation and refer the reader to \cite{LeCoz} for a more detailed and careful presentation.


\begin{lemma}\label{L:v1} We have
\[
\|\langle x\rangle v(t)\|_{L^2}\lesssim_{\omega_0} \langle t\rangle
\]
uniformly in $t\in[0,T]$.
\end{lemma}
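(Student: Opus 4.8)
The plan is to run a virial-type (Morawetz) computation at the level of the original solution $u$ rather than working directly with the perturbation $v$, since $u$ solves the clean equation \eqref{nls} with no modulation terms, and then transfer the resulting bound back to $v$ through the decomposition \eqref{u=Q+v}. First I would introduce the weighted quantity $I(t) = \int x^2 |u(t,x)|^2\,dx$ (or, to be safe about finiteness, a truncated version $I_R(t)=\int \phi_R(x)^2|u(t,x)|^2\,dx$ with $\phi_R$ a smooth cutoff equal to $x$ for $|x|\le R$, then let $R\to\infty$). Differentiating in time and using \eqref{nls}, the delta potential term contributes something proportional to $x^2\delta_0$ evaluated against $|u|^2$, which vanishes because $x^2$ kills the delta at the origin; this is the formal remark already flagged in the statement. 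One then gets $\dot I(t) = 2\,\Im\int \bar u\, x\,\partial_x u\,dx$, which is controlled by $\|xu\|_{L^2}\|\partial_x u\|_{L^2}$. Using the uniform $H^1$ bound on $u$ coming from orbital stability (so $\|\partial_x u(t)\|_{L^2}\lesssim_{\omega_0}1$), this gives $|\dot I(t)| \lesssim_{\omega_0} \sqrt{I(t)}$, and a Gronwall/comparison argument yields $\sqrt{I(t)} \lesssim_{\omega_0} \sqrt{I(0)} + \langle t\rangle$, i.e. $\|xu(t)\|_{L^2}\lesssim_{\omega_0}\langle t\rangle$, provided $\|xu_0\|_{L^2}$ is finite. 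The latter holds because $u_0 = Q_{\omega_0}+\tilde v_0$, with $\langle x\rangle^\alpha Q_{\omega_0}\in L^2$ (hence $xQ_{\omega_0}\in L^2$, using $\alpha>1$) and $\|\langle x\rangle^\alpha \tilde v_0\|_{L^2}<\eta$ by hypothesis.

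Next I would transfer this to $v$. From \eqref{u=Q+v} we have $v(t) = e^{-i\Phi(t)}u(t) - Q_{\omega(t)}$, so $\|\langle x\rangle v(t)\|_{L^2} \le \|\langle x\rangle u(t)\|_{L^2} + \|\langle x\rangle Q_{\omega(t)}\|_{L^2}$. The first term is $\lesssim_{\omega_0}\langle t\rangle$ by the virial bound just established, and the second is $\lesssim_{\omega_0} 1$ uniformly in $t$: indeed $\langle x\rangle Q_{\omega(t)}\in L^2$ with a bound uniform over $\omega(t)$ in the compact range $|\omega(t)-\omega_0|<\eps$ guaranteed by \eqref{control-omegat}, using the explicit formulas for $Q_\omega$ recorded in Section~\ref{S:solitons} (or simply \eqref{bs3}, which controls even $\|\langle x\rangle^{20}Q_\omega\|_{H^1}$). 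Combining the two pieces gives $\|\langle x\rangle v(t)\|_{L^2}\lesssim_{\omega_0}\langle t\rangle$ uniformly on $[0,T]$, as claimed. Note the bound is trivially compatible with the smallness at $t=0$: at $t=0$ we even have $\|\langle x\rangle v(0)\|_{L^2}\lesssim \eta + \eps$ from \eqref{control-v0}, so the $\langle t\rangle$ growth is only needed for large $t$.

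The main technical obstacle is making the virial computation rigorous given the low regularity at the origin caused by the delta potential and the a priori non-finiteness of $\int x^2|u|^2$; both are handled by the standard cutoff procedure — work with $\phi_R^2$, differentiate, observe that the commutator terms involving $\phi_R''$ and $\phi_R'$ are bounded using the $H^1$ bound on $u$ and the fact that these cutoff errors are supported where $|x|\sim R$, pass to the limit $R\to\infty$ using that $xu_0\in L^2$ and $u\in C(\R;H^1)$, and note the delta contributes $q\phi_R(0)^2|u(0)|^2 \cdot (\text{time derivative structure})$ which vanishes since $\phi_R(0)=0$. This is exactly the kind of computation carried out carefully in \cite{LeCoz}, to which the statement already defers, so I would present the formal version and cite that reference for the details. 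A secondary (very minor) point is to make sure the implicit constant genuinely depends only on $\omega_0$: this is fine because the $H^1$ bound on $u$ from orbital stability of $Q_{\omega_0}$ depends only on $\omega_0$ (and $q,p$, which are fixed), and the weighted bound on $Q_{\omega(t)}$ is uniform over the $\eps$-neighborhood of $\omega_0$.
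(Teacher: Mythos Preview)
Your proposal is correct and follows essentially the same approach as the paper's proof: run the virial identity for $\|xu(t)\|_{L^2}^2$ at the level of the original solution $u$, use the uniform $\dot H^1$ bound from orbital stability to get linear-in-$t$ growth of $\|xu(t)\|_{L^2}$, and then transfer to $v$ by the triangle inequality and the weighted bounds on $Q_{\omega(t)}$. The paper's version is terser (it presents only the formal computation and defers the regularization to \cite{LeCoz}), but the argument is the same.
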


\begin{proof} By orbital stability, we have
\[
\|u\|_{L_t^\infty \dot H_x^1([0,T]\times\R^3)} \lesssim_{\omega_0} 1.
\]
We now use the virial identity.  By direct computation using \eqref{nls} and integration by parts, we have
\[
\bigl| \tfrac{d}{dt} \|xu(t)\|_{L^2}^2\bigr| =\biggl| 4\Im \int x\bar u \partial_x u\,dx\biggr| \leq 4\|xu(t)\|_{L^2}\|\partial_x u(t)\|_{L^2}, 
\]
which implies
\[
\|xu(t)\|_{L^2} \leq \|xu_0\|_{L^2}+2t\|u\|_{L_t^\infty \dot H_x^1}.
\]
The result now follows from the triangle inequality.  \end{proof}

The next estimate provides control over the modulation parameters.

\begin{lemma}\label{L:control-omega} If $\eps$ is sufficiently small, the matrix $A$ appearing in \eqref{D:A} is invertible for all $t\in[0,T]$, with
\begin{equation}\label{control-Ainverse}
\|A^{-1}\| \lesssim_{\omega_0,C_1} 1
\end{equation}
uniformly in $t\in[0,1]$.  The modulation parameters $(\omega,\theta)$ obey 
\begin{equation}\label{control-omega1}
\|(\omega,\theta)\|_{Y([0,T])}\lesssim_{p,\omega_0,C_1} \|v\|_{X([0,T])}^2.
\end{equation}
In particular, by \eqref{bs1}, 
\begin{equation}\label{control-omega2}
\|(\omega,\theta)\|_{Y([0,T])} \lesssim_{p,\omega_0,C_1} \eps^2. 
\end{equation}
\end{lemma}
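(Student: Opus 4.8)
The plan is to establish Lemma~\ref{L:control-omega} by first controlling $A^{-1}$ and then using the ODE system \eqref{ODE} together with pointwise bounds on $\N$.

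\textbf{Step 1: Invertibility of $A$.} I would start from the explicit form \eqref{D:A} and split $A$ as the sum of the diagonal matrix $\langle Q_\omega,\partial_\omega Q_\omega\rangle I$ and a remainder whose entries are $\Re\langle v,\partial_\omega Q_\omega\rangle$, $\Im\langle v,Q_\omega\rangle$, and $\Im\langle v,\partial_\omega^2 Q_\omega\rangle$. Using \eqref{bs0}, the diagonal part is bounded below by a constant depending only on $\omega_0$. For the remainder, Cauchy--Schwarz gives, e.g., $|\Re\langle v,\partial_\omega Q_\omega\rangle| \leq \|\langle x\rangle^\alpha\partial_\omega Q_\omega\|_{L^2}\|\langle x\rangle^{-\alpha}v\|_{L^2}$, and by \eqref{bs3} (the $Z$-bound) together with \eqref{bs1} we get $\lesssim_{\omega_0,C_1}\eps$. (Alternatively one can simply use $\|v\|_{H^1}\lesssim_{\omega_0}\eps$ from \eqref{control-v-H1} and the decay of $Q_\omega$ and its $\omega$-derivatives.) Hence for $\eps$ small the remainder is a small perturbation of an invertible diagonal matrix, so $A$ is invertible with $\|A^{-1}\|\lesssim_{\omega_0,C_1}1$ uniformly in $t\in[0,T]$; this is \eqref{control-Ainverse}.

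\textbf{Step 2: Pointwise bound on $\N$.} The key quantitative input is that $\N$ defined in \eqref{D:N} is (at least) quadratic in $v$. Writing $\N = e^{-i\Phi}f(u)-f(Q_\omega)-\sigma\tfrac{p+2}{2}Q_\omega^p v - \sigma\tfrac{p}{2}Q_\omega^p\bar v$, and noting $e^{-i\Phi}f(u) = f(Q_\omega+v)$ up to the phase bookkeeping already built into the decomposition, a Taylor expansion of $f(Q_\omega+v) = \sigma|Q_\omega+v|^p(Q_\omega+v)$ around $Q_\omega$ shows that the zeroth- and first-order terms in $v$ cancel exactly against $f(Q_\omega)$ and the two linear terms. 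Thus pointwise $|\N| \lesssim_p Q_\omega^{p-1}|v|^2 + |v|^{p+1}$ (with the second term present only when $p\geq 1$, which holds). Pairing against $Q_\omega$ or $\partial_\omega Q_\omega$ and using the rapid decay and boundedness of $Q_\omega$, $\partial_\omega Q_\omega$ from \eqref{bs3}, I would estimate $|\langle\N,Q_\omega\rangle| + |\langle\N,\partial_\omega Q_\omega\rangle| \lesssim_{p,\omega_0} \|\langle x\rangle^{-\alpha}v\|_{L^2}^2 + \|v\|_{L^{r,2}}^2 \cdot(\text{something summable})$ — more precisely, the quadratic-in-$v$ term is controlled by $\|\langle x\rangle^{-\alpha}v\|_{L^2}^2$ after using a weighted $Q_\omega^{p-1}$ to supply the $\langle x\rangle^{2\alpha}$ weight, while the $|v|^{p+1}$ term is handled by $L^{r,2}$ norms (with $r$ chosen large, using the decay $|t|^{-\frac12+\frac1r}$ and $p+1$ copies of $v$; here $p>4$ is used). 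Summing, $|\langle\N,Q_\omega\rangle| + |\langle\N,\partial_\omega Q_\omega\rangle| \lesssim_{p,\omega_0} \langle t\rangle^{-2\alpha}\|v\|_{X([0,T])}^2$, where the time weight comes from distributing the decay rates in the definition \eqref{D:X} of the $X$-norm.

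\textbf{Step 3: Conclusion.} Combining Steps 1 and 2 with \eqref{ODE}, we get
\[
\left|\left[\begin{array}{c}\dot\omega \\ \dot\theta\end{array}\right]\right| \lesssim \|A^{-1}\|\cdot\bigl(|\langle\N,Q_\omega\rangle| + |\langle\N,\partial_\omega Q_\omega\rangle|\bigr) \lesssim_{p,\omega_0,C_1}\langle t\rangle^{-2\alpha}\|v\|_{X([0,T])}^2,
\]
which after multiplying by $\langle t\rangle^{2\alpha}$ and taking the supremum over $t\in[0,T]$ is exactly \eqref{control-omega1}. Then \eqref{control-omega2} is immediate from \eqref{bs1}. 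I expect the main obstacle to be Step 2, specifically verifying that the $|v|^{p+1}$ contribution can be absorbed with the right time decay: one must check that $p+1$ copies of the decay rate $|t|^{-(\frac12-\frac1r)}$ from the $L^{r,2}$ norm, combined with the need to pair against a weighted (hence bounded) factor from $Q_\omega$ or $\partial_\omega Q_\omega$, actually beats $\langle t\rangle^{-2\alpha}$ — this forces the lower bound $p>4$ and a suitable choice of $r=r(p,\alpha)$, and the weighted piece $Q_\omega^{p-1}|v|^2$ requires care in how the weight $\langle x\rangle^{2\alpha}$ is split between the (rapidly decaying) soliton factor and the two copies of $\langle x\rangle^{-\alpha}v$. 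The invertibility step is routine given \eqref{bs0} and \eqref{bs3}.
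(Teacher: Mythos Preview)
Your proposal is correct and follows essentially the same structure as the paper's proof: bound $\det A$ from below using \eqref{bs0} and the smallness of the $v$-dependent entries, derive the pointwise estimate $|\N|\lesssim_p Q_\omega^{p-1}|v|^2+|v|^{p+1}$, pair against $Q_\omega$ or $\partial_\omega Q_\omega$, and conclude via \eqref{ODE}.

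The one place where the paper is simpler than your Step~2 is the handling of the $|v|^{p+1}$ term, which you flag as the main obstacle. Rather than invoking the $L^{r,2}$ part of the $X$-norm and checking that $(p+1)(\tfrac12-\tfrac1r)\geq 2\alpha$, the paper uses the uniform $H^1$ bound \eqref{control-v-H1} together with Sobolev embedding $H^1\hookrightarrow L^\infty$ to write $|v|^{p+1}\leq \|v\|_{L^\infty}^{p-1}|v|^2\lesssim_{\omega_0}|v|^2$, so that the entire pointwise bound collapses to $|\N|\lesssim_{\omega_0}|v|^2$. Then pairing against $Q_\omega$ (or $\partial_\omega Q_\omega$) and using its rapid decay to supply the weight $\langle x\rangle^{2\alpha}$ gives $|\langle\N,Q_\omega\rangle|\lesssim\|\langle x\rangle^{-\alpha}v\|_{L^2}^2\lesssim\langle t\rangle^{-2\alpha}\|v\|_X^2$ directly, with no need for the $L^{r,2}$ norm or any constraint on $p$ beyond what is already assumed. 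Your more elaborate route would also close, but the paper's shortcut makes the argument independent of $r$ and avoids the exponent bookkeeping you were anticipating.
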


\begin{proof} Using \eqref{bs1} and \eqref{bs3}, we compute
\begin{align*}
\det A&  \geq \langle Q_\omega,\partial_\omega Q_\omega\rangle^2 - |\langle v,\partial_\omega Q_\omega\rangle|^2 - |\langle v,Q_\omega\rangle|\,|\langle v,\partial_\omega^2 Q_\omega\rangle| \\
& \gtrsim_{\omega_0} 1 - \mathcal{O}_{\omega_0, C_1}(\eps^2)\gtrsim_{\omega_0} 1
\end{align*}
for $\eps=\eps(\omega_0, C_1)$ sufficiently small.  Using this, we may deduce \eqref{control-Ainverse}.

We turn to \eqref{control-omega1} (which then implies \eqref{control-omega2} via \eqref{bs1}). In light of \eqref{ODE} and \eqref{bs3}, it is enough to prove
\begin{equation}\label{control-N}
\| \langle x\rangle^{-2\alpha}\N\|_{L^1}\lesssim_{p,\omega}\langle t\rangle^{-2\alpha}\|v\|_X^2,
\end{equation}
uniformly in $t\in[0,1]$, where $\N$ is as in \eqref{D:N}. For this we observe that by Young's inequality, Sobolev embedding, and \eqref{control-v-H1}, we have the pointwise estimate
\[
|\N| \lesssim_p |v|^2|Q_{\omega}|^{p-1}+|v|^{p+1}\lesssim_{\omega_0} |v|^2. 
\]
Now \eqref{control-N} follows from the fact that 
\[
\|\langle x\rangle^{-\alpha} v(t)\|_{L^2} \lesssim_{C_1} \langle t\rangle^{-\alpha} \|v\|_{X([0,T])}
\]
uniformly in $t\in[0,T]$. \end{proof}


We next estimate the spectral parameters $a(t)$ and $b(t)$, which in turn allows us to control $v(t)$ (using $\eta(t)=P_c(T)z(t)$). 

\begin{lemma} If $\eps$ is sufficiently small, matrix $B$ appearing in \eqref{D:B} is invertible for all $t\in[0,T]$, with
\begin{equation}\label{control-Binverse}
\|B^{-1}\| \lesssim_{\omega_0} 1.
\end{equation}
The coefficients $(a,b)$ obey
\begin{equation}\label{control-ab}
\sup_{t\in[0,T]}\langle t\rangle^{\alpha}\bigl\{|a(t)| + |b(t)|\bigr\} \lesssim_{\omega_0} \|\eta\|_{X([0,T])},
\end{equation}
and consequently
\begin{equation}\label{eta-controls-v}
\|v\|_{X([0,T])}\lesssim_{\omega_0} \|\eta\|_{X([0,T])}.
\end{equation}
\end{lemma}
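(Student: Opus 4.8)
The plan is to treat the three assertions in order, since each feeds the next. The heart of the matter is that, thanks to the bootstrap control on the modulation parameters, the phase discrepancy $\Psi(t)-\Phi(t)$ is uniformly small on $[0,T]$; this forces the matrix $B$ in \eqref{D:B} to be a small perturbation of an off-diagonal matrix built from $\langle Q_{\omega_0},\partial_\omega Q_{\omega_0}\rangle$, which is nonzero by \eqref{IPNZ}. Once $B^{-1}$ is controlled, the linear system \eqref{ab} expresses $a,b$ as weighted pairings of $\eta$ against the exponentially localized functions $Q_{\omega(t)}$ and $\partial_\omega Q_{\omega(t)}$, and a weighted Cauchy--Schwarz inequality converts the decay encoded in the $X$-norm of $\eta$ into the decay \eqref{control-ab}. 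Finally, the pointwise identity $|v(t)|=|z(t)|$ together with the decomposition \eqref{z=abeta} reduces \eqref{eta-controls-v} to estimating the three pieces $a(t)Q_{\omega(T)}$, $b(t)\partial_\omega Q_{\omega(T)}$, $\eta(t)$ in the norms making up $\|\cdot\|_{X([0,T])}$.

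\emph{Invertibility of $B$.} First I would estimate $\Psi-\Phi$. Writing $\Psi(t)-\Phi(t)=[\theta(T)-\theta(t)]+\int_0^t[\omega(T)-\omega(s)]\,ds$ and inserting the bound $|\dot\theta|+|\dot\omega|\lesssim\eps^2\langle\cdot\rangle^{-2\alpha}$ from \eqref{bs2}, the hypothesis $\alpha>1$ makes both contributions integrable (one has $\int_s^T\langle\tau\rangle^{-2\alpha}\,d\tau\lesssim\langle s\rangle^{1-2\alpha}$ with $1-2\alpha<-1$), so $\sup_{t\in[0,T]}|\Psi(t)-\Phi(t)|\lesssim\eps^2$. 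Hence $\sin(\Psi-\Phi)=\mathcal{O}(\eps^2)$ and $\cos(\Psi-\Phi)=1+\mathcal{O}(\eps^2)$. Meanwhile \eqref{control-omegat} gives $|\omega(t)-\omega_0|<\eps$ and $|\omega(T)-\omega_0|<\eps$, and $\omega\mapsto Q_\omega$ is smooth into weighted Sobolev spaces on compact $\omega$-sets, so every inner product appearing in $B$ equals its value at $\omega=\omega_0$ up to $\mathcal{O}(\eps)$. Expanding the determinant then yields $\det B=-\langle Q_{\omega_0},\partial_\omega Q_{\omega_0}\rangle^2+\mathcal{O}(\eps)$, which is $\gtrsim_{\omega_0}1$ once $\eps=\eps(\omega_0)$ is small (using \eqref{bs0}). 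Since the entries of $B$ are $\mathcal{O}_{\omega_0}(1)$, the $2\times2$ adjugate formula yields \eqref{control-Binverse}; note that the implicit constant depends only on $\omega_0$, even though $\eps$ is chosen small depending on $C_1,C_2$ as well.

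\emph{Decay of $(a,b)$ and of $v$.} From \eqref{ab} and \eqref{control-Binverse},
\[
|a(t)|+|b(t)|\lesssim_{\omega_0}\bigl|\langle e^{i(\Psi-\Phi)}\eta,Q_{\omega(t)}\rangle\bigr|+\bigl|\langle e^{i(\Psi-\Phi)}\eta,i\partial_\omega Q_{\omega(t)}\rangle\bigr|.
\]
Each pairing I would bound by $\|\langle x\rangle^{-\alpha}\eta(t)\|_{L^2}$ times $\|\langle x\rangle^{\alpha}Q_{\omega(t)}\|_{L^2}$ (resp. with $\partial_\omega Q$), and the latter factors are $\lesssim_{\omega_0}1$ by the exponential decay of $Q_\omega$ and its $\omega$-derivatives, uniform over $\omega$ near $\omega_0$ (cf. \eqref{bs3}, and $\alpha<20$). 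Since the $X$-norm of $\eta$ controls $\|\langle x\rangle^{-\alpha}\eta(t)\|_{L^2}\lesssim\langle t\rangle^{-\alpha}\|\eta\|_{X([0,T])}$, we get $|a(t)|+|b(t)|\lesssim_{\omega_0}\langle t\rangle^{-\alpha}\|\eta\|_{X([0,T])}$, which is \eqref{control-ab}. For \eqref{eta-controls-v}, the relation $e^{i\Phi}v=e^{i\Psi}z$ gives $|v(t,x)|=|z(t,x)|\le|a(t)|\,|Q_{\omega(T)}(x)|+|b(t)|\,|\partial_\omega Q_{\omega(T)}(x)|+|\eta(t,x)|$; taking the $L^{r,2}$ norm and the $\langle x\rangle^{-\alpha}L^2$ norm, using that $Q_{\omega(T)}$ and $\partial_\omega Q_{\omega(T)}$ have $\mathcal{O}_{\omega_0}(1)$ norms in both spaces, inserting \eqref{control-ab}, and noting $\alpha>\tfrac12-\tfrac1r$ (so $\langle t\rangle^{-\alpha}$ decays at least as fast as the Lorentz-space weight in \eqref{D:X}), one reads off $\|v\|_{X([0,T])}\lesssim_{\omega_0}\|\eta\|_{X([0,T])}$.

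\emph{Main obstacle.} This lemma is essentially bookkeeping; the only point that requires genuine care is the uniform smallness of $\Psi-\Phi$, which hinges on the integrability $2\alpha>2$ built into the hypothesis $\alpha>1$ and on the already-established modulation bound \eqref{bs2} (equivalently \eqref{control-omega2}). One must also be careful to keep the constants in \eqref{control-Binverse} and \eqref{eta-controls-v} dependent on $\omega_0$ alone, letting all $C_1,C_2$-dependence be absorbed into the choice of $\eps$. The genuinely hard analytic input of the argument — the dispersive estimates of Proposition~\ref{P:D} and closing the bootstrap for $\eta$ via the Duhamel formula \eqref{nls-eta} — lies outside this lemma.
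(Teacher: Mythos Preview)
Your proposal is correct and follows essentially the same approach as the paper: both establish the smallness of $\Psi-\Phi$ from the modulation bound \eqref{bs2} (using $\alpha>1$ for integrability), use this together with \eqref{control-omegat} to show $|\det B|\gtrsim_{\omega_0}1$, then apply weighted Cauchy--Schwarz to \eqref{ab} for \eqref{control-ab} and the decomposition \eqref{z=abeta} for \eqref{eta-controls-v}. The only cosmetic difference is that the paper organizes the determinant computation by writing $-\det B$ as the main term $\cos^2(\Psi-\Phi)\langle \partial_\omega Q_{\omega(T)},Q_\omega\rangle\langle Q_{\omega(T)},\partial_\omega Q_\omega\rangle$ plus a $\sin^2$-error, whereas you Taylor-expand everything about $\omega_0$ and $\Psi-\Phi=0$ directly; both routes yield the same lower bound.
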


\begin{proof} We first abbreviate $s^2 = \sin^2(\Psi-\Phi)$ and $c^2=\cos^2(\Psi-\Phi)$ and compute
\begin{align}
-\det B & = s^2\langle Q_{\omega(T)},Q_\omega\rangle\langle\partial_\omega Q_{\omega(T)},\partial_\omega Q_\omega\rangle + c^2\langle \partial_\omega Q_{\omega(T)},Q_\omega\rangle\langle Q_{\omega(T)},\partial_\omega Q_\omega\rangle \nonumber \\
& = \langle \partial_\omega Q_{\omega(T)},Q_\omega\rangle \langle Q_{\omega(T)},\partial_\omega Q_\omega\rangle \label{B-LB}\\
&\quad  + s^2\bigl[  \langle Q_{\omega(T)},Q_\omega\rangle\langle\partial_\omega Q_{\omega(T)},\partial_\omega Q_\omega\rangle  - \langle \partial_\omega Q_{\omega(T)},Q_\omega\rangle\langle Q_{\omega(T)},\partial_\omega Q_\omega\rangle\bigr].\label{B-error} 
\end{align}
For \eqref{B-LB}, we begin by writing
\begin{align*}
\langle& \partial_\omega Q_{\omega(T)},Q_\omega\rangle \langle Q_{\omega(T)},\partial_\omega Q_\omega\rangle \\
 & \geq [\langle \partial_\omega Q_\omega,Q_\omega\rangle]^2  - \mathcal{O}_{\omega_0}( \|Q_\omega - Q_{\omega(T)}\|_{L^2} + \|\partial_\omega Q_\omega - \partial_\omega Q_{\omega(T)}\|_{L^2}).
\end{align*}
We now observe that by \eqref{bs2}, 
\[
\|Q_\omega -Q_{\omega(T)}\|_{L^2} \lesssim_{\omega_0} \int_t^T |\dot\omega(s)|\,ds \lesssim_{\omega_0,C_2} \int_t^T \langle s\rangle^{-2\alpha}\,ds\lesssim_{\omega_0,C_2}\eps^2
\]
uniformly in $t\in[0,T]$, with a similar bound for $\|\partial_\omega Q_\omega - \partial_\omega Q_{\omega(T)}\|_{L^2}$.  In particular, using \eqref{bs3}, we obtain
\[
\eqref{B-LB}\gtrsim_{\omega_0} 1 - \mathcal{O}_{\omega_0,C_2}(\eps^2) \gtrsim_{\omega_0}1
\]
for $\eps=\eps(\omega_0,C_2)$ sufficiently small. 

It remains to estimate \eqref{B-error}.  We begin with
\[
|\eqref{B-error}| \lesssim_{\omega_0} |\Psi-\Phi|^2. 
\]
We now claim that
\begin{equation}\label{control-phase}
\sup_{t\in[0,T]}|\Psi(t)-\Phi(t)| \lesssim_{C_2} \eps^2,
\end{equation}
so that for $\eps$ chosen possibly even smaller, we may continue from above to deduce
\begin{equation}\label{detB}
|\det B|\gtrsim_{\omega_0} 1
\end{equation}
uniformly over $t\in[0,T]$.  We begin by writing
\[
\Phi(t)-\Psi(t)=\theta(t)-\theta(T)+\int_0^t [\omega(s)-\omega(T)]\,ds. 
\]
Using \eqref{bs2}, we then have
\[
|\theta(t)-\theta(T)|\leq \int_t^T |\dot\theta(s)|\,ds \lesssim_{C_2} \eps^2\int_t^T \langle s\rangle^{-2\alpha}\,ds \lesssim_{C_2} \eps^2.
\]
Similarly, recalling $\alpha>1$, 
\begin{align*}
\int_0^t |\omega(s)-\omega(T)|\,ds & \leq \int_0^t \int_s^T |\dot\omega(\tau)|\,d\tau\,ds \\
& \lesssim_{C_2} \eps^2\int_0^t\int_s^T \langle \tau\rangle^{-2\alpha}\,d\tau\,ds \lesssim_{C_2} \eps^2,
\end{align*}
thus completing the proof of \eqref{control-phase}.  

In particular, we have now proven \eqref{detB}, which shows $B(t)$ is invertible for all $t\in[0,T]$.  Using \eqref{bs3} as well, we readily deduce \eqref{control-Binverse}. 

We turn to \eqref{control-ab}.  In fact, this follows from \eqref{ab}, \eqref{control-Binverse}, \eqref{bs3}, and the straightforward estimate
\[
|\langle |\eta|,|Q_\omega|\rangle| \leq \| \langle x\rangle^{-\alpha} \eta\|_{L^2} \|\langle x\rangle^\alpha Q_\omega\|_{L^2},
\]
with a similar estimate using $\partial_\omega Q_{\omega}$ in place of $Q_\omega$.  Finally, recalling \eqref{z=abeta} and \eqref{bs3}, we see that \eqref{control-ab} implies \eqref{eta-controls-v}.
\end{proof}


We turn to the estimation of $\eta$.  In light of \eqref{eta-controls-v}, control over $\eta$ will yield control over $v$.  We will use the evolution equation \eqref{nls-eta}, relying on the dispersive estimates for the linearized operator.  We begin by recording a pointwise-in-time bound for the nonlinearity $\F$ (see \eqref{D:F}).

\begin{lemma}\label{L:controlF} For $r=r(\alpha,p)$ sufficiently large, 
\[
\|\langle x\rangle^{\alpha-[\frac12-\frac1r]} \F \|_{L^{r',2}}+\|\F\|_{L^2} \lesssim_{p,\omega_0,C_1,C_2} \langle t\rangle^{-\alpha}\eps^2,
\]
uniformly over $t\in[0,T]$, where $\tfrac{1}{r}+\tfrac{1}{r'}=1$. 
\end{lemma}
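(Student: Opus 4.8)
The plan is to estimate each of the four groups of terms in the expression \eqref{D:F} for $\F$ separately, using the bootstrap hypotheses \eqref{bs1}, \eqref{bs2}, the control on the modulation parameters from Lemma~\ref{L:control-omega}, the virial bound from Lemma~\ref{L:v1}, and the uniform bounds \eqref{bs3} on the weighted norms of $Q_\omega$. First I would dispense with the ``source'' term $e^{i\Phi-i\Psi}\{\N+\dot\theta Q_\omega - i\dot\omega\partial_\omega Q_\omega\}$. For the $\dot\theta Q_\omega - i\dot\omega\partial_\omega Q_\omega$ piece, one simply uses \eqref{bs3} together with $|\dot\theta|+|\dot\omega|\lesssim \eps^2\langle t\rangle^{-2\alpha}$ (from \eqref{control-omega2}); since $2\alpha>\alpha$ this is more than enough. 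For the $\N$ piece, recall from the proof of Lemma~\ref{L:control-omega} the pointwise bound $|\N|\lesssim_p |v|^2|Q_\omega|^{p-1} + |v|^{p+1}$; one then puts this in weighted $L^{r',2}$ and $L^2$ by distributing the weight onto the factors of $v$, using the $L^{r,2}$ and weighted-$L^2$ components of the $X$-norm for some factors and the $H^1$-bound \eqref{control-v-H1} (with Sobolev embedding) to absorb the remaining copies of $v$, while the factors of $Q_\omega$ carry any leftover weight via \eqref{bs3}. The decay budget here is $\langle t\rangle^{-2[\frac12-\frac1r]}$ or $\langle t\rangle^{-\alpha}$ from the two copies of $v$ in the $X$-norm, and by taking $r$ large one gets at least $\langle t\rangle^{-\alpha}$.

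Next I would handle the three ``difference'' terms, namely $\sigma\tfrac{p+2}{2}[Q_\omega^p - Q_{\omega(T)}^p]z$, $\sigma\tfrac{p}{2}[Q_\omega^p - Q_{\omega(T)}^p]\bar z$, and $\sigma\tfrac{p}{2}[e^{2i\Phi-2i\Psi}-1]Q_\omega^p\bar z$. The key point is that each of these carries an extra smallness factor: $|Q_\omega^p - Q_{\omega(T)}^p|\lesssim_{\omega_0} |\omega(t)-\omega(T)| \cdot(\text{weighted bump})$ by the mean value theorem and \eqref{bs3}, and $|e^{2i\Phi-2i\Psi}-1|\lesssim |\Phi-\Psi|$. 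From the estimates in the proof of the previous lemma (see \eqref{control-phase} and the bound on $\|Q_\omega - Q_{\omega(T)}\|_{L^2}$), one has $|\omega(t)-\omega(T)| + |\Phi(t)-\Psi(t)| \lesssim_{C_2} \eps^2$ uniformly on $[0,T]$, but more usefully $|\omega(t)-\omega(T)| \leq \int_t^T|\dot\omega(s)|\,ds \lesssim_{C_2}\eps^2\int_t^T\langle s\rangle^{-2\alpha}\,ds \lesssim_{C_2}\eps^2\langle t\rangle^{1-2\alpha}$, and similarly for $\Phi-\Psi$ (after integrating twice one still retains a factor $\langle t\rangle^{1-2\alpha}$, which since $\alpha>1$ gives negative powers of $\langle t\rangle$; in fact one only needs $\langle t\rangle^{0}$ here and can even afford the crude $\eps^2$). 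Since $z$ and $v$ differ only by a phase, $\|z\|$ in any of the relevant norms is controlled by the $X$-norm of $v$, contributing a decaying factor (from the $L^{r,2}$ component, $\langle t\rangle^{-[\frac12-\frac1r]}$, or from the weighted-$L^2$ component). The spatial weight $\langle x\rangle^{\alpha-[\frac12-\frac1r]}$ is absorbed entirely by the rapidly-decaying $Q_\omega^p$-type factor via \eqref{bs3}; so the only issue is the time decay, and combining $\eps^2$ (or $\eps^2\langle t\rangle^{1-2\alpha}$) from the difference with the decay of $z$ from the $X$-norm gives $\langle t\rangle^{-\alpha}\eps^2$ comfortably for $r$ large.

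The main obstacle, and the reason $r$ must be chosen large and why the constraint $\alpha<\frac{p}{4}$ (equivalently $p>4\alpha$) enters, is the nonlinear term $|v|^{p+1}$ inside $\N$ when measured in the weighted space with weight $\langle x\rangle^{\alpha-[\frac12-\frac1r]}$. Here there is no $Q_\omega$ factor to soak up the spatial weight, so one must borrow it from $v$ itself, incurring a factor like $\|\langle x\rangle v\|_{L^2}\lesssim\langle t\rangle$ from Lemma~\ref{L:v1}. One must then interpolate: write $\langle x\rangle^{\alpha-[\frac12-\frac1r]}|v|^{p+1}$ as a product of many copies of $v$, putting one or two copies in the weighted-$L^2$ norm ($\lesssim\eps\langle t\rangle^{-\alpha}$), possibly one copy carrying the weight via $\|\langle x\rangle v\|_{L^2}\lesssim\langle t\rangle$, one copy in $L^{r,2}$ ($\lesssim\eps\langle t\rangle^{-[\frac12-\frac1r]}$), and the rest in $L^\infty$ via $H^1\hookrightarrow L^\infty$ and \eqref{control-v-H1}. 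Counting powers of $\langle t\rangle$: the weighted $L^2$ copies give $\langle t\rangle^{-\alpha}$, the weight copy gives $\langle t\rangle^{+1}$, and then one needs the copies in $L^{r,2}$ to beat $\langle t\rangle^{1}$ and still leave $\langle t\rangle^{-\alpha}$ to spare. Since each $L^{r,2}$ copy gives roughly $\langle t\rangle^{-1/2}$ in the large-$r$ limit, and we have $p+1\geq 5$ copies total of which at most two or three are used for weights/one norm, the remaining $\gtrsim p-3$ copies supply enough decay precisely when $p>4\alpha$, which is our hypothesis; the interpolation inequalities for Lorentz spaces (Hölder, from Section~\ref{S:notation}) make this bookkeeping rigorous. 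I would carry out this counting carefully, as it pins down the precise admissible range of $r$ and is exactly where the argument degenerates as $p\downarrow 4$.
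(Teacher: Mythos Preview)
Your plan is correct and matches the paper's approach: the same pointwise bound $|\N|\lesssim_p |Q_\omega|^{p-1}|v|^2+|v|^{p+1}$, the same handling of the modulation and difference terms via \eqref{bs2}--\eqref{bs3} and \eqref{control-phase}, and the identification of $|v|^{p+1}$ in weighted $L^{r',2}$ as the crux, resolved via the virial bound Lemma~\ref{L:v1}.

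One sharpening of your scheme for $\langle x\rangle^{\alpha-[\frac12-\frac1r]}|v|^{p+1}$: the \emph{negatively}-weighted norm $\|\langle x\rangle^{-\alpha}v\|_{L^2}$ is not useful here, since invoking it only enlarges the positive-weight deficit you must then cover. The paper uses only the virial bound for the weight and the $L^{r,2}$-norm for decay: split $|v|^{p+1}=|v|^k|v|^{p+1-k}$ with $k=\tfrac{2(r-p-2)}{r-2}\in(1,2)$, place the entire weight on $|v|^k$ and bound $\|\langle x\rangle^{(\alpha-[\frac12-\frac1r])/k}v\|_{L^2}^k\lesssim\langle t\rangle^{\alpha-[\frac12-\frac1r]}$ by Lemma~\ref{L:v1} (the per-copy weight is $\leq 1$ for large $r$), then put $|v|^{p+1-k}$ in an $L^r$-type norm for decay $\langle t\rangle^{-(p+1-k)[\frac12-\frac1r]}$. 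With this $k$ one computes $(p+2-k)[\tfrac12-\tfrac1r]=\tfrac{p}{2}$ exactly, so the net exponent is $\alpha-\tfrac{p}{2}$, and the requirement that this be $<-\alpha$ is precisely $\alpha<\tfrac{p}{4}$, confirming your claimed constraint. The negatively-weighted norm is reserved for the $Q_\omega^{p-1}|v|^2$ piece, where $Q_\omega$ can absorb arbitrarily large positive weights via \eqref{bs3}.
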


\begin{proof} We begin by observing that in light of \eqref{control-v-H1}, we may upgrade the bootstrap assumptions
\[
\|v(t)\|_{L^{r,2}} \lesssim_{C_1} |t|^{-(\frac12-\frac1r)}\eps\qtq{and} \|\langle x\rangle^{-\alpha}v(t)\|_{L^2} \lesssim_{C_1} |t|^{-\alpha}\eps
\]
to
\begin{equation}\label{bs1'}
\|v(t)\|_{L^{r,2}} \lesssim_{\omega_0,C_1} \langle t\rangle^{-(\frac12-\frac1r)}\eps\qtq{and} \|\langle x\rangle^{-\alpha}v(t)\|_{L^2}\lesssim_{C_1} \langle t\rangle^{-\alpha}\eps.
\end{equation}

We will first estimate the weighted norm.  For this, we first estimate the contribution of $\N$ to $\F$ (see \eqref{D:N}). We rely on the pointwise estimate
\begin{equation}\label{PW}
|\N| \lesssim_p |Q_{\omega}|^{p-1}|v|^2 + |v|^{p+1}. 
\end{equation}

We begin with the purely nonlinear term $|v|^{p+1}$. We define the parameter
\[
k=\tfrac{2(r-(p+2))}{r-2},
\]
where we impose $r>2(p+1)$ to guarantee that $1<k<2$. 

We may then estimate
\begin{equation}
\begin{aligned}\label{xvpp1}
\| \langle x\rangle^{\alpha-[\frac12-\frac1r]}|v|^{p+1}\|_{L^{r',2}} & \lesssim \| \langle x\rangle^{\alpha-[\frac12-\frac1r]}v^k \|_{L^{\frac2k,\frac2{k-1}}}\|v^{p+1-k}\|_{L^{\frac{r}{p+1-k},\frac{2}{2-k}}} \\ 
&\lesssim \| \langle x\rangle^{\frac{1}{k}(\alpha-[\frac12-\frac1r])}v\|_{L^2}^k \|v\|_{L^{r,\frac{2}{2-k}(p+1-k)}}^{p+1-k},
\end{aligned}
\end{equation}
where we have used $\frac{2k}{k-1}>2$.  For the weighted term, we will use Lemma~\ref{L:v1}.  This requires the condition
\[
\tfrac{1}{k}(\alpha-[\tfrac12-\tfrac1r]) \leq 1. 
\]
Recalling the definition of $k$ and rearranging, we find that this follows provided
\[
\alpha\leq \tfrac{5}{2}-\tfrac{1}{r}-\tfrac{2p}{r-2}.
\]
In particular, as $\alpha<\tfrac{3}{2}$, we can guarantee this condition by choosing $r$ sufficiently large.  We will control the remaining norm in \eqref{xvpp1} by the $L^{r,2}$-norm.  This requires that
\[
\tfrac{2}{2-k}(p+1-k)>2, 
\]
which after rearranging is equivalent to $p>1$.  In particular, we arrive at the estimate
\[
\| \langle x\rangle^{\alpha-[\frac12-\frac1r]}|v|^{p+1}\|_{L^{r',2}}  \lesssim \eps^{p+1-k}\langle t\rangle^{\alpha-[\frac12-\frac1r]-(p+1-k)(\frac12-\frac1r)}. 
\]
We now observe that the condition
\[
\alpha-[\tfrac12-\tfrac1r]-(p+1-k)(\tfrac12-\tfrac1r)<-\alpha
\]
follows from $\alpha<\tfrac{p}{4}$. We also note that $p+1-k>2$ follows from $k<2<p-1$.  Thus we have
\[
\|\langle x\rangle^{\alpha-[\frac12-\frac1r]}|v|^{p+1}\|_{L^{r',2}} \lesssim \eps^2 \langle t\rangle^{-\alpha},
\]
as desired.

Next, we estimate
\begin{align*}
\| \langle x\rangle^{\alpha-[\frac12-\frac1r]} Q_\omega^{p-1}v^2 \|_{L^{r',2}} & \lesssim \| \langle x\rangle^{-\frac{r-3}{r-2}\alpha}v\|_{L^{2r',4}}^2 \|\langle x\rangle^{\alpha-[\frac12-\frac1r]+\frac{2(r-3)}{r-2}\alpha}Q_{\omega}^{p-1}\|_{L^\infty} \\
& \lesssim \| \langle x\rangle^{-\frac{r-3}{r-2}\alpha}v\|_{L^{2r',2}}^2 \|Q_\omega\|_Z^{p-1}.
\end{align*}
To arrive at a suitable estimate, it therefore remains to show that
\[
\|\langle x\rangle^{-\frac{r-3}{r-2}\alpha}v\|_{L^{2r',2}} \lesssim \eps \langle t\rangle^{-\frac{\alpha}{2}}.
\]
We first have the following estimate by interpolation: 
\begin{align*}
\|\langle x\rangle^{-\frac{r-3}{r-2}\alpha}v\|_{L^{2r',2}} & \lesssim \| \langle x\rangle^{-\alpha}v \|_{L^2}^{\frac{r-3}{r-2}}\|v\|_{L^{r,2}}^{\frac{1}{r-2}} \\
& \lesssim \eps\langle t\rangle^{-\frac{r-3}{r-2}\alpha -\frac{1}{r-2}(\frac12-\frac1r)}.
\end{align*}
In particular, we need to show that 
\[
-\tfrac{r-3}{r-2}\alpha -\tfrac{1}{r-2}(\tfrac12-\tfrac1r)<-\tfrac\alpha2.
\]
In fact, the left-hand side converges to $-\alpha$ as $r\to\infty$, and so this holds for any large $r$.

Collecting the estimates above, this completes the estimation of the $L^{r',2}$-norm of $\N$.

We turn to the contribution of the modulation parameters to $\F$. By \eqref{bs2}, \eqref{bs3}, and H\"older's inequality, we have
\begin{align*}
\|\langle x\rangle^{\alpha-[\frac12-\frac1r]}& \dot\theta Q_{\omega}\|_{L^{r',2}} + \|\langle x\rangle^{\alpha-[\frac12-\frac1r]}\dot\omega \partial_\omega Q_\omega\|_{L^{r',2}} \\
& \lesssim_{C_2}\langle t\rangle^{-2\alpha}\eps^2\|Q_\omega\|_Z \lesssim_{\omega_0,C_2}\langle t\rangle^{-2\alpha}\eps^2,
\end{align*}
which is acceptable. 

We next use \eqref{bs1}, \eqref{bs2}, and \eqref{bs3} to estimate
\begin{align*}
\|\langle & x\rangle^{\alpha-[\frac12-\frac1r]}[Q_\omega^p-Q_{\omega(T)}^p]z\|_{L^{r',2}} \\ 
& \lesssim \|\langle x\rangle^{-\alpha}z\|_{L^2} \|\langle x\rangle^4[Q_\omega^p-Q_{\omega(T)}^p]\|_{L^{\frac{2r}{r-2}}} \\
& \lesssim_{C_1,p}\eps  \langle t\rangle^{-\alpha}\biggl\|\langle x\rangle^4  \int_t^T \dot\omega(s)\cdot Q_{\omega(s)}^{p-1}\partial_\omega Q_{\omega(s)}\,ds\biggr\|_{H_x^1} \\
& \lesssim_{C_1,C_2,p} \eps^3\langle t\rangle^{-\alpha} \int_t^T \langle s\rangle^{-2\alpha} \|\langle x\rangle^4 Q_{\omega(s)}^{p-1} \partial_\omega Q_{\omega(s)}\|_{H_x^1}\,ds \\
& \lesssim_{\omega_0,C_1,C_2,p} \eps^3\langle t\rangle^{-3\alpha+1},
\end{align*}
which is acceptable. 

Finally, we use \eqref{bs1}, \eqref{bs2}, and \eqref{control-phase} to estimate
\begin{align*}
\| \langle x\rangle^{\alpha-[\frac12-\frac1r]}|e^{2i\Phi-2i\Psi}-1]Q_{\omega}^p z \|_{L^{r',2}} & \lesssim |\Phi-\Psi|\cdot \|\langle x\rangle^{-\alpha}z(t)\|_{L^2}\|\langle x\rangle^4 Q_{\omega}^p\|_{L^{\frac{2r}{r-2}}} \\
& \lesssim_{\omega_0,C_1,C_2,p} \eps^3 \langle t\rangle^{-\alpha}\|\langle x\rangle^4 Q_{\omega}^p\|_{H^1} \\
& \lesssim_{\omega_0,C_1,C_2} \eps^3\langle t\rangle^{-\alpha},
\end{align*}
which is acceptable.  This completes the estimate of the weighted norm.

It remains to estimate $\F$ in $L^2$. Again, we begin with the contribution of $\N$, relying on the pointwise estimate \eqref{PW}.  We begin by estimating
\[
\| |v|^{p+1}\|_{L^2} \lesssim \|v\|_{L^{2(p+1)}}^{p+1} \lesssim \eps^{p+1}\langle t\rangle^{-\frac{p}{2}},
\]
where we have used interpolation between the $L^2$ and $L^r$ norms of $v$ (and assumed $r>2(p+1)$).  As $\alpha<\frac{p}{2}$, the contribution of this term is acceptable.  On the other hand, by interpolation,
\begin{align*}
\| v^2 Q_\omega^{p-1}\|_{L^2} &  \lesssim \| \langle x\rangle^{-\alpha[\frac{r-4}{2(r-2)}]}v\|_{L^4}^2 \|\langle x\rangle^4 Q_{\omega}^{p-1}\|_{L^\infty} \\ 
& \lesssim_{\omega_0} \| \langle x\rangle^{-\alpha}v\|_{L^2}^{\frac{r-4}{r-2}}\|v\|_{L^r}^{\frac{r}{r-2}} \\
& \lesssim_{\omega_0} \eps^2 \langle t\rangle^{-\alpha[\frac{r-4}{r-2}]-[\frac12-\frac1r][\frac{r}{r-2}]}.
\end{align*}
As the power tends to $-\alpha-\frac12$ as $r\to\infty$, we find that this contribution is acceptable provided $r$ is sufficiently large. 

It remains to estimate the contribution of the modulation parameters. First,
\[
\|\dot\theta Q_{\omega}\|_{L^2}+\|\dot\omega\partial_\omega Q_\omega\|_{L^2} \lesssim_{C_2}\eps^2\langle t\rangle^{-2\alpha}\|Q_{\omega}\|_{Z} \lesssim_{\omega_0,C_2} \eps^2\langle t\rangle^{-2\alpha},
\]
which is acceptable. Next, estimating as we did above, we have
\begin{align*}
\|[Q_\omega^p - Q_{\omega(T)}^p]z\|_{L^2}& \lesssim \|\langle x\rangle^{-\alpha}z\|_{L^2}\int_t^T \langle s\rangle^{-2\alpha}\|Q_{\omega(s)}^{p-1}\partial_\omega Q_{\omega(s)}\|_{H^1}\,ds \\
& \lesssim_{\omega_0,C_1,C_2,p}\eps^3\langle t\rangle^{-3\alpha+1},
\end{align*}
which is acceptable. Finally, estimating again as we did above,
\[
\|[e^{2i\Phi-2i\Psi}-1]Q_\omega^p z\|_{L^2} \lesssim |\Phi-\Psi|\cdot\|\langle x\rangle^{-\alpha}z\|_{L^2} \|Q_\omega\|_Z^p \lesssim_{\omega_0,C_1,C_2} \eps^3\langle t\rangle^{-\alpha}.
\]
This completes the proof.  \end{proof}


We can now establish our desired estimate for $\eta$. 

\begin{lemma}\label{L:control-eta} We have the following estimate for $\eta$:
\[
\|\eta\|_{X([0,T])} \lesssim_{\omega_0} \|\langle x\rangle^\alpha v(0)\|_{L^2}+ \mathcal{O}_{p,\omega_0,C_1,C_2}(\eps^2)
\]
\end{lemma}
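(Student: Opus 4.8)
The plan is to estimate $\eta$ directly from the integral equation \eqref{nls-eta} using the dispersive estimates for $e^{t\L(\omega(T))}$ together with the pointwise-in-time bound for $\F$ from Lemma~\ref{L:controlF}. First I would pass from $\L$ to $\H$ via the unitary change of variables $e^{t\L}=Ue^{it\H}U^*$ recorded at the start of Section~\ref{S:dispersive}, so that the linear propagator in \eqref{nls-eta} is covered by Proposition~\ref{P:D} and its interpolated consequences \eqref{dispersive4}--\eqref{dispersive7}. (One must note that the implicit constants in those estimates depend continuously on the frequency, per the remark after Proposition~\ref{P:D}, so they are uniform for $\omega(T)$ in the $\eps$-neighborhood of $\omega_0$ guaranteed by \eqref{control-omegat}.) The two norms in $\|\eta\|_{X([0,T])}$ (see \eqref{D:X}) are $\sup_t |t|^{\frac12-\frac1r}\|\eta(t)\|_{L^{r,2}}$ and $\sup_t |t|^{-\alpha}\|\langle x\rangle^{-\alpha}\eta(t)\|_{L^2}$, and each is handled by the same scheme applied to the two terms of \eqref{nls-eta}: the free evolution of the data $P_c(T)z(0)$, and the Duhamel integral of $[\Im\F\ -\Re\F]^t$.

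For the data term, I would apply \eqref{dispersive4} with the pair $(r,r')$ to get $\|e^{t\L(\omega(T))}P_c(T)z(0)\|_{L^{r,2}}\lesssim |t|^{-(\frac12-\frac1r)}\|P_c(T)z(0)\|_{L^{r',2}}$, and \eqref{dispersive7} (with the appropriate choice of exponent and $\alpha$ in the admissible range $[\frac12-\frac1r,\frac32-\frac3r]$, which holds for $1<\alpha<\frac32$ once $r$ is large) to get $\|\langle x\rangle^{-\alpha}e^{t\L(\omega(T))}P_c(T)z(0)\|_{L^2}\lesssim |t|^{-\alpha}\|\langle x\rangle^{\alpha-[\frac12-\frac1r]}P_c(T)z(0)\|_{L^{r',2}}$. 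Both right-hand sides are then controlled by $\|\langle x\rangle^\alpha P_c(T)z(0)\|_{L^2}$ after embedding $L^2\hookrightarrow L^{r',2}$ on the relevant weighted space (since $r'<2$ and weights are harmless on bounded regions, with decay at infinity supplied by the $\langle x\rangle^\alpha$ factor), and finally by $\|\langle x\rangle^\alpha v(0)\|_{L^2}$ using $P_c(T)z(0)=P_c(T)[e^{i\Phi(0)-i\Psi(0)}v(0)]$, boundedness of $P_c(T)$ on the weighted space (via the explicit projection formula in Proposition~\ref{P:Projection} together with \eqref{bs3}), and $|\Phi(0)-\Psi(0)|\lesssim 1$. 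This yields the $\|\langle x\rangle^\alpha v(0)\|_{L^2}$ contribution in the statement.

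For the Duhamel term, I would split the time integral at $t/2$ and apply the same two dispersive estimates inside. The key input is Lemma~\ref{L:controlF}, which gives $\|\langle x\rangle^{\alpha-[\frac12-\frac1r]}\F(s)\|_{L^{r',2}}+\|\F(s)\|_{L^2}\lesssim_{p,\omega_0,C_1,C_2}\langle s\rangle^{-\alpha}\eps^2$. For the $L^{r,2}$-component: on $[0,t/2]$ use \eqref{dispersive4} to bound the kernel by $|t-s|^{-(\frac12-\frac1r)}\gtrsim |t|^{-(\frac12-\frac1r)}$ and integrate $\langle s\rangle^{-\alpha}$ (integrable since $\alpha>1$); on $[t/2,t]$ use the $L^2$-boundedness \eqref{dispersive3} and the $L^2$-bound on $\F$ together with $\langle s\rangle\sim\langle t\rangle$, then absorb the small-time singularity of the $L^{r,2}$ estimate. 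For the weighted $L^2$-component, run the analogous split using \eqref{dispersive7}: on $[0,t/2]$ the kernel gives $|t-s|^{-\alpha}\gtrsim |t|^{-\alpha}$ and one integrates the weighted $L^{r',2}$-bound on $\F$; on $[t/2,t]$ one uses \eqref{dispersive3} plus the $L^2$-bound on $\F$ and $\langle s\rangle\sim\langle t\rangle$. In all cases the $s$-integral of $\langle s\rangle^{-\alpha}$ converges because $\alpha>1$, and the output is $\mathcal{O}_{p,\omega_0,C_1,C_2}(\eps^2)\langle t\rangle^{-(\frac12-\frac1r)}$ respectively $\mathcal{O}(\eps^2)\langle t\rangle^{-\alpha}$, i.e. the $\mathcal{O}(\eps^2)$ contribution to $\|\eta\|_{X([0,T])}$.

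The main obstacle I expect is bookkeeping rather than anything conceptual: making sure the chosen large exponent $r=r(\alpha,p)$ simultaneously satisfies all the constraints needed here and in Lemma~\ref{L:controlF} (in particular that $\alpha$ lies in the admissible window $[\frac12-\frac1r,\frac32-\frac3r]$ of Proposition~\ref{P:weighted-dispersive}, which forces $\alpha<\frac32$ — exactly the hypothesis), and handling the $s\to 0$ endpoint of the Duhamel integral where the $L^{r,2}$ dispersive estimate has a $|t-s|^{-(\frac12-\frac1r)}$ singularity; this is integrable since $\frac12-\frac1r<1$, so it causes no trouble, but it must be written carefully. The boundedness of $P_c(T)$ on the weighted $L^2$ spaces also deserves a line, and follows from Proposition~\ref{P:Projection} and the uniform bounds \eqref{bs0}, \eqref{bs3}.
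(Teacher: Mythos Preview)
Your approach is essentially the same as the paper's: apply the interpolated dispersive estimates \eqref{dispersive4} and \eqref{dispersive7} (with \eqref{dispersive3} to tame the $|t-s|^{-\alpha}$ singularity in the weighted part) to the Duhamel formula \eqref{nls-eta}, and feed in Lemma~\ref{L:controlF} for the inhomogeneity. Two small points to tighten: there is no embedding $L^2\hookrightarrow L^{r',2}$ for $r'<2$; what you want (and what the paper does) is H\"older in Lorentz spaces, $\|z(0)\|_{L^{r',2}}\lesssim \|\langle x\rangle^{-(\frac12-\frac1r)}\|_{L^{\frac{2r}{r-2},\infty}}\|\langle x\rangle^{\frac12-\frac1r}z(0)\|_{L^2}\lesssim\|\langle x\rangle^{\alpha}v(0)\|_{L^2}$. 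Also, for the $L^{r,2}$ Duhamel piece the $t/2$-split and the appeal to \eqref{dispersive3} are unnecessary (and $L^2$-boundedness does not produce an $L^{r,2}$ bound): since $\frac12-\frac1r<1$ and $\alpha>1$, the convolution $|t|^{\frac12-\frac1r}\int_0^t |t-s|^{-(\frac12-\frac1r)}\langle s\rangle^{-\alpha}\,ds$ is bounded uniformly in $t$, so a single application of \eqref{dispersive4} over $[0,t]$ suffices.
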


\begin{proof} We will utilize the Duhamel formula in \eqref{nls-eta} and apply dispersive estimates adapted to $e^{t\L(\omega(T))}P_c(T)$ (see \eqref{dispersive4} and \eqref{dispersive7}).  By \eqref{control-omegat}, we may guarantee that the implicit constants appearing in the dispersive estimates for all $\omega(t)$ are all bounded by a constant depending only on $\omega_0$.  Also, as $\alpha<\tfrac32$ by assumption, we may guarantee $\alpha<\tfrac32-\tfrac3r$ (which is necessary to apply \eqref{dispersive7}) provided $r$ is chosen sufficiently large. 

We first consider the $L^{r,2}$-norm.  We apply \eqref{dispersive4}, H\"older's inequality, and Lemma~\ref{L:controlF} to obtain
\begin{align*}
|t|^{\frac12-\frac1r}\|\eta(t) \|_{L^{r,2}} & \lesssim_{\omega_0} \|z(0)\|_{L^{r',2}} + |t|^{\frac12-\frac1r}\int_0^t |t-s|^{-[\frac12-\frac1r]}\|\F(s)\|_{L^{r',2}}\,ds \\
& \lesssim_{\omega_0} \|\langle x\rangle^{\frac12-\frac1r}z(0)\|_{L^2} +\eps^2 |t|^{\frac12-\frac1r}\int_0^t |t-s|^{-[\frac12-\frac1r]}\langle s\rangle^{-\alpha}\,ds \\
& \lesssim_{\omega_0} \|\langle x\rangle^{\alpha}v(0)\|_{L^2} + \mathcal{O}_{p,\omega_0,C_1,C_2}(\eps^2)
\end{align*}
uniformly over $t\in[0,T]$, which is acceptable.  For the weighted norm, we use both \eqref{dispersive7} and \eqref{dispersive3} to arrive at the estimate
\begin{align*}
|t|^{\alpha}&\|\langle x\rangle^{-\alpha}\eta(t)\|_{L^2}  \lesssim_{\omega_0} \|\langle x\rangle^{\alpha-[\frac12-\frac1r]}z(0)\|_{L^{r',2}} \\
& \quad + |t|^{\alpha}\int_0^t \min\{|t-s|^{-\alpha},1\}\bigl\{\|\langle x\rangle^{\alpha-[\frac12-\frac1r]}\F\|_{L^{r',2}}+\|\F\|_{L^2}\bigr\}\,ds \\
& \lesssim_{\omega_0} \|\langle x\rangle^{\alpha}v(0)\|_{L^2} + \eps^2|t|^{\alpha}\int_0^t \min\{|t-s|^{-\alpha},1\}\langle s\rangle^{-\alpha}\,ds \\
& \lesssim_{\omega_0}  \|\langle x\rangle^{\alpha}v(0)\|_{L^2} + \mathcal{O}_{p,\omega_0,C_1,C_2}(\eps^2) 
\end{align*}
uniformly over $t\in[0,T]$, which is acceptable.  This completes the proof.\end{proof}


With the preceding lemmas in place, we can complete the bootstrap argument.

\begin{proposition}[Completing the bootstrap]\label{PT1} There exist $C_1,C_2>0$ depending on $\omega_0$ and $p$ such that for $\eps=\eps(\omega_0,p)>0$ sufficiently small and $\eta=\eta(\eps)>0$ sufficiently small, if
\begin{equation}\label{cbs-data}
\|\tilde v_0\|_{H^1}+\|\langle x\rangle^\alpha \tilde v_0\|_{L^2} \leq \eta,
\end{equation}
then the estimates \eqref{bs1} and \eqref{bs2} hold uniformly for all $T>0$. 
\end{proposition}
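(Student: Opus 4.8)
The plan is to assemble the preceding lemmas into a standard continuity/bootstrap argument. First I would fix the small parameters in the right order. We invoke Proposition~\ref{P:decomposition} to obtain the decomposition \eqref{u=Q+v} with the orthogonality conditions \eqref{orthogonality}, the evolution equation \eqref{nls-v}, and the initial bounds \eqref{control-v0}, \eqref{control-v-H1}; this fixes $\eps=\eps(\omega_0)$ and $\eta=\eta(\eps)$. We then choose the weight $\alpha$, the large exponent $r=r(\alpha,p)$ required by Lemmas~\ref{L:controlF}, \ref{L:control-eta} (and the interpolation estimates \eqref{dispersive4}, \eqref{dispersive7}), and the constants $C_1=C_1(\omega_0,p)$ and then $C_2=C_2(C_1,\omega_0,p)$, shrinking $\eps$ further at each stage so that the background bounds \eqref{bs0} and \eqref{bs3} hold. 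Finally, we choose $\eta$ small enough that \eqref{cbs-data} implies, via \eqref{control-v0}, that $\|\langle x\rangle^\alpha v(0)\|_{L^2}\leq 2\eta$ say.

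Next, I would set up the bootstrap on a fixed interval $[0,T]$. Define the set of times $T'\in[0,T]$ for which \eqref{bs1} and \eqref{bs2} hold on $[0,T']$. This set is nonempty (at $T'=0$ the quantities $\|v\|_{X([0,0])}$ and $\|(\omega,\theta)\|_{Y([0,0])}$ are controlled directly by the initial smallness, since near $t=0$ the $X$-norm reduces to $\langle t\rangle$-type weights times $H^1$- and weighted-$L^2$-norms of $v(0)$, all $\mathcal{O}(\eta)$ by \eqref{control-v0}, \eqref{control-v-H1}, Lemma~\ref{L:v1}, and the ODE \eqref{ODE} with Lemma~\ref{L:control-omega}), it is closed by continuity of the relevant norms in $T'$, and the heart of the matter is to show it is open, i.e. that the estimates can be improved. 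For the improvement I would chain the lemmas: Lemma~\ref{L:control-omega} gives $\|(\omega,\theta)\|_{Y([0,T'])}\lesssim_{p,\omega_0,C_1}\|v\|_{X([0,T'])}^2\leq (2C_1)^2\eps^2$ times a constant depending only on $p,\omega_0,C_1$; choosing $C_2$ larger than this constant times $(2C_1)^2$ and $\eps$ small, we get $\|(\omega,\theta)\|_{Y}\leq C_2\eps^2$, strictly improving \eqref{bs2}. Then Lemmas~\ref{L:control-eta} and the bound \eqref{eta-controls-v} give $\|v\|_{X([0,T'])}\lesssim_{\omega_0}\|\eta\|_{X([0,T'])}\lesssim_{\omega_0}\|\langle x\rangle^\alpha v(0)\|_{L^2}+\mathcal{O}_{p,\omega_0,C_1,C_2}(\eps^2)$; using \eqref{control-v0} to write $\|\langle x\rangle^\alpha v(0)\|_{L^2}\lesssim \eta+\mathcal{O}_{\omega_0}(\eps)\lesssim\eps$ (after possibly shrinking $\eta$ relative to $\eps$), we obtain $\|v\|_{X([0,T'])}\leq C_1\eps$ for $C_1$ chosen large enough (depending only on $\omega_0$, absorbing the $\mathcal{O}(\eps^2)$ into $\mathcal{O}(\eps)$ for $\eps$ small), strictly improving \eqref{bs1}. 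Openness follows, so the set is all of $[0,T]$; since $T>0$ was arbitrary and the estimates are uniform in $T$, \eqref{bs1} and \eqref{bs2} hold on $[0,\infty)$.

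The delicate point of ordering is the circular dependence among $C_1$, $C_2$, and $\eps$: $C_2$ must be chosen after $C_1$ (it absorbs constants from Lemma~\ref{L:control-omega} that depend on $C_1$), $C_1$ must be chosen to beat the $\omega_0$-dependent constant in Lemma~\ref{L:control-eta}/\eqref{eta-controls-v} (which is genuinely independent of $C_1,C_2$, so there is no vicious loop), and only then is $\eps$ taken small enough that all the $\mathcal{O}(\eps^2)$ and $\mathcal{O}(\eps^3\langle t\rangle^{\cdot})$ error terms are dominated. I expect this bookkeeping — verifying that the constant in the key estimate \eqref{eta-controls-v} and the leading term of Lemma~\ref{L:control-eta} depend only on $\omega_0$ and not on $C_1,C_2$, so that $C_1$ can be fixed first — to be the only real subtlety; everything else is a routine continuity argument. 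One should also check that the time-integrals appearing in Lemma~\ref{L:control-eta}, namely $|t|^{\frac12-\frac1r}\int_0^t|t-s|^{-[\frac12-\frac1r]}\langle s\rangle^{-\alpha}\,ds$ and $|t|^{\alpha}\int_0^t\min\{|t-s|^{-\alpha},1\}\langle s\rangle^{-\alpha}\,ds$, are bounded uniformly in $t$ — this uses $\alpha>1$ and $\tfrac12-\tfrac1r<1$, and is where the integrability of the decay rate (hence the need for the weighted dispersive estimate \eqref{dispersive2}) is essential.
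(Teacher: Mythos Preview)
Your proposal is correct and follows essentially the same approach as the paper's own proof: both establish the small-time base case via the $H^1$ control \eqref{control-v-H1}, then assume \eqref{bs1}--\eqref{bs2} on $[0,T]$ and improve them to \eqref{bs11}--\eqref{bs22} by chaining Lemma~\ref{L:control-omega}, Lemma~\ref{L:control-eta}, and \eqref{eta-controls-v} in exactly the order you describe, with the same resolution of the $C_1,C_2,\eps$ dependence. Your additional remarks on the constant ordering and the uniform boundedness of the time integrals are accurate and simply make explicit what the paper leaves implicit.
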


\begin{proof} We first claim that the estimates \eqref{bs1} and \eqref{bs2} hold in a neighborhood of $t=0$.  We first consider the estimate \eqref{bs1}.  For this, we note that for small times we may control the $X$-norm by the $L_t^\infty H_x^1$-norm, which is $\mathcal{O}_{\omega_0}(\eps)$ by \eqref{control-v-H1}.  In particular, we need only choose $C_1$ sufficiently large depending on $\omega_0$.  The estimate \eqref{bs1} for small times follows from the estimate \eqref{bs1} via \eqref{control-omega1}, choosing $C_2$ sufficiently large depending on $p,\omega_0,C_1$. 

Next, we suppose that the estimates \eqref{bs1} and \eqref{bs2} hold on an interval $[0,T]$.  We will then show that for $\eps>0$ sufficiently small, we have the improved estimates
\begin{align}
\|v\|_{X([0,T])} &\leq C_1\eps, \label{bs11} \\
\|(\omega,\theta)\|_{Y([0,T])}&\leq C_2\eps^2. \label{bs22}
\end{align}
A standard continuity argument then implies the result. 

Recall that for $\eps>0$ small enough, the condition \eqref{control-omegat} guarantees that the implicit constant in the dispersive estimates for $\L(\omega(t))$ is bounded uniformly in $t$. In particular, the implicit constant in Lemma~\ref{L:control-eta} is independent of $T$. 

We first consider \eqref{bs22}.  As above, by \eqref{control-omega1} we may guarantee that \eqref{bs22} holds by choosing $C_2$ sufficiently large depending on the parameters $p,\omega_0,$ and  $C_1.$ 

Finally, we consider \eqref{bs11}.  We first apply Lemma~\ref{L:control-eta} and \eqref{control-v0}, which yields
\[
\|\eta\|_{X([0,T])} \lesssim_{\omega_0} \eps + \mathcal{O}_{p,\omega_0,C_1,C_2}(\eps^2).
\]
In particular, for $\eps=\eps(p,\omega_0,C_1,C_2)$ sufficiently small, we may guarantee that
\[
\|\eta\|_{X([0,T)} \lesssim_{\omega_0}\eps. 
\]
Now using \eqref{eta-controls-v} and choosing $C_1$ sufficiently large depending on $\omega_0$, we may derive \eqref{bs11}.  This completes the proof. \end{proof}

With the bootstrap closed and the global estimates for $v$ and $(\omega,\theta)$ in place, it is now straightfoward to prove that $v(t)$ scatters in $L^2$ as $t\to\infty$. 

\begin{proposition}[Scattering]\label{PT2} There exists $\omega_+$ (with $|\omega_0-\omega_+|\lesssim \eps^2$) and $v_+\in L^2$ such that 
\begin{equation}\label{scatter}
\lim_{t\to\infty} \|v(t) - e^{t\L(\omega_+)}v_+\|_{L^2} = 0. 
\end{equation}
\end{proposition}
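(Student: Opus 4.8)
The plan is to identify the limiting frequency $\omega_+$, to show that the part of $v(t)$ in the continuous spectral subspace of $\L(\omega_+)$ converges under the associated linear flow, and to show that the complementary (two-dimensional) part of $v(t)$ decays to $0$ in $L^2$.

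First I would observe that, since $\alpha>1$, the modulation bound $|\dot\omega(t)|\lesssim\eps^2\langle t\rangle^{-2\alpha}$ gives $\dot\omega\in L^1([0,\infty))$, so that $\omega(t)\to\omega_+:=\omega(0)+\int_0^\infty\dot\omega(s)\,ds$ with $\int_0^\infty|\dot\omega(s)|\,ds\lesssim\eps^2$. Combining this with $|\omega(0)-\omega_0|\lesssim_{\omega_0}\eta$ (a consequence of the locally uniform Lipschitz dependence in the implicit function theorem argument of Proposition~\ref{P:decomposition}) and taking $\eta\le\eps^2$ yields $|\omega_0-\omega_+|\lesssim\eps^2$. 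In particular $\omega_+$ lies in the neighborhood of $\omega_0$ on which the spectral condition of Definition~\ref{D:spectral} and the bound \eqref{bs0} hold, so $P_c^+:=P_c(\omega_+)$, the evolution $e^{t\L(\omega_+)}$, Proposition~\ref{P:Projection}, and the dispersive estimates of Proposition~\ref{P:D} are all available at $\omega_+$; in particular $\|e^{t\L(\omega_+)}P_c^+\varphi\|_{L^2}\lesssim\|\varphi\|_{L^2}$ uniformly in $t\in\R$ by \eqref{dispersive3} (via the representation formula of Proposition~\ref{P:Rep}).

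Next I would dispose of the non-dispersive part. Applying Proposition~\ref{P:Projection} at $\omega_+$,
\[
(I-P_c^+)v(t)=\frac{\Re\langle v(t),Q_{\omega_+}\rangle}{\langle Q_{\omega_+},\partial_\omega Q_{\omega_+}\rangle}\,\partial_\omega Q_{\omega_+}+\frac{\Re\langle v(t),i\partial_\omega Q_{\omega_+}\rangle}{\langle Q_{\omega_+},\partial_\omega Q_{\omega_+}\rangle}\,iQ_{\omega_+}.
\]
By the orthogonality conditions \eqref{orthogonality}, one may replace $Q_{\omega_+}$ by $Q_{\omega_+}-Q_{\omega(t)}$ and $\partial_\omega Q_{\omega_+}$ by $\partial_\omega Q_{\omega_+}-\partial_\omega Q_{\omega(t)}$ inside the two inner products, so that using \eqref{control-v-H1}, \eqref{bs0}, \eqref{bs3} and the locally uniform smoothness of $\omega\mapsto Q_\omega$,
\[
\|(I-P_c^+)v(t)\|_{L^2}\lesssim_{\omega_0}\|v(t)\|_{L^2}\,|\omega(t)-\omega_+|\lesssim_{\omega_0}\eps^3\,\langle t\rangle^{1-2\alpha}\longrightarrow 0\qtq{as}t\to\infty,
\]
since $\alpha>1$ and $|\omega(t)-\omega_+|\le\int_t^\infty|\dot\omega|\lesssim\eps^2\langle t\rangle^{1-2\alpha}$.

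Finally I would treat $w(t):=P_c^+v(t)$. Rewriting \eqref{nls-v} after replacing $\omega$ by $\omega_+$ in the linear part and extracting the purely nonlinear contribution as in \eqref{D:N}, one obtains $i\partial_t v=[H+\omega_+]v+\sigma\tfrac{p+2}{2}Q_{\omega_+}^p v+\sigma\tfrac{p}{2}Q_{\omega_+}^p\bar v+\mathcal{G}$ with
\[
\mathcal{G}=(\omega-\omega_++\dot\theta)v+\sigma\tfrac{p+2}{2}[Q_\omega^p-Q_{\omega_+}^p]v+\sigma\tfrac{p}{2}[Q_\omega^p-Q_{\omega_+}^p]\bar v+\N+\dot\theta Q_\omega-i\dot\omega\,\partial_\omega Q_\omega.
\]
Identifying $v$ with $[\Re v\ \ \Im v]^t$ and applying $P_c^+$ (which commutes with $\L(\omega_+)$), Duhamel's formula and \eqref{zsyst} give
\[
e^{-t\L(\omega_+)}w(t)=w(0)+\int_0^t e^{-s\L(\omega_+)}P_c^+\bigl[\,\Im\mathcal{G}(s)\ \ {-}\Re\mathcal{G}(s)\,\bigr]^t\,ds.
\]
The crux is that $s\mapsto\|\mathcal{G}(s)\|_{L^2}$ is integrable on $[0,\infty)$: the terms $(\omega-\omega_++\dot\theta)v$, $[Q_\omega^p-Q_{\omega_+}^p]v$ and $[Q_\omega^p-Q_{\omega_+}^p]\bar v$ are $\lesssim\eps\bigl(|\omega(s)-\omega_+|+|\dot\theta(s)|\bigr)\lesssim\eps^3\bigl(\langle s\rangle^{1-2\alpha}+\langle s\rangle^{-2\alpha}\bigr)$ in $L^2$ (using \eqref{control-v-H1}, \eqref{bs3} and the smoothness of $Q_\omega$); the terms $\dot\theta Q_\omega$ and $\dot\omega\,\partial_\omega Q_\omega$ are $\lesssim_{\omega_0}\eps^2\langle s\rangle^{-2\alpha}$ in $L^2$; and $\|\N\|_{L^2}\lesssim\eps^2\langle s\rangle^{-\alpha}$ by the estimates obtained in the proof of Lemma~\ref{L:controlF}. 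Since $\alpha>1$, each of these is integrable, so by the $L^2$-boundedness of $e^{-s\L(\omega_+)}P_c^+$ the Duhamel integral converges in $L^2$ as $t\to\infty$ to some $v_+$ in the range of $P_c^+$. Then, writing $P_c^+v(t)-e^{t\L(\omega_+)}v_+=e^{t\L(\omega_+)}\bigl(e^{-t\L(\omega_+)}w(t)-v_+\bigr)$ and using \eqref{dispersive3},
\[
\|v(t)-e^{t\L(\omega_+)}v_+\|_{L^2}\lesssim_{\omega_0}\|(I-P_c^+)v(t)\|_{L^2}+\bigl\|e^{-t\L(\omega_+)}w(t)-v_+\bigr\|_{L^2}\longrightarrow 0,
\]
which gives \eqref{scatter}. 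The main obstacle is the $L^2$-integrability of $\mathcal{G}$; it depends precisely on $\alpha>1$, which renders both the $\langle s\rangle^{-2\alpha}$ modulation decay and the $\langle s\rangle^{1-2\alpha}$ decay of $\omega(s)-\omega_+$ integrable, and on the nonlinear estimate of Lemma~\ref{L:controlF}, while everything else is soft.
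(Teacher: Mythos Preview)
Your argument is correct and reaches the same conclusion, but the organization differs from the paper's. The paper reuses the change-of-variables machinery from the bootstrap step: it sets $\Psi(t)=\theta_++t\omega_+$, introduces $z(t)=e^{i(\Phi(t)-\Psi(t))}v(t)$, and writes the $z$-equation with linear part $\L(\omega_+)$, so that the remainder $\F$ has exactly the structure of \eqref{D:F} (with $\omega(T)$ replaced by $\omega_+$). It then splits $z=ia_+Q_{\omega_+}+b_+\partial_\omega Q_{\omega_+}+\eta$, invokes the already-proved estimates of Lemmas~\ref{L:controlF} and~\ref{L:control-eta} to show $e^{-t\L(\omega_+)}\eta(t)$ is Cauchy in $L^2$, and uses \eqref{control-ab} for the decay of $(a_+,b_+)$; finally, $e^{i(\Psi-\Phi)}\to e^{-i\gamma}$ produces the constant phase in $v_+=e^{i\gamma}\eta_+$.

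You instead work directly with $v$: you decompose $v=P_c^+v+(I-P_c^+)v$ using Proposition~\ref{P:Projection}, kill the discrete part via the orthogonality conditions \eqref{orthogonality} together with $|\omega(t)-\omega_+|\lesssim\eps^2\langle t\rangle^{1-2\alpha}$, and write the $v$-equation with linear part $\L(\omega_+)$. Your remainder $\mathcal{G}$ then contains the explicit scalar terms $(\omega-\omega_++\dot\theta)v$ in place of the paper's phase-mismatch contribution $[e^{2i(\Phi-\Psi)}-1]Q_\omega^p\bar z$; both are $O(\eps^3\langle t\rangle^{1-2\alpha})$ in $L^2$ and hence integrable since $\alpha>1$. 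The ingredients used are identical (the bootstrap bounds \eqref{bs1}--\eqref{bs2}, the $L^2$-boundedness \eqref{dispersive3} of $e^{t\L(\omega_+)}P_c^+$, and the $L^2$ estimates on $\N$ from the proof of Lemma~\ref{L:controlF}). Your route is marginally more direct for this final step because it avoids re-introducing $z$; the paper's route has the advantage of invoking Lemmas~\ref{L:controlF} and~\ref{L:control-eta} verbatim rather than re-deriving analogous bounds for a new remainder.
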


\begin{proof} We have the decomposition
\[
u(t) = e^{i\Phi(t)}\{Q_{\omega(t)}+v(t)\}, \quad \Phi(t)=\theta(t)+\int_0^t \omega(s)\,ds,
\]
for all $t\geq 0$, with $v$ and $(\theta,\omega)$ obeying the estimates \eqref{bs1} and \eqref{bs2} on the entire interval $[0,\infty)$.  We now carry out a constriction on $[0,\infty)$ analogous to the one we used to prove estimates on $[0,T]$.  In particular, we set
\[
\theta_+ = \lim_{t\to\infty}\theta(t) \qtq{and} \omega_+ = \lim_{t\to\infty}\omega(t).
\]
Observe that by using the explicit decay rates obtained, we may also define the limit
\[
\gamma = \int_0^\infty [\omega(s)-\omega_+]\,ds. 
\]

We then define $\Psi$ and $z$ via 
\[
\Psi(t) = \theta_+ + t\omega_+ \qtq{and} e^{i\Psi(t)}z(t) = e^{i\Phi(t)}v(t),
\]
and we formulate the equation solved by $z$ with linear operator given by $\L(\omega_+)$.  In particular, we obtain the PDE \eqref{nls-z} with \eqref{D:F}, with $\omega(T)$ replaced by $\omega_+$.  We then introduce the variable $\eta_+$, with coefficients $a_+$ and $b_+$, as in \eqref{z=abeta}.  In particular $\eta=P_c(\omega_+)z$ and $\eta$ evolves according to \eqref{nls-eta}, with $\omega(T)$ replaced by $\omega_+$. 

Now the estimates appearing in Lemma~\ref{L:control-eta} suffice to prove that $e^{-t\mathcal{L}(\omega_+)}\eta(t)$ forms a Cauchy sequence in $L^2$ and hence has a limit $\eta_+\in L^2$.  In light of the estimates on $a_+$ and $b_+$ (see \eqref{control-ab}), this implies $z(t)=e^{t\L(\omega_+)}\eta_+ + o(1)$ as $t\to\infty$.  This in turn yields
\[
v(t) = e^{i[\Psi_+(t)-\Phi(t)]}e^{t\L(\omega_+)}\eta_+ + o(1) = e^{i\gamma} e^{t\L(\omega_+)}\eta_+ + o(1)
\]
in $L^2$ as $t\to\infty$, where we have used the dominated convergence theorem as well.  The result now follows with $v_+ = e^{i\gamma}\eta_+$. \end{proof}

Collecting the results of Propositions~\ref{P:decomposition}, \ref{PT1}, and \ref{PT2}, we have now completed the proof of Theorem~\ref{T}. 

\appendix

\section{Construction of ODE solutions}\label{S:F1F4}

In this section we discuss the construction of the solutions to the generalized eigenvalue problem \eqref{gep} ($f_1,f_2,f_3$ and $\tilde f_4$ appearing in Lemma~\ref{L:f1-f4}).  The construction is similar to what appears in \cite[Lemmas~5.2, 5.3, 5.5]{KS}, with the main differences 
arising from the presence of the jump condition at $x=0$. Accordingly, we shall be somewhat brief in our presentation.  

We let $H_q=-\frac12 \d_x^2 + q \delta_0$ and $\mathcal{H}_V=\mathcal{H}_q + \mathcal{V}$, where
\[
	\mathcal{H}_q := 
	\begin{pmatrix}
	H_q + \omega & 0 \\ 0 & - H_q -\omega
	\end{pmatrix}, \quad
	\mathcal{V} :=
	\sigma_3\begin{pmatrix}
	V_1(x) & V_2(x) \\  V_2(x) &  V_1(x)
	\end{pmatrix}.
\]
For now, we regard $\omega>0$ just as a parameter.
We let 
\[
	\mu = \mu(\omega):= \sqrt{\xi^2 + 4\omega}.
\]
In this section, the precise form of the potential $\mathcal{V}$ appearing in the operator is not important.  We only assume that $V_j(x) \in C^\I(\R\setminus\{0\})$ are even, real-valued functions such that
\[
\sum_{j=0}^k (|\d_x^j V_1(x)| + |\d_x^j V_2(x)|) \lesssim_{k} e^{-\gamma |x|}
\]
holds for some $\gamma>0$ and any $k\geq 0$.  In our setting, $V_1=\sigma\frac{p+1}{2}Q_{\omega}^p$ and 
$V_2=\sigma\frac{p}{2}Q_{\omega}^p$, so that the assumption above is satisfied; furthermore, the implicit constants (as well as $\gamma$) depend continuously on $\omega$.  (Recall that we needed to assert continuous dependence in Proposition~\ref{P:D}.) 

We consider the ODE system
\begin{equation}\label{E:ODEs}
	\mathcal{H}_V f =  \(\frac{\xi^2}2 + \omega \)f , \quad f(x;\xi) : \R^2 \to \C^2.
\end{equation}
By the standard ODE theory, one sees that the initial value problem of \eqref{E:ODEs} has solutions.  Our goal is to find four solutions $f_1$, $f_2$, $f_3$, and $f_4$ satisfying the asymptotic conditions
\begin{align*}
	&f_1(x;\xi) \to e^{i\xi x} 	\begin{pmatrix}
	1 \\ 0
	\end{pmatrix}, 
	\qquad f_2(x;\xi) \to e^{-i\xi x} 	\begin{pmatrix}
	1 \\ 0
	\end{pmatrix}, \\
&f_3(x;\xi) \to e^{-\mu x} 	\begin{pmatrix}
	0 \\ 1
	\end{pmatrix}, 
\qquad f_4(x;\xi) \to e^{\mu x} 	\begin{pmatrix}
	0 \\ 1
	\end{pmatrix}
\end{align*}
as $x\to\I$.
Before beginning the construction of our specific solutions, we collect some basic facts about the solution space.

We first note that a solution to the homogeneous equation
\[
	\mathcal{H}_q f=  \(\tfrac12{\xi^2} + \omega \)f, 
	\quad f(0) =\begin{pmatrix}a_1 \\ a_2\end{pmatrix}, 
	\quad f'(0)=\begin{pmatrix}b_1 \\ b_2\end{pmatrix}
\]
is given by
\[
	f(x) = D_\xi' (x) 
	\begin{pmatrix}a_1 \\ a_2\end{pmatrix}
  + D_\xi (x) \begin{pmatrix}b_1 \\ b_2\end{pmatrix}
,
\]
where
\[
	D_\xi (y) = \int_0^y \begin{pmatrix}
	\cos \xi s & 0 \\ 0 & \cosh \mu s
	\end{pmatrix} ds = \begin{pmatrix}
	\frac{\sin \xi y}{\xi} & 0 \\ 0 & \frac{\sinh \mu y}{\mu}
	\end{pmatrix},
\]
with $\mu=\sqrt{\xi^2 + 4\omega}$ as above.

Next, we have the following result.
\begin{proposition}
For each $\xi \in \R$,
The solution space $X(\xi)$ to \eqref{E:ODEs} is isomorphic to $\C^4$.
Moreover, for any fixed $x_0 \in \R \setminus \{0\}$, the map
\[
X(\xi) \ni f= \begin{pmatrix}f^{(1)}(x;\xi)  \\  f^{(2)}(x;\xi) \end{pmatrix} \mapsto \begin{pmatrix}
f^{(1)}(x_0 ;\xi)  \\  f^{(2)}(x_0;\xi)\\ \d_x f^{(1)}(x_0 ;\xi)  \\  \d_x f^{(2)}(x_0;\xi) \end{pmatrix} \in \C^4
\]
is an isomorphism. Similarly,
\[
X(\xi) \ni f= \begin{pmatrix}f^{(1)}(x;\xi)  \\  f^{(2)}(x;\xi) \end{pmatrix} \mapsto \begin{pmatrix}
f^{(1)}(0 ;\xi)  \\  f^{(2)}(0;\xi)\\ \d_x f^{(1)}(0+ ;\xi)  \\  \d_x f^{(2)}(0+;\xi) \end{pmatrix} \in \C^4
\]
and
\[
X(\xi) \ni f= \begin{pmatrix}f^{(1)}(x;\xi)  \\  f^{(2)}(x;\xi) \end{pmatrix} \mapsto \begin{pmatrix}
f^{(1)}(0 ;\xi)  \\  f^{(2)}(0;\xi)\\ \d_x f^{(1)}(0- ;\xi)  \\  \d_x f^{(2)}(0-;\xi) \end{pmatrix} \in \C^4
\]
are also isomorphisms.  Here $\pm$ denote limits from the right/left.
\end{proposition}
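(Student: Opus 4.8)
The plan is to reduce the entire statement to the classical existence and uniqueness theory for linear first-order ODE systems, together with careful bookkeeping at the jump point $x=0$. Away from the origin, \eqref{E:ODEs} is equivalent to a first-order linear system for $(f^{(1)},f^{(2)},\d_x f^{(1)},\d_x f^{(2)})^t \in \C^4$ whose coefficient matrix is built from $\omega$, $\xi$, and $V_1,V_2$; by hypothesis this matrix is smooth on $\R\setminus\{0\}$ and bounded near $0$, hence extends continuously to each of $[0,\infty)$ and $(-\infty,0]$. I will therefore begin by recording the purely classical facts that on each closed half-line the initial value problem (prescribing $(f(0),\d_x f(0\pm))$) has a unique solution, that more generally prescribing $(f(x_0),\d_x f(x_0))$ at any interior point $x_0$ has a unique solution, and that in every case the corresponding data-to-solution map is a linear isomorphism onto $\C^4$. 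This part uses no new input.

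Next I will fix the meaning of ``$f \in X(\xi)$'': namely $f \in H^1_{\mathrm{loc}}(\R)$, the restrictions $f|_{(0,\infty)}$ and $f|_{(-\infty,0)}$ solve the delta-free system, and the jump condition $\d_x f(0+) - \d_x f(0-) = 2q\,f(0)$ holds componentwise, this being the condition inherited from $D(\mathcal{H}_q)$ (cf. \eqref{domain}). The key construction is the following gluing map: given $(a,b) \in \C^2\times\C^2$, let $f_+$ be the half-line solution on $[0,\infty)$ with $(f_+(0),\d_x f_+(0+)) = (a,b)$ and $f_-$ the half-line solution on $(-\infty,0]$ with $(f_-(0),\d_x f_-(0-)) = (a,\,b - 2q\,a)$, and set $f := f_+$ on $[0,\infty)$ and $f := f_-$ on $(-\infty,0]$. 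Continuity at $0$ is automatic, and $\d_x f_+(0+) - \d_x f_-(0-) = 2q\,a = 2q\,f(0)$, so $f \in X(\xi)$. This shows the evaluation map $\Phi_+ : X(\xi) \to \C^4$, $f \mapsto (f(0),\d_x f(0+))$, is onto; it is injective because vanishing data forces $f \equiv 0$ on $[0,\infty)$ by uniqueness, whence $f(0)=0$ and then $\d_x f(0-) = \d_x f(0+) - 2q\,f(0) = 0$, so $f \equiv 0$ on $(-\infty,0]$ as well. In particular $\dim_\C X(\xi) = 4$, which is the first assertion.

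The remaining isomorphism statements then follow formally. The map $f \mapsto (f(0),\d_x f(0-))$ equals $\Phi_+$ composed with the invertible linear map $(a,b)\mapsto(a,\,b-2q\,a)$ of $\C^4$, hence is an isomorphism as well. For fixed $x_0 > 0$, restriction to $[0,\infty)$ is an isomorphism from $X(\xi)$ onto the half-line solution space --- injectivity by the same uniqueness argument, surjectivity by applying the gluing construction to the data at $0$ of a prescribed half-line solution --- and evaluation at $x_0$ on the half-line solution space is an isomorphism onto $\C^4$ by the classical theory recalled in the first step; composing gives the claim, and the case $x_0 < 0$ is identical after reflection.

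I do not anticipate a genuine difficulty here; the only points requiring care are (i) stating cleanly what ``$\mathcal{H}_V f = (\tfrac12\xi^2 + \omega)f$ on all of $\R$'' means in the presence of the delta potential, and (ii) keeping track of the invertible change of variables $b_+ = b_- + 2q\,a$ relating the two one-sided derivatives at the origin, so that the three evaluation maps in the statement are recognized as differing only by composition with explicit isomorphisms of $\C^4$.
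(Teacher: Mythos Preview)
Your proposal is correct and follows essentially the same approach as the paper: reduce to standard ODE existence/uniqueness on each half-line and then glue across $x=0$ via the jump condition, noting that the map $(a,b_-)\mapsto(a,b_+)=(a,\,b_-+2q\,a)$ is an invertible linear transformation of $\C^4$. The paper's proof is just a terser version of what you wrote, recording this transition map explicitly as a $4\times4$ matrix and observing it is invertible.
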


\begin{proof} This is just a rephrasing of the existence of a unique solution for data given at a fixed time.  Thus it is very standard if we do not have the delta potential (that is, if $q=0$). Incorporating the delta potential requires that we add the jump condition
\[
	\begin{pmatrix}
	f^{(1)}(0 ;\xi)  \\  f^{(2)}(0;\xi)\\ \d_x f^{(1)}(0+ ;\xi)  \\  \d_x f^{(2)}(0+;\xi) \end{pmatrix}
	=
	\begin{pmatrix}
	1 & 0 & 0 & 0\\
	0 & 1 & 0 & 0\\
	2q & 0 & 1 & 0 \\
	0 & 2q & 0 & 1 
	\end{pmatrix}
	\begin{pmatrix}
	f^{(1)}(0 ;\xi)  \\  f^{(2)}(0;\xi)\\ \d_x f^{(1)}(0- ;\xi)  \\  \d_x f^{(2)}(0-;\xi) \end{pmatrix}.
\]
As the matrix in the right hand side is invertible, the unique solvability follows by gluing together solutions in the positive and negative regions.
\end{proof}

\begin{corollary}
Fix $\xi\in\R$. For solutions $\{f_j\}_{j\in J} \subset X(\xi)$ to \eqref{E:ODEs}, the following statements are equivalent
\begin{enumerate}
\item The solutions $\{f_j\}_{j\in J}$ are linearly independent;
\item For some $x_0 \in \R \setminus\{0\}$, vectors
	\[
	\left\{  \begin{pmatrix} f_j^{(1)} (x_0)\\ f_j^{(2)}(x_0) \\ \d_x f_j^{(1)}(x_0) \\ \d_x f_j^{(2)}(x_0) \end{pmatrix}  \right\}_{j\in J} \subset \C^4
	\]
	are linearly independent;
\item The statement (2) with $x_0$ replaced by $0-$;
\item The statement (2) with $x_0$ replaced by $0+$.
\end{enumerate}
\end{corollary}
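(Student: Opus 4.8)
The plan is to deduce the corollary directly from the preceding Proposition, which asserts that the three evaluation maps on the solution space $X(\xi)$ -- data at a fixed $x_0\in\R\setminus\{0\}$, data at $0+$, and data at $0-$ -- are linear isomorphisms onto $\C^4$. The entire content of the corollary is then the elementary fact that a linear isomorphism both preserves and reflects linear independence, so the proof should be very short.

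More concretely, I would first recall the general principle: if $T\colon V\to W$ is an injective linear map and $v_1,\dots,v_k\in V$, then for any scalars $c_1,\dots,c_k$ one has $\sum_i c_i v_i=0$ if and only if $\sum_i c_i Tv_i = T\bigl(\sum_i c_i v_i\bigr)=0$; hence $\{v_j\}$ is linearly independent precisely when $\{Tv_j\}$ is (only injectivity is needed here, and the Proposition in fact supplies bijectivity). Applying this with $V=X(\xi)$, $W=\C^4$, and $T$ the evaluation-at-$x_0$ isomorphism furnished by the Proposition gives the equivalence of (1) with the assertion ``(2) holds for this particular $x_0$''. Since the Proposition provides such an isomorphism for \emph{every} $x_0\in\R\setminus\{0\}$, we obtain in particular that (1) is equivalent to ``(2) holds for some $x_0$'' and also to ``(2) holds for all $x_0$'', which settles (1)$\Leftrightarrow$(2) as stated. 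Taking instead $T$ to be the evaluation map at $0+$ or at $0-$ yields (1)$\Leftrightarrow$(4) and (1)$\Leftrightarrow$(3), respectively. I would also note that the pointwise derivatives $\d_x f_j^{(\ell)}(x_0)$ appearing in the data vectors are unambiguous exactly because $x_0\neq 0$, while at the origin one uses the one-sided derivatives, consistently with the jump condition built into the Proposition.

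There is no real obstacle: the statement is purely a matter of transporting linear independence across isomorphisms that have already been constructed. The only mild subtlety is the gap between ``for some $x_0$'' and ``for all $x_0$'' in item (2), and this is resolved by the observation that the Proposition yields an isomorphism for each individual choice of $x_0\in\R\setminus\{0\}$, so every such formulation is simultaneously equivalent to (1).
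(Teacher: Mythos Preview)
Your proposal is correct and matches the paper's approach: the corollary is stated without proof in the paper precisely because it follows immediately from the preceding proposition by the elementary fact that linear isomorphisms preserve and reflect linear independence, exactly as you argue.
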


We next introduce a `scalar' Wronskian.

\begin{proposition}[Scalar Wronskian] Fix $\xi \in \R$.  For two solutions $f_1, f_2 \in X(\xi)$, the Wronskian
\[
	W[f_1,f_2] (x) := \begin{pmatrix} f_1^{(1)} (x) \\ f_1^{(2)} (x) \\ \d_x f_1^{(1)} (x) \\ \d_x f_1^{(2)} (x) \end{pmatrix}^t
	\begin{pmatrix}
	0 & 0 & -1 & 0 \\
	0 & 0 & 0 & 1 \\
	1 & 0 & 0 & 0 \\
	0 & -1 & 0 & 0
	\end{pmatrix}
	\begin{pmatrix} f_2^{(1)} (x) \\ f_2^{(2)} (x) \\ \d_x f_2^{(1)} (x) \\ \d_x f_2^{(2)} (x) \end{pmatrix}
\]
is a constant function for $x\neq0$.  We denote the constant by $W[f_1,f_2]$.
\end{proposition}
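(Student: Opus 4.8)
The plan is to verify that $\frac{d}{dx}W[f_1,f_2](x) = 0$ on each of the intervals $(-\infty,0)$ and $(0,\infty)$ by direct differentiation using the ODE, and then to check that the jump condition at $x=0$ forces the two constants (on the left and on the right) to coincide. First I would differentiate the bilinear expression in $x$, using the product rule, so that
\[
\tfrac{d}{dx}W[f_1,f_2] = \bigl({}^tf_1'\bigr)'\,S\,f_2 + {}^tf_1'\,S\,f_2' + \bigl({}^tf_1\bigr)'\,\tilde S\, f_2 + {}^tf_1\,\tilde S\, f_2' + \cdots,
\]
where I have written $S$ for the $4\times 4$ antisymmetric matrix appearing in the definition; more simply, grouping the eight derivative terms, four of them cancel in pairs by the antisymmetry of $S$ (the terms not involving second derivatives), and the remaining four involve $\partial_x^2 f_j^{(k)}$. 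On $\R\setminus\{0\}$ the equation \eqref{E:ODEs} reads $-\tfrac12\partial_x^2 f^{(1)} + (\omega+V_1)f^{(1)} + V_2 f^{(2)} = (\tfrac12\xi^2+\omega)f^{(1)}$ and $+\tfrac12\partial_x^2 f^{(2)} - (\omega+V_1)f^{(2)} - V_2 f^{(1)} = -(\tfrac12\xi^2+\omega)f^{(2)}$ (using $\mathcal H_q = \sigma_3(H_q+\omega)$ away from the origin and the explicit form of $\mathcal V$), so each $\partial_x^2 f_j^{(k)}$ is expressed as a linear combination of $f_j^{(1)}$ and $f_j^{(2)}$ with coefficients \emph{symmetric} in the sense needed; substituting these in, the surviving four terms also cancel, because the potential contributions are symmetric under exchange of $f_1,f_2$ while $S$ is antisymmetric. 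This is exactly the computation in the scalar case (cf.\ the vector Wronskian discussion right before Lemma~\ref{L:f1-f4}), and I would simply record that $\tfrac{d}{dx}W[f_1,f_2]=0$ for $x\neq 0$.

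Next I would address the jump at $x=0$. Write $c_+ := W[f_1,f_2](0+)$ and $c_- := W[f_1,f_2](0-)$; each is well defined since $W$ is constant on each side. Both $f_1$ and $f_2$ lie in $X(\xi)$, so their first components are continuous at $0$ and their derivatives satisfy the jump relation $\partial_x f_j^{(k)}(0+) - \partial_x f_j^{(k)}(0-) = 2q\, f_j^{(k)}(0)$ for $k=1,2$ (this is the jump condition built into the domain, already used in the proposition describing $X(\xi)$). Plugging $x=0+$ and $x=0-$ into the quadratic form and subtracting, the difference $c_+ - c_-$ is a sum of terms each of which pairs a jump in a derivative of one function against a value of the other; using $\partial_x f_j^{(k)}(0+) = \partial_x f_j^{(k)}(0-) + 2q f_j^{(k)}(0)$ and continuity of the values, the extra $2q$-terms organize into $2q$ times a symmetric bilinear expression in $(f_1(0),f_2(0))$ contracted against the antisymmetric matrix $S$, hence vanish. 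Thus $c_+ = c_-$, and $W[f_1,f_2]$ is a single constant, which is the assertion.

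The main obstacle — or rather the only point requiring care — is bookkeeping the signs in the antisymmetric matrix $S$ so that the cancellations genuinely occur; in particular one must check that the first-component block and second-component block of $\mathcal H_q$ carry opposite signs (reflecting $\sigma_3$) in just the way that matches the $+1/-1$ pattern in $S$, so that the potential terms $V_1, V_2$ drop out rather than doubling up. This is a routine but sign-sensitive calculation, entirely analogous to the standard Wronskian argument for $\sigma_3$-selfadjoint matrix Schrödinger operators, and I would follow the template of \cite[Lemma~5.10]{KS} for the organization. Once that is in hand, the jump-condition step is immediate since it reuses the same antisymmetry. No new ideas beyond those already present in the paper are needed.
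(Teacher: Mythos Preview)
Your proposal is correct and follows exactly the paper's approach: a direct computation giving $\tfrac{d}{dx}W[f_1,f_2]=0$ on $\R\setminus\{0\}$, followed by the jump-condition check at $x=0$. The paper packages the latter step as the single matrix identity $J^t S J = S$ (with $J$ the lower-triangular jump matrix), which is precisely your ``antisymmetric form against symmetric $2q$-terms'' observation written compactly; note there is a harmless sign slip in your second displayed component equation (the right-hand side should be $+(\tfrac12\xi^2+\omega)f^{(2)}$), but this does not affect the argument.
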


\begin{proof}
By a direct calculation, one can see that $\frac{d}{dx}W[f_1,f_2]=0$ for $x\neq0$.
Further, since
\[
	\begin{pmatrix}
	1 & 0 & 0 & 0\\
	0 & 1 & 0 & 0\\
	2q & 0 & 1 & 0 \\
	0 & 2q & 0 & 1 
	\end{pmatrix}^t
	\begin{pmatrix}
	0 & 0 & -1 & 0 \\
	0 & 0 & 0 & 1 \\
	1 & 0 & 0 & 0 \\
	0 & -1 & 0 & 0
	\end{pmatrix}	
	\begin{pmatrix}
	1 & 0 & 0 & 0\\
	0 & 1 & 0 & 0\\
	2q & 0 & 1 & 0 \\
	0 & 2q & 0 & 1 
	\end{pmatrix}
	=	\begin{pmatrix}
	0 & 0 & -1 & 0 \\
	0 & 0 & 0 & 1 \\
	1 & 0 & 0 & 0 \\
	0 & -1 & 0 & 0
	\end{pmatrix},
\]
we have $W[f_1,f_2](0+) = W[f_1,f_2](0-)$. 
\end{proof}

In the scalar equation case, one can investigate linear independence of two solutions by Wronskian. This is not the case for ODE systems (consider the vectors $(1\ 0\ 0 \ 0)^t$ and $(0\ 1\ 0 \ 0)^t$).

\begin{proposition} Fix $\xi \in \R$. For a pair of solutions $f_1,f_2 \in X(\xi)$, consider the following two statements:
\begin{enumerate}
\item $f_1$ and $f_2$ are linearly dependent;
\item $W[f_1,f_2]=0$.
\end{enumerate}
Then, (i) implies (ii), but (ii) does not imply (i).
\end{proposition}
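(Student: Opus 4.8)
The plan is to prove the implication (i)$\implies$(ii) by the bilinearity of the Wronskian together with its constancy, and to establish the failure of the reverse implication by exhibiting an explicit counterexample built from two of the solutions $f_1, f_3$ constructed in Lemma~\ref{L:f1-f4}.

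For (i)$\implies$(ii): suppose $f_1$ and $f_2$ are linearly dependent, so that (after possibly interchanging the roles of $f_1$ and $f_2$) we may write $f_2 = c f_1$ for some $c \in \C$, or else one of them is identically zero. In either case, since $W[\cdot,\cdot]$ is bilinear in its two slots and since $W[f_1,f_1](x) = 0$ pointwise (the quadratic form defining the Wronskian is alternating, as one checks directly from the antisymmetry of the $4\times 4$ matrix in the definition of the scalar Wronskian), we get $W[f_1, f_2](x) = c\, W[f_1,f_1](x) = 0$ for all $x \neq 0$. Hence $W[f_1,f_2] = 0$. The case where $f_1$ or $f_2$ vanishes identically is immediate since the Wronskian is then zero.

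For the failure of (ii)$\implies$(i): the natural candidates are $f_1$ and $f_3$ from Lemma~\ref{L:f1-f4}, for which that lemma records $W[f_1,f_3] = 0$. These two solutions are linearly independent: from the asymptotic expansions \eqref{f1-bd} and \eqref{f3-bd}, $f_1(x,\xi) \to e^{ix\xi}[1\ 0]^t$ while $f_3(x,\xi) \to 0$ (exponentially, like $e^{-\mu x}$) as $x\to\infty$ whenever $\xi \neq 0$, so no nontrivial linear combination of the two can vanish identically; equivalently, one can invoke the Corollary above and evaluate the two data vectors at $x_0 = 0+$ to see they are linearly independent. Thus $W[f_1,f_3] = 0$ but $f_1, f_3$ are linearly independent. (If one wishes to remain entirely self-contained, one may instead take $\xi$ fixed and simply note that the $4\times4$ Wronskian matrix vanishes on pairs spanning a common isotropic subspace; but pointing to $f_1, f_3$ is cleanest.) For the degenerate value $\xi = 0$ one can either restrict the counterexample to $\xi \neq 0$ or replace $f_1$ by $f_2$; since the statement only asserts existence of \emph{some} pair violating the reverse implication, this is harmless.

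The main obstacle, such as it is, is purely bookkeeping: one must be careful that the alternating property $W[f,f] = 0$ genuinely holds for the $4\times4$ form in question (it does, since the matrix $\left[\begin{smallmatrix} 0&0&-1&0\\ 0&0&0&1\\ 1&0&0&0\\ 0&-1&0&0\end{smallmatrix}\right]$ is antisymmetric), and that the linear independence of $f_1$ and $f_3$ is not accidentally spoiled at $\xi = 0$, where $\mu = 2\sqrt\omega > 0$ so in fact the argument goes through unchanged. No genuine analytic difficulty arises.
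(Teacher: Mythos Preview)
Your proof is correct. The paper does not supply a formal proof of this proposition; it only offers the parenthetical hint immediately preceding the statement, namely to consider the data vectors $(1,0,0,0)^t$ and $(0,1,0,0)^t$ (which, via the isomorphism $X(\xi)\cong\C^4$, give two linearly independent solutions with vanishing Wronskian). Your counterexample using $f_1$ and $f_3$ from Lemma~\ref{L:f1-f4} is equally valid, and your aside about isotropic subspaces is precisely the paper's hinted argument; both rest on the same observation that the antisymmetric $4\times 4$ form defining $W$ admits nontrivial isotropic planes.
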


We turn to the construction of the specific solutions to \eqref{E:ODEs}.

\subsection{Construction of $f_3$ for $x>0$}

Let us first consider $f_3$, which should solve the integral equation
\begin{equation}\label{E:f3int}
	f_3(x;\xi) = e^{-\mu x} \begin{pmatrix}
	0 \\ 1
	\end{pmatrix}
	- 2\int_x^\I D_\xi (y-x) \sigma_3  \mathcal{V}(y) f_3(y;\xi) \,dy
\end{equation}
for $x>0$.  Note that $f_1$, $f_2$, and $f_4$ will not solve the above integral equation; indeed, the the integral is not finite due to the asymptotic behavior of these solutions and the kernel $D_\xi$.

We will solve the above equation for $x\in \R$. In addition to constructing $f_3$ on $x>0$, we will use this solution in the construction of $f_1$ and $f_2$ below. 

\begin{proposition}[Construction of $f_3(x;\xi)$ for $x>0$]\label{P:f3 plus}
For all $\xi \in \R$, there exists a unique solution $f_3(x;\xi)$ to \eqref{E:f3int} (on $\R$) with the asymptotic condition
\[
	\abs{ e^{\mu x} f_3(x;\xi) -
	\begin{pmatrix}
	0 \\ 1
	\end{pmatrix} }
	\lesssim_{\omega} \mu^{-1} \min(1,e^{-\gamma x}) .
\]
The solution is smooth in both variables and
satisfies 
\[
	|\d_\xi^l \d_x^k (e^{\mu x} f_3(x;\xi))| \lesssim_{\omega} 
	\mu^{-l-1} \min(1, e^{- 2\gamma x}), 
\]
for $l,k \geq 0$ with $l+k\geq1$.  Furthermore, the implicit constants depend continuously on $\omega$. 

The restriction of $f_3(x;\xi)$ to the region $x>0$ coincides with any solution to \eqref{E:ODEs}
which satisfies the above asymptotic condition for $x>0$.
\end{proposition}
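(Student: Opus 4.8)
The plan is to prove Proposition~\ref{P:f3 plus} by the classical Volterra iteration and to read off the final uniqueness statement from uniqueness of the fixed point. First I would renormalize, setting $g(x,\xi)=e^{\mu x}f_3(x,\xi)$, so that \eqref{E:f3int} becomes $g(x,\xi)=\binom{0}{1}-2\int_x^\infty\widetilde D(y-x)\,\sigma_3\mathcal{V}(y)\,g(y,\xi)\,dy$ with $\widetilde D(z):=e^{-\mu z}D_\xi(z)$. The elementary input is the kernel bound $|\widetilde D(z)|\lesssim\mu^{-1}$ for $z\geq 0$ — in the first slot because $e^{-\mu z}|\sin\xi z|/|\xi|\leq z e^{-\mu z}\lesssim\mu^{-1}$, in the second because $\widetilde D_{22}(z)=(1-e^{-2\mu z})/2\mu$ — together with the cancellations $\widetilde D(0)=0$ and $\cosh\mu z-\sinh\mu z=e^{-\mu z}$, which will be needed for the derivative estimates. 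Since $\mathcal{V}$ is only integrable (not small), I would not expect a one-shot contraction; instead, because $Tg(x):=-2\int_x^\infty\widetilde D(y-x)\sigma_3\mathcal{V}(y)g(y)\,dy$ is a Volterra operator, ordered integration over the simplex gives $\|T^n g\|_{L^\infty([x,\infty))}\lesssim(2C\mu^{-1})^n\,\tfrac{1}{n!}\bigl(\int_x^\infty|\mathcal{V}|\bigr)^n\|g\|_{L^\infty}$, so $\sum_n T^n$ converges and $(I-T)^{-1}$ exists on $L^\infty$. This produces a unique bounded $g$, and $g-\binom{0}{1}=(I-T)^{-1}T\binom{0}{1}$ satisfies $|g(x,\xi)-\binom{0}{1}|\lesssim\mu^{-1}\int_x^\infty|\mathcal{V}|\lesssim\mu^{-1}\min(1,e^{-\gamma x})$, which is the stated asymptotic bound. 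Because the kernel $D_\xi$ used in \eqref{E:f3int} is the \emph{free} fundamental solution, $f_3=e^{-\mu x}g$ is automatically a genuine solution of \eqref{E:ODEs} on $\R\setminus\{0\}$, in particular on $x>0$.

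For the higher estimates I would differentiate the integral equation. Differentiating in $x$, the boundary term at $y=x$ drops out because $\widetilde D(0)=0$, and after an integration by parts in $y$ one finds that $\partial_x g$ solves a Volterra equation $(I-T)(\partial_x g)=S$ with source $S(x)=-2\int_x^\infty\widetilde D(y-x)\sigma_3\mathcal{V}'(y)g(y)\,dy$, so $(I-T)^{-1}$ again closes the estimate. Differentiating in $\xi$ each $\partial_\xi$ that lands on $\widetilde D$ costs an extra power of $\mu^{-1}$ (visible from $\widetilde D_{22}(z)=(1-e^{-2\mu z})/2\mu$ and the analogous expansion of $\widetilde D_{11}$), while the polynomial factors produced by $\partial_\xi D_\xi$ are absorbed by the exponential decay of $\widetilde D$; this is the origin of the $\mu^{-l-1}$. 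The general case $\partial_\xi^l\partial_x^k g$ is handled by induction on $l+k$: it solves $(I-T)(\partial_\xi^l\partial_x^k g)=S_{l,k}$ with source a finite sum of integrals against derivatives of $\widetilde D$, $\mathcal{V}$, and lower-order derivatives of $g$, all already estimated, and the bound on $(I-T)^{-1}$ together with the kernel bounds closes the induction and yields the stated decay. Smoothness in both variables follows because the Neumann series converges in every $C^k$ by the same estimates, and the continuous dependence of all constants on $\omega$ follows since every bound is expressed through $\gamma$ and finitely many $C^k$-norms of the $V_j$, which depend continuously on $\omega$ via the explicit formula for $Q_\omega$.

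Finally, for the assertion that the restriction of $f_3$ to $x>0$ coincides with \emph{any} solution $\widetilde f$ of \eqref{E:ODEs} satisfying the displayed asymptotic condition there: on $(0,\infty)$ the operator $\mathcal{H}_q$ acts as the free operator, so variation of parameters against the free fundamental solution gives, for every $R>x>0$, $\widetilde f(x)=(\text{free solution matching }\widetilde f,\widetilde f'\text{ at }R)-2\int_x^R D_\xi(y-x)\sigma_3\mathcal{V}(y)\widetilde f(y)\,dy$. The asymptotic condition $|e^{\mu x}\widetilde f-\binom{0}{1}|\lesssim\mu^{-1}\min(1,e^{-\gamma x})$ forces the free part to converge as $R\to\infty$ to $e^{-\mu x}\binom{0}{1}$ (the $e^{\mu x}$ and $e^{\pm i\xi x}$ branches are excluded) and the integral to $\int_x^\infty$, so $\widetilde f$ satisfies \eqref{E:f3int} on $(0,\infty)$; uniqueness of the Volterra fixed point then gives $\widetilde f\equiv f_3$ on $x>0$. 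I expect the only genuinely delicate part to be the bookkeeping in the derivative estimates — keeping all constants uniform in $\mu$ (equivalently in $\xi$) and tracking the exact powers of $\mu^{-1}$ and the exponential rates, where the cancellation $\widetilde D(0)=0$ and the structure of $\widetilde D$ must be exploited via integration by parts at the right places; the existence, uniqueness, and the final comparison statement are routine once the kernel estimates are in hand.
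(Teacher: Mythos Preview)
Your proposal is correct and follows essentially the same approach as the paper: renormalize via $g=e^{\mu x}f_3$, observe that the modified kernel $K_\xi(z)=e^{-\mu z}D_\xi(z)$ is bounded by $C\mu^{-1}$ for $z\geq 0$, and solve the resulting Volterra equation by the standard Neumann-series/iteration argument, with derivatives handled by differentiating the integral equation and inducting. The paper's own proof is in fact just a two-line sketch pointing to \cite{KS}, so your write-up is already more detailed than what appears there; your closing caveat that the only delicate part is the bookkeeping of the $\mu^{-1}$ powers and the exponential rates (in particular the improvement from $e^{-\gamma x}$ to $e^{-2\gamma x}$ once a derivative falls) is accurate and is exactly where care is needed.
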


The proof of Proposition~\ref{P:f3 plus} follows as in \cite{KS}.  We briefly recall the ideas:  To solve \eqref{E:f3int}, we introduce a modified kernel
\[
	K_\xi (z) = e^{-\mu z} D_\xi (z),
\]
which is bounded for $z\geq 0$.
Then, \eqref{E:f3int} is written as
\begin{equation}\label{E:f3int1}
	e^{\mu x} f_3(x;\xi) =
	\begin{pmatrix}
	0 \\ 1
	\end{pmatrix}
	- 2\int_x^\I K_\xi (y-x) \sigma_3 \mathcal{V}(y) e^{\mu y}f_3(y;\xi) \,dy.
\end{equation}
This is a Volterra equation, and the result follows from standard arguments. 

\subsection{Construction of $f_1$ and $f_2$ for $x>0$}

Let us next consider the solutions $f_1(x;\xi)$ and $f_2(x;\xi)$, with our main focus on $f_1(x;\xi)$. It is difficult to formulate \eqref{E:ODEs} as a Volterra type integral equation (like we did for \eqref{E:f3int}), because the integral kernel $D_\xi$ grows exponentially if $\mu > \gamma$.  Nonetheless, we can show the following.

\begin{proposition}[Construction of $f_1(x;\xi)$ for $x>0$]\label{P:f12 plus}
For all $\xi \in \R$, there exists a unique solution $f_1(x;\xi)$ to \eqref{E:ODEs} with the asymptotic condition
\[
	\abs{ e^{-i \xi x } f_1(x;\xi) -
	\begin{pmatrix}
	1 \\ 0
	\end{pmatrix} } 
	\lesssim_{\omega} \mu^{-1} \Jbr{x} e^{-x \min (\gamma ,\mu)} 
\]
for $x> 0$.
Moreover, we have the estimate
\[
	\abs{ \d_\xi^l \d_x^k( e^{-i \xi x } f_1(x;\xi)) } 
	\lesssim_{k,l,\omega} \mu^{-1} \Jbr{x}^{l+1} e^{- x \min (\gamma ,\mu)}
\]
for $x >0$ and for $k,l\geq 0$ with $k+l\geq 1$.  Furthermore, the implicit constants in the above inequalities depend continuously on $\omega$.
\end{proposition}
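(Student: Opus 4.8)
The plan is to construct $f_1$; the solution $f_2$ is then obtained identically, or from the reflection symmetry $f_2(x;\xi)=\overline{f_1(x;\xi)}=f_1(x;-\xi)$ valid for real $\xi$. Since the delta potential sits at $x=0$ while we work on $\{x>0\}$, the jump condition plays no role in this particular construction, and the argument would mimic \cite[Lemmas~5.2--5.3]{KS}. First I would introduce the de-oscillated unknown $m(x;\xi):=e^{-i\xi x}f_1(x;\xi)$, which should converge to $[1\ 0]^t$, and recast \eqref{E:ODEs} as an integral equation for $m$. On $\{x>0\}$ the principal parts of the two components decouple: the first solves a Schr\"odinger equation at the in-spectrum energy $\tfrac12\xi^2$, hence is oscillatory, while the second solves one at energy $-\tfrac12\mu^2$ with $\mu=\sqrt{\xi^2+4\omega}\ge 2\sqrt\omega>0$, hence is of exponential type. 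Accordingly I would invert $\mathcal{H}_q-(\tfrac12\xi^2+\omega)$ componentwise. For the first component use the bounded Volterra-from-$+\infty$ Green's function $\xi^{-1}\sin\xi(y-x)$ (with $|\xi^{-1}\sin\xi(y-x)|\le\min(|y-x|,|\xi|^{-1})$, exactly as for $f_3$). For the second component one must \emph{not} use the analogue $\mu^{-1}\sinh\mu(y-x)$ appearing in $D_\xi$: it grows like $e^{\mu(y-x)}$ and defeats the exponential decay of $\mathcal{V}$ once $\mu>\gamma$ --- this is the obstruction to a naive Volterra formulation. Instead invert the second component with the kernel $-\tfrac1{2\mu}e^{-\mu|x-y|}$ (the resolvent kernel of $-\tfrac12\partial_x^2+\tfrac12\mu^2$ on $\R$), which is uniformly bounded precisely because $\mu$ is bounded away from zero, since $\omega>0$ is fixed. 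This yields a fixed-point equation $m=[1\ 0]^t+\mathcal{K}_\xi m$ whose kernel is exponentially localized against the weight $e^{-\gamma\,\cdot}$ supplied by $\mathcal{V}$.

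Next I would solve $m=[1\ 0]^t+\mathcal{K}_\xi m$ by a contraction argument in the space with norm $\sup_{x>0}\bigl(\mu^{-1}\langle x\rangle e^{-x\min(\gamma,\mu)}\bigr)^{-1}|m(x)-[1\ 0]^t|$. The first-component part of $\mathcal{K}_\xi$, and the $y\ge x$ part of the second-component kernel, are genuine Volterra operators integrated from $+\infty$ and are handled by the quasinilpotency of Volterra operators with exponentially integrable kernel, as in the scalar theory. The only genuinely new feature is the ``backward'' $y\le x$ part of $e^{-\mu|x-y|}$, which is Volterra-from-$0$ and hence not automatically quasinilpotent in combination with the forward pieces; I would control it by first solving the equation on $[R,\infty)$ for $R$ large --- where the exponential smallness of $\int_R^\infty|\mathcal{V}|$ makes the full Neumann series converge uniformly in $\xi\in\R$ --- and then extending the solution to $(0,R]$ by standard ODE theory, matching the Cauchy data at $x=R$. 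Reading off the resulting estimates gives $|m(x)-[1\ 0]^t|\lesssim_\omega\mu^{-1}\langle x\rangle e^{-x\min(\gamma,\mu)}$ for $x>0$: the prefactor $\mu^{-1}$ comes from $\min(|\xi|^{-1},1)\lesssim_\omega\mu^{-1}$ in the first component and from the $\tfrac1{2\mu}$ in the hyperbolic Green's function, while the polynomial factor $\langle x\rangle$ is the near-resonant contribution $e^{-\mu x}\int_0^x e^{(\mu-\gamma)y}\,dy$ from the second component when $\mu\approx\gamma$ (using $\tfrac{1-e^{-tx}}{t}\le x$). Uniqueness among solutions satisfying this asymptotic is immediate: two such solutions differ by an element of the weighted space fixed by $\mathcal{K}_\xi$, which must vanish.

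The derivative bounds follow by differentiating the integral equation. Each $\partial_\xi$ landing on $e^{-i\xi x}$, on $e^{i\xi(y-x)}$, or on $e^{-\mu|x-y|}$ --- using $|\partial_\xi\mu|=|\xi|/\mu<1$, uniformly bounded --- costs at most one factor of $\langle x\rangle$, or of $|x-y|\le\langle x\rangle+\langle y\rangle$, the stray $\langle y\rangle$ being harmlessly absorbed by the decay of $\mathcal{V}$. Thus $\partial_\xi^l m$ satisfies $(I-\mathcal{K}_\xi)\partial_\xi^l m=g_l$ with $g_l$ built from lower-order $\xi$-derivatives of $m$, and solving this by the same fixed point --- now in the space weighted by $\mu^{-1}\langle x\rangle^{l+1}e^{-x\min(\gamma,\mu)}$ --- yields $|\partial_\xi^l\partial_x^k m|\lesssim_{k,l,\omega}\mu^{-1}\langle x\rangle^{l+1}e^{-x\min(\gamma,\mu)}$ for $k+l\ge1$, the $\partial_x$ derivatives being recovered from the equation and the product rule. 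Continuous dependence on $\omega$ is then automatic, since $\mathcal{V}$, $\mu$, and all the kernels depend continuously (indeed smoothly) on $\omega>0$ and the estimates above are uniform over compact $\omega$-intervals because $\mu\ge2\sqrt\omega$ stays bounded below there.

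I expect the main obstacle to be precisely this departure from the scalar Jost-solution theory in the hyperbolic component: the exponentially growing fundamental solution forbids the naive Volterra setup, and the bounded replacement kernel $e^{-\mu|x-y|}$ is no longer strictly causal, so solvability must be recovered from the exponential localization of $\mathcal{V}$ (via the $[R,\infty)$-then-$(0,R]$ splitting, or an equivalent weighted estimate) rather than from quasinilpotency alone. This same mechanism produces the $\langle x\rangle^{l+1}$ loss, which must then be tracked carefully so as to remain compatible with the way Lemma~\ref{L:f1-f4} is used in the dispersive estimates.
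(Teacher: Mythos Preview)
Your approach is correct but takes a genuinely different route from the paper's. The paper follows Krieger--Schlag and introduces the ansatz
\[
f_1(x;\xi) = e^{i\xi x}\begin{pmatrix}1\\0\end{pmatrix}v(x;\xi) + f_3(x;\xi)\,u(x;\xi),
\]
using the \emph{previously constructed} exponentially decaying solution $f_3$ to absorb the hyperbolic component. From the second row of the system one expresses $u$ in terms of $v$ by an integral formula (valid wherever $f_3^{(2)}\neq 0$), and substituting back yields a genuine scalar Volterra equation for $v$ alone, with bounded kernel. The paper's refinement over \cite{KS} is to take the lower limit $x_0(\xi)$ of the $u$-integral to be \emph{negative} for large $|\xi|$, which is what produces the uniform-in-$k$ factor $\mu^{-1}$ in the $\partial_x^k$ estimate.

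Your construction instead inverts the two components with different Green's kernels directly --- Volterra-from-$+\infty$ for the oscillatory component and the bounded resolvent kernel $-\tfrac{1}{2\mu}e^{-\mu|x-y|}$ for the hyperbolic one --- and handles the non-causal piece by smallness on $[R,\infty)$ plus ODE continuation on $(0,R]$. This is more self-contained (it does not rely on $f_3$ or on controlling its zeros) and conceptually simpler, at the cost of giving up the pure Volterra structure. One point to watch: recovering the $\mu^{-1}$ prefactor for $\partial_x m^{(1)}$ in your scheme is not quite ``product rule''; after differentiation the leading term is $\int_x^\infty e^{2i\xi(y-x)}V_1(y)\,dy$, and you need one integration by parts in $y$ (using the oscillation $e^{2i\xi y}$) to extract the factor $|\xi|^{-1}\sim\mu^{-1}$ for large $|\xi|$. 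Also note that uniqueness within the stated asymptotic class is not literally true for the ODE (one may add any multiple of $f_3$); what is unique is the solution of your specific integral equation, which fixes that freedom.
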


We again follow the strategy set out in \cite{KS} (see Lemma~5.3 therein).  We introduce the following ansatz:
\[
f_1(x;\xi) = \begin{pmatrix} f_1^{(1)}(x;\xi) \\ f_1^{(2)}(x;\xi) \end{pmatrix} := \begin{pmatrix} 1 \\ 0 \end{pmatrix} e^{i\xi x} v(x;\xi) + \begin{pmatrix} f_3^{(1)}(x;\xi) \\ f_3^{(2)}(x;\xi) \end{pmatrix} u (x;\xi),
\]
where $u$ and $v$ are new scalar unknowns.  Here $f_3(x;\xi)$ is the solution to \eqref{E:f3int} on $\R$ constructed in the previous section.
To have the desired asymptotic behavior for $f_1$, we must impose that
\begin{equation}\label{E:cond for uv}
v(x;\xi) \to 1, \quad f_3^{(2)}(x;\xi) u (x;\xi) \to 0
\end{equation}
as $x\to\I$.  We will eventually obtain the following estimate on $v$:
\begin{equation}\label{E:sys for uv6}
|\d_\xi^l \d_x^k v(x;\xi) | \lesssim_{k,l} \mu^{-1} \Jbr{x}^{l} e^{-\gamma x},
\end{equation}
which provides a slight improvement over the analogous estimate appearing in \cite{KS} (as we do not lose powers of $\mu$ under differentiation with respect to $x$).

We derive a system of equations for $u$ and $v$.
For $x>0$, \eqref{E:ODEs} is the same as
\begin{equation}\label{E:sys for uv0}
	\begin{pmatrix} -\frac12 \d_x^2 -\frac{\xi^2}2 + V_1 & V_2 
	\\ -V_2 & \frac12 \d_x^2 - \frac{\mu^2}2 -V_1 \end{pmatrix}
	f_1(x;\xi) = 0.
\end{equation}
As in Proposition \ref{P:f3 plus}, we consider the equation for $x\in \R$.

We first show that $u$ may be obtained in terms of $v$.  In view of \eqref{E:cond for uv}, the second coordinate of
 \eqref{E:sys for uv0} implies that

\begin{equation}\label{E:sys for uv2}
	u'(x;\xi) = -2 (f_3^{(2)}(x;\xi))^{-2}  \int_x^\I e^{iy\xi} V_2(y)f_3^{(2)}(y;\xi)  v(y;\xi) \,dy
\end{equation}
provided $f_3^{(2)}(x;\xi)\neq0$.  Thus we have the following formula:
\begin{equation}\label{E:sys for uv35}
		u(x;\xi) = \int_{x_0}^x 2 (f_3^{(2)}(y;\xi))^{-2}  \int_y^\I e^{iz\xi} V_2(z)f_3^{(2)}(z;\xi)  v(z;\xi) \,dz \,dy ,
\end{equation}
where $x_0 = x_0(\xi)$ will be chosen below. Note that this formula requires $f_3^{(2)}(x;\xi) \neq 0$ for all $x\geq x_0$.

We deviate from the approach of \cite{KS} in our choice of $x_0$.  In \cite{KS}, the authors rely on the follow estimate (due to Proposition \ref{P:f3 plus}):
\[
\abs{ e^{\mu x} f_3(x;\xi) -
	\begin{pmatrix}
	0 \\ 1
	\end{pmatrix} }
	\lesssim \mu^{-1} \max(1,e^{-\gamma x}).
\]
In particular, this estimate shows that there exists a constant $\tilde{x}>0$ such that
\[
	\inf_{\xi \in \R} \inf_{x\geq \tilde{x}} |e^{\mu x} f_3^{(2)}(x;\xi)| \geq \tfrac12.
\]
The authors of \cite{KS} then choose $x_0(\xi)\equiv \tilde{x}$. In our case, we rely on the fact that there exists $R>0$ such that 
\[
	\inf_{|\xi| \geq R} \inf_{x\in \R} |e^{\mu x} f_3^{(2)}(x;\xi)| \geq \tfrac12,
\] 
which implies that the choice of $x_0(\xi)$ for $|\xi|\geq R$ may be taken arbitrarily. In fact, the authors of \cite{KS} already observed this and choose $x_0=0$ for large $\xi$.  We take this one step further and choose \emph{negative} $x_0$ for large $\xi$.  This will allow us to derive the slightly improved estimate. 

In summary, we choose $x_0=x_0(\xi) \in C^\I(\R)$ to be an even function such that
\begin{equation}\label{E:def of x0}
	x_0 = x_0(\xi)= \begin{cases} \tilde{x} & |\xi| \leq R , \\
	-1 & |\xi | \geq 2R
	\end{cases}
\end{equation}
and $-1 \leq x_0(\xi) \leq \tilde{x}$ for all other values of $\xi$, where $\tilde x>0$ and $R>0$ are as above.

We now need to derive an integral equation for the unknown scalar $v$.  From the first component of \eqref{E:sys for uv0}, we obtain the relation
\begin{equation}\label{E:sys for uv4}
	- \frac12 v'' - i\xi  v' + V_1  v + e^{-ix\xi}\( \frac12 u''  f_3^{(1)}  + u' (f_3^{(1)})'\)  = 0.
\end{equation}
We may rewrite this equation as 
\[
	v(x) = 1 + \int_x^\I E_{\xi}(y-x) \left[ 2V_1(y)v(y;\xi) + e^{-iy\xi}\( u''  f_3^{(1)}  + 2 u' (f_3^{(1)})'\)(y)  \right]\,dy,
\]
where
\[
	E_{\xi}(z) := \int_0^z e^{2is\xi} ds = \frac{e^{2iz\xi}-1}{2i\xi}.
\]
Using \eqref{E:sys for uv2} we derive
\begin{equation}\label{E:sys for uv5}
	\begin{aligned}
	v(x) 
	={}& 1 + \int_x^\I  (K_1(x, y)+ K_2(x,y))  v(y;\xi) \,dy,
	\end{aligned}
\end{equation}
where 
\begin{equation}\label{E:def A}
	A(x;\xi) := \frac{4}{(f_3^{(2)}(x;\xi))^3}(f_3^{(2)}(f_3^{(1)})'- (f_3^{(2)})'f_3^{(1)})(x;\xi),
\end{equation}
\[
	K_1(x,y) = 2 E_{\xi}(y-x) \( V_1(y) -\frac{f_3^{(1)}(y;\xi)}{f_3^{(2)}(y;\xi)} V_2(y) \),
\]
and
\[
	K_2(x,y) = V_2(y)f_3^{(2)}(y;\xi) \int_x^y E_{\xi}(z-x) A(z)  e^{i(y-z)\xi} \,dz .
\]
This is a Volterra-type equation. To complete the proof of Proposition \ref{P:f12 plus}, we need to construct $v$ as a solution to \eqref{E:sys for uv5}.

To this end, we first collect some estimates on $A$.  The key fact is that $A$ has the decaying factor $e^{-\mu x}$.

\begin{lemma}\label{L:f12 lem15}
For the solution $f_3(x;\xi)$ to \eqref{E:f3int} on $\R$ given in Proposition \ref{P:f3 plus}, we define
$A(x;\xi)$ as in \eqref{E:def A}.
Let $x_0$ be a number such that $|f_3(x;\xi)|>0$ holds for $x>x_0$.
Let $c:= \inf_{y\geq x_0}|e^{\mu y} f_3(y;\xi)|$.
Then, 
\[
	|\d_\xi^l \d_x^k(e^{-\mu x} A(x;\xi))| \lesssim_{k,l,c} \mu^{-l-1} \min (1, e^{-2\gamma x})
\]
for $k,l\geq 0$ and $x\geq x_0$.
If $f_3(x;\xi)>0$ for all $x\in \R$, the estimate is true for all $x\in\R$.
\end{lemma}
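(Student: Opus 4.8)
The plan is to prove Lemma~\ref{L:f12 lem15} by a direct computation starting from the definition \eqref{E:def A} of $A(x;\xi)$ and exploiting the estimates on $f_3$ already established in Proposition~\ref{P:f3 plus}. First I would rewrite $A$ in a form where the exponential factor $e^{-\mu x}$ is made manifest. Writing $\tilde f_3(x;\xi) := e^{\mu x}f_3(x;\xi)$, so that $f_3 = e^{-\mu x}\tilde f_3$ and $f_3' = e^{-\mu x}(\tilde f_3' - \mu\tilde f_3)$, the combination $f_3^{(2)}(f_3^{(1)})' - (f_3^{(2)})'f_3^{(1)}$ that appears in the numerator of \eqref{E:def A} becomes
\[
e^{-2\mu x}\bigl[\tilde f_3^{(2)}(\tilde f_3^{(1)})' - (\tilde f_3^{(2)})'\tilde f_3^{(1)}\bigr],
\]
since the two $-\mu\tilde f_3^{(1)}\tilde f_3^{(2)}$ terms cancel. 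Meanwhile $(f_3^{(2)})^3 = e^{-3\mu x}(\tilde f_3^{(2)})^3$, so
\[
e^{-\mu x}A(x;\xi) = \frac{4}{(\tilde f_3^{(2)}(x;\xi))^3}\bigl[\tilde f_3^{(2)}(\tilde f_3^{(1)})' - (\tilde f_3^{(2)})'\tilde f_3^{(1)}\bigr](x;\xi).
\]
This identity is the crux: it reduces everything to estimates on $\tilde f_3 = e^{\mu x}f_3$ and its derivatives, which are exactly what Proposition~\ref{P:f3 plus} provides.

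Next I would read off the needed bounds from Proposition~\ref{P:f3 plus}. That proposition gives $|\tilde f_3 - (0\ 1)^t| \lesssim_\omega \mu^{-1}\min(1,e^{-\gamma x})$, hence $\tilde f_3^{(1)} = \mathcal{O}(\mu^{-1}\min(1,e^{-\gamma x}))$ and $\tilde f_3^{(2)} = 1 + \mathcal{O}(\mu^{-1}\min(1,e^{-\gamma x}))$, together with $|\partial_\xi^l\partial_x^k\tilde f_3| \lesssim_\omega \mu^{-l-1}\min(1,e^{-2\gamma x})$ for $l+k\ge 1$. In particular, on $\{x\ge x_0\}$ we have $|\tilde f_3^{(2)}(x;\xi)| \ge c > 0$ by hypothesis, so $(\tilde f_3^{(2)})^{-3}$ is bounded along with all of its derivatives (each derivative of $(\tilde f_3^{(2)})^{-3}$ is, by the quotient/chain rule and Leibniz, a polynomial in $(\tilde f_3^{(2)})^{-1}$ and the derivatives $\partial^j\tilde f_3^{(2)}$, all of which are controlled, with the derivative factors contributing the decaying weight $\min(1,e^{-2\gamma x})$ and extra powers of $\mu^{-1}$). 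Then I would expand $\partial_\xi^l\partial_x^k$ of the displayed formula for $e^{-\mu x}A$ using the Leibniz rule: every term is a product of $(\tilde f_3^{(2)})^{-3}$-type factors with a bracket factor $\tilde f_3^{(2)}(\tilde f_3^{(1)})' - (\tilde f_3^{(2)})'\tilde f_3^{(1)}$ differentiated some number of times. The bracket factor, even undifferentiated, is $\mathcal{O}(\mu^{-1}\min(1,e^{-2\gamma x}))$: the term $(\tilde f_3^{(1)})'$ is $\mathcal{O}(\mu^{-2}e^{-2\gamma x})$, and $\tilde f_3^{(2)} = 1 + \mathcal{O}(\cdots)$ multiplies it, while $(\tilde f_3^{(2)})'\tilde f_3^{(1)}$ is a product of two small factors. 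Carefully bookkeeping the $\mu$ powers (each $\xi$-derivative costs one $\mu^{-1}$, $x$-derivatives cost nothing extra beyond what Proposition~\ref{P:f3 plus} already records) gives the claimed bound $|\partial_\xi^l\partial_x^k(e^{-\mu x}A)| \lesssim_{k,l,c}\mu^{-l-1}\min(1,e^{-2\gamma x})$.

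For the final sentence of the lemma — that when $f_3(x;\xi) > 0$ on all of $\R$ the estimate holds for all $x\in\R$ — the same argument applies verbatim with $x_0 = -\infty$, since then $c = \inf_{y\in\R}|e^{\mu y}f_3^{(2)}(y;\xi)| > 0$ (this positivity is part of the hypothesis) and the decay estimates from Proposition~\ref{P:f3 plus} already hold on all of $\R$ (the $\min(1,\cdot)$ form of those bounds is valid for negative $x$ too, where it simply reads as the constant $1$). I expect the main obstacle — though it is bookkeeping rather than conceptual — to be the careful tracking of powers of $\mu$ through the Leibniz expansion of $(\tilde f_3^{(2)})^{-3}$ times the differentiated bracket, making sure that no term is worse than $\mu^{-l-1}$ after $l$ derivatives in $\xi$; the key structural point that makes this work is that the leading constant part of $\tilde f_3^{(2)}$ (namely $1$) is annihilated by any derivative and does not appear in the bracket numerator, so every term genuinely carries at least one "small" factor. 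This mirrors the corresponding estimate in \cite{KS}, and I would cite that reference for the details that are identical, noting only the minor improvement coming from our sharper control of $x$-derivatives of $f_3$.
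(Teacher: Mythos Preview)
Your approach is exactly the paper's: derive the identity $e^{-\mu x}A = 4(\tilde f_3^{(2)})^{-3}\bigl[\tilde f_3^{(2)}(\tilde f_3^{(1)})' - (\tilde f_3^{(2)})'\tilde f_3^{(1)}\bigr]$ with $\tilde f_3 = e^{\mu x}f_3$, then read off the bound from Proposition~\ref{P:f3 plus}. Your write-up is in fact more detailed than the paper's two-line proof; one small slip is that $(\tilde f_3^{(1)})'$ is $\mathcal{O}(\mu^{-1}\min(1,e^{-2\gamma x}))$ rather than $\mathcal{O}(\mu^{-2})$ (take $l=0$, $k=1$ in Proposition~\ref{P:f3 plus}), but this does not affect your conclusion that the bracket is $\mathcal{O}(\mu^{-1}\min(1,e^{-2\gamma x}))$.
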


\begin{proof}
We have the formula
\[
	e^{-\mu x} A(x) = \frac{4}{(e^{\mu x} f_3^{(2)}(x;\xi) )^3} ((e^{\mu x} f_3^{(2)} )(e^{\mu x} f_3^{(1)})'-(e^{\mu x} f_3^{(2)} )'(e^{\mu x} f_3^{(1)} ))(x;\xi).
\]
Thus the lemma is a consequence of Proposition \ref{P:f3 plus}.
\end{proof}

We turn to problem of solving \eqref{E:sys for uv5}.  We begin by estimating the integral kernels.  For now, let us restrict to the interval $[x_0,\infty)$. 

We first note that
\[
	|E_\xi (z)| \leq 
	\left\{ 
	\begin{aligned}
	&\frac1{|\xi|}, && |\xi| \geq 1, \\
	& z, && |\xi| \leq 1
	\end{aligned}
	\right.
\]
 for $z\geq 0$.  Thus by Proposition \ref{P:f3 plus}, we have the following estimate for $K_1$:
\[
	\int_x^\I |K_1(x,y)| \,dy \lesssim \int_x^\I |E_\xi (x-y)|e^{-\gamma y} dx \lesssim \mu^{-1} e^{-\gamma x}. 
\]

Let us next consider $K_2(x,y)$.  By Lemma \ref{L:f12 lem15},
\[
	|A(x;\xi)| \lesssim \mu^{-1} e^{(\mu-2\gamma) x},
\]
and so
\[
	\int_x^\I |K_2(x,y)|\,dy \lesssim \int_x^\I e^{-(\mu+\gamma)y} \int_x^y \mu^{-2} z e^{(\mu-2\gamma) z} \,dz \,dy
	\lesssim \mu^{-2} e^{-\gamma x}
\]
for $x\geq x_0$.

As \eqref{E:sys for uv5} is a Volterra-type equation, we may now obtain a solution $v(x;\xi)$ on $[x_0,\I)$ by standard arguments (see e.g. \cite{KS}). Furthermore, we can obtain
\[
	|v(x;\xi) -1 | \lesssim \int_x^\I (|K_1(x,y)|+ |K_2(x,y)|) \,dy  \lesssim \mu^{-1} e^{-\gamma x}
\]
on this interval.

We next estimate $x$-derivatives, still restricting to the interval $[x_0,\infty)$.  As we claim a small improvement over \cite{KS}, we will estimate in some detail. Let $k\geq1$ and $l=0$ and observe that 
\begin{align*}
	&\d_x^{k} \int_x^\I  K_1(x, y) v(y;\xi) \,dy \\
	&{}= \sum_{j=0}^{k-1} \int_x^\I K_1^{(k,j)}(x,y) \d_y^j v(y,\xi) \,dy + \int_x^\I K_1(x,y) \d_y^k v(y,\xi) \,dy,
\end{align*}
where
\[
	K_1^{(k,j)}(x,y):= 2
	\begin{pmatrix} k \\ j \end{pmatrix}
	 E_{\xi}(y-x) \d_y^{k-j} \( V_1(y) -\frac{f_3^{(1)}(y;\xi)}{f_3^{(2)}(y;\xi)} V_2(y) \)
\]
for $0\leq j \leq k-1$.

By Proposition \ref{P:f3 plus}, we have
\[
	\int_x^\I |K_1^{(k,j)}(x,y)| \lesssim_k \int_x^\I E_\xi(y-x) e^{-\gamma y} \,dy \lesssim \mu^{-1} e^{-\gamma x}
\]
for $0\leq j \leq k-1$.
We also remark that
\begin{align*}
	&\d_x^k  \int_x^\I  K_2(x, y) v(y;\xi) \,dy \\
	&= \sum_{j=0}^k\sum_{m=0}^{\min (j,k-1)}  \int_x^\I K_2^{(k,j,m)}(x,y) \d_y^m v(y;\xi) \,dy + \int_x^\I K_2(x,y)\d_y^k v(y;\xi)\,dy,
\end{align*}
where
\begin{multline*}
	K_2^{(k,j,m)}(x,y) := 
	\begin{pmatrix} k \\ j \end{pmatrix} \begin{pmatrix} j \\ m \end{pmatrix}
	\d_y^{j-m} (V_2(y)e^{\mu y}f_3^{(2)}(y;\xi))\\ \times \int_x^y E_{\xi}(z-x) (\d_z^{k-j}(e^{-\mu z} A(z)))  e^{(y-z)(i\xi-\mu)} \,dz .
\end{multline*}
By Proposition \ref{P:f3 plus} and by assumption, we have the estimate
\[
	|\d_x^k (V_2(x)e^{\mu x}f_3^{(2)}(x;\xi))| \lesssim_k e^{-\gamma x}.
\]
Recalling Lemma~\ref{L:f12 lem15}, we arrive at the estimate
\[
	|K_2^{(k,j,m)}(x,y)| \lesssim_{k} e^{-\gamma y} \int_x^y \mu^{-1} (z-x) \mu^{-1} e^{-2\gamma z} e^{-\mu(y-z)} \,dz
	\lesssim \mu^{-2} e^{-\gamma y}.
\]
Combining the above identities, one sees that $\d_x^k v(x;\xi)$ solves a Volterra-type equation, and hence we obtain the following estimate by standard arguments: 
\[
|\d_x^k v(x;\xi) | \lesssim_k \mu^{-1} e^{-\gamma x}
\]
for $x\geq x_0$.  The case $l \geq 1$ is handled similarly.

We next estimate $u$ (given by $v$ as above) and complete the proof of Proposition~\ref{P:f12 plus}.  We rely on the following lemma, which follows from direct computation.

\begin{lemma}\label{L:f12 lem2}
Let $\mu,\gamma>0$ and $x_0\in\R$. For $n,m\in \mathbb{N}\cup\{0\}$, we have 
\[
	\int_{x}^\I (z-x)^n e^{-\mu(z-x)} e^{-\gamma z} \,dz
	\lesssim_n  \mu^{-n-1}  e^{-\gamma x}
\]
for $x\in\R$, and
\begin{align*}
	\int_{x_0}^x& (x-y)^n e^{\mu(y-x)} \int_{y}^\I (z-y)^{m} e^{-\mu (z-y)} e^{-\gamma z} \,dz \,dy\\
	&\lesssim_{m,n,\gamma,x_0} \mu^{-(m+n)-2} \Jbr{x}^{n+1} e^{-x \min(\mu,\gamma)}
\end{align*}
for $x\geq \max(0,x_0)$.
\end{lemma}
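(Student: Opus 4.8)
The plan is to establish the two estimates in order, deducing the second from the first. For the first estimate I would substitute $w=z-x$, which gives
\[
\int_x^\I (z-x)^n e^{-\mu(z-x)}e^{-\gamma z}\,dz = e^{-\gamma x}\int_0^\I w^n e^{-(\mu+\gamma)w}\,dw = \frac{n!}{(\mu+\gamma)^{n+1}}\,e^{-\gamma x},
\]
and since $\mu+\gamma\geq\mu$ this is bounded by $n!\,\mu^{-n-1}e^{-\gamma x}$, with a constant depending only on $n$ and valid for all $x\in\R$. This part is immediate.

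For the second estimate, I would first apply the first estimate with $n$ replaced by $m$ and $x$ by $y$, so that the inner integral is bounded by $C_m\,\mu^{-m-1}e^{-\gamma y}$. Since the integrand is nonnegative and $e^{\mu(y-x)}\leq 1$ for $y\leq x$, it then suffices to prove
\[
J(x):=\int_{x_0}^x (x-y)^n e^{\mu(y-x)}e^{-\gamma y}\,dy \lesssim_{n,\gamma,x_0} \mu^{-n-1}\Jbr{x}^{n+1}e^{-x\min(\mu,\gamma)}
\]
for $x\geq\max(0,x_0)$; multiplying through by the factor $\mu^{-m-1}$ then produces the stated bound $\mu^{-(m+n)-2}\Jbr{x}^{n+1}e^{-x\min(\mu,\gamma)}$. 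Substituting $u=x-y$ gives the identity $J(x)=e^{-\gamma x}\int_0^{x-x_0}u^n e^{-(\mu-\gamma)u}\,du$, which I would analyze by cases on the size of $\mu$ relative to $\gamma$.

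If $\mu\geq 2\gamma$, then $\mu-\gamma\geq\mu/2$ and extending the $u$-integral to $[0,\I)$ yields $J(x)\lesssim_n\mu^{-n-1}e^{-\gamma x}$; here $\min(\mu,\gamma)=\gamma$ and $\Jbr{x}^{n+1}\geq 1$, so this is acceptable. If $\gamma\leq\mu<2\gamma$, then $e^{-(\mu-\gamma)u}\leq 1$, so the $u$-integral is at most $(x-x_0)^{n+1}/(n+1)$; again $\min(\mu,\gamma)=\gamma$, the bound $(x-x_0)^{n+1}\lesssim_{n,x_0}\Jbr{x}^{n+1}$ holds for $x\geq\max(0,x_0)$, and $1\lesssim_{n,\gamma}\mu^{-n-1}$ since $\mu<2\gamma$. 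Finally, if $\mu<\gamma$, the factor $e^{(\gamma-\mu)u}$ is increasing, so I would bound the $u$-integrand pointwise by its value at $u=x-x_0$ to obtain $J(x)\lesssim (x-x_0)^{n+1}e^{-\mu x}e^{-(\gamma-\mu)x_0}$; now $e^{-(\gamma-\mu)x_0}\lesssim_{\gamma,x_0}1$, $\min(\mu,\gamma)=\mu$ so that $e^{-\mu x}=e^{-x\min(\mu,\gamma)}$, and again $(x-x_0)^{n+1}\lesssim_{n,x_0}\Jbr{x}^{n+1}$ and $1\lesssim_{n,\gamma}\mu^{-n-1}$.

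The computation is elementary, so there is no serious obstacle; the points requiring care are that $x_0$ must enter the constants only through the fixed quantities $e^{\gamma|x_0|}$ and $\Jbr{x_0}$, and that in the regime $\mu<\gamma$ one cannot extend the $u$-integral to infinity, so a crude pointwise estimate on the integrand is used instead. The hypothesis $x\geq\max(0,x_0)$ is precisely what makes the substitution $u=x-y$ legitimate and what permits the comparison $e^{-\gamma x}\leq e^{-\mu x}$ in the case $\mu\leq\gamma$.
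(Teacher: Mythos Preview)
Your proof is correct. The paper does not actually give a proof of this lemma, merely stating that it ``follows from direct computation''; your substitution $w=z-x$ for the first estimate and $u=x-y$ together with the three-case split on $\mu$ versus $\gamma$ for the second are exactly the natural direct computation, and all the bounds are verified correctly.
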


With all of the estimates above in place, we can turn to the proof of Proposition~\ref{P:f12 plus}. 

\begin{proof}[Proof of Proposition \ref{P:f12 plus}]
We define $x_0$ as in \eqref{E:def of x0}.
Then, one obtains $u$ by the formula \eqref{E:sys for uv35}.  Furthermore, by the estimates on $v$ and by Lemma~\ref{L:f12 lem2},
\begin{align*}
	|f_3(x;\xi) u(x;\xi)| 
	&{}\lesssim \int_{x_0}^x e^{\mu (y-x)} \int_y^\I e^{-\mu ( z-y)} e^{- \gamma z} \,dz \,dy\\
	&{}\lesssim \mu^{-2} \Jbr{x} e^{-x \min (\gamma,\mu) }. 
\end{align*}

Thus, the pair $(u,v)$ gives us the solution $f_1(x;\xi)$ which satisfies
\[
	\abs{ f_1(x;\xi) - \begin{pmatrix} e^{ix\xi} \\ 0 \end{pmatrix} } \lesssim \mu^{-1} \Jbr{x} e^{-\min(\gamma,\mu) x}
\]
for $x>x_0(\xi)$. 
For $\xi$ such that $x_0(\xi)>0$, we extend the solution to the region $0< x< x_0(\xi)$ by the general existence theorem.

We turn to the estimate of derivatives of the solution.  In particular, we need to estimate derivatives of $u$.

We introduce
\begin{align*}
	\tilde{u}(x)
	&{}:=e^{-(i\xi + \mu)x} u(x) \\
	&{}= 
	\int_{x_0}^x 2 (e^{\mu y} f_3^{(2)}(y;\xi))^{-2} e^{(y-x)(i\xi +\mu)} \\
	&\qquad \qquad \times \int_y^\I e^{(z-y)(i\xi-\mu)} V_2(z)(e^{\mu z}f_3^{(2)}(z;\xi))  v(z;\xi) \,dz \,dy 
\end{align*}
We may also write
\begin{align*}
	\tilde{u}(x)
	&{}= 
	\int^{x-x_0}_0  e^{-w(i\xi +\mu)} H(x-w) \,dw ,
\end{align*}
where
\[
	H(z) := 2 (e^{\mu z} f_3^{(2)}(z;\xi))^{-2} \int_{0}^\I e^{a(i\xi-\mu)} V_2(a+z)(e^{\mu (a+z)}f_3^{(2)}(a+z;\xi))  v(a+z;\xi) \,da .
\]
This expression yields
\begin{align*}
	\d_x^{k} \tilde{u}(x)
	&{}= \sum_{j=0}^{k-1} H^{(j)}(x_0) e^{x_0(i\xi + \mu)} (-(i\xi+\mu))^{k-1-j} e^{-x(i\xi + \mu)}\\
	&\quad + \int_0^{x-x_0} H^{(k)}(x-w) e^{-w(i\xi +\mu)} \,dw\\
	&{}=: I_1 + I_2
\end{align*}
for $k\geq1$. 

We now observe that the derivative of the first term is bounded by
\[
	|\d_\xi^l I_1(x,\xi)| \leq C_{k,l}\Jbr{x}^l \mu^{k-1} e^{x_0 \mu} e^{-x\mu} \lesssim_{k,R} e^{-\frac12 \mu} \Jbr{x}^l e^{-x\mu}
\]
for $|\xi|\geq 2R$ and $x>0$, where we have used $x_0=-1$. For $|\xi| \leq 2R$ and $x \geq x_0(\xi)$, we use boundedness $|\xi| \leq 2R$ and $x_0 \leq \tilde{x}$ to obtain the same conclusion.

Let us next consider derivatives of $I_2$.  If any derivative lands on $e^{\mu x} f_3^{(2)}(x;\xi)$, then we gain $\mu^{-1}$, while if it lands on $v(x;\xi)$ or $e^{x(i\xi\pm \mu)}$ then we lose $\langle x\rangle$.  Hence, 
\begin{align*}
	|\d_\xi^l I_2(x)| \lesssim{}&
	\sum_{j=0}^l \int_{0}^{x-x_0} w^j e^{-\mu w} \int_{0}^\I (a+w)^{l-j} e^{-\mu a} e^{-\gamma (a+w)} da\, dw \\
	={}& \sum_{j=0}^l \int_{x_0}^x (x-y)^j e^{\mu(y-x)} \int_{y}^\I (z-y)^{l-j} e^{-\mu (z-y)} e^{-\gamma z} \,dz \,dy.
\end{align*}
Using Lemma \ref{L:f12 lem2}, we find
\[
	|\d_\xi^l  I_2(x)| \lesssim \mu^{-l-1} \Jbr{x}^{l+1} e^{-x \min (\gamma, \mu)}
\]
for $x\geq 0$. This yields the desired estimate for $|\xi| \geq 2R$ or $x \geq 0$.  Note that the above argument also works  for $k=0$ and $l\geq1$.

Finally, we consider the case $|\xi| \leq 2R$, $x_0(\xi) > 0$, and $x \in [0,x_0(\xi)]$.
Notice that this case both $x$ and $\xi$ is bounded.
Hence, it is sufficient to use the following estimate for a general solution:
\[
	|\d_\xi^l \d_x^k f_1(x;\xi)| \lesssim_{k,l,R} 1.
\]
This completes the proof.
\end{proof}

\subsection{Construction of $f_4$ for $x>0$}

Let us next construct the solution $f_4(x;\xi)$.  We first need to deal with the fact that the desired asymptotic behavior, namely
\[
	f_4(x;\xi) \to e^{\mu x} \begin{pmatrix} 0 \\ 1 \end{pmatrix} \qtq{as}x\to\infty,
\]
does not uniquely specify the solution.  Indeed, if one finds one such solution then any linear combination of the solution and the solutions $f_1,f_2$, and $f_3$ satisfies the same asymptotics.

Following \cite{KS}, we will look for $f_4$ obeying the Wronskian condition
\[
	W[f_1,f_4] = W[f_2,f_4] = 0.
\]
As we already have
\[
	W[f_1,f_3] = W[f_2,f_3] = 0,
\]
the above condition determines the `coefficients' of $f_1$ and $f_2$.  We remark, however, that we still have the freedom to add a scalar multiple of $f_3$.

To obtain a solution, we solve the following integral equation:
\begin{equation}\label{E:f4eq}
\begin{aligned}
	f_4(x;\xi) ={}& e^{\mu x} \begin{pmatrix} 0 \\ 1 \end{pmatrix} + \int_x^\I 
	\begin{pmatrix} 0 & 0 \\ 0 & -\frac{1}\mu e^{\mu (x-y)} \end{pmatrix} \mathcal{V}(y) f_4(y;\xi) \,dy \\
	&{}+	\int_{x_1}^x 
	\begin{pmatrix} \frac{2\sin \xi(x-y)}{\xi} & 0 \\ 0 & -\frac{1}\mu e^{-\mu (x-y)} \end{pmatrix} \mathcal{V}(y) f_4(y;\xi) \,dy
\end{aligned}
\end{equation}
with a suitable choice of $x_1=x_1(\xi) \in \R$.
We denote the particular solution we construct by $\tilde{f}_4(x;\xi)$.  Similar to the construction of $f_1$ and $f_2$, we will choose \emph{negative} $x_1$ for large $\xi$.  More precisely, we define $x_1=x_1(\xi) \in C^\I(\R)$, with constants $\tilde{x} \geq 0 $ and $R=R(\norm{\mathcal{V}}_{L^1(\R)},\omega,\gamma)> 0$ to be chosen later, to be an even function satisfying
\begin{equation}\label{E:defx1}
	x_1(\xi) = 
	\begin{cases}
	\tilde{x} & |\xi| \leq R\\
	-1 & |\xi| \geq 2R
	\end{cases}
\end{equation}
and $-1 \leq x_1(\xi) \leq \tilde{x}$ for any $\xi \in \R$.

\begin{proposition}[Construction of $\tilde{f}_4(x;\xi)$ for $x>0$]\label{P:f4 plus}
There exists a bounded function $x_1 = x_1(\xi)$ such that \eqref{E:f4eq} admits a unique solution $\tilde{f}_4(x;\xi)$
obeying 
\[
\tilde{f}_4(x;\xi) \leq 2 e^{\mu x}
\]
for $x\geq x_1$ and
\[
	\abs{ e^{-\mu x } \tilde{f}_4^{(j)}(x;\xi) - \delta_{2j} } 
	\lesssim_{\omega} \mu^{-(3-j)} \Jbr{x}^2 e^{-x \min (\gamma ,\mu)} 
\]
for $j=1,2$, $x \geq x_1$, and $\xi \in \R$.  Here $\delta_{ij}$ denotes the Kronecker delta.  Moreover, we have the estimate
\[
	\abs{ \d_x^k (e^{-\mu x } \tilde{f}_4^{(j)}(x;\xi)) } \lesssim_{k,\omega} \mu^{-(3-j)} \Jbr{x}^{2} e^{- x \min (\gamma ,\mu)}
\]
for $x \geq x_1$ and for $k\geq 1$.  The implicit constants in the above inequalities depend continuously on $\omega$.
\end{proposition}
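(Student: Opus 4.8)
\textbf{Proof proposal for Proposition~\ref{P:f4 plus}.} The plan is to adapt the decoupling ansatz used for $f_1$ in Proposition~\ref{P:f12 plus} to the present situation, writing $\tilde f_4$ as a combination of the already-constructed solutions $f_1,f_2,f_3$ plus a ``$e^{\mu x}$-type'' contribution, and then solving a Volterra equation for the new scalar unknowns. Since we impose $W[f_1,\tilde f_4]=W[f_2,\tilde f_4]=0$, the coefficients of $f_1$ and $f_2$ in the representation are pinned down and only a scalar multiple of $f_3$ remains free; this will be exploited precisely through the choice of the base point $x_1=x_1(\xi)$ in \eqref{E:defx1}. Concretely, I would first rewrite \eqref{E:ODEs} for $x>0$ in the scalar form analogous to \eqref{E:sys for uv0}, with the roles of the exponentially growing/decaying components swapped: now it is the second component that carries the growth $e^{\mu x}$, so the second component of the system gives a first-order relation that determines the ``$u$-unknown'' in terms of the ``$v$-unknown'' up to integration, while the first component yields a Volterra equation for the scalar that multiplies $f_3^{(1)}$.

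First I would carry out the standard estimates on the modified kernel $K_\xi(z)=e^{-\mu z}D_\xi(z)$ and on the quantity $A(x;\xi)$ from \eqref{E:def A}, exactly as in Lemma~\ref{L:f12 lem15}, which provides the decaying factor $e^{-\mu x}$ that makes the iteration converge. The key structural point is that the integral kernels appearing in \eqref{E:f4eq} — the $-\tfrac1\mu e^{\mu(x-y)}$ piece integrated over $[x,\infty)$ and the $-\tfrac1\mu e^{-\mu(x-y)}$ and $\tfrac{2\sin\xi(x-y)}{\xi}$ pieces integrated over $[x_1,x]$ — each satisfy an $L^1$-in-$y$ bound of size $\mu^{-1}e^{-\gamma x}$ (with extra $\langle x\rangle$ powers from the $\sin$-kernel when $|\xi|\le 1$, accounting for the $\langle x\rangle^2$ loss in the stated estimate). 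Then one runs the usual Volterra contraction on $[x_1,\infty)$, first to obtain $e^{-\mu x}\tilde f_4(x;\xi)$ with $|e^{-\mu x}\tilde f_4^{(j)}-\delta_{2j}|\lesssim \mu^{-(3-j)}\langle x\rangle^2 e^{-x\min\{\gamma,\mu\}}$, and then differentiates the Volterra equation in $x$ (following the detailed scheme in the proof of Proposition~\ref{P:f12 plus}) to propagate the bound to $\partial_x^k$, using the Leibniz-rule expansion with the auxiliary kernels $K_j^{(k,\dots)}$ and the boundedness estimates of Lemma~\ref{L:f12 lem2}.

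The main obstacle — and the reason $x_1(\xi)$ is chosen to be \emph{negative} for large $\xi$, just as in the construction of $f_1$ — is the division by $f_3^{(2)}(x;\xi)$ (equivalently, by $e^{\mu x}f_3^{(2)}$) when solving for the $u$-unknown: the formula for $u$ requires $f_3^{(2)}(x;\xi)\neq 0$ for all $x\geq x_1$, and we want the resulting estimates to be uniform in $\xi\in\R$ and to avoid any loss of powers of $\mu$ under $\partial_x$. The resolution uses the fact (already exploited in Section~A.2) that there is $R>0$ with $\inf_{|\xi|\geq R}\inf_{x\in\R}|e^{\mu x}f_3^{(2)}(x;\xi)|\geq \tfrac12$, so for $|\xi|\geq 2R$ we may take $x_1=-1$ and the non-vanishing is automatic, whereas for $|\xi|\leq R$ we take $x_1=\tilde x$ for a fixed $\tilde x$ large enough that $\inf_{\xi}\inf_{x\geq\tilde x}|e^{\mu x}f_3^{(2)}|\geq\tfrac12$ by the asymptotics of $f_3$, and interpolate smoothly in between; on the bounded set $|\xi|\leq 2R$, $0\leq x\leq x_1(\xi)$, where $x$ and $\xi$ both range over a compact set, the bound $|\partial_\xi^l\partial_x^k \tilde f_4|\lesssim 1$ from the general existence theorem suffices. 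Finally, I would verify the Wronskian identities $W[f_1,\tilde f_4]$, $W[f_2,\tilde f_4]$ directly from the representation and the asymptotics, noting that the particular solution produced this way already satisfies $W[f_3,\tilde f_4]=-2\mu$ (as recorded in Lemma~\ref{L:f1-f4}), which fixes the remaining $f_3$-ambiguity in the normalization.
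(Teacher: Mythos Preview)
Your proposal takes a genuinely different route from the paper's proof. The paper does \emph{not} use a decoupling ansatz \`a la Proposition~\ref{P:f12 plus}: it works directly with the integral equation \eqref{E:f4eq}. After the substitution $g(x;\xi):=e^{-\mu x}f_4(x;\xi)$, the three kernels become uniformly bounded (the crucial observation is $|\tfrac{\sin\xi(x-y)}{\xi}e^{-\mu(x-y)}|\le (x-y)e^{-\mu(x-y)}\lesssim\mu^{-1}$), so $g\mapsto\Psi[g]$ is a contraction on $\{g\in C([x_1,\infty)):\|g\|_{L^\infty}\le 2\}$, with contraction constant $C\mu^{-1}\|\mathcal V\|_{L^1([x_1,\infty))}$. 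One then differentiates in $x$: an explicit formula expresses $g^{(k)}$ as an integral of $\partial^{k-1}(\mathcal Vg)$ against bounded kernels, plus a boundary term evaluated at $x_1$ involving factors like $e^{-\mu(x-x_1)}$. Finally the sharper bound on the first component $g_1$ (the extra $\mu^{-1}$, giving $\mu^{-(3-j)}$) is extracted a posteriori from the scalar ODE $g''=-2\mu g'-(\xi^2+2E)(\mathrm{Id}+\sigma_3)g+2\sigma_3\mathcal Vg$, which yields $|g_1^{(k)}|\lesssim\mu^{-2}|g^{(k+2)}|+\mu^{-1}|g^{(k+1)}|+\mu^{-2}|\partial_x^k(\mathcal Vg)|$.

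There is a concrete gap in your reasoning. You identify the ``main obstacle'' as division by $f_3^{(2)}$ and invoke $A(x;\xi)$ and Lemma~\ref{L:f12 lem15}, but none of these objects enter the integral equation \eqref{E:f4eq}: that equation is built from the free-operator Green kernels and contains no $f_3$ factor. Consequently your stated reason for taking $x_1(\xi)<0$ at large $|\xi|$ is incorrect for this construction. In the paper's argument, $x_1=-1$ for $|\xi|\ge 2R$ is chosen so that the boundary term in the derivative formula carries a factor $e^{-\mu(x-x_1)}\le e^{-\mu/2}e^{-\mu x}$ at $x\ge 0$, which kills the potentially dangerous $\mu^{k-1}$ coefficients and prevents any loss of $\mu$-powers under $\partial_x$. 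Your ansatz route could in principle be made to work, but it would need a different bookkeeping (the analogue of the $(u,v)$ system with roles of the components swapped), a correct identification of where nonvanishing is actually needed, and you would still have to supply the ODE argument for the extra $\mu^{-1}$ on the first component, which your outline does not mention.
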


As before, our choice of $x_1$ affords us some slight improvement compared to the estimates appearing in \cite{KS}.  In particular, the power of $\mu$ does not grow with $k$, and the estimate for the first component is improved.

\begin{proof}[Proof of Proposition~\ref{P:f4 plus}] Writing $g(x;\xi) := e^{-\mu x} f_4(x;\xi)$, we arrive at the equation
\begin{align*}
	g(x;\xi) ={}& \begin{pmatrix} 0 \\ 1 \end{pmatrix} + \int_x^\I 
	\begin{pmatrix} 0 & 0 \\ 0 &- \frac{1}\mu \end{pmatrix} \mathcal{V}(y) g(y;\xi) \,dy \\
	&{}+	\int_{x_1}^x 
	\begin{pmatrix} \frac{2\sin \xi(x-y)}{\xi}e^{-\mu (x-y)} & 0 \\ 0 &- \frac{1}\mu e^{-2\mu (x-y)} \end{pmatrix} \mathcal{V}(y) g(y;\xi) \,dy .
\end{align*}
Denoting the right hand side by $\Psi[g](x)$, we will show that $\Psi$ is a contraction map on the complete metric space $(X,d)$ defined by
\begin{align*}
	X &{}:= \{ g \in C([x_1,\I); \C^2 ) : \norm{g}_{L^\I([x_1,\I))} \leq 2 \},\\
	d(g_1,g_2) &{}:= \norm{g_1-g_2}_{L^\I([x_1,\I))}.
\end{align*}
Note that the integrals have bounded kernels since
\[
	\abs{\frac{\sin \xi(x-y)}{\xi}e^{-\mu (x-y)} } \leq (x-y)e^{-\mu (x-y)} \leq \mu^{-1} \( \sup_{w \geq 0} w e^{-w}\)
\]
for $0 \leq y \leq x$.

Now let $g,g_1,g_2 \in X$. For $x\in (x_1,\I)$, we have
\begin{align*}
	|\Psi[g](x)| \le{}& 1 + \int_x^\I \mu^{-1} C |\mathcal{V}(y)| \,dy + \int_{x_1}^x \mu^{-1} C |\mathcal{V}(y)| \,dy\\
	\leq {}& 1 + C\mu^{-1} \norm{\mathcal{V}}_{L^1([x_1,\I))}
\end{align*}
and similarly
\[
	|\Psi[g_1](x)-\Psi[g_2](x)| \leq C\mu^{-1} \norm{\mathcal{V}}_{L^1([x_1,\I))} \norm{g_1 - g_2}_{L^\I ([x_1,\I))}.
\]

We now observe that there exists $\tilde{x} \geq 0 $ such that $\Psi$ is a contraction for all $\xi \in \R$ with the choice  $x_1=\tilde{x}$.  Moreover, there exists $R=R(\norm{\mathcal{V}}_{L^1(\R)})\geq 0$ such that if $|\xi| \geq R$ then $\Psi$ is a contraction for any $x_1 \in \R$. Without loss of generality, we may suppose that $R \geq \max(\omega,\gamma)$ so that $|\xi| \sim_\omega \mu \geq 2\gamma$ for $|\xi|\geq 2R$.  Now, we define $x_1(\xi)$ by \eqref{E:defx1} with this choice of $\tilde{x}$ and $R$.  We then obtain a unique solution $g \in X$ to the equation $g=\Psi[g]$.

We next estimate derivatives of $g$. We will show
\[
	\abs{ \d_x^k (e^{-\mu x } \tilde{f}_4(x;\xi)) } \lesssim_{k,\gamma} \mu^{-1} \Jbr{x}^{2} e^{- x \min (\gamma ,\mu)}
\]
for $x \geq x_1$ and for $k\geq 1$.  We first have
\begin{equation}\label{E:gkform1}
\begin{aligned}
	& g^{(k)}(x;\xi)\\
	&{} = \int_{0}^{x-x_1} 
	\begin{pmatrix} 2\cos (\xi w) e^{-\mu w}-\frac{2\mu \sin \xi w}{\xi}e^{-\mu w} & 0 \\ 0 & 2 e^{-2\mu w} \end{pmatrix} (\d_w^{k-1}(\mathcal{V} g))(x-w) dw \\
	&\quad {}+ \sum_{j=0}^{k-2}  \d_x^{k-1-j}
	\begin{pmatrix} \frac{2\sin \xi (x-x_1)}{\xi}e^{-\mu (x-x_1)} & 0 \\ 0 & -\frac{1}\mu e^{-2\mu (x-x_1)} \end{pmatrix} (\d_w^{j}(\mathcal{V} g))(x_1)
\end{aligned}
\end{equation}
for $k\geq1$, with the convention that the second term is zero when $k=1$.

Consider the case $|\xi|\geq 2R$.   We will show by induction that
\[
	|g^{(k)}(x)| \lesssim_k \mu^{-1} e^{-\gamma x}
\]
holds for $k\geq1$ and $x\geq x_1=-1$. The base case is immediate from \eqref{E:gkform1}. Indeed, since $\mu \geq 2\gamma$ holds in this case, we have
\[
	|g'(x)| \lesssim ( ({\bf 1}_{\{w\geq 0\}} e^{-\mu w})* ({\bf 1}_{\{w\geq -1\}} e^{-\gamma w}))(x)
	\leq 2\mu^{-1} e^{-\gamma x}
\]
for $x\geq -1$.  Now suppose that the above bound is true for $1 \leq k \leq k_0$.  Then we derive
\[
	\abs{\d_w^{k_0} (\mathcal{V} g(\cdot;\xi))(z) dw} \lesssim_{k_0,\gamma}  e^{-\gamma z}
\]
for $z \geq 0$.

Hence, as in the base case, we have
\[
	|g^{(k_0+1)}(x)| \lesssim ( ({\bf 1}_{\{w\geq 0\}} e^{-\mu w})* ({\bf 1}_{\{w\geq -1\}} e^{-\gamma w}))(x) + e^{-\frac12 \mu}e^{-\mu x} \lesssim \mu^{-1} e^{-\gamma x}
\]
from \eqref{E:gkform1}. Here the choice $x_1=-1$ is used for the estimate of the second term.

For $|\xi| \leq 2R$ and $x \geq x_1$, we have
\[
	|g^{(k)}(x)| \lesssim_k \Jbr{x}^{2} \min(1, e^{-x \min(\mu,\gamma) }) \lesssim_\gamma 1
\]
by a similar induction argument.  The difference is that we need to use the worse estimate
\[
	\abs{\begin{pmatrix} \cos (\xi w)e^{-\mu w} - \frac{\mu \sin \xi w}{\xi}e^{-\mu w} & 0 \\ 0 & -2 e^{-2\mu w} \end{pmatrix}}
	\lesssim_R \Jbr{w} e^{-\mu w}
\]
for $w\geq 0$. The base case $k=1$ follows from
\[
	|g'(x)| \lesssim ( ({\bf 1}_{\{w\geq 0\}} \Jbr{w} e^{-\mu w})* ({\bf 1}_{\{w\geq -1\}} e^{-\gamma w}))(x)
	\lesssim \Jbr{x}^2  e^{-x \min (\gamma,\mu) } \lesssim_\gamma 1.
\]
If the estimate is true up to $k_0$, we have
\[
	\abs{\d_w^{k_0} (\mathcal{V} g(\cdot;\xi))(z) dw} \lesssim_{k_0,\gamma}  e^{-\gamma z}
\]
for $z\geq 0$.
Hence,
\begin{align*}
	|g^{(k_0+1)}(x)| &{} \lesssim ( ({\bf 1}_{\{w\geq 0\}} \Jbr{w} e^{-\mu w})* ({\bf 1}_{\{w\geq -1\}} e^{-\gamma w}))(x) + e^{-\frac12 \mu}\Jbr{x} e^{-\mu x} \\
	&{}\lesssim \Jbr{x}^2 e^{- x \min(\gamma,\mu)}
\end{align*}
as desired.

Finally, we consider an improved estimate for the first component. We first observe that $g$ satisfies the ODE
\[
	g'' = -2\mu g' -(\xi^2 +2E)(\mathrm{Id}+\sigma_3) g + 2 \sigma_ 3\mathcal{V} g.
\]
Thus
\[
	|g_1^{(k)}| \leq \mu^{-2}|g^{(k+2)}| + 2\mu^{-1} |g^{(k+1)}| + 2 \mu^{-2} |\d_x^k(\mathcal{V} g)|
\]
for $k\geq 0$, where $g_1$ is the first component of $g$. 
Combining the above estimates, we complete the proof of Proposition~\ref{P:f4 plus}.
\end{proof}

\begin{remark}
We have similar estimates for $\xi$-derivatives. 
We omit the details and only remark that the equation for $\d_\xi^l g$ is of the same form as that for $g$.
As a result, we obtain 
\[
	|\d_\xi^l (e^{-\mu x} \tilde{f}_4(x;\xi))| \lesssim \mu^{-2} \Jbr{x}^l e^{-x \min (\gamma,\mu)}
\]
for $l\geq 1$, for instance. One also has estimates on $\d_x^k \d_\xi^l (e^{-\mu x} \tilde{f}_4(x;\xi))$ ($k,l\geq1$)
by proceeding as in the case $l=0$.\end{remark}

We conclude this section by demonstrating that $g-1$ decays as $x\to\I$.  As $g$ is a bounded solution, the equation for $g$ yields
\begin{align*}
	|g(x)-1| \lesssim{}& \int_x^\I \mu^{-1} |\mathcal{V}(y)| \,dy + \int_{x_1}^x \abs{\begin{pmatrix} \frac{2\sin \xi(x-y)}{\xi}e^{-\mu (x-y)} & 0 \\ 0 & -\frac{1}\mu e^{-2\mu (x-y)} \end{pmatrix}}|\mathcal{V}(y)| \,dy.
\end{align*}
It is straightforward to see that the first term is $O(\mu^{-1} e^{-\gamma x})$ for large $x$.
We estimate the second term as follows: 
\[
	\int_{x_1}^x (x-y) e^{-\mu (x-y)} e^{-\gamma y} \,dy
	= e^{- \gamma x} \int_{x_1}^x (x-y) e^{-(\mu-\gamma) (x-y)} \,dy,
\]
which is bounded by
\[
	e^{- \gamma x} \int_{0}^\I w e^{-(\mu/2) w} dw = C_\gamma \mu^{-2} e^{-\gamma x}
\]
if $\mu \geq 2\gamma$, and by
\[
	e^{-\gamma x} \max (1, e^{-(\mu-\gamma) (x-x_1)}  ) \int_{x_1}^x (x-y)  \,dy \lesssim_\gamma
	\Jbr{x}^2 e^{-x \min (\gamma, \mu)}
\]
if $\mu \leq 2\gamma$.
%
%

\subsection{Construction of solutions for $x<0$}

So far, we have constructed solutions $f_j(x;\xi)$ ($j=1,2,3,4$) to \eqref{E:ODEs} for $x>0$.
We now consider the continuation to $x<0$.  Note that we already have four linearly independent solutions $f_j(-x;\xi)$ ($j=1,2,3,4$) when $\xi\neq0$, and so the continued solution is given by a linear combination of these for $x<0$.  However, it it is not clear whether we have a uniform-in-$\xi$ bound on the coefficients, as $f_1$ and $f_2$ coincide when $\xi=0$.

In order to get a uniform-in-$\xi$ bound, we solve the integral equation
\begin{align*}
	f_j(x;\xi) ={}& D_\xi' (x) f_j(0;\xi) + D_\xi (x)  f_j'(0-;\xi) \\
	&{}-2 \int_x^0 D_\xi(y-x) \sigma_3 \mathcal{V}(y)  f_j(y;\xi) \, dy.
\end{align*}
for $x<0$.  We summarize the result as follows:

\begin{proposition}[Continuation of $f_j(x;\xi)$ ($j=1,2,3,4$) for $x<0$]\label{P:f3 minus}
For all $\xi \in \R$, the solution $f_j(x;\xi)$ ($j=1,2,3,4$) to \eqref{E:ODEs} given above
can be extended as solutions on the whole line $x\in \R$. 
The solution is smooth in $(x,\xi) \in (\R \setminus\{0\}) \times \R$ and
satisfies 
\[
	| \d_\xi^l \d_x^k f_j(x;\xi)| \lesssim \mu^{-1+k}\Jbr{x}^l e^{\mu |x|}
\]
for $x<0$ and $k,l \geq 0$. 
\end{proposition}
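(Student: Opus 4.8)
The plan is to extend the solutions $f_j(x;\xi)$ from $x>0$ to $x<0$ by solving the displayed Volterra-type integral equation on $(-\infty,0]$, using as Cauchy data at $x=0$ the boundary values $f_j(0;\xi)$ and $f_j'(0-;\xi)$ dictated by the jump condition in $D(H)$ (namely $f_j'(0-;\xi)=f_j'(0+;\xi)-2q f_j(0;\xi)$, with $f_j(0\pm;\xi)$ and $f_j'(0+;\xi)$ already controlled by Propositions~\ref{P:f3 plus}, \ref{P:f12 plus}, and~\ref{P:f4 plus}). First I would observe that on $x<0$ the kernel $D_\xi(y-x)$ for $x\le y\le 0$ satisfies the bound $|D_\xi(y-x)|\lesssim \mu^{-1}e^{\mu(y-x)}$ (using $|\sin\xi s/\xi|\le |s|\le \mu^{-1}e^{\mu|s|}$ and $|\sinh\mu s/\mu|\le \mu^{-1}e^{\mu|s|}$). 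This exponential weight is the right one: substituting $g_j(x;\xi):=e^{\mu x}f_j(x;\xi)$ converts the equation into one with kernel $e^{\mu(y-x)}\cdot e^{-\mu(y-x)}$-type factors that are bounded on the relevant region, together with the exponentially decaying potential $\mathcal{V}$. A standard Volterra contraction/iteration argument on a bounded interval $[-N,0]$, followed by the a priori bound that keeps $|g_j|$ from growing, then yields a global solution on $(-\infty,0]$ with $|f_j(x;\xi)|\lesssim \mu^{-1}e^{\mu|x|}$, where the leading $\mu^{-1}$ comes from the normalization of $D_\xi$ and the data bounds from the $x>0$ construction.

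Next I would handle the derivatives. For $\partial_x^k$ one differentiates the integral equation: each $x$-derivative either hits the prefactors $D_\xi'(x)$, $D_\xi(x)$ (producing at worst a factor $\mu$ and another copy of the same kernel structure, hence the $\mu^{-1+k}$), or hits the Volterra kernel $D_\xi(y-x)$, reproducing the integral operator applied to lower-order derivatives; an induction on $k$ closes this, exactly as in the $x>0$ case and as in \cite{KS}. For $\partial_\xi^l$ I would note, as in the remark following Proposition~\ref{P:f4 plus}, that $\partial_\xi^l g_j$ satisfies an integral equation of the same Volterra form with inhomogeneous terms built from lower-order $\xi$-derivatives of the kernel $D_\xi$ and of the data; since $\partial_\xi^l D_\xi(z)$ picks up at most a polynomial factor $\langle z\rangle^l$ (differentiating $\sin\xi s/\xi$ and $\sinh\mu s/\mu$ in $\xi$), one gains the weight $\langle x\rangle^l$ and no further loss in $\mu$, giving $|\partial_\xi^l\partial_x^k f_j(x;\xi)|\lesssim \mu^{-1+k}\langle x\rangle^l e^{\mu|x|}$. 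Smoothness in $(x,\xi)\in(\R\setminus\{0\})\times\R$ is automatic from the smoothness of the data at $0\pm$, the smoothness of $\mathcal{V}$ away from $0$, and the smooth dependence of Volterra solutions on parameters.

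I would also record that the extended $f_j$ indeed solve \eqref{E:ODEs} on all of $\R$: on $(-\infty,0)$ they solve the ODE $\mathcal{H}_V f=(\tfrac12\xi^2+\omega)f$ by construction, on $(0,\infty)$ they are the solutions already built, and at $x=0$ they satisfy the jump condition by our choice of data — so they lie in the solution space $X(\xi)$ described in the appendix's opening propositions. This also makes the constancy of the Wronskians $W[f_i,f_j]$ across $x=0$ (established earlier) applicable, which is what later sections need.

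The main obstacle, as in the positive-axis constructions, is obtaining bounds that are \emph{uniform in $\xi$} near $\xi=0$, where $f_1$ and $f_2$ degenerate (they coincide at $\xi=0$) and where the naive representation of the continued solution as a linear combination of $f_j(-x;\xi)$ would have coefficients blowing up like $1/\xi$. Solving the integral equation directly — rather than expanding in a basis — is precisely what circumvents this: the kernel $D_\xi$ and its $\xi$-derivatives are smooth and bounded (by $\langle z\rangle^l$ times the exponential) uniformly down to $\xi=0$, so the Volterra argument produces estimates with no spurious negative powers of $\xi$, only the stated powers of $\mu=\sqrt{\xi^2+4\omega}\ge 2\sqrt\omega$. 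Checking this uniformity carefully through the induction on $k$ and $l$ — and verifying that the contraction constant $C\mu^{-1}\|\mathcal{V}\|_{L^1}$-type bound on $[-N,0]$ can be made $<1$ after iterating finitely many times, independently of $\xi$ — is the one place requiring genuine care; everything else parallels \cite[Lemmas~5.2, 5.3, 5.5]{KS} and the earlier subsections of this appendix.
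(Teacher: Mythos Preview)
Your proposal is correct and follows essentially the same approach as the paper: both recast the continuation as a Volterra equation for $e^{-\mu|x|}f_j = e^{\mu x}f_j$ on $(-\infty,0]$, take Cauchy data at $x=0-$ from the $x>0$ construction via the jump condition, and close by standard iteration together with the observation that higher $x$-derivatives are controlled either by differentiating the integral equation or (equivalently, as the paper phrases it) by reading them off from the differential form of \eqref{E:ODEs}. Your write-up is simply more detailed than the paper's sketch, and your emphasis on why solving the integral equation directly (rather than expanding in the basis $f_j(-x;\xi)$) gives $\xi$-uniform bounds near $\xi=0$ is exactly the point flagged in the paragraph preceding the proposition.
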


\begin{proof}[Sketch of proof]  One sees that $e^{-\mu|x|} f_j(x)$ solves a Volterra type integral equation on $x\in(-\I,0)$.
This can be solved by standard arguments.  For the data, we have
\[
	\d_x^k f_j (0-;\xi) =  O(\mu^{-1+k})
\]
for $k\geq 0$ and $j=1,2,3,4$.
Indeed,
thanks to estimates for solutions on $x>0$, we know
\[
	\d_x^k f_j (0+;\xi) =  O(\mu^{-1+k})
\]
for $k\geq 0$ and $j=1,2,3,4$. Then, by the jump condition we obtain the desired bound for $k=0,1$.
The differential form of the equation gives us the bound for $k\ge2$.
\end{proof}



\begin{thebibliography}{100}

\bibitem{AGHH} S. Albeverio, F. Gesztesy, R. H\"egh-Krohn and H. Holden, \emph{Solvable Models in Quantum Mechanics}. Spring-Verlag (1988). 

\bibitem{BP} V.S. Buslaev and G. S. Perel'man, \emph{Scattering for the nonlinear Schr\"odinger equation: states that are close to a soliton} St. Petersburg Math. J. \textbf{4} (1993), no. 6, 1111--1142. 

\bibitem{BS} V.S. Buslaev and C. Sulem, \emph{On asymptotic stability of solitary waves for nonlinear Schr\"odinger equations.} Ann. inst. H. Poincar\'e Anal. Non Lin\'eaire \textbf{20} (2003), no. 3, 419--475.  

\bibitem{Comech-Pelinovsky} A. Comech and D. Pelinovsky, \emph{Purely nonlinear instability of standing waves with minimal energy.} Comm. Pure Appl. Math. \textbf{56} (2003), no. 11, 1565--1607.

\bibitem{CGNT} S. M. Chang, S. Gustafson, K. Nakanishi, T. P. Tsai, \emph{Spectra of linearized operators for NLS solitary waves.} SIAM J. Math. Anal. \textbf{39} (2007/08), no. 4, 1070--1111.

\bibitem{LeCoz} S. Le Coz, R. Fukuizumi, G. Fibich, B. Ksherim, and Y. Sivan, \emph{Instability of bound states of a nonlinear Schr\"odinger equation with a Dirac potential.} Phys. D \textbf{237} (2008), no. 8, 1103--1128. 

\bibitem{CC1} S. Cuccagna, \emph{The Hamiltonian structure of the nonlinear Schr\"odinger equation and the asymptotic stability of its ground states.} Comm. Math. Phys. \textbf{305} (2011), no. 2, 279--331.

\bibitem{CC4} S. Cuccagna, \emph{Stabilization of solutions to nonlinear Schr\"odinger equations,} Comm. Pure. Appl. Math. \textbf{54} (2001), no. 9, 1110--1145. 

\bibitem{CC2} S. Cuccagna and D. Pelinovski, \emph{The asymptotic stability of solitons in the cubic NLS equation on the line.} Appl. Anal. \textbf{93} (2014), no. 4, 791--822.

\bibitem{CC3} S. Cuccagna, \emph{On asymptotic stability of moving ground states of the nonlinear Schr\"odinger equation.} Trans. Amer. Math. Soc. \textbf{366} (2014), no. 4, 791--822. 

\bibitem{CM} S. Cuccagna and M. Maeda, \emph{On stability of
small solitons of the 1-D NLS with a trapping delta potential.}
SIAM J. Math. Anal. \textbf{51} (2019), no. 6, 4311--4331.

\bibitem{DH} K. Datchev and J. Holmer, \emph{Fast soliton scattering by attractive delta impurities.} Comm. Partial Differential Equations \textbf{34} (2009), 1074--1113. 

\bibitem{DP} P. Deift and J. Park, \emph{Long-time asymptotics for solutions of the NLS with a delta potential and even initial data.} Int. Math. Res. Not. \textbf{2011} (2011), 5505--5624.

\bibitem{FJ} R. Fukuizumi, L. Jeanjean, \emph{Stability of standing waves for a nonlinear Schr\"odinger equation with a repulsive Dirac delta potential.} Discrete Contin. Dyn. Syst. \textbf{21} (2008), no. 1, 121--136. 

\bibitem{FOO} R. Fukuizumi, M. Ohta, and T. Ozawa, \emph{Nonlinear Schr\"odinger equations with a point defect.} Ann. Inst. H. Poincar\'e Anal. Non Lin\'eaire \textbf{25} (2008), 837--845. 

\bibitem{GS} Z. Gang and I. M. Sigal, \emph{Asymptotic stability of nonlinear Schr\"odinger equations with potential.} Rev. Math. Phys. \textbf{17} (2005), no. 10, 1143--1207. 

\bibitem{GS2} M. Goldberg and W. Schlag, \emph{Dispersive estimates for Schr\"odinger operators in dimensions one and three.} Comm. Math. Phys. \textbf{251} (2004), no. 1, 157--178. 

\bibitem{GHW} R. H. Goodman, P. J. Holmes, and M. I. Weinstein, \emph{Strong NLS soliton-defect interactions.} Phys. D. \textbf{192} (2004), 215--248. 

\bibitem{GNT} S. Gustafson, K. Nakanishi, and T. P. Tsai, \emph{Asymptotic stability and completeness in the energy space for nonlinear Schr\"odinger equations with small solitary waves.} Int. Math. Res. Not. \textbf{2004} (2004), no 3559--3584. 

\bibitem{HMZ} J. Holmer, J. Marzuola, and M. Zworski, \emph{Soliton splitting by external delta potentials.} J. Nonlinear Sci. \textbf{17} (2007), 349--367.

\bibitem{HMZ2} J. Holmer, J. Marzuola, and M. Zworski, \emph{Fast soliton scattering by delta impurities.} Comm. Math. Phys. \textbf{274} (2007), 187--216. 

\bibitem{HZ} J. Holmer and M. Zworski, \emph{Slow soliton interaction with delta impurities.} J. Mod. Dyn. \textbf{1} (2007), 689--718.

\bibitem{HZ2} J. Holmer and M. Zworski, \emph{Breathing patterns in nonlinear relaxation.} Nonlinearity \textbf{22} (2009), 1259--1301. 

\bibitem{Hunt} R. A. Hunt, \emph{On $L(p,q)$ spaces.} Enseignement Math. (2) \textbf{12} (1966), 249--276.

\bibitem{KO} M. Kaminaga and M. Ohta, \emph{Stability of standing waves for nonlinear Schr\"odinger equation with attractive delta potential and repulsive nonlinearity.} Saitama Math. J. \textbf{26} (2009), 39--48 (2010). 

\bibitem{KM} E. Kirr and O. Mizrak, \emph{Asymptotic stability of ground states in 3D nonlinear Schr\"odinger equation including subcritical cases,} J. Funct. Anal. \textbf{257} (2009), no. 12, 3691--3747. 

\bibitem{KZ} E. Kirr and A. Zarnescu, \emph{Asymptotic stability of ground states in 2D nonlinear Schr\"odinger equation including subcritical cases,} J. Differential Equatoins \textbf{247} (2009), no. 3, 710--735. 

\bibitem{KS} J. Krieger and W. Schlag, \emph{Stable manifolds for all monic supercritical focusing nonlinear Schr\"odinger equations in one dimension.} J. Amer. Math. Soc. \textbf{19} (2006), no. 4, 815--920. 


\bibitem{MS} J. Marzuola and G. Simpson, \emph{Spectral analysis for matrix Hamiltonian operators}. Nonlinearity \textbf{24} (2011), 389--429.

\bibitem{MMS} S. Masaki, J. Murphy, and. J. Segata, \emph{Stability of small solitary waves for the $1d$ NLS with an attractive delta potential}. Anal. PDE. \textbf{13} (2020), no. 4, 1099--1128.  

\bibitem{Miz} T. Mizumachi, \emph{Asymptotic stability of small solitary waves to 1D nonlinear Schr\"odinger equations with potential.} J. Math. Kyoto Univ. \textbf{48} (2008), no. 3, 471--497. 

\bibitem{MN} J. Murphy and K. Nakanishi, \emph{Failure of scattering to solitary waves for long-range nonlinear Schr\"odinger equations.} Preprint {\tt arXiv:1906.01802}.

\bibitem{ONeil} R. O'Neil, \emph{Convolution operators and $L(p,q)$ spaces.} Duke. Math. J. \textbf{30} (1963), 129--142. 

\bibitem{Perelman} G. Perelman, \emph{
On the formation of singularities in solutions of the critical nonlinear Schrödinger equation}. Ann. Henri Poincar\'e \textbf{2} (2001), no. 4, 605--673. 

\bibitem{Shatah} J. Shatah, \emph{Stable standing waves of nonlinear Klein--Gordon equations,} Commun. Math. Phys. \textbf{91} (1983), 313--327.

\bibitem{ShatahStrauss} J. Shatah and W. Strauss, \emph{Instability of nonlinear bound states}, Commun. Math. Phys. \textbf{100} (1985), 173--190. 

\bibitem{SW} A. Soffer and M. I. Weinstein, \emph{Multichannel nonlinear scattering for nonintegrable equations,} Comm. Math. Phys. \textbf{133} (1990), no. 1, 119--146.

\bibitem{SW2} A. Soffer and M. I. Weinstein, \emph{Multichannel nonlinear scattering for nonintegrable equations II. The case of anisotropic potentials and data,} J. Differential Equations \textbf{98} (1992), no. 2, 376--390. 

\bibitem{SW3} A Soffer and M. I. Weinstein, \emph{Selection of the ground state for nonlinear Schr\"odinger equations.} Rev. Math. Phys. \textbf{16} (2004), no. 8, 977--1071.

\bibitem{TX} X. Tang and G. Xu, \emph{Instability of the solitary waves for the 1d NLS with an attractive delta potential in the degenerate case.} Preprint {\tt arXiv:1910.11150}.


 
\end{thebibliography}
\end{document}